\newcommand{\llb}{\llbracket}
\newcommand{\rrb}{\rrbracket}
\def\ppr{\mathrm{par}}
\def\compositum{\cdot}
\def\Sp{\mathrm{Sp}}
\def\SO{\mathrm{SO}}
\def\sss{\mathrm{ss}}
\def\diag{\mathrm{diag}}
\def\Fps{\F_{p^2}}
 \def\GU{\mathrm{GU}} 
 \def\PSU{\mathrm{PSU}}
\renewcommand{\setminus}{\smallsetminus}
\def\Cl{\mathrm{Cl}}
\def\Br{\mathrm{Br}}
\def\CM{\mathrm{CM}}
\def\UA{\mathrm{UA}}
\def\aux{\mathrm{aux}}
\def\psibar{\overline{\psi}}
 \def\avoid{\mathrm{avoid}}
 \def\q{\mathfrak{q}}
\def\r{\mathfrak{r}}
\newcommand{\red}{\operatorname{red}}
\newcommand{\loc}{\operatorname{loc}}
\newcommand{\ad}{\operatorname{ad}}
\newcommand{\Art}{\operatorname{Art}}
\newcommand{\rank}{\operatorname{rank}}
\newcommand{\To}{\longrightarrow}
\newcommand{\isoto}{\stackrel{\sim}{\To}}
\renewcommand{\mathbb}{\mathbf}
\newcommand{\Iw}{\mathrm{Iw}}
\newcommand{\crBrMod}{\operatorname{BrMod}^{\operatorname{cr}}}
\newcommand{\crFBrMod}{\F\!\operatorname{BrMod}^{\operatorname{cr}}}
\newcommand{\crFpbarBrMod}{\Fpbar\!\operatorname{BrMod}^{\operatorname{cr}}}
\newcommand{\qFBrMod}{\F\!\operatorname{qBrMod}}
\newcommand{\resK}{\operatorname{res}_K}
\newcommand{\resKinfty}{\operatorname{res}_{K_{\infty}}}
\def\RR{\mathcal{R}}
\def\XX{\mathcal{X}}
\def\SSS{\mathcal{S}}
\def\HH{\mathbf{H}}
\def\OL{\mathcal{O}}
\def\crys{\mathrm{crys}}
\def\sbar{\overline{s}}
\def\rbar{\overline{r}}
\def\chibar{\overline{\chi}}
\def\rhobar{\overline{\rho}}
\def\varepsilonbar{\overline{\varepsilon}}
\def\barK{\overline{{K}}}
\def\cXbar{\overline{\cX}}
\def\sigmabar{\overline{\sigma}}   
\def\pp{\mathfrak{p}}
\def\PP{\mathfrak{P}}
\def\QQ{\mathfrak{Q}}
\def\fg{\mathfrak{g}}
\newcommand{\ffrm}{{\mathfrak m}}
\newcommand{\p}{\mathfrak{p}}
\newcommand{\m}{\mathfrak{m}}
\newcommand{\gM}{\mathfrak{M}}
\newcommand{\gS}{\mathfrak{S}}
\newcommand{\Acirc}{A^\circ}
\def\Ghat{\widehat{G}}
\def\wH{\widetilde{H}}
\def\PSL{\mathrm{PSL}}
\def\PGL{\mathrm{PGL}}
\newcommand{\HT}{\mathrm{HT}}
\newcommand{\ur}{\mathrm{ur}}
\newcommand{\Frob}{\mathrm{Frob}}
\newcommand{\univ}{{\operatorname{univ}}}
\newcommand{\bL}{\mathbf{L}}
\newcommand{\bD}{\mathbf{D}}
\newcommand{\A}{\mathbf{A}}
\newcommand{\C}{\CC}
\newcommand{\F}{\FF}
\newcommand{\N}{{\mathbb N}}
\newcommand{\Q}{{\mathbb Q}}
\newcommand{\R}{{\mathbb R}}
\newcommand{\Z}{{\mathbb Z}}
\newcommand{\CC}{{\mathbb C}}
\newcommand{\FF}{{\mathbb F}}
\newcommand{\GG}{{\mathbb G}}
\newcommand{\TT}{{\mathbb T}}
\newcommand{\cE}{{\mathcal E}}
\newcommand{\cL}{{\mathcal L}}
\newcommand{\cW}{{\mathcal W}}
\newcommand{\tr}{\mathrm{tr}}
\newcommand{\cD}{{\mathcal D}}
\newcommand{\cO}{{\mathcal O}}
\newcommand{\cM}{{\mathcal M}}
\newcommand{\cX}{{\mathcal X}}
\newcommand{\cU}{{\mathcal U}}
\newcommand{\cV}{{\mathcal V}}
\newcommand{\cS}{{\mathcal S}}
\newcommand{\cR}{{\mathcal R}}
\newcommand{\cT}{{\mathcal T}}
\newcommand{\Fp}{\F_p}
\newcommand{\Fpbar}{\Fbar_p}
\newcommand{\Qp}{\Q_p}
\newcommand{\Qpbar}{\Qbar_p}
\newcommand{\Zpbar}{\Zbar_p}
\newcommand{\Zp}{\Z_p}
\newcommand{\Fpbartimes}{\Fpbar^{\times}}
\newcommand{\Fbar}{\overline{\F}}
\newcommand{\Qbar}{\overline{\Q}}
\newcommand{\Zbar}{\overline{\Z}}
\DeclareMathOperator{\GL}{GL}
\DeclareMathOperator{\Gal}{Gal}
\DeclareMathOperator{\Ext}{Ext}
\DeclareMathOperator{\Aut}{Aut}
\DeclareMathOperator{\End}{End}
\DeclareMathOperator{\Ind}{Ind}
\DeclareMathOperator{\im}{im}
\DeclareMathOperator{\Hom}{Hom}
\DeclareMathOperator{\Sym}{Sym}
\DeclareMathOperator{\SU}{SU}
\DeclareMathOperator{\SL}{SL}
\DeclareMathOperator{\Spf}{Spf}
\DeclareMathOperator{\Spec}{Spec}
\DeclareMathOperator{\Supp}{Supp}
 \newcommand{\into}{\hookrightarrow}
 \newcommand{\onto}{\twoheadrightarrow}
\newcommand{\Gm}{\GG_m}
\newcommand{\Res}{\operatorname{Res}}
\newtheorem{ithm}{Theorem}
\newtheorem{theorem}[subsubsection]{Theorem}
\newtheorem{thm}[subsubsection]{Theorem}
\newtheorem{lemma}[subsubsection]{Lemma}
\newtheorem{lem}[subsubsection]{Lemma}
\newtheorem{prop}[subsubsection]{Proposition}
\newtheorem{cor}[subsubsection]{Corollary}
\theoremstyle{definition}
\newtheorem{df}[subsubsection]{Definition}
\newtheorem{defn}[subsubsection]{Definition}
\theoremstyle{remark}
\newtheorem{rem}[subsubsection]{Remark}
\def\numequation{\addtocounter{subsubsection}{1}\begin{equation}}
  \def\nummultline{\addtocounter{subsubsection}{1}\begin{multline}}
    \def\numeqnarray{\addtocounter{subsubsection}{1}\begin{eqnarray}}
\def\anumequation{\addtocounter{subsection}{1}\begin{equation}}
\def\anummultline{\addtocounter{subsection}{1}\begin{multline}}
\newif\iffinalrun
  \newcommand{\need}[1]{}
  \newcommand{\mar}[1]{}
  \newcommand{\need}[1]{{\tiny *** #1}}
\newcommand{\mar}[1]{\marginpar{\raggedright\tiny FIXME: #1 }}\fi
  \let\latexbibitem\bibitem
 \RenewDocumentCommand{\bibitem}{O{}m}
  {
   \nineteen_bibitem:en { \nineteen_bibitem_check:n { #1 } } { #2 }
  }
\title[Ramanujan and Sato--Tate for Bianchi modular forms]{The Ramanujan and Sato--Tate Conjectures for Bianchi modular forms}
\author[G.~Boxer]{George Boxer}  \email{g.boxer@imperial.ac.uk} \address{Department of
  Mathematics, Imperial College London,
  London SW7 2AZ,~UK}
\author[F.~Calegari]{Frank Calegari}  \email{fcale@math.uchicago.edu} \address{The University of Chicago,
5734 S University Ave,
Chicago, IL 60637, USA}
\author[T.~Gee]{Toby Gee} \email{toby.gee@imperial.ac.uk} \address{Department of
  Mathematics, Imperial College London,
  London SW7 2AZ,~UK}
\author[J.~Newton]{James Newton} \email{newton@maths.ox.ac.uk}
\address{Mathematical Institute, University of Oxford, Woodstock Road, Oxford OX2 6GG, UK}
  \author[J.~A.~Thorne]{Jack A. Thorne} \email{thorne@dpmms.cam.ac.uk}
\address{Department of Pure Mathematics and Mathematical Statistics, Wilberforce Road, Cambridge CB3 0WB,~UK}
  \thanks{G.B.\ was supported by a Royal Society University Research Fellowship}
 \thanks{F.C. \ was supported in part by NSF Grant DMS-2001097.}
 \thanks{ T.G.\ was supported in part by an ERC Advanced grant. This
   project has received funding from the European Research Council
   (ERC) under the European Union’s Horizon 2020 research and
   innovation programme (grant agreement No. 884596)}
\thanks{J.N.\ was supported by a UKRI Future Leaders Fellowship, grant MR/V021931/1.}
\begin{document}

\begin{abstract}
  We prove the Ramanujan and Sato--Tate conjectures for Bianchi modular
  forms of weight at least~$2$. More generally, we prove these
  conjectures for all regular algebraic cuspidal
  automorphic representations of~$\GL_2(\A_F)$ of parallel weight,
  where~$F$ is any CM field. We deduce these theorems from a new 
  potential automorphy theorem for the symmetric powers of  $2$-dimensional %
  compatible systems of Galois representations of parallel weight. %
  \end{abstract}

 \maketitle

\setcounter{tocdepth}{2}
{\footnotesize
\tableofcontents
}
 
 \section{Introduction}\label{sec: intro}
 
 Let~$f = \sum_{n=1}^{\infty} a_n q^n$ be a cuspidal modular form of
 weight~$k \ge 2$ and level~$\Gamma_1(N) \subset \SL_2(\Z)$ 
which 
is  an eigenform for all the Hecke operators~$T_p$ for~$(p,N)=1$ and normalized so that~$a_1 = 1$.
The Ramanujan conjecture for~$f$ --- proved by Deligne~\cite{Deligne} as a consequence
of the Weil conjectures --- is the claim that
$$|a_p| \le 2 \cdot p^{(k-1)/2}.$$
Suppose that the coefficients of~$f$ are real.
The Sato--Tate conjecture
 (proved in a sequence of papers~\cite{cht,tay, HSBT,blght}) is the theorem that the normalized values~$a_p/2p^{(k-1)/2} \in [-1,1]$
are equidistributed with respect to the Sato--Tate measure~$2/\pi \cdot \sqrt{1-x^2} dx$
unless~$f$ is a so-called CM form,  in which case the corresponding
measure is the average of the atomic measure with support zero and the measure~$1/\pi \cdot 1/\sqrt{1-x^2} dx$ (the proof in this CM case is much easier and follows from~\cite{Hecke}).
(If the coefficients~$a_p$ are not real, some minor modifications are required to
formulate the conjecture properly.)
These conjectures were originally made for the particular  (non-CM) form~$f = \Delta = q \prod_{n=1}^{\infty} (1 - q^n)^{24} = \sum_{n=1}^{\infty} \tau(n) q^n$ of level~$\SL_2(\Z)$
and weight~$k = 12$ studied by Ramanujan;  this particular case turns out to
be no easier than the general case.

Both of these conjectures have an equivalent reformulation in the language  of automorphic representations.
Associated to a cuspidal modular eigenform~$f$ (as above) is an automorphic
representation~$\pi$ for~$\GL(2)/\Q$. The data of~$\pi$ includes  irreducible admissible infinite dimensional
complex representations~$\pi_p$ of~$\GL_2(\Q_p)$ for
all~$p$. For~$(p,N)=1$, the representations~$\pi_p$ satisfy the
additional property of being so-called \emph{spherical}, and are in particular classified
by a pair of complex numbers~$\{\alpha_p,\beta_p\}$ known as Satake parameters, which
are related to the original coefficients~$a_p$ via the equation
$$x^2 - a_p x + p^{k-1} \chi(p) = (x - \alpha_p)(x - \beta_p),$$
where~$\chi: (\Z/N \Z)^{\times} \rightarrow \C^{\times}$ is the Nebentypus character
of~$f$. 
The Ramanujan conjecture is equivalent to the
equality~$|\alpha_p|=|\beta_p| = p^{(k-1)/2}$, which can be reformulated as saying that
the representation~$\pi_p$ is tempered. The Sato--Tate conjecture is equivalent
(for non-CM forms) to the claim that the conjugacy classes of the matrices
$$\frac{1}{p^{(k-1)/2}} \cdot \left( \begin{matrix} \alpha_p & 0
\\ 0 & \beta_p \end{matrix}  \right)$$
 are equidistributed in $\SU(2)/\text{conjugacy}$ with respect
to the probability Haar measure.

One advantage of these reformulations is that they can be  generalized; the original Ramanujan conjecture becomes the statement that if~$\pi$ is a regular algebraic cuspidal automorphic representation for~$\GL(2)/\Q$, then~$\pi_p$ is tempered for all~$p$. 
The general Ramanujan conjecture is  the statement that if~$\pi$ is a cuspidal automorphic representation for~$\GL(n)/F$ for any number field~$F$, then~$\pi_v$ is tempered for all primes~$v$ of $F$. (One
can generalize further to groups beyond~$\GL(n)$ but then the formulation becomes more subtle.)
This conjecture is still open in the case of~$\GL(2)/\Q$; after one
drops the adjectives ``regular algebraic'' (or even just ``regular''), one then allows Maass forms, which seem beyond the reach of all current
techniques.
On the other hand, one can consider regular algebraic automorphic representations~$\pi$
for~$\GL(2)/F$ for number fields~$F$.
If~$F$ is a totally real field, then these correspond to Hilbert
modular forms of weight~$(k_i)_{i=1}^{d}$ (with~$d=[F:\Q]$) with all weights~$k_i$ at least~$2$, and parity independent of $i$; the
theory here is close  to the original setting of classical modular forms.
One point of similarity is that Hilbert modular forms can also be written as~$q$-series
(now in more variables). Moreover, just as for classical modular forms, there is a direct link
between Hilbert modular forms and the \'{e}tale cohomology of certain algebraic (Shimura)
varieties, which allows one to deduce the Ramanujan conjecture in these cases as a consequence
of the Weil conjectures~(\cite{Brylinski-Labesse,MR2327298}). The Sato--Tate conjecture can also be proved
in these cases by arguments generalizing those used for modular forms
\cite{blgg}.

In this paper, we consider the Ramanujan and Sato--Tate conjectures
for regular algebraic cuspidal automorphic
representations for~$\GL(2)/F$ where~$F$ is now an imaginary quadratic field (or more generally 
an imaginary CM field). In this case, the classical interpretation of these objects 
(sometimes called Bianchi modular forms when~$F$ is imaginary quadratic) 
 looks quite
different from the familiar~$q$-expansions associated to classical or Hilbert modular forms; for example,
if~$F$ is an imaginary quadratic field, they can be thought of as vector valued differential one-forms
on arithmetic hyperbolic three manifolds. The Eichler--Shimura map allows one to relate classical
modular forms of weight~$k \ge 2$ to the cohomology of local systems for congruence subgroups
of~$\SL_2(\Z)$; the analogous theorem also allows one to relate Bianchi modular forms
to the cohomology of local systems for subgroups of~$\SL_2(\OL_F)$.
However, what is missing in this setting is that there is now 
no longer any direct link  to the cohomology
of algebraic varieties. Despite this, in this paper, we
 prove the Ramanujan conjecture for regular algebraic cuspidal automorphic
representations in full for all quadratic fields and with the parallel weight condition for arbitrary imaginary CM fields. 

For a precise clarification of what parallel weight~$k$ means, see Definition~\ref{parallelweight}. The meaning of `regular algebraic' is also recalled immediately before this definition. When $F$ is imaginary quadratic, all regular algebraic cuspidal automorphic representations for $\GL(2)/F$ have parallel weight.
\begin{ithm}[Ramanujan Conjecture, Theorem~\ref{thm: Ramanujan thm main
    paper}] \label{Ramanujan}
Let~$F/\Q$ be an imaginary CM field.
Let~$\pi$ be a cuspidal algebraic automorphic representation for~$\GL(2)/F$ 
of parallel weight~$k\ge 2$.
Then~$\pi_v$ is tempered for all finite places~$v$; in particular, for places~$v$
prime to the level of~$\pi$, the Satake parameters~$\{\alpha_v,\beta_v\}$ of~$\pi_v$ satisfy~$|\alpha_v| = |\beta_v| = N(v)^{(k-1)/2}$.
\end{ithm}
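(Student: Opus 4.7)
The strategy is to deduce temperedness from purity of the compatible system of $\ell$-adic Galois representations attached to $\pi$, with purity in turn obtained from the new potential automorphy theorem for symmetric powers referenced in the abstract.

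To $\pi$ one attaches a compatible system of 2-dimensional $\ell$-adic representations $\rho_{\pi,\lambda}\colon G_F \to \GL_2(\Qbar_\ell)$, constructed by Harris-Lan-Taylor-Thorne and Scholze. These representations are de Rham at places above $\ell$ with Hodge-Tate weights determined by the parallel weight $k$, and for $v \nmid N\ell$ the characteristic polynomial of $\rho_{\pi,\lambda}(\Frob_v)$ is $X^2 - a_v X + N(v)^{k-1}\chi(v)$, where $a_v = \alpha_v + \beta_v$ and $\chi$ is the central character of $\pi$.

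By the potential automorphy theorem (the main new input from the abstract), there exists an integer $n \geq 2$ and a finite CM extension $F'/F$ such that $\Sym^{n-1}\rho_{\pi,\lambda}|_{G_{F'}}$ arises from a regular algebraic cuspidal automorphic representation of $\GL_n(\A_{F'})$. The Galois representations attached to such automorphic representations are known to be pure in the Weil-Deligne sense (after arranging a conjugate self-dual situation by the standard character-twisting tricks, or by the general purity theorems for regular algebraic cuspidal representations over CM fields), so $\Sym^{n-1}\rho_{\pi,\lambda}|_{G_{F'}}$ is pure of weight $(n-1)(k-1)$. Since the Frobenius eigenvalues of $\Sym^{n-1}\rho$ are products of those of $\rho$, this forces $\rho_{\pi,\lambda}|_{G_{F'}}$ itself to be pure of weight $k-1$; that is, $|\alpha_w| = |\beta_w| = N(w)^{(k-1)/2}$ at every good place $w$ of $F'$.

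The descent to temperedness of $\pi_v$ at every finite place $v$ of $F$ is then straightforward. At unramified $v$, fixing any place $w$ of $F'$ above $v$ with residue degree $f = f(w/v)$, the Satake parameters at $w$ are $f$-th powers of those at $v$, so purity at $w$ gives $|\alpha_v| = |\beta_v| = N(v)^{(k-1)/2}$. At the finitely many ramified $v$, one uses local-global compatibility for the compatible system (known in the regular algebraic setting for $\GL_2$ over CM fields) to transfer purity of the local Weil-Deligne representation of $\rho_{\pi,\lambda}|_{G_{F_v}}$ (again a consequence of the automorphy of $\Sym^{n-1}\rho|_{G_{F'}}$) to temperedness of $\pi_v$ via the local Langlands correspondence.

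The main obstacle is the potential automorphy step. Over totally real fields, potential automorphy of symmetric powers of 2-dimensional compatible systems is the classical theorem of Barnet-Lamb-Geraghty-Harris-Taylor, proved via the Taylor-Wiles method applied to conjugate self-dual representations. Over a CM field, however, $\Sym^{n-1}\rho_{\pi,\lambda}$ is typically not conjugate self-dual, so the classical patching machinery does not apply directly. The proof must exploit the parallel weight hypothesis --- presumably to arrange a conjugate self-dual substitute by tensoring with a suitably chosen algebraic Hecke character --- together with the authors' recent advances in automorphy lifting over CM fields that handle (or reduce to) the non-conjugate self-dual setting.
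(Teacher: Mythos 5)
Your overall skeleton (attach Galois representations, prove potential automorphy of $\Sym^{n-1}\rho_{\pi,\lambda}$, deduce purity, descend to temperedness) matches the paper's approach, but the step where you extract purity from potential automorphy is wrong in a way that is not a small omission — it is the central circularity the whole paper is designed to avoid.

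You write that after potential automorphy one knows $\Sym^{n-1}\rho_{\pi,\lambda}|_{G_{F'}}$ is ``pure in the Weil--Deligne sense (after arranging a conjugate self-dual situation by the standard character-twisting tricks, or by the general purity theorems for regular algebraic cuspidal representations over CM fields).'' Neither of these options is available. There are no such ``general purity theorems'' over a CM field: purity of the Galois representation attached to a regular algebraic cuspidal automorphic representation of $\GL_n(\A_{F'})$ with $F'$ CM \emph{is} the generalized Ramanujan conjecture, i.e.\ the very statement under proof. And the conjugate self-dual situation cannot be arranged by twisting: for a two-dimensional $\rho$ one has $\rho^\vee \cong \rho \otimes \det\rho^{-1}$, so conjugate self-duality of a twist of $\rho$ forces $\rho^c \cong \rho$ (up to twist), which holds essentially only when $\pi$ is a base change from $F^+$. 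For a genuine Bianchi form this fails, and $\Sym^{n-1}\rho$ inherits the failure. This is exactly why the problem was open and why the cohomology of Shimura varieties cannot be brought to bear directly.

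The correct deduction of purity (Lemma~\ref{lem:pot aut implies purity} in the paper) is of a completely different nature: it does not use purity of automorphic Galois representations at all, only the elementary Jacquet--Shalika bound $|\alpha| \leq q_v^{1/2}$ (after unitary normalization) for a cuspidal representation of $\GL_n$. Applied to the automorphic representation realizing $\Sym^{n-1}\cR|_{G_{F_n}}$, this gives $|\iota\alpha_i| \leq q_{v_0}^{m/2 + 1/(2(n-1))}$, and letting $n\to\infty$ yields the sharp bound. In particular, you need potential automorphy of $\Sym^{n-1}\cR$ for \emph{arbitrarily large} $n$, not ``an integer $n\ge 2$'' as you state; a single $n$ only gives a bound strictly weaker than Ramanujan. (You also need to control ramification of the $\GL_n$ automorphic representation at the place $v_0$ of interest — hence the paper's notion of ``weakly automorphic of level prime to $\{v\mid v_0\}$'' — because the auxiliary tensoring in the potential automorphy argument introduces extra ramification.) Your reduction to unramified places via residue degrees and your descent remarks are fine; the paper instead reduces to the unramified case by solvable base change, citing the argument of \cite[Cor.\ 7.1.15]{10author}, and also preliminarily twists and makes a quadratic CM base change to normalize the central character before invoking Theorem~\ref{thm_pot_aut_of_powers}, but those are technical rearrangements rather than gaps on your side.
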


\begin{ithm}[Sato--Tate Conjecture, Theorem~\ref{thm_Sato_Tate_general_case}]
\label{SatoTate}
Let~$F/\Q$ be an imaginary CM field.
Let~$\pi$ be a cuspidal algebraic automorphic representation for~$\GL(2)/F$ 
of parallel weight~$k\ge 2$. 
Assume that~$\pi$ does not have CM,
equivalently, $\pi$ is not the automorphic induction of an algebraic Hecke character
from a quadratic CM extension~$F'/F$.
 For each finite place $v$
prime to the level of~$\pi$, let~$a_v=  (\alpha_v + \beta_v)/(2 N(v)^{(k-1)/2})$ denote the normalized
parameter, and suppose that the~$a_v$ are real. Then the~$a_v$ are uniformly distributed with
respect to the Sato--Tate measure~$2/\pi \cdot \sqrt{1-x^2} dx$.
\end{ithm}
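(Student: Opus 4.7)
The plan is to follow the Wiener--Ikehara / Weyl approach, reducing Sato--Tate to the holomorphy and non-vanishing on the line $\Re(s) \geq 1$ of all the symmetric-power $L$-functions attached to $\pi$, and obtaining the latter analytic input from the potential automorphy theorem promised in the abstract. First, I would attach to $\pi$ the compatible system $\{r_\lambda(\pi)\}$ of $2$-dimensional $\lambda$-adic Galois representations (available by work of Harris--Lan--Taylor--Thorne, Scholze, and refinements). The non-CM hypothesis on $\pi$ is equivalent to the projective image of $r_\lambda(\pi)$ being open in $\PGL_2$, which in turn guarantees that $\Sym^{n-1} r_\lambda(\pi)$ is absolutely irreducible for every $n \geq 1$; this is exactly what is needed to feed these symmetric powers into the potential automorphy machine.

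Second, for each $n \geq 1$ I would apply the paper's main potential automorphy theorem to the parallel-weight compatible system $\{\Sym^{n-1} r_\lambda(\pi)\}$. This yields a finite CM extension $F_n/F$, which by the usual flexibility of such theorems may be taken linearly disjoint from any prescribed finite extension of $F$ and, in particular, solvable over some intermediate CM field or arranged so that Brauer's theorem applies. Over $F_n$, the base change $\Sym^{n-1}(\pi)_{F_n}$ corresponds to a regular algebraic cuspidal automorphic representation $\Pi_n$ on $\GL_n(\A_{F_n})$. Using Brauer's induction theorem to write the trivial character of $\Gal(F_n/F)$ as a $\Q$-combination of characters induced from subgroups fixing intermediate fields (over each of which one must also establish automorphy, typically by iterating the same theorem), one expresses $L(s, \Sym^{n-1}\pi)$ as a finite product of rational powers of $\GL_{n}$ $L$-functions of automorphic type. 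Meromorphic continuation to $\Re(s) \geq 1$ then follows, holomorphy on $\Re(s) = 1$ except possibly at $s = 1$ is standard, and non-vanishing on $\Re(s) = 1$ follows from Jacquet--Shalika. The non-CM hypothesis rules out a pole at $s = 1$ for $n \geq 2$, since such a pole would force $\Sym^{n-1}\pi_{F_n}$ to contain the trivial representation as an isobaric summand.

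Third, I would feed this analytic input into the standard Tauberian/Weyl equidistribution argument, as in Serre's \emph{Abelian $\ell$-adic representations} or \S8 of \cite{blght}. Temperedness of $\pi_v$ (Theorem~A) places the normalized Satake parameters in the compact set $[-2, 2]$ (or more precisely in the unit disc before restricting to the real locus); the reality hypothesis on the $a_v$ lifts the conjugacy classes to $\SU(2)/\sim$, and the vanishing of all the non-trivial moments $\sum U_{n-1}(a_v) N(v)^{-s}$ at $s = 1$ (extracted from the logarithmic behaviour of the symmetric power $L$-functions just established) is exactly Weyl's criterion for equidistribution against the pushforward of Haar measure on $\SU(2)$, namely the Sato--Tate measure $\frac{2}{\pi}\sqrt{1 - x^2}\,dx$.

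The main obstacle is of course Step two, the invocation of the potential automorphy theorem, which constitutes the novel technical heart of the paper and is not an obstacle within the present proof sketch but the entire motivation for it. Within this Sato--Tate argument itself, the only subtlety is the Brauer-induction bookkeeping needed to descend the analytic properties of $L(s, \Sym^{n-1}\pi_{F_n})$ back to $L(s, \Sym^{n-1}\pi)$ over $F$, and the verification that the CM extensions $F_n$ can be chosen compatibly (solvable above a common CM base, or with controlled Galois groups) so that each induction step stays within the scope of automorphic induction and solvable base change for $\GL_n$. Once potential automorphy is granted, both the formal structure of this deduction and the analytic estimates involved are by now essentially mechanical.
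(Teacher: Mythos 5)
Your proposal is correct and follows essentially the same route as the paper: Theorem~\ref{thm_Sato_Tate_general_case} is deduced from the potential automorphy of the symmetric powers (Theorem~\ref{PO}, applied after noting via~\cite[Lemma 7.1.2]{10author} that the non-CM hypothesis makes the compatible system strongly irreducible) by Brauer induction and solvable descent to obtain holomorphy and non-vanishing of the symmetric-power $L$-functions on $\operatorname{Re}(s)=1$, and then Serre's equidistribution criterion from~\cite[Ch.~I, Appendix]{serreabladic}, exactly as in~\cite[\S 7]{GeeSTwt3} and~\cite[\S 8]{blght}. The compatibility of the extensions $F_n$ across different $n$ that you flag as a possible subtlety is not actually needed, since each symmetric power is handled separately.
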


As in the case~$F=\Q$, a minor modification of the statement
 is needed when the~$a_v$ are not real;
we relegate the details of this to Section~\ref{subsec:satotate}.
We also
discuss some alternate formulations of Theorem~\ref{Ramanujan} when~$F/\Q$
is an imaginary quadratic field in Section~\ref{Bianchi}.

To prove Theorems \ref{Ramanujan} and \ref{SatoTate}, we prove 
the potential automorphy of the symmetric powers of the compatible systems of Galois representations associated to a cuspidal, regular algebraic automorphic representation $\pi$ of $\GL_2(\A_F)$. Here again is a simplified version of our main result in this direction.
(To orient the reader, the integer~$k \ge 2$ parametrizing the weight in this
discussion above is related the integer~$m \ge 1$ below via the relation~$m = k - 1$.
This mirrors the fact that the Hodge--Tate weights of $p$-adic Galois representations
associated to modular forms of weight~$k$ are equal to~$\{0,k-1\}$.)

\begin{ithm}[Potential automorphy of symmetric powers, Theorem~\ref{PO}]\label{ithm_potential_automorphy_of_symmetric_powers} Let~$F$ be a CM field, let~$M$ be a number field, and let~$m \ge 1$ be an integer.
Suppose we have a system of Galois representations
$$\rho_{\lambda}: G_F \rightarrow \GL_2(\overline{M}_{\lambda})$$
indexed by primes~$\lambda$ of~$M$
with the following compatibilities:
\begin{enumerate}
\item $\rho_{\lambda}$ is unramified outside a finite set of primes~$\{v \in S\} \cup \{v | N(\lambda)\}$
where~$S$ is independent of~$\lambda$. For any~$v$ not in this set, 
the characteristic polynomial
$P_v(X) = X^2 + a_v X + b_v$ of~$\rho_\lambda(\Frob_v)$  lies in~$M[X]$
and is independent of~$v$.
\item For all but finitely many~$\lambda$, the representations~$\rho_{\lambda} |_{G_v}$
for primes~$v|N(\lambda)$ and~$v \notin S$ are crystalline with Hodge--Tate weights~$H = \{0,m\}$
for every embedding of~$F$ into~$\Qbar_p$.
\end{enumerate}
Assume that at least one~$\rho_{\lambda}$ is irreducible. Then:
\begin{enumerate}
\item {\bf Purity:\rm} for any embedding of~$M \hookrightarrow \mathbf{C}$, the roots~$\alpha_v$ and~$\beta_v$ of~$X^2 + a_v X + b_v$
have absolute value~$q^{m/2}$ where~$q = N(v)$.
\item {\bf Potential automorphy:\rm} There is a number field~$F'/F$ 
such that the restrictions~$\rho_{\lambda} |_{G_{F'}}$ are all automorphic
and associated to a fixed cuspidal algebraic~$\pi$ for~$\GL(2)/F'$.
\item {\bf Potential automorphy of symmetric powers:\rm} Fix~$n-1 \ge 2$. Either:
\begin{enumerate} 
\item The~$\rho_{\lambda}$ are all induced from a compatible system
associated to an algebraic Hecke character~$\chi$
of some quadratic extension~$F'/F$. Then~$\Sym^{n-1} \rho_{\lambda}$ is reducible
and decomposes into representations
of dimension two and one which are all automorphic over~$F$.
\item There is a number field~$F'/F$ such that the 
representations~$\Sym^{n-1} \rho_{\lambda} |_{G_{F'}}$ %
are all irreducible and automorphic, associated to
a fixed cuspidal algebraic~$\Pi$ for~$\GL(n)/F'$.
\end{enumerate}
\end{enumerate}
\end{ithm}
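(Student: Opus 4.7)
The plan is to establish parts (2), (3), and (1) in that order, since purity will fall out from the temperedness of the regular algebraic cuspidal automorphic representations produced by potential automorphy of symmetric powers.

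For part (2), I would fix a prime $\lambda_0$ such that $\overline{\rho}_{\lambda_0}$ is absolutely irreducible with sufficiently large image (available because at least one, hence infinitely many, of the $\rho_\lambda$ are irreducible). Following the Harris--Shepherd-Barron--Taylor template adapted to the CM setting, construct a CM extension $F'/F$ and an auxiliary motive $M/F'$ by a Moret-Bailly argument on an appropriate moduli space, chosen so that the mod-$\lambda_0$ realization of $M$ matches $\overline{\rho}_{\lambda_0}|_{G_{F'}}$, while a mod-$\ell$ realization at a second prime $\ell$ is already known to be automorphic (for instance, as an induction of a Hecke character, or as coming from a Hilbert modular form by totally real methods). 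Apply a non-polarizable automorphy lifting theorem for $\GL_2$ over CM fields (the two-dimensional case of the framework of the ten authors and its subsequent refinements) to upgrade residual automorphy to automorphy of $\rho_{\lambda_0}|_{G_{F'}}$; compatibility within the Galois family then propagates this to all $\rho_\lambda|_{G_{F'}}$.

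For part (3), for each $n \ge 2$, I would use a Dwork-type one-parameter family of Calabi--Yau varieties $X_t$ whose middle cohomology $H^{n-1}(X_t)$ realizes $\Sym^{n-1}$ of a rank-two motive over the base. A second Moret-Bailly argument produces a further CM extension $F'/F$ and a $t \in X(F')$ such that the mod-$\lambda$ representation on $H^{n-1}(X_t)$ matches $\Sym^{n-1}\overline{\rho}_\lambda|_{G_{F'}}$, and such that a mod-$\ell$ representation on the same cohomology can be directly verified to be automorphic. Applying the non-polarizable automorphy lifting theorem for $\GL_n$ over CM fields then yields the automorphy of $\Sym^{n-1}\rho_\lambda|_{G_{F'}}$. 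Case (3a), when $\rho_\lambda = \Ind_{G_{F'}}^{G_F} \psi$ for an algebraic Hecke character $\psi$ of a quadratic extension, is handled directly: $\Sym^{n-1}$ of an induced representation decomposes explicitly into one- and two-dimensional constituents, each an induction or character twist whose automorphy over $F$ follows from classical results of Hecke. With part (3) in hand, part (1) follows: temperedness of regular algebraic cuspidal representations on $\GL_n$ over CM fields (via Caraiani and its non-polarizable extensions) bounds the Satake parameters of $\Sym^{n-1}\pi_v$, i.e.\ the monomials $\alpha_v^i \beta_v^{n-1-i}$, uniformly across $n$ after normalization by $N(v)^{(n-1)m/2}$. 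This forces $|\alpha_v| = |\beta_v|$, and combined with the determinant condition $|\alpha_v \beta_v| = N(v)^m$ coming from the Hodge--Tate weights and local-global compatibility, one obtains $|\alpha_v| = |\beta_v| = N(v)^{m/2}$.

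The heart of the difficulty is the Moret-Bailly construction in part (3): one must have a geometric family realizing the required symmetric powers for every $n$, with fine control of local deformation conditions (potentially diagonalizable at $\ell$-adic places, tame ramification at other primes in $S$), and one must verify the stringent ``big image'' hypothesis for the residual representation required by the non-polarizable automorphy lifting theorem. Unlike in the totally real or polarizable CM cases, there is no Shimura variety to directly produce the Galois representations attached to the automorphic forms on $\GL_n$ over $F'$, so residual automorphy must be bootstrapped through a delicate combination of inductive arguments, careful choices of auxiliary primes, and the very deep non-polarizable automorphy lifting machinery.
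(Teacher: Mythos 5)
Your proposal outlines essentially the HSBT/\cite{blght} template: residual-match on a Dwork-type family via Moret-Bailly, then automorphy lifting in the non-polarizable CM setting of \cite{10author}. That template is indeed the starting point, and for $m=1$ it is correct (and was carried out in \cite{10author}). But for $m \geq 2$ --- which is the entire novelty of this statement --- your plan has a fundamental obstruction that you do not address. You propose to find ``a Dwork-type one-parameter family of Calabi--Yau varieties $X_t$ whose middle cohomology $H^{n-1}(X_t)$ realizes $\Sym^{n-1}$ of a rank-two motive over the base.'' When $m \geq 2$, the Hodge--Tate weights of $\Sym^{n-1}\rho_\lambda$ are $\{0, m, 2m, \ldots, (n-1)m\}$, which are not consecutive, and Griffiths transversality forces a geometrically varying family of motives to have consecutive Hodge--Tate weights. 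No such geometric family exists, so the Moret-Bailly step you describe cannot get off the ground.

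The standard fix is the Harris tensor product trick: replace $\Sym^{n-1}\RR$ by $\Sym^{n-1}\RR \otimes \Ind_{G_L}^{G_F}\XX$ for a compatible system of algebraic Hecke characters $\XX$ over a cyclic CM extension $L/F$, chosen so the tensor product has consecutive Hodge--Tate weights. But over an imaginary CM field this produces two further serious difficulties that the paper spends most of its effort resolving. First, a single $\rho_\lambda$ may be ordinary at a $p$-adic place $v$ and non-ordinary at $v^c$; the restricted shape of algebraic Hecke characters means no $\XX$ can make the tensor product uniformly ordinary or uniformly ``supersingular'' across conjugate pairs, so one cannot match the local-at-$p$ behaviour of the auxiliary Dwork motive. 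The paper's resolution is to tensor instead with a second, non-induced compatible system $\cR_{\aux}$ drawn from the Dwork hypersurfaces (relying on the Drinfeld--Kedlaya generic-ordinarity result to produce crystalline ordinary specialisations), and then to run a three-prime $p$-$q$-$r$ switch through a third, CM-induced system $\cR_{\CM}$ in order to recover the potential automorphy of $\Sym^{n-1}\RR$ itself via cyclic descent. Second, realising the local conditions requires a highly ramified base change at $p$, and the automorphy lifting theorems available prior to this paper for non-ordinary representations over CM fields demanded $p$ unramified in $F$. The paper's new ingredient here is Theorem~\ref{thm: special fibre weight 0 crystalline def ring generically reduced} (generic reducedness of the special fibre of the weight-$0$ crystalline lifting ring), which feeds into Ihara avoidance to yield the arbitrarily-ramified automorphy lifting theorem (Theorem~\ref{thm:main_automorphy_lifting_theorem}). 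Your proposal does not engage with any of these issues, which are precisely what make the $m \geq 2$ case genuinely harder than what was known before.
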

The Galois representations associated to cuspidal, regular algebraic automorphic representations of $\GL_2(\A_F)$ are not yet known to satisfy the conditions of Theorem \ref{ithm_potential_automorphy_of_symmetric_powers}, but rather a weaker condition (they form a `very weakly compatible system'). We establish potential automorphy of symmetric powers also under this weaker condition. Once again, we refer to the statement of Theorem \ref{PO} in the main body of the paper for the precise statement that is used to deduce Theorems  \ref{thm: Ramanujan thm main paper} and \ref{thm_Sato_Tate_general_case} (and therefore Theorems \ref{Ramanujan} and \ref{SatoTate} above).

\subsection{The new ideas in this paper}
When $m = 1$, Theorems \ref{Ramanujan}, \ref{SatoTate} and \ref{ithm_potential_automorphy_of_symmetric_powers} were proved in \cite{10author} (see \cite[Thm~1.01, Thm~1.0.2, Thm~7.1.14]{10author}). The deduction of Theorems \ref{Ramanujan} and \ref{SatoTate}  from Theorem \ref{ithm_potential_automorphy_of_symmetric_powers} exactly parallels the arguments in \cite{10author}, so we now focus on explaining the proof of Theorem \ref{ithm_potential_automorphy_of_symmetric_powers}.

Unsurprisingly, our arguments build on those of~\cite{10author}: in particular, we prove the potential automorphy of the compatible system of symmetric powers $\Sym^{n-1}\RR = \{\Sym^{n-1} \rho_{\lambda}\}$ by checking the residual automorphy over some extension $F' / F$ and then applying an automorphy lifting theorem. 
We would like to highlight three new ingredients which appear here:
\begin{enumerate}
	\item A result on generic reducedness of special fibres of
          weight $0$ (local) crystalline deformation rings (see \S\ref{subsec:
		Ihara avoidance discussion} for a further introductory discussion). Using the local-global compatibility result of \cite{caraiani-newton}, this leads to a new automorphy lifting theorem in the setting of arbitrary ramification (Theorem \ref{thm:main_automorphy_lifting_theorem}).
	
	\item An application of a theorem of Drinfeld and Kedlaya \cite{DrinfeldKedlaya}, showing generic ordinarity of families of Dwork motives (Proposition \ref{ordinarypoints}). This makes it possible to verify the potential residual automorphy of certain residual representations by an automorphic motive which is crystalline ordinary at some set of $p$-adic places. 
	
	\item A ``$p$-$q$-$r$'' switch including a version of the ``Harris tensor product trick'' which incorporates an additional congruence between two tensor products of compatible families. One is a tensor product of $\Sym^{n-1}\RR$ with an induction of a character, as usual. The other is a tensor product of $\Sym^{n-1}\RR$ with a different auxiliary compatible family, which gives us more flexibility to realise different local properties at places related by complex conjugation. We discuss in the remainder of the introduction the need for this argument, and give a more detailed sketch in \S \ref{subsec_proof_of_pot_aut} below. 
\end{enumerate}
To explain in more detail the need for these innovations, suppose given a compatible system $\RR = \{\rho_{\lambda}\}$ as in the statement of Theorem \ref{ithm_potential_automorphy_of_symmetric_powers}, therefore of Hodge--Tate weights $\{0, m \}$ for some $m \geq 1$ (and with $m \geq 2$ if we hope to go beyond the cases treated in \cite{10author}). The general strategy
 for proving potential automorphy (the so-called ``$p$-$q$ switch'') is as follows:
 \begin{enumerate}
 \item After making some CM base extension~$H/F$ (depending on~$n$),
   find an auxiliary $n$-dimensional compatible system~$\SSS =\{ s_{\lambda}\}$
 such that:
 \begin{enumerate}
 \item For one prime~$\lambda$, the residual 
 representations~$\Sym^{n-1} \rhobar_{\lambda} |_{G_H}$ and~$\sbar_{\lambda}$ coincide, and moreover satisfy a number of standard
 ``Taylor--Wiles'' conditions.
 \item For a second prime~$\lambda'$, the residual representation~$\sbar_{\lambda'}$
 is induced from a character and is thus
  residually automorphic.
  \item The Hodge--Tate weights of the compatible system~$\SSS$ coincide with those of~$\Sym^{n-1}\RR |_{G_H}$.
  \end{enumerate}
\item Apply an automorphy lifting theorem at~$\lambda'$ to deduce that the compatible system~$\SSS$ is automorphic.
Then
deduce that the residual representation~$\sbar_{\lambda}$ is automorphic,
and  use automorphy lifting theorems again to deduce that~$\Sym^{n-1}\RR |_{G_H}$
is automorphic.
\end{enumerate}
In our setting, both of these steps  cause problems, but those affecting the second
step are more serious. 

The issue in the first step is the requirement (1)(c) on the Hodge--Tate weights. The most natural source of compatible systems~$\SSS$
 are those arising from motives, and a geometrically varying family  of motives 
 cannot have Hodge--Tate weights $0,m,\ldots,m(n-1)$ with~$m \ge 2$ by Griffiths transversality.
(This difficulty is already present if~$F=\Q$ and one wants to prove
the Sato--Tate conjecture for a classical modular form of weight
greater than~$2$, such as~$\Delta$.)
The now-standard resolution to this problem is to employ the ``Harris
tensor product trick'' \cite{HarrisDouble}, and replace~$\Sym^{n-1} \RR$
 by~$\Sym^{n-1} \RR \otimes \Ind^{G_{F}}_{G_{L}} \XX$ for some cyclic CM extension~$L/F$,
  where~$\XX$ is a compatible system
 of algebraic Hecke characters chosen sufficiently carefully so that this new compatible system has
 consecutive Hodge--Tate weights. Now in the second step, one wants to prove this new compatible
 system is potentially automorphic (using for~$\SSS$ a compatible system coming
 from the cohomology of the Dwork family). The potential automorphy of $\Sym^{n-1}\RR$ can then be deduced using cyclic base change~\cite{MR1007299}.
 
 For the second step, applying an automorphy lifting theorem typically requires that the
 compatible systems~$\SSS$ and~$\Sym^{n-1} \RR \otimes \Ind^{G_{F}}_{G_{L}} \XX$ have ``the same'' behaviour at places~$v|p$. There are two problems with this. Firstly, we will need an automorphy lifting theorem that applies to arbitrarily ramified $F$, including non-ordinary representations. Secondly, the compatible system of characters $\XX$ has a restricted form, and in particular its local behaviour can't be chosen arbitrarily at a pair of conjugate places. We explain more about these difficulties and their resolution below.
 
 For context, we first recall the situation when $F=\Q$. For example, one might try to demand that the $p$-adic representations in the compatible systems $\SSS$ and $\Sym^{n-1} \RR \otimes \Ind^{G_{F}}_{G_{L}} \XX$ are both ordinary, and indeed it is straightforward (at least after a ramified base change) to find ordinary
 representations in the Dwork family, and presumably difficult to
 understand the non-ordinary representations in any %
 generality. This means that one would like to show that many of the
 representations in the compatible system~$\RR$ are ordinary.
 
 For a weight~$2$ modular form, it is relatively easy to prove that
 there are infinitely many primes~$p$ for which the~$p$-adic Galois
 representation is ordinary at~$p$.  However, the existence of
 infinitely many ordinary primes for~$\Delta$ (or for any non-CM form
 of weight~$k \ge 4$) remains an open question, so one also has to
 consider the possibility that the residual
 representation~$\rbar_{\lambda}|_{G_{F_v}}$ is locally of the
 form~$\omega^m_2 \oplus \omega^{mp}_2$ on inertia at~$p$.  This
 problem was resolved for classical modular forms in~\cite{blght}, via
 a further study of the Dwork family; in particular, showing that
 certain residual representations of the
 shape~$\Sym^{n-1} (\omega_2 \oplus \omega^p_2)$ arise (locally on
 inertia) as residual representations in that family.

 We now consider the case of an imaginary CM field~$F$, and explain why we need an automorphy lifting theorem allowing ramification at non-ordinary places. Given the
 automorphy lifting theorems for CM fields proved in~\cite{10author},
 the most serious difficulty in adapting the strategy of~\cite{blght}
 is that there is no way to avoid the possibility that a
 representation~$\rho_{\lambda}$ can be simultaneously ordinary at one
 prime~$v|p$ and non-ordinary at the complex conjugate
 place~$v^c$. (One might hope to avoid this by considering places
 with~$v=v^c$, but then we would have to show that certain residual representations of $G_{\Q_{p^2}}$ occur in the Dwork family which seemed to us to be a difficult task.) This
 is a problem because the automorphy lifting theorems in \cite{10author} for
 non-ordinary representations require~$F$ to be unramified at our
 non-ordinary prime~$v^c$; while at the ordinary prime~$v$, we need to
 be able to make a highly ramified base change of (imaginary) CM fields~$F'/F$ to
 find an appropriate representation in the Dwork family.  It is
 however impossible to arrange that such an extension of CM fields is unramified
 at~$v^c$ and ramified at~$v$. One of the key innovations in this
 paper is to prove an automorphy lifting theorem that allows us to
 make a ramified base change at~$v^c$ (Theorem \ref{thm:main_automorphy_lifting_theorem}). This was done in the two-dimensional case in \cite{caraiani-newton}; we discuss the difficulties in
 extending this result to higher dimensions and how we overcome them in Section~\ref{subsec: Ihara avoidance discussion} below. Note that, even when making a ramified base change, it is still important for us to keep track of the inertial type of residual representations in the Dwork family in order to show that the representations of interest are connected in the deformation space.
  
 There turns out to be one final wrinkle, where the second problem mentioned above arises. The $p$-adic representations in our compatible system $\cS$ will satisfy one of two, mutually exclusive, local conditions at each $p$-adic place: they are crystalline at~$p$ and are either ordinary
 or are (on the same component of a local crystalline deformation ring as) a symmetric power of an induction of a Lubin--Tate character of $G_{\Q_{p^2}}$. It turns
 out that we can't always arrange for a tensor product $\Sym^{n-1} \RR \otimes \Ind^{G_F}_{G_L} \XX$ to have local $p$-adic representations of this shape. The problem is
 that algebraic Hecke characters have a very restricted form, and 
 the fact that~$F$ is an imaginary CM field  %
 implies that a suitable choice of~$\XX$ will exist only if~$\rho_{\lambda}$
 is either both ordinary or non-ordinary at each pair of
 places~$\{v,v^c\}$ permuted by complex conjugation
 in~$\Gal(F/F^{+})$.

 Our solution is to instead consider tensor
 products of the form~$(\Sym^{n-1} \cR)\otimes \cR_{\aux}$, where~$\cR_{\aux}$ is a compatible system coming from (part of) the cohomology of the Dwork hypersurface~\cite{QianPotential}. We will be able to choose $\cR_{\aux}$ so that one of the local conditions mentioned in the previous paragraph will be satisfied by $\cS_{\aux} = (\Sym^{n-1} \cR) \otimes  \cR_{\aux}$ at each $p$-adic place.
 It is now no longer possible to  directly deduce  the potential automorphy  of~$\Sym^{n-1} \RR$
 from the potential automorphy of this product.
This is not necessary to prove the Ramanujan conjecture  ---
 already the  automorphy of this tensor product combined with the Jacquet--Shalika bounds (and the fact
 that~$\RR_{\aux}$ is pure) is enough to deduce  purity --- but it is
 necessary to prove the Sato--Tate conjecture.
 However, once the potential automorphy of~$\cS_{\aux}$
 is established, we can (having chosen~$\cR_{\aux}$ carefully
 to begin with) find a third compatible system~$\cR_{\CM}$ such 
 that~$\cS_{\aux} = (\Sym^{n-1} \cR) \otimes \cR_{\aux}$
 and~$\cS_{\CM} = (\Sym^{n-1} \cR) \otimes \cR_{\CM}$ are residually
 the same at a \emph{third} prime~$r$, and~$\cR_{\CM}$ is induced from a character.
 Even though we do not
 have any control over the~$r$-adic representation associated to~$\Sym^{n-1} \cR$ locally at~$v|r$,
the fact that
 it occurs as the same tensor factor in the
 $r$-adic representations of both~$\cS_{\aux}$ and~$\cS_{\CM}$
means we can still put ourselves
 in a situation where both $r$-adic Galois representations lie on the same component
 of a local deformation ring at~$v|r$. From this $p$-$q$-$r$ switch, we can show
 that~$\cS_{\CM} = (\Sym^{n-1} \cR) \otimes \cR_{\CM}$ is potentially automorphic,
 from which we deduce that~$\Sym^{n-1} \cR$ is potentially automorphic. %

 One might also ask whether for general CM fields~$F$ one can drop the
 hypothesis that~$\cR$ has parallel weight. The difficulty in doing so is as follows: in order to pass from~$\Sym^{n-1} \RR$ to a compatible system with consecutive Hodge--Tate weights, one needs to tensor this compatible system
 with a second compatible system with certain prescribed local properties. If~$\cR$ does not have parallel weight, this auxiliary compatible system
 cannot have consecutive Hodge--Tate weights and for reasons explained above also cannot be induced from a compatible system of characters.
 It is very hard to construct such compatible systems because  of the constraints on families
 of geometric local systems imposed by Griffiths transversality.
 The existence of even a \emph{single} regular algebraic cuspidal automorphic representation for~$\GL_2(\A_F)$
 for some CM field~$F$
 which is neither of parallel weight~$2$, nor of CM type, nor arising from base change from the totally real subfield~$F^{+}$ was only found (by a computation) 
 in~\cite[Lemma~8.11(2)]{calmazur} (see also~\cite{MR3091734}).

\subsection{Ihara avoidance and the Emerton--Gee stack}\label{subsec:
  Ihara avoidance discussion}%
There are two main difficulties in proving automorphy lifting liftings
for~$p$-adic representations with~$p$ ramified in~$F$. One is having
local--global compatibility theorems at the places dividing~$p$; this
was resolved in the recent work of
Caraiani--Newton~\cite{caraiani-newton}. The other difficulty was
alluded to above: the usual Taylor--Wiles method for automorphy
lifting only allows us to deduce the automorphy of a $p$-adic
representation~$r$ from the automorphy of a congruent
representation~$r'$ if we  know that for all finite places~$v$,
the representations~ $r|_{G_{F_v}}$ and~$r'|_{G_{F_v}}$ are
``connected'', in the sense that they lie on the same component of
the appropriate local deformation ring.

As we have sketched above, in the particular cases that we consider in
this paper, we have arranged this property at the places~$v|p$ by
considering the ordinary and non-ordinary cases separately. (It was
this construction that required us to pass to a situation where $p$ is
highly ramified in~$F$.) We are not, however, able to arrange that our
representations are connected at all the places~$v\nmid
p$. Fortunately, Taylor~\cite{tay} found a way to prove automorphy
lifting theorems when the representations fail to be connected at some
places~$v\nmid p$, using his so-called ``Ihara avoidance''
argument. This argument makes an ingenious use of two different local
deformation problems at places $v\nmid p$, which are congruent
modulo~$p$, and relates two corresponding patched modules of
automorphic forms. The key point which makes this argument possible is
to work with local deformation rings having the following ``unique
generalization'' property:
any generic point of their special fibre has a unique generalization
to the generic fibre. More geometrically, we need to avoid having two
distinct irreducible components in characteristic zero which specialize to a
common irreducible  component in  the  special fibre. 

In order to apply this argument one also needs the unique generalization property for
the deformation rings at the places~
$v|p$. %
This  was previously only
known in the Fontaine--Laffaille and ordinary contexts, in which case
the crystalline deformation rings can be understood completely
explicitly (and in the former case, there is even a unique irreducible
component).  (This problem was sidestepped to some extent
in~\cite{blgg,BLGGT}, but the approach there combines the Ihara
avoidance argument with the Khare--Wintenberger lifting argument to
produce characteristic zero lifts of residual representations of the
prescribed weight and level. In our~$\ell_0 > 0$ situation (in the
language of~\cite{CG}) such lifts do not always exist.)

One way to establish the unique generalization property (when it
holds) would be to explicitly compute the irreducible components of
the generic fibres of the deformation rings, but this appears to be
hopeless for crystalline deformation rings in any generality. However,
as was already observed in~\cite[\S 3]{tay} in the case~$v\nmid p$,
an alternative approach is to consider an
appropriate moduli stack of Galois representations, for which the deformation rings are versal rings at closed points. One shows that its
special fibre is generically reduced (or even generically smooth), for
example by showing that the deformation rings for generic choices of
the residual Galois representation are formally smooth. It then
follows that for an arbitrary residual representation, the special
fibres of the ($\Zp$-flat quotients of) the deformation rings are
generically reduced, which implies the unique generalization
property. (While~\cite[\S 3]{tay} does not explicitly work with moduli
stacks of Galois representations, \cite[Lem.\ 3.2]{tay} is easily
reformulated in these terms; and while ~\cite[Prop.\ 3.1(3)]{tay} does
not explicitly state that the $\Zp$-flat quotient of the deformation
ring has generically reduced special fibre, this follows from the
argument, as in~\cite[Prop.\ 3.1]{MR4190048}.)

The unique generalization property has subsequently been used by
Thorne in a context with~$v\nmid p$ in the proof of~\cite[Thm.\
8.6]{jack} (in order to avoid any additional hypotheses when
introducing an auxiliary prime to make the level structure
sufficiently small), and in the case that~$v|p$ by
Caraiani--Newton~\cite{caraiani-newton}, who used the results
of~\cite{caraiani2022geometric}, which establish the generic
reducedness of the special fibres of the crystalline deformation rings
in the 2-dimensional (tamely potentially) Barsotti--Tate case, by an
analysis of the corresponding Emerton--Gee stacks.

Unfortunately an (unconditional) argument with the Breuil--M\'ezard
conjecture shows that generic reducedness is extremely rare when~$v|p$
(see Remark~\ref{rem: comparison to CEGS}). We are however able to
prove the following theorem.
\begin{ithm}[Theorem~\ref{thm: special fibre weight 0 crystalline def ring generically
    reduced}]%
  \label{intro thm: special fibre weight 0 crystalline def ring generically
    reduced}Suppose that $p>n$, that $K/\Qp$ and~$\F/\Fp$ are finite extensions,
  and that ~$\rhobar: G_K \to \GL_n(\F)$ is a continuous representation.
  Let~$R^{\crys,0}$
  be the universal lifting ring for crystalline lifts of~$\rhobar$ of
  parallel Hodge--Tate weights $0,1,\dots,n-1$. Then the special fibre of
  $\Spec R^{\crys,0}$ is generically reduced.
\end{ithm}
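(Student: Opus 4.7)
The plan is to pass from the local deformation ring to the Emerton--Gee stack and carry out the argument geometrically. Let $\cX^{\crys,0}$ denote the closed substack of the Emerton--Gee stack of rank $n$ \'etale $(\varphi,\Gamma_K)$-modules parameterizing crystalline representations of parallel Hodge--Tate weights $\{0,1,\ldots,n-1\}$. Then $R^{\crys,0}$ is a versal ring to $\cX^{\crys,0}$ at the $\F$-point $\rhobar$, and since generic reducedness passes between a locally Noetherian excellent stack and its versal rings at closed points, it suffices to show that the special fibre $\cX^{\crys,0}_{\Fp}$ is generically reduced.

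I would carry this out in two steps. First, identify the irreducible components of $\cX^{\crys,0}_{\Fp}$: by analogy with the geometric Breuil--M\'ezard picture, these are expected to be parametrized by certain combinatorial data, namely by the Serre weights appearing in the mod-$p$ reduction of the irreducible algebraic representation of $\GL_n$ of highest weight $(n-1,n-2,\ldots,0)$, together with an appropriate choice of component for each tame inertial type. The hypothesis $p>n$ guarantees that the Hodge--Tate weights are pairwise distinct modulo $p$, which keeps this combinatorics tractable and prevents the component-merging phenomena responsible for generic non-reducedness in larger weights. Second, on each irreducible component I would exhibit a Zariski dense open locus of formally smooth points. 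Natural candidates are residual representations that are direct sums (or successive extensions) of characters in sufficiently general position; for such representations an explicit Breuil--Kisin module computation should show that the crystalline deformation ring of weight $(0,1,\ldots,n-1)$ is formally smooth over $\F$ of the expected dimension, using crucially that under $p>n$ the relevant $\Ext^1$ groups have the expected dimension and the higher obstructions vanish.

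The main obstacle will be the second step. The analogous assertion for $n=2$ was established in \cite{caraiani2022geometric} via a careful analysis of the Emerton--Gee stack through local models, but for $n\geq 3$ and arbitrary ramification of $K/\Qp$ the combinatorics of Breuil--Kisin modules of height $\leq n-1$ with descent data becomes substantially more intricate. A workable strategy is to work on the smooth atlas of $\cX^{\crys,0}$ coming from a moduli stack of Breuil--Kisin modules, stratify by the conjugacy class of the residual Frobenius matrix, perform a direct tangent--and--obstruction calculation (exploiting $p>n$) to verify formal smoothness on an open dense substratum, and then argue --- via an explicit dimension estimate or an input from the weight part of Serre's conjecture --- that every irreducible component of $\cX^{\crys,0}_{\Fp}$ meets this smooth substratum. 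The role of the hypothesis $p>n$ is pervasive throughout: it simultaneously controls the set of components, ensures that $\Ext^1$-dimensions are as expected in the obstruction theory, and guarantees that the local models relevant to height $\leq n-1$ Breuil--Kisin modules remain in a regime where formal smoothness at generic points can be verified by an essentially direct computation.
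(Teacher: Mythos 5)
Your proposal shares the high-level skeleton of the paper's argument (pass to the Emerton--Gee stack $\cX^{\crys,0}$, show its special fibre is generically reduced, and transfer back to the versal rings), but it has two genuine gaps, one in each of your two steps.

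In your first step, to identify the irreducible components you invoke the geometric Breuil--M\'ezard picture, saying the components are ``expected to be parametrized by'' the Serre weights in the mod-$p$ reduction of the highest-weight-$(n-1,\dots,0)$ representation. But that conjecture is not available here (indeed, this theorem supplies one of the few instances where the relevant cycle has been computed for $n>2$; see Remark~\ref{rem: this is a BM cycle}), so treating it as an input is circular. The paper instead proves directly which components occur, by combining the structure theorem from \cite{emertongeepicture} --- the generic $\Fpbar$-points of each component of $\cX_{d,\red}$ are maximally nonsplit of niveau $1$, with the diagonal characters fixed on inertia --- with the Breuil-module computation described below. Your write-up never uses this ``maximally nonsplit niveau $1$'' structure, which is the key that unlocks everything.

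In your second step, you propose a direct tangent--and--obstruction calculation on a stratified moduli of Breuil--Kisin modules. You yourself flag that this ``becomes substantially more intricate'' for $n\ge 3$ and ramified $K$, and the paper avoids exactly this route because it does not appear tractable in general. The actual argument is structurally different: one shows that a \emph{generic} maximally nonsplit niveau-$1$ representation $\rhobar$ that admits \emph{any} weight $0$ crystalline lift must be ordinary, and in fact that all its weight $0$ crystalline lifts are ordinary (Theorem~\ref{thm: rho generic ordinary}, Corollary~\ref{cor: rhobar generic ordinary}). The formal smoothness is then not obtained by obstruction theory at all --- it is immediate from the explicit description of the ordinary lifting ring in \cite[Lem.\ 2.4.7]{cht}. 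The heart of the matter is therefore proving ordinarity of generic weight $0$ lifts, and this is done by passing from a crystalline $\rho$ to a crystalline Breuil module (via Liu's theorem, valid since $p>n$), taking the filtration by rank-$1$ Breuil submodules induced by the maximal filtration of $\rhobar$, and applying an explicit computation of extensions of \emph{rank-one} Breuil modules (Lemmas~\ref{lem: Breuil module extensions} and \ref{lem:criteria for missing extensions}). That reduction of an $n$-dimensional integral $p$-adic Hodge theory question to a calculation for reducible two-dimensional representations is the novel idea, and it is not present in your proposal; the hypothesis $p>n$ enters primarily through Liu's equivalence (needing height $\le p-2$) rather than through tangent-space dimension counts. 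Finally, the same rank-one extension computation simultaneously rules out all Serre weights other than $\underline{0}$ in the special-fibre cycle, which is why the two steps you separated are in fact one calculation.
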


We refer the reader to the introduction to Section~\ref{sec:genred}  for a
detailed overview of the proof of Theorem~\ref{intro thm: special fibre weight 0 crystalline def ring generically
    reduced}, which as above relies on proving the
corresponding property of the relevant Emerton--Gee
stacks~\cite{emertongeepicture} (whose versal rings are the
crystalline lifting rings). The irreducible components of the
special fibres of these stacks were described
in~\cite{emertongeepicture}, and we prove our result by combining this
description with a computation of extensions of rank~$1$ Breuil
modules. An amusing feature of this argument is that we prove a
result about the deformation rings of arbitrary $n$-dimensional
mod~$p$ representations %
by reducing to a calculation for reducible
$2$-dimensional representations.

\subsection{Bianchi Modular Forms} \label{Bianchi}
Let us specialize to the case when~$F/\Q$ is an imaginary quadratic field.
Let~$\pi$ be a regular algebraic cuspidal automorphic representation of~$\GL_2(\A_F)$.
Let~$\chi$
be the central character of~$\pi$. By definition, the representation~$\pi$ occurs 
in~$L^2_{\mathrm{cusp}}(\GL_2(F) \backslash \GL_2(\A_F))$. Let~$\fg$ be the Lie algebra of~$\GL_2(\C)$ as a real group.
The assumption that~$\pi$ is regular algebraic is equivalent to the condition that the infinitesimal character
of~$\pi_{\infty}$ is the same as~$V^{\vee}$ for an algebraic representation~$V$ of~$\mathrm{Res}_{F/\Q} \GL_2$.
The assumption that~$\pi$ is cuspidal places a restriction on~$V$ corresponding to the fact (noted earlier) 
that such~$\pi$
has parallel weight;
the corresponding 
representations are parametrized (up to twist) by an integer~$k \ge 2$, where~$k=2$ corresponds to the case
when~$V$ is trivial. 
 This choice of~$k$
determines the action of~$Z(\fg)$ on~$\pi_{\infty}$, and by taking functions which are suitable eigenvectors under~$Z(\fg)$,
we may arrive at certain vector valued Hecke eigenfunctions~$\Phi$ on~$\GL_2(\A_F)$ with Fourier expansions (\cite[\S1.2]{Williams},
\cite[\S6]{HidaCritical})
$$f \left[ \left( \begin{matrix} t & z \\
0 & 1 \end{matrix} \right) \right] =
|t|_F  \sum_{\alpha \in F^{\times}}
c(\alpha t \delta_F,f) W(\alpha t_{\infty}) e_F(\alpha z),$$
where~$\delta = \delta_F$ is the different, $\alpha t \delta$ can be interpreted as a fractional ideal of~$\OL_F$,
$c(I,f)$ is a Fourier
coefficient which vanishes unless~$I \subset \OL_F$ and which we may assume is normalized so that~$c(\OL_F,f) = 1$,
$W$ is an explicit Whittaker function which is vector valued in some explicit representation of~$\SU(2)$ depending on~$k$,
and~$e_F$ is an explicit additive character of~$F \backslash \A_F$. This has a direct
translation into more classical language, and
 can be interpreted as a collection of~$h_F$ functions on a finite union of hyperbolic 
 spaces~$\HH^3$.
 The explicit functions~$f$ (either adelically or classically) are known as Bianchi modular forms. For a normalized Bianchi eigenform~$f$
of weight~$k$ and level prime to~$\p$,  Theorem \ref{thm: Ramanujan thm main paper}  implies
 the following bound:
 
 \begin{ithm}\label{ithm_simplified_Ramanujan}
 Let~$f$ be a cuspidal Bianchi modular eigenform of level~$\mathfrak{n}$ and weight~$k$.
Let $\mathfrak{p}$ be a prime ideal of $\mathcal{O}_F$ not dividing $\mathfrak{n}$, and let $c(\mathfrak{p},f)$ be an eigenvalue of $T_\mathfrak{p}$ on $H$. Then 
\begin{equation}
\label{ramanujanbianchi}
|c(\mathfrak{p},f)| \le 2 N(\p)^{(k-1)/2}.
\end{equation}
\end{ithm}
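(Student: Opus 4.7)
The plan is to deduce this theorem directly from Theorem~\ref{Ramanujan} (Theorem~\ref{thm: Ramanujan thm main paper}) by translating the classical statement for Bianchi forms into the language of automorphic representations, as outlined in~\S\ref{Bianchi}.

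First I would attach to the normalized cuspidal Bianchi eigenform $f$ of weight $k$ and level $\mathfrak{n}$ a regular algebraic cuspidal automorphic representation $\pi$ of $\GL_2(\A_F)$. As recalled in \S\ref{Bianchi}, the cuspidality assumption together with $F$ being imaginary quadratic forces the underlying algebraic representation $V$ of $\Res_{F/\Q}\GL_2$ to be of parallel weight, parametrized by the integer $k \geq 2$, so $\pi$ has parallel weight $k$ in the sense of Definition~\ref{parallelweight}. The adelic translation of the classical Fourier expansion written in~\S\ref{Bianchi} shows that the Hecke eigenvalues of $f$ at a prime $\mathfrak{p} \nmid \mathfrak{n}$ coincide (in the standard arithmetic normalization) with the Hecke data of $\pi_\mathfrak{p}$, which is unramified at $\mathfrak{p}$.

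Next, let $\{\alpha_\mathfrak{p}, \beta_\mathfrak{p}\}$ be the Satake parameters of the spherical representation $\pi_\mathfrak{p}$, normalized as in Theorem~\ref{Ramanujan}. Matching the Hecke operator $T_\mathfrak{p}$ with its adelic counterpart (and keeping track of the factor coming from the central character and the weight~$k$ normalization used in \cite[\S1.2]{Williams}, \cite[\S6]{HidaCritical}) gives the identity
\[
c(\mathfrak{p}, f) = \alpha_\mathfrak{p} + \beta_\mathfrak{p}.
\]
Applying Theorem~\ref{Ramanujan} to $\pi$ then yields $|\alpha_\mathfrak{p}| = |\beta_\mathfrak{p}| = N(\mathfrak{p})^{(k-1)/2}$, so the triangle inequality gives $|c(\mathfrak{p}, f)| \leq 2 N(\mathfrak{p})^{(k-1)/2}$, as required.

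The only point that requires any care is the bookkeeping of normalizations: one must verify that the Hecke eigenvalue $c(\mathfrak{p}, f)$ appearing in the classical Fourier expansion agrees with $\alpha_\mathfrak{p} + \beta_\mathfrak{p}$ in the normalization used to state Theorem~\ref{Ramanujan}, and that the parallel weight parameter $k$ used on the classical side matches the parallel weight $k$ of Definition~\ref{parallelweight} on the automorphic side. Both are standard in the Bianchi literature; with these in hand the corollary is immediate.
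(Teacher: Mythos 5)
Your proposal is correct and takes essentially the same route as the paper: the paper states Theorem~\ref{ithm_simplified_Ramanujan} as an immediate consequence of Theorem~\ref{thm: Ramanujan thm main paper} after the adelic/classical dictionary is laid out in \S\ref{Bianchi}, and your writeup fills in exactly the same steps — attaching $\pi$ to $f$, noting that $F$ imaginary quadratic forces parallel weight $k$, identifying $c(\mathfrak{p},f)$ with $\alpha_\mathfrak{p}+\beta_\mathfrak{p}$ for the unramified local component $\pi_\mathfrak{p}$, and applying the triangle inequality to the equal absolute values from Theorem~\ref{Ramanujan}. The normalization bookkeeping you flag (matching $c(\mathfrak{p},f)$ with the trace of $\operatorname{rec}^T_{F_\mathfrak{p}}(\pi_\mathfrak{p})(\Frob_\mathfrak{p})$, and the classical weight $k$ with Definition~\ref{parallelweight}) is indeed the only content, and is standard.
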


This connects our theorem with the more classical version of the Ramanujan conjecture for modular
forms~\cite{Deligne} as discussed earlier in the introduction.

The eigenvalues~$c(\mathfrak{p},f)$ associated to~$f$ have a second interpretation
in terms of the cohomology of arithmetic groups and arithmetic hyperbolic~$3$-manifolds.
 The algebraic representations~$V$
of~$\Res_{F/\Q}\GL_2$ are all, up to twist, given on real
 points~$\Res_{F/\Q}\GL_2(\R) = \GL_2(\C)$
by the representations~$ \Sym^{k-2} \C^2 \otimes \overline{\Sym^{l-2} \C^2}$ for a pair of
integers~$k, l \geq 2$. 
Let $\mathfrak{n} \leq \mathcal{O}_F$ be a non-zero ideal. Having fixed~$k$ and~$l$,  we can form the group cohomology
\[ H = H^1(\Gamma_1(\mathfrak{n}), \Sym^{k-2} \C^2 \otimes \overline{\Sym^{l-2} \C^2}) \]
of the standard congruence subgroup $\Gamma_1(\mathfrak{n}) \leq \GL_2(\mathcal{O}_F)$. Then $H$ is a finite-dimensional $\C$-vector space. 
Let~$H_{\ppr} \subset H$ denote the subgroup consisting of classes which
vanish under the restriction of~$H$ to~$H^1(P, \Sym^{k-2} \C^2 \otimes \overline{\Sym^{l-2} \C^2})$ for 
 any parabolic subgroup~$P \subset \Gamma_1(\mathfrak{n})$.
 More geometrically, one can interpret~$H$ as the cohomology of a local system 
on the Bianchi manifold~$Y_1(\mathfrak{n}) = \HH^3/\Gamma_1(\mathfrak{n})$.
If~$X_1(\mathfrak{n})$ is the
Borel--Serre compactification of~$Y_1(\mathfrak{n})$,  then parabolic cohomology
consists of classes which are trivial on the boundary $X_1(\mathfrak{n}) \setminus Y_1(\mathfrak{n})$; this boundary may be identified
(when~$\Gamma_1(\mathfrak{n})$ is torsion free)
with a finite disjoint union of  complex tori.
The spaces $H$  and~$H_{\ppr}$ are  equipped with a commuting family of linear operators, the unramified Hecke operators $T_{\mathfrak{p}}$, indexed by the principal ideals $\mathfrak{p} \leq \mathcal{O}_F$ not dividing $\mathfrak{n}$. 
More precisely,
 if one writes~$\mathfrak{p} = (\pi)$, then the group~$A \Gamma_1(\mathfrak{n}) A^{-1}
\cap \Gamma_1(\mathfrak{n}) = \Gamma_1(\mathfrak{n},\mathfrak{p})$ has finite index in~$\Gamma_1(\mathfrak{n})$,  where
$$A = \left(\begin{matrix} \pi & 0 \\ 0 & 1 \end{matrix} \right);$$
the map~$T_{\mathfrak{p}}$ is induced by composing (in a suitable order)
a restriction map, a conjugation by~$A$ map, and a trace map respectively.
Note that the existence of (a large family of) such operators comes from 
the fact that~$\Gamma = \GL_2(\OL_F)$
has infinite index inside its commensurator in~$\GL_2(F)$;  as shown
by Margulis~\cite[Thm.\ IX.1.13]{Margulis}, this characterizes the arithmeticity of~$\Gamma$.

In order to obtain an action of
~$T_{\mathfrak{p}}$ for more general prime ideals~$\mathfrak{p}$, one needs to 
replace~$Y_1(\mathfrak{n})$ by a disconnected union of~$h_F$
commensurable arithmetic hyperbolic manifolds~$Y_1(\mathfrak{n};\mathfrak{a}) = \HH/\Gamma_1(\mathfrak{n};\mathfrak{a})$
indexed by ideals~$\mathfrak{a}$ in the
class group~$\Cl(\OL_F)$ of~$F$ prime to~$\mathfrak{n}$, and where~$Y_1(\mathfrak{n};\OL_F) = Y_1(\mathfrak{n})$.
The group~$\Gamma(\OL_F;\mathfrak{a})$ is the automorphism group
of the~$\OL_F$-module~$\OL_F \oplus \mathfrak{a}$, which consists explicitly of matrices
of the form
$$\left( \begin{matrix} \OL_F & \mathfrak{a}^{-1} \\ \mathfrak{a} & \OL_F \end{matrix} \right)
\cap \GL_2(F)$$
with determinant in~$\OL^{\times}_F$.
 The space~$H_{\ppr}$ vanishes unless~$k = l$ (\cite[3.6.1]{Harder}),
   which we now assume.
 If~$h_F = 1$,
the Eichler--Shimura isomorphism (\cite[\S3.6]{Harder}) gives a map from 
Bianchi cuspidal modular eigenforms~$f$  of weight~$k$ as described above
and cohomology classes~$\eta_f \in H_{\ppr}$  which are simultaneous
eigenforms for all the Hecke operators. Moreover, the eigenvalues of~$T_{\mathfrak{p}}$
on~$\eta_f$ are given exactly by~$c(\mathfrak{p},f)$. 
If~$h_F > 1$, one must replace~$H$ and~$h_{\ppr}$ by the direct sum of the corresponding
cohomology groups over~$Y_1(\mathfrak{n};\mathfrak{a})$ for~$\mathfrak{a} \in \Cl(\OL_F)$.
Theorem~\ref{ithm_simplified_Ramanujan} now implies:

\begin{ithm}\label{ithm_simplified_Ramanujanagain}
Let $\mathfrak{p}$ be a principal prime ideal of $\mathcal{O}_F$ not dividing $\mathfrak{n}$, and let $a_{\mathfrak{p}}$ be an eigenvalue of $T_\mathfrak{p}$ on $H_{\ppr}$. Then $| a_{\mathfrak{p}} | \leq 2 N(\mathfrak{p})^{(k-1)/2}$. 
\end{ithm}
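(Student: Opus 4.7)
The plan is to deduce Theorem~\ref{ithm_simplified_Ramanujanagain} from Theorem~\ref{Ramanujan} (the Ramanujan conjecture for $\GL_2$ over CM fields) via the standard cohomological-to-automorphic translation. No genuinely new arithmetic content is required beyond what has already been proved; the task is to translate between the cohomological and automorphic settings.

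First, I would invoke the Matsushima--Borel description of the cuspidal cohomology of the arithmetic hyperbolic three-manifold $Y_1(\mathfrak{n})$, as formulated in Harder's paper cited in the introduction. This identifies $H_{\ppr}$, equivariantly for the action of the principal Hecke operators, with a direct sum
\[
H_{\ppr} \cong \bigoplus_\pi H_{\ppr}[\pi],
\]
where $\pi$ ranges over the regular algebraic cuspidal automorphic representations of $\GL_2(\A_F)$ of parallel weight $k$ whose infinitesimal character matches $\Sym^{k-2}\C^2 \otimes \overline{\Sym^{k-2}\C^2}$ and which admit a non-zero vector invariant under the finite-level subgroup of $\GL_2(\widehat{\mathcal{O}}_F)$ attached to $\Gamma_1(\mathfrak{n})$. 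Each Hecke operator $T_\mathfrak{p}$ with $\mathfrak{p}$ principal and prime to $\mathfrak{n}$ acts as a scalar on each summand $H_{\ppr}[\pi]$.

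Second, I would identify this scalar in automorphic terms. For a principal prime $\mathfrak{p}=(\varpi)$ with $\mathfrak{p}\nmid \mathfrak{n}$, a direct comparison of the cohomological recipe for $T_\mathfrak{p}$ (restriction, conjugation by $A = \diag(\varpi,1)$, trace) with the Satake formalism shows that $T_\mathfrak{p}$ acts on $H_{\ppr}[\pi]$ by $\alpha_\mathfrak{p}^\pi + \beta_\mathfrak{p}^\pi$, where $\{\alpha_\mathfrak{p}^\pi,\beta_\mathfrak{p}^\pi\}$ are the Satake parameters of $\pi_\mathfrak{p}$ (in the normalization of Theorem~\ref{Ramanujan}, where the Ramanujan bound reads $|\alpha_\mathfrak{p}^\pi|=|\beta_\mathfrak{p}^\pi|=N(\mathfrak{p})^{(k-1)/2}$). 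Any eigenvalue $a_\mathfrak{p}$ of $T_\mathfrak{p}$ on $H_{\ppr}$ then occurs on some $H_{\ppr}[\pi]$, giving $a_\mathfrak{p}=\alpha_\mathfrak{p}^\pi+\beta_\mathfrak{p}^\pi$.

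Third, I would apply Theorem~\ref{Ramanujan} to this $\pi$ and combine with the triangle inequality:
\[
|a_\mathfrak{p}| \;=\; |\alpha_\mathfrak{p}^\pi + \beta_\mathfrak{p}^\pi| \;\le\; |\alpha_\mathfrak{p}^\pi| + |\beta_\mathfrak{p}^\pi| \;=\; 2\,N(\mathfrak{p})^{(k-1)/2}.
\]
The only obstacle of any substance is bookkeeping: carefully matching the cohomological Hecke normalization to the Satake normalization of Theorem~\ref{Ramanujan}, and keeping track of the twist arising from the coefficient system $\Sym^{k-2}\C^2 \otimes \overline{\Sym^{k-2}\C^2}$. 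The restriction to principal $\mathfrak{p}$ is precisely what allows the Hecke operator to be defined on the cohomology of $\Gamma_1(\mathfrak{n})$ alone (without passing to the class-group enlargement), and it plays no role in the bound itself.
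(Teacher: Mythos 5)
Your proposal is correct and takes essentially the same route as the paper: the paper deduces Theorem~\ref{ithm_simplified_Ramanujanagain} from Theorem~\ref{ithm_simplified_Ramanujan} via the Eichler--Shimura isomorphism (which identifies the $T_\mathfrak{p}$-eigenvalues on $H_{\ppr}$ with the Fourier coefficients $c(\mathfrak{p},f)$, equivalently the Satake parameter sums $\alpha_\mathfrak{p}+\beta_\mathfrak{p}$), and Theorem~\ref{ithm_simplified_Ramanujan} in turn follows from Theorem~\ref{Ramanujan}. You merely collapse the intermediary Bianchi-eigenform statement and phrase the identification in adelic (Matsushima--Borel) rather than classical (Eichler--Shimura) language, but the underlying content and the normalization bookkeeping are the same.
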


These explicit formulations of our theorems can be generalized in a number of ways.
Remaining in the setting of arithmetic hyperbolic~$3$-manifolds (or orbifolds), 
we can replace~$\GL_2(\OL_F)$ by a congruence subgroup~$\Gamma$ of the
norm
one units in a maximal order~$\OL$ of a division algebra~$D/F$ where~$F \hookrightarrow \C$
is a number field with one complex place and~$D$ is definite at all real
places of~$F$. When~$[F:\Q] = 2$, we %
obtain the Bianchi manifolds as above (when~$D/F$ is split)  but also certain compact hyperbolic arithmetic three manifolds; our theorem applies equally well in the latter case (note that~$H=H_{\ppr}$ in this setting). On the other hand, suppose that~$F$ has at least one real place; for example, take~$F = \mathbf{Q}[\theta]/(\theta^3 - \theta + 1)$, let $k = 2$, let~$D/F$
be ramified at the real place and the unique prime of norm~$5$. Now~$H = H_{\ppr}$ is
 the first cohomology group of a congruence cover of the Weeks manifold.
The generalized Ramanujan
conjecture  still predicts a bound of the shape~$|a_{\p}| \le 2 N(\p)^{1/2}$ for the eigenvalues
of the Hecke operators~$T_{\mathfrak{p}}$. 
However,  our
methods do not apply in this situation, and the best current bounds remain those of 
the form~$|a_{\p}| \le 2 N(\p)^{1/2 + 7/64}$ proved using analytic methods (see~\cite{Sarnak}).

We finish with an application of a different sort. Let~$\Gamma = \SL_2(\OL_F)$.
The quotients~$\Gamma \backslash \HH^3$
  were first investigated by  Bianchi~\cite{Bianchi}, and for that reason they
 are known as Bianchi orbifolds.
For a Bianchi modular form~$f$ of level one, one may~\cite[\S3]{marshall}
 associate to~$f$ a normalized measure~$\mu_f$ on~$\Gamma \backslash \HH^3$.
  One then has the following~\cite[Cor~3]{marshall}:
 
 \begin{ithm}  Assume that~$F$ has class number one\footnote{The
 paper~\cite{marshall} has the following to say about this assumption:
 ``For simplicity, we assume our fields
to have narrow class number one throughout the paper, but this is not essential.''
One might therefore expect it to be possible to prove the
 more general adelic statement for all imaginary quadratic~$F$ under the additional
hypothesis (as explained in~\cite[\S1]{everymanwilldohisduty}) that, when~$h_F$ is even, one avoids
certain dihedral forms   which   vanish identically on half of the connected
components of the adelic quotient.
Similarly, concerning the assumption on the level, the paper~\cite{marshall} says
``The proof may easily be modified to allow a nontrivial level in any case.''}.
 For any sequence of Bianchi modular eigenforms~$f$ of weight tending to~$\infty$,
the measures~$\mu_f$ converge
 weakly to the hyperbolic volume on~$Y = \SL_2(\OL_F) \backslash \HH^3$.
 \end{ithm}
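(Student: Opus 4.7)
My plan is to deduce the theorem directly from the argument of Marshall~\cite[Cor.\ 3]{marshall}, combined with the Ramanujan bound of Theorem~\ref{ithm_simplified_Ramanujan}. Marshall establishes equidistribution for sequences of Bianchi eigenforms of growing weight by following the strategy of arithmetic quantum unique ergodicity: one associates to each $f$ an (adelic) microlocal lift on the unit tangent bundle of $Y$, extracts a weak-$*$ limit $\mu_\infty$ of the lifted measures, and shows that any such limit must be invariant under both the geodesic flow and the commuting action of the Hecke algebra. The Lindenstrauss-style measure-rigidity argument then forces $\mu_\infty$ to be the Haar measure, whose pushforward to $Y$ is the normalised hyperbolic volume.

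The analytic input from Ramanujan enters in two ways in Marshall's argument: first in bounding the Hecke eigenvalues $c(\mathfrak{p},f)$ uniformly, so that the relevant $L^2$ norms of microlocal lifts can be controlled; and second in ruling out escape of positive mass under the Hecke action (the analogue of the no-escape-of-mass step in Lindenstrauss's proof). Both are driven by the pointwise bound $|c(\mathfrak{p},f)| \le 2 N(\mathfrak{p})^{(k-1)/2}$, which is precisely Theorem~\ref{ithm_simplified_Ramanujan}. So the first thing I would do is verify that the hypotheses under which Marshall states his corollary are satisfied once Ramanujan is in hand; the class-number-one assumption ensures that level-one Bianchi forms correspond to eigenclasses on the single connected manifold $\SL_2(\OL_F)\backslash\HH^3$, so no extra adelic bookkeeping is needed.

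With that verification done, the proof reduces to a straightforward invocation: apply Theorem~\ref{ithm_simplified_Ramanujan} to the sequence of Bianchi eigenforms $f$, and feed the resulting bounds into Marshall's Corollary~3 to conclude weak convergence $\mu_f \to \mathrm{vol}$. The main obstacle in this chain of reasoning is therefore not analytic but arithmetic, namely the unconditional proof of the Ramanujan conjecture for Bianchi forms; with Theorem~\ref{Ramanujan} established, the equidistribution statement follows as a citation.
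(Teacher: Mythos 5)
Your proof takes essentially the same route as the paper: invoke Marshall's result~\cite[Cor.\ 3]{marshall}, observe that his argument is conditional on the Ramanujan bound for Bianchi eigenforms, and note that this bound is now supplied unconditionally by Theorem~\ref{ithm_simplified_Ramanujan}. The only detail you omit is the explicit reference to Marshall's erratum~\cite{marshall-erratum}, where the dependence of the original proof on the Ramanujan conjecture is acknowledged, but your exposition of where Ramanujan enters (controlling Hecke eigenvalues and preventing escape of mass) makes the same point.
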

 
 \begin{proof}
 As noted in~\cite{marshall-erratum}, the proof given in~\cite{marshall} assumes the Ramanujan conjecture
 for Bianchi modular forms --- this is now a consequence of Theorem~\ref{ithm_simplified_Ramanujan}.
 \end{proof}
\subsection{Recent work of Matsumoto} A few months after the first preprint version of this work was circulated, a remarkable new work by Matsumoto appeared \cite{matsumoto} which proves Theorems A and B with no parallel weight condition.
Matsumoto's approach introduces several new ideas of a global nature, whilst our approach here is based on refining our understanding of the local ingredients in the potential automorphy argument. For this reason, one might hope that the two approaches could be profitably combined in the future.

 \subsection{Acknowledgements}\label{subsec: acknowledge}Some of the
 ideas in Section~\ref{sec:genred} were found in joint discussions
 with Matthew Emerton, and we are grateful to him for allowing us to
 include them here, as well as for his assistance in proving
 Theorem~\ref{thm:Xdred is algebraic}~\eqref{item: we can get generic
   Ext classes in rhobar}. We would also like to thank Patrick Allen and
 Matthew Emerton for their comments on an earlier
 version of the paper, together with an anonymous referee whose careful reading and many comments were very helpful. Thanks to Dat Pham for pointing out that the assertion made in Remark \ref{rem:twistedphi} of the published version of the paper is incorrect. 

 \subsection{Notation}\label{subsec: notation}
 Let $K/\Qp$ be a finite
extension. %
If $\sigma:K \into \Qpbar$ is a continuous
embedding of fields then we will write $\HT_\sigma(\rho)$ for the
multiset of Hodge--Tate numbers of $\rho$ with respect to $\sigma$,
which by definition contains $i$ with multiplicity
$\dim_{\Qpbar} (W \otimes_{\sigma,K} \widehat{\barK}(i))^{G_K} $. We
write~$\varepsilon$ for the $p$-adic cyclotomic character,
which is a crystalline representation with $\HT_\sigma(\varepsilon)=\{
-1\}$ for each~$\sigma$.

We say that~$\rho$ has weight~$0$ if for each~$\sigma:K\into\Qpbar$ we
have~$\HT_\sigma(\rho)=\{0,1,\dots,d-1\}$. We often somewhat abusively
write that a representation $\rho:G_K\to\GL_d(\Zpbar)$ is crystalline of
weight~$0$ if the corresponding
representation $\rho:G_K\to\GL_d(\Qpbar)$ is crystalline of weight~$0$.

Let~$\cO$ be the ring of integers in some finite extension~$E/\Qp$,
and suppose that~$E$ is large enough that it contains the images of
all embeddings~$\sigma:K\into\Qpbar$. Write~$\varpi$ for a uniformizer
of~$\cO$, and~$\cO/\varpi=\F$ for its residue field.  We write
$\Art_K: K^\times\to W_K^{\operatorname{ab}}$ for the isomorphism of
local class field theory, normalized so that uniformizers correspond
to geometric Frobenius elements. 

Let $\rhobar:G_K\to\GL_d(\Fpbar)$ be a continuous representation. Then
after enlarging~$E$ and thus~$\F$ if necessary, we may assume that the image of~$\rhobar$ is contained in~$\GL_d(\F)$.
We write~$R^{\square,\cO}_{\rhobar}$ for the universal 
lifting~$\cO$-algebra of~$\rhobar$; by definition, this (pro-)represents the
functor $\cD^{\square,\cO}_{\rhobar}$ given by lifts of~$\rhobar$ to representations $\rho: G_K \to
\GL_d(A)$, for~$A$ an Artin local $\cO$-algebra with residue field~$\F$. 
The precise choice of~$E$ is unimportant, in the sense that
if~$\cO'$ is the ring of integers in a finite extension~$E'/E$, then
by~\cite[Lem.\ 1.2.1]{BLGGT} we have
$R^{\square,\cO'}_{\rhobar}=R^{\square,\cO}_{\rhobar}\otimes_{\cO}\cO'$.

We write
~$R^{\crys,\underline{0},\cO}_{\rhobar}$  
for the unique $\cO$-flat quotient
of~$R_{\rhobar}^{\square,\cO}$ with the property that if~$B$ is a
finite flat~$E$-algebra, then an $\cO$-algebra homomorphism
$R_{\rhobar}^{\square,\cO}\to B$ factors through ~$R^{\crys,0,\cO}_{\rhobar}$
 if and only if the corresponding representation of~$G_K$ is
 crystalline  of weight~$0$. %

We will let $\operatorname{rec}_K$ be the local Langlands correspondence of
\cite{ht}, so that if $\pi$ is an irreducible complex
admissible representation of $\GL_n(K)$, then $\operatorname{rec}_K(\pi)$ is a
Frobenius semi-simple Weil--Deligne representation of the Weil group $W_K$. 
We write $\operatorname{rec}^T_K$ for the arithmetic normalization of the local Langlands correspondence, as defined in e.g.\ ~\cite[\S 2.1]{Clo14}; it is defined on irreducible admissible representations of $\GL_n(K)$ defined over any field which is abstractly isomorphic to $\C$ (e.g.\ $\overline{\Q}_l$).

 Let $F$ be a number field. If~$v$ is a finite place of~$F$ then we
 write~$k(v)$ for the residue field of~$F_v$. We identify dominant weights $\lambda$ of $\Res_{F/\Q}\GL_n$ with sets of tuples of integers $(\lambda_{\tau,1}\ge \lambda_{\tau,2} \ge \cdots \ge \lambda_{\tau,n})_{{\tau: F\hookrightarrow \C}}$ indexed by complex embeddings of $F$ (cf.~\cite[\S 2.2.1]{10author}). If $\pi$ is an irreducible admissible representation of
$\GL_n(\A_F)$ and $\lambda$ is a dominant weight, we say that $\pi$
is regular algebraic of weight $\lambda$ if the infinitesimal character of $\pi_\infty$ is the same as that of $V_\lambda^\vee$, where~$V_\lambda$ is the algebraic
representation of $\Res_{F/\Q}\GL_n$ of highest weight~$\lambda$. We say that $\pi$ is regular
algebraic if it is regular algebraic of some weight.

\begin{df}[Parallel Weight] \label{parallelweight} Suppose $\pi$ is regular algebraic of weight $\lambda$. We say that $\pi$ is of
	parallel weight if $\lambda_{\tau, 1} - \lambda_{\tau, 2}$ is
	independent of $\tau$; equivalently, if $\pi$ admits a regular
	algebraic twist of weight $\mu = (m-1, 0)_\tau$ for some
	$m \ge 1$ in~$\Z$.
	We say that~$\pi$ has parallel weight~$k$ for some
	integer~$k \ge 2$ if~${\mu} = (k-2, 0)_\tau$.
\end{df}

Let $F$ be an imaginary CM field, and let $\pi$ be a cuspidal, regular algebraic weight $\lambda$ automorphic representation of $\GL_2(\A_F)$. 
The weight ${\lambda} = (\lambda_{\tau, 1}, \lambda_{\tau, 2})_\tau \in (\Z^2)^{\Hom(F, \C)}$ satisfies:
\begin{itemize}
	\item There is an integer $w \in \Z$ such that for all $\tau$, we have $\lambda_{\tau, 1} + \lambda_{\tau c, 2} = w$. In particular, for all $\tau$ we have $\lambda_{\tau, 1} - \lambda_{\tau, 2} = \lambda_{\tau c, 1} - \lambda_{\tau c, 2}$.
\end{itemize} 
This is a consequence of Clozel's purity lemma \cite[Lemma 4.9]{MR1044819}. In particular, if $F$ is imaginary quadratic, $\pi$ is necessarily of parallel weight.

\section{The special fibres of weight~$0$
  crystalline lifting rings are generically reduced}\label{sec:genred}%
The
goal of this section is to prove Theorem~\ref{thm: special fibre
  weight 0 crystalline def ring generically reduced}, which shows that
if~$p>n$, then for any finite extension ~$K/\Qp$ and any
$\rhobar:G_K\to\GL_n(\Fpbar)$, the special fibre of the corresponding
weight~$0$ crystalline lifting ring is generically reduced. We
deduce this from the corresponding statement for the special fibre of
the weight~$0$ crystalline Emerton--Gee stack. This stack was
introduced in~\cite{emertongeepicture}. We recall the results from~\cite{emertongeepicture} that we
need in Section~\ref{subsec: special fibre EG stacks} below, but for
this introduction the key points are as follows: the full Emerton--Gee
stack~$\cX$ is a stack of $(\varphi,\Gamma)$-modules which sees
all~$\rhobar$ at once, and whose versal ring at any~$\rhobar$ is the
corresponding unrestricted lifting ring; and the
weight~$0$ crystalline Emerton--Gee stack~$\cX^{\underline{0}}$ is a
closed substack whose versal ring at any~$\rhobar$ is the
corresponding weight~$0$ crystalline lifting ring.

Generic reducedness for the (special fibre of the) stack~$\cX^{\underline{0}}$ is equivalent
to the generic reducedness for the special fibres of the crystalline
lifting rings, as we show by a direct argument below (in the
proofs of Theorem~\ref{thm: BM cycle weight 0} and Theorem~\ref{thm:
  special fibre weight 0 crystalline def ring generically
  reduced}). Working on the stack allows us to argue more
geometrically, and in particular one of the main theorems
of~\cite{emertongeepicture} classifies the irreducible components of
the underlying reduced substack of~$\cX$, and shows that the
underlying reduced substack of the special fibre $\cXbar^{\underline{0}}$
of~$\cX^{\underline{0}}$ is a union of these irreducible components.

In order to show that the 
~$\cXbar^{\underline{0}}$ is generically reduced, it therefore suffices
to determine which irreducible components are contained in this
special fibre, and to show that $\cXbar^{\underline{0}}$ is reduced at
a generic point of each such component.

The classification in ~\cite{emertongeepicture} of  the irreducible components is via a description of the generic ~$\rhobar$ which occur on that
component. These are all of the form
\[\rhobar\cong \begin{pmatrix}
     \chibar_1 &*&\dots &*\\
      0& \chibar_2&\dots &*\\
      \vdots&& \ddots &\vdots\\
      0&\dots&0& \chibar_n\end{pmatrix}\] where
  the~$\chibar_i:G_K\to \Fpbar^{\times}$
  are characters and the extension
  classes~$*$ are in generic position (in particular nonsplit). The
  characters~$\chibar_i|_{I_K}$ are fixed on each irreducible
  component, and the components are usually determined by the data of
  the ~$\chibar_i|_{I_K}$ (see Theorem~\ref{thm:Xdred is algebraic}~(3) for a precise statement).

In order to prove our results, we show that the condition that a
generic such~$\rhobar$ has a crystalline lift of weight~$0$
seriously constrains the possible~$\chibar_i$. We use a theorem of
Tong Liu~\cite{MR2388556}, which in particular shows (under the
assumption that~$p>n$) that~$\rhobar$ is obtained from a (crystalline)
Breuil module. In the case~$n=2$, we can then argue as follows: we can
compute the possible extensions of rank 1 Breuil modules, and we find
that a sufficiently generic extension of~$\chibar_2$ by~$\chibar_1$
can only have a crystalline lift of weight~$0$ if it is ordinary, in
the sense that ~$\chibar_1$ is unramified and~$\chibar_2$ is an
unramified twist of~$\varepsilonbar^{-1}$, where $\varepsilonbar$ is
the mod~$p$ cyclotomic character. 

Furthermore, a generic such
extension arises from a unique Breuil module, which is ordinary.
(Since two-dimensional weight~$0$ crystalline representations are
given by the generic fibres of $p$-divisible groups, we can
alternatively phrase this result as showing that~$\rhobar$ comes from
a unique finite flat group scheme over~$\cO_K$, which is ordinary in
the sense that it is an extension of a multiplicative by an \'etale
group scheme.) By an argument of Kisin~\cite[Prop.\ 2.4.14]{kis04},
the deformations of this Breuil module are also ordinary, so that the
weight~$0$ crystalline lifting rings for these generic~$\rhobar$'s
are also ordinary. It is then easy to show that the crystalline ordinary
lifting ring for a generic~$\rhobar$ is formally smooth, and thus has reduced special
fibre, which completes the argument.

Perhaps surprisingly, we are able to make a similar argument for
general~$n$, without making any additional calculations. For each~$i$,
we apply our computation of extension classes of Breuil modules to the
extension of ~$\chibar_{i+1}$ by~$\chibar_i$ arising as a subquotient
of~$\rhobar$. If all of these extensions are sufficiently generic, we
show that~$\rhobar$ can only admit crystalline lifts of weight~$0$
if~$\chibar_i|_{I_K}=\varepsilonbar^{1-i}$ for all~$i$. Furthermore,
we also see that a generic such~$\rhobar$ can only arise from an ordinary
Breuil module, and again deduce that all weight~$0$ crystalline lifts
of~$\rhobar$ are ordinary. From this we deduce the formal smoothness of the
corresponding lifting rings for generic~$\rhobar$, and conclude as above.

The organization of the proof is as follows. In
Section~\ref{subsec:Breuil modules and strongly divisible modules} we
recall Liu's results~\cite{MR2388556} on strongly divisible modules and lattices in
semistable representations, and deduce the results that we need on
crystalline Breuil modules. In Section~\ref{subsec:
  extensions rank one Breuil modules} we compute extensions of rank
one Breuil modules. This is essentially elementary, using only
semilinear algebra and some combinatorics. We deduce from this in
Section~\ref{subsec: generic weight 0} that sufficiently generic
crystalline representations of weight~$0$ are ordinary. In Section~\ref{subsec:
  special fibre EG stacks} we give a brief introduction to
Emerton--Gee stacks, and prove some slight generalizations  of some
results of~\cite{emertongeepicture}, before deducing our generic
reducedness results in Section~\ref{subsec: generic reducedness}.

\subsection{Breuil modules and strongly divisible modules}\label{subsec:Breuil modules and strongly divisible modules}We begin by
recalling some standard results about Breuil modules and Breuil--Kisin
modules. The results we use are largely due to Breuil, Kisin and Liu,
but for convenience we mostly cite the papers~\cite{MR3079258,
  MR3705291} which deduce versions of these results with coefficients
and prove some exactness properties of the functors to Galois
representations which we will make use of in our main arguments. (Note
that~\cite[App.\ A]{MR3705291} makes a running assumption on the
ramification of the field $K/\Qp$, but this is only made in order to
discuss tame descent data and compare to Fontaine--Laffaille theory,
and it is easy to check that all of the results we cite from there are
valid for general $K/\Qp$ with trivial descent data, with identical
proofs (or often with simpler proofs, as there is no need to consider
the descent data).) %

Let $K/\Qp$ be a finite extension for some $p>2$, with ring of
integers~$\cO_K$ and residue field~$k$. Write~$e$ for the absolute
ramification degree of~$K$, and~$f$ for its inertial degree
$[k:\Fp]$. Fix a uniformizer $\pi\in K$ with Eisenstein polynomial~$E(u)$, which
we choose so that ~$E(0)=p$. %
Fix also a compatible choice~$(\pi^{1/p^n})_{n\ge 1}$ of
$p$-power roots of~$\pi$ in~$\Qpbar$, and set ~$K_n:=K(\pi^{1/p^n})$ and~$K_\infty:=\cup_{n\ge 1}K_n$.

Let $E/\Qp$ be a finite extension containing the normal closure
of~$K$, with ring of integers~$\cO$ and residue field~$\F$. We will
consider various semilinear algebra objects with coefficients in a
finite $\cO$-algebra $A$, and it is trivial to verify that all of our
definitions are compatible with extension of scalars of~$A$ in an
obvious way. In particular, we often take $A=\F$, but we are free to
replace~$\F$ by an arbitrary finite extension, or (after passing to a
limit in an obvious fashion) by~$\Fpbar$.

For any finite $\cO$-algebra $A$ we let
$\gS_A:=(W(k)\otimes_{\Zp}A)\llb u\rrb $, equipped with the usual
$A$-linear, $W(k)$-semilinear Frobenius endomorphism~ $\varphi$, with
$\varphi(u)=u^p$. For any integer
$h\ge 0$, a \emph{Breuil--Kisin module with $A$-coefficients} of
height at most~$h$ is a finite free $\gS_{A}$-module $\gM$ equipped
with a $\varphi$-semilinear map $\varphi:\gM\to\gM$ such that the
cokernel of the linearized Frobenius
$\varphi^*\gM\xrightarrow{1\otimes\varphi}\gM$ is killed
by~$E(u)^h$, where as usual $\varphi^*\gM$ denotes the Frobenius pullback $\gS_A\otimes_{\varphi,\gS_A}\gM$.  (Here we indulge in
a standard abuse of notation in writing $\varphi$ for both the
endomorphism of~$\gS_A$ and of~$\gM$, which should not cause any
confusion.)

Suppose that~$A=\F$, and let
$\overline{S}_{\F}:=\gS_{\F}/u^{ep}$. %
If~$h\le p-2$, then a \emph{quasi-Breuil module with $\F$-coefficients}~$\cM$ of height~$h$ is a
finite free $\overline{S}_\F$ module~$\cM$ equipped with a
$\overline{S}_\F$-submodule \[u^{eh}\cM\subseteq\cM^h\subseteq\cM\] and
a $\varphi$-semilinear map $\varphi:\cM^h\to\cM$ such
that \[\overline{S}_{\F}\cdot\varphi(\cM^h)=\cM.\] (The morphism~$\varphi$ is
usually denoted~$\varphi_h$ in the literature, but we will shortly fix
the choice $h=p-2$ for the rest of the paper, so we have omitted the
subscript for the sake of cleaner notation.)%

For each $0\le h\le p-2$, there is by~\cite[Thm.\ 4.1.1]{MR1695849} an
equivalence of categories between the category of Breuil--Kisin
modules with $\F$-coefficients of height at most~$h$ and the category
of quasi-Breuil modules with $\F$-coefficients of height at most~$h$.
Explicitly, a Breuil--Kisin module~$\gM$ of height $h\le p-2$
determines a quasi-Breuil module as follows. Write
$\gM^h:=(1\otimes\varphi)^{-1}(u^{eh}\gM)\subseteq\varphi^*\gM$. Set
$\cM:=\varphi^*\gM/u^{pe}$, and
$\mathcal{M}^h=\mathfrak{M}^h/u^{pe}\varphi^*\mathfrak{M}$. Then
$\varphi:\cM^h\to\cM$ is defined by the
composite
\[\cM^h\xrightarrow{1\otimes
    \varphi}u^{eh}\overline{S}_{\F}\otimes_{\gS_{\F}}\gM\xrightarrow{\varphi_h\otimes
    1} \overline{S}_{\F}\otimes_{\varphi,\gS_{\F}}\gM=\cM,\] where
$\varphi_h:u^{eh}\overline{S}_{\F}\to \overline{S}_{\F}$ is the $\varphi$-semilinear
morphism $\varphi_h(u^{eh}x):=\varphi(x)$. (Note that this is
well-defined because if~$x$ is divisible by $u^{e(p-h)}$, then
$\varphi(x)$ is divisible by $u^{ep(p-h)}$ and in particular
by~$u^{ep}=0$.) %
  We will often say that the Breuil--Kisin
module~$\gM$ \emph{underlies} the quasi-Breuil module~$\cM$.%

We define $c \in (\overline{S}_{\F})^\times$ to be the image of $\varphi(E(u))/p$ in $\overline{S}_{\F}$. We note that $c = \varphi(d)$ for $d \in (\overline{S}_{\F})^\times$.

\begin{rem}\label{rem:twistedphi}
	The equivalence between Breuil--Kisin modules and quasi-Breuil modules recalled above is usually defined with the map $\varphi_h(u^{eh}x) := c^h\varphi(x)$. It is easily checked that multiplying by $d^h$ gives an isomorphism between the quasi-Breuil modules with differently scaled $\varphi$. 
\end{rem}

Write $N:\overline{S}_{\F}\to \overline{S}_{\F}$ for the $(k\otimes_{\Fp}\F)$-linear
derivation $-u\frac{\partial}{\partial u}$.  A \emph{Breuil module with $\F$-coefficients} $\cM$ of height~$h$ is a
quasi-Breuil module equipped with the additional data of a map $N:\cM\to\cM$ which satisfies:
\begin{itemize}
\item $N(sx)=sN(x)+N(s)x$ for all $s\in \overline{S}_{\F}, x\in\cM$,
\item   $u^{e}N(\cM^h)\subseteq\cM^h$, 
\item and $\varphi(u^{e}N(x))=c N(\varphi(x))$ for all $x\in\cM^h$.
\end{itemize}
We say that a Breuil module~$\cM$ is \emph{crystalline} if
$N(\cM)\subseteq u\cM$.%

\begin{rem}
  \label{rem: another u divisibility for N}While we will not
  explicitly need this below, it can be checked that if~$\cM$ is
  crystalline, then $u^eN(\cM^h) \subseteq u N(\cM^h)$. To see this,
  note that since $\overline{S}_{\F}\cdot\varphi(\cM^h)=\cM$, there is
  an induced $\Fp$-linear surjection
  $\cM^h/u\cM^h\to\cM/u\cM$, which is in fact an isomorphism
  (comparing dimensions as in \cite[Lem.~2.2.1.1]{Breuil-ENS}). %
If~$\cM$ is crystalline then~$N$ acts by~$0$ on~$\cM/u\cM$,
and the commutation relation between~$N$ and~$\varphi$ then shows that
$u^eN$ acts by~$0$ on $\cM^h/u\cM^h$, as required.
\end{rem}

We now define the Galois representations associated to Breuil modules
and to Breuil--Kisin modules, beginning with the latter. An
\emph{\'etale $\varphi$-module with $\F$-coefficients} is by
definition a finite free $(k\otimes_{\Fp}\F)((u))$-module~$M$ with a
semilinear endomorphism~$\varphi:M\to M$ such that the linearized
Frobenius $\varphi^*M\xrightarrow{1\otimes\varphi}M$ is an
isomorphism. Note that by definition, if $\gM$ is a Breuil--Kisin
module with $\F$-coefficients, then $\gM[1/u]$ is an \'etale
$\varphi$-module with
$\F$-coefficients. Let~$k((u))^{\operatorname{sep}}$ denote a
separable closure of~$k((u))$. By the results of~\cite{MR1106901} (see
e.g.\ \cite[1.1.12]{kis04}), the 
functor
\[T_\infty:M\mapsto(M\otimes_{k((u))}k((u))^{\operatorname{sep}})^{\varphi=1} \]
is an equivalence of categories between the category of \'etale
$\varphi$-modules with $\F$-coefficients and the category of
continuous representations of $G_{K_\infty}$ on $\F$-vector spaces,
and we have
$\dim_{\F}T_\infty(M)=\rank_{(k\otimes_{\Fp}\F)((u))}M$. We also write
$T_\infty$ for the induced functor from Breuil--Kisin modules to
$G_{K_\infty}$-representations given by
$\gM\mapsto T_\infty(\gM[1/u])$. Similarly, if~$\gM$ is the
Breuil--Kisin module underlying a quasi-Breuil module~$\cM$, we write
$T_\infty(\cM)$ for~$T_\infty(\gM)$.

Similarly, there is a functor~$T$ from the category of Breuil modules
of height at most~$h$ with $\F$-coefficients to the category of
continuous representations of $G_{K}$ on $\F$-vector spaces
defined
by
\[T(\cM):=\Hom_{k[u]/u^{ep},\varphi,N}(\cM,\widehat{A})^\vee,\]where
$\widehat{A}:=\widehat{A}_{\operatorname{st}}\otimes_Sk[u]/u^{ep}$ is
defined for example in~\cite[(A.3.1)]{MR3705291}. Again we have
$\dim_{\F}T(\cM)=\rank_{\overline{S}_{\F}}\cM$. Furthermore, by~\cite[Prop.\
A.3.2]{MR3705291} the forgetful functor from Breuil modules to
quasi-Breuil modules induces an
isomorphism \[T(\cM)|_{G_{K_\infty}}\isoto T_\infty(\cM).\]

From now on all of our Breuil modules will be crystalline and have
height~$(p-2)$. We write $\crFBrMod$ for the category of crystalline
Breuil modules of height~$(p-2)$ with~$\F$-coefficients, and
$\qFBrMod$ for the category of quasi-Breuil modules of height~$(p-2)$
with $\F$-coefficients, which we identify with the category of
Breuil--Kisin modules of height at most~$(p-2)$ with
$\F$-coefficients. We say that a complex \numequation\label{eqn: ses
  of BrMod}0\to\cM_1\to\cM\to\cM_2\to 0\end{equation} in $\crFBrMod$
or $\qFBrMod$ is exact if it induces exact sequences of
$\overline{S}_{\F}$-modules $0\to\cM_1\to\cM\to\cM_2\to 0$
and \[0\to\cM_1^{p-2}\to\cM^{p-2}\to\cM_2^{p-2}\to 0.\] It is easily
checked that a complex of quasi-Breuil modules is exact if and only if
the corresponding complex of Breuil--Kisin modules is exact (as a
complex of $\gS_{\F}$-modules).

If~$\cM$ is an object of $\crFBrMod$, then an $\overline{S}_{\F}$-submodule
$\mathcal{N}\subseteq\cM$ is a \emph{Breuil submodule of~$\cM$} if it is a
direct summand of~$\cM$ as a $k[u]/u^{ep}$-module, and we furthermore
have $N(\mathcal{N})\subseteq\mathcal{N}$ and
$\varphi(\mathcal{N}\cap\cM^r)\subseteq\mathcal{N}$. Then~$\mathcal{N}$ inherits the structure
of a crystalline Breuil module from~$\cM$, as does the quotient
$\cM/\mathcal{N}$, and by~\cite[Lem.\ 2.3.2]{MR3705291}, the complex of
crystalline Breuil modules \[0\to\mathcal{N}\to\cM\to\cM/\mathcal{N}\to 0\] is exact;
and conversely if~\eqref{eqn: ses of BrMod} is exact, then $\cM_1$ is
a Breuil submodule of~$\cM$.

\begin{thm}
  \label{thm: properties of crystalline Breuil modules}\leavevmode
  \begin{enumerate}
  \item The categories $\crFBrMod$ and $\qFBrMod$ are  exact categories
    in the sense of~\cite{MR2655184}, and the functors~$T$
    and~$T_\infty$ are exact.
  \item For any object~$\cM$ of $\crFBrMod$, there is an order
    preserving bijection~$\Theta$ between the Breuil submodules
    of~$\cM$ and the $G_K$-subrepresentations of~$T(\cM)$, taking
    $\mathcal{N}$ to the image of $T(\mathcal{N})\into T(\cM)$. Furthermore if
    $\cM_1\subseteq\cM_2$ are Breuil submodules of~$\cM$, then
    $\Theta(\cM_2)/\Theta(\cM_1)\cong T(\cM_2/\cM_1)$.
  \end{enumerate}
\end{thm}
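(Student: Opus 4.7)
The plan is to prove both parts by assembling results already in the literature (notably \cite{MR3079258, MR3705291} and the $(\varphi,\Gamma)$-module formalism of \cite{MR1106901, kis04}), and by transferring information back and forth across the equivalences of categories and functors recalled above.

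For part~(1), I would first declare the admissible short exact sequences in $\qFBrMod$ to be exactly those already defined as exact, and observe that via the equivalence with Breuil--Kisin modules of height $\le p-2$ these are the same as the honest short exact sequences of the underlying $\gS_{\F}$-modules. Verifying the axioms of an exact category then reduces to the corresponding trivial statement for finitely generated modules over $\gS_{\F}$, once one checks that kernels and cokernels of admissible morphisms remain finite free and stay in the height~$(p-2)$ category (which is standard, cf.\ the proof of \cite[Lem.~2.3.2]{MR3705291}). The exact structure on $\crFBrMod$ is then inherited by demanding compatibility with the monodromy operator $N$, which behaves well under admissible kernels and cokernels in $\qFBrMod$. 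For the exactness of $T_\infty$, I would extend along $\gM\mapsto \gM[1/u]$ to \'etale $\varphi$-modules and use that the resulting functor is the exact equivalence of \cite{MR1106901}. For the exactness of $T$, invoke the isomorphism $T(\cM)|_{G_{K_\infty}}\cong T_\infty(\cM)$ of~\cite[Prop.~A.3.2]{MR3705291}: given an admissible short exact sequence in $\crFBrMod$, the induced sequence of $\F$-vector spaces is already exact after restriction to $G_{K_\infty}$ by exactness of $T_\infty$, and $G_K$-equivariance is part of the definition of $T$.

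For part~(2), given a Breuil submodule $\mathcal{N}\subseteq\cM$, \cite[Lem.~2.3.2]{MR3705291} provides an admissible short exact sequence $0\to \mathcal{N}\to\cM\to \cM/\mathcal{N}\to 0$ in $\crFBrMod$, so by part~(1) the map $T(\mathcal{N})\hookrightarrow T(\cM)$ is an injection of $G_K$-representations, which I would take as the definition of $\Theta(\mathcal{N})$. The final compatibility $\Theta(\cM_2)/\Theta(\cM_1)\cong T(\cM_2/\cM_1)$ then drops out of the exactness of $T$ applied to $0\to \cM_1\to \cM_2\to \cM_2/\cM_1\to 0$, and order-preservation is tautological. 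For the bijection itself, I would argue in two steps: first, by the known correspondence on the $(\varphi,\Gamma)$-module side (equivalently, for Breuil--Kisin modules, see~\cite{kis04}), $G_{K_\infty}$-subrepresentations of $T_\infty(\cM)=T(\cM)|_{G_{K_\infty}}$ correspond bijectively to saturated Breuil--Kisin submodules $\gN\subseteq\gM$; second, each such $\gN$ reduces modulo $u^{pe}\varphi^*\gM$ to a quasi-Breuil submodule $\mathcal{N}\subseteq\cM$. Given a $G_K$-stable subrepresentation $V\subseteq T(\cM)$, apply this to produce $\mathcal{N}$ and then verify that $N(\mathcal{N})\subseteq \mathcal{N}$; this is the only place where $G_K$-stability (as opposed to mere $G_{K_\infty}$-stability) of $V$ is used, and one deduces it from the fact that the monodromy operator $N$ on $\cM$ is precisely what records the action of $G_K/G_{K_\infty}$ on the $\widehat{A}$-valued points, so a $G_K$-stable $V$ forces $N$ to preserve the associated Breuil submodule.

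The main obstacle is the last step: checking that a saturated Breuil--Kisin submodule corresponding to a $G_K$-stable (rather than just $G_{K_\infty}$-stable) subrepresentation automatically satisfies $N(\mathcal{N})\subseteq\mathcal{N}$ on the Breuil-module side, so that the correspondence lands in $\crFBrMod$ and not merely in $\qFBrMod$. Once this is in hand, the bijectivity of $\Theta$ follows by comparing with the bijection on the $G_{K_\infty}$ side, and the remaining assertions follow from part~(1) and the functoriality of $T$.
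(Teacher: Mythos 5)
Your proposal takes a genuinely different route from the paper. The paper's proof is essentially a citation exercise: the quasi-Breuil module case is \cite[Thm.~2.1.2]{MR2951749}, the Breuil module case (without crystallinity) is \cite[Prop.~2.3.4, 2.3.5]{MR3705291}, and the passage to $\crFBrMod$ is a formal observation that a Breuil submodule of a crystalline Breuil module is automatically crystalline (since $\mathcal{N}$ is a $k[u]/u^{ep}$-direct summand of $\cM$, one has $N(\mathcal{N})\subseteq \mathcal{N}\cap u\cM = u\mathcal{N}$). You are instead trying to rebuild the bijection from the Breuil--Kisin/$(\varphi,\Gamma)$-module side, passing through $G_{K_\infty}$-subrepresentations and saturated Breuil--Kisin submodules.

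The problem is that the step you flag as ``the main obstacle'' --- that $G_K$-stability of $V\subseteq T(\cM)$ forces $N$-stability of the associated quasi-Breuil submodule $\mathcal{N}$ --- is not a technical wrinkle to be tidied later; it \emph{is} the content of \cite[Prop.~2.3.5]{MR3705291}, and your sketch does not supply an argument for it. The assertion that $N$ ``is precisely what records the action of $G_K/G_{K_\infty}$ on the $\widehat{A}$-valued points'' is a heuristic, not a proof: $T$ is a contravariant $\Hom$ functor into a period ring equipped with both $\varphi$ and $N$, and turning $G_K$-equivariance of the corresponding surjection $T(\cM)\onto T(\cM)/V$ into an algebraic statement about $N$ acting on the associated Breuil--Kisin quotient requires real work (this is where \cite{MR3705291} uses the duality on Breuil modules). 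Since you neither cite that result nor reproduce its argument, the bijectivity of $\Theta$ is not established. One further minor omission: even once $N(\mathcal{N})\subseteq\mathcal{N}$ is known, you would still need to record the (easy but necessary) check that $\mathcal{N}$ is actually crystalline, i.e. that $N(\mathcal{N})\subseteq u\mathcal{N}$; the direct-summand argument above closes this.

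A cleaner version of your strategy would simply cite \cite[Prop.~2.3.4, 2.3.5]{MR3705291} for the bijection in $\FBrMod$ and then argue the formal automaticity of crystallinity for Breuil submodules, which is exactly what the paper does. If you really want the bijection from the $(\varphi,\Gamma)$-module side, you need to either reproduce the duality argument of \cite{MR3705291} or give an independent proof of the $N$-stability implication; the current sketch has a gap precisely where the mathematics is.
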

\begin{proof}The statement for quasi-Breuil modules is~\cite[Thm.\
  2.1.2]{MR2951749}.  The rest of the theorem for
  not-necessarily-crystalline Breuil modules is~\cite[Prop.\ 2.3.4,
  2.3.5]{MR3705291}. The case of crystalline Breuil modules follows
  formally, because (as noted above) a Breuil submodule of a
  crystalline Breuil module is automatically crystalline (as is the
  corresponding quotient submodule). Alternatively, it is
  straightforward to check that the proofs of ~\cite[Prop.\ 2.3.4,
  2.3.5]{MR3705291} go through unchanged, once one notes that the
  duality on Breuil modules~\cite[Defn.\ 3.2.8]{MR3079258} by
  definition preserves
  the subcategory of crystalline Breuil modules.
\end{proof}

We now show that any Galois representation obtained
as the reduction mod~$p$ of a lattice in a crystalline representation
with Hodge--Tate weights in the range~$[0,p-2]$ comes from a
crystalline Breuil module. This is essentially an immediate
consequence of the main theorem of Liu's paper~\cite{MR2388556}, which
proves an equivalence of categories between $G_K$-stable lattices inside
 semistable representations with Hodge--Tate weights in the range
 $[0,p-2]$ and strongly divisible modules. From this one can easily deduce
 an equivalence of categories between $G_K$-stable lattices inside
 crystalline representations with Hodge--Tate weights in the range
 $[0,p-2]$ and an appropriate category of ``crystalline strongly
 divisible lattices'', but since we do not need this, we avoid
 recalling the definitions of strongly divisible lattices and leave it
 to the interested reader.%

 Recall that by the results of~\cite{MR2263197}, if  $\rho:G_K\to\GL_n(\cO)$ is a lattice in a crystalline
 representation with non-negative Hodge--Tate weights,  there is a  Breuil--Kisin
 module with $\cO$-coefficients ~$\gM_{\cO}$ associated to $\rho|_{G_{K_\infty}}$
 (see e.g.\ \cite[Thm.\ 3.2(3),
 Prop.\ 3.4(3)]{MR3164985} for a precise reference
 allowing~$\cO$-coefficients).  
\begin{thm}
  \label{thm: reductions of crystalline reps are crystalline Breuil
    modules}Let $\rho:G_K\to\GL_n(\cO)$ be a lattice in a crystalline
  representation with Hodge--Tate weights in~$[0,h]$ for some
  integer~$0\le h\le p-2$, and write
  $\rhobar:G_K\to\GL_n(\F)$ for its reduction modulo~$\m_{\cO}$. Then
  there is a crystalline Breuil module~$\cM$ with $\F$-coefficients
  such that $\rhobar\cong T(\cM)$. Furthermore, the underlying
  Breuil--Kisin module of~$\cM$ has height at most~$h$, and is the
  reduction modulo $\m_\cO$ of the Breuil--Kisin module~$\gM_{\cO}$
  with~$\cO$-coefficients associated to
  $\rho|_{G_{K_\infty}}$.
\end{thm}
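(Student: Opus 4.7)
The plan is to invoke the main theorem of~\cite{MR2388556}, which establishes an equivalence between $G_K$-stable $\cO$-lattices in semistable $E$-representations of Hodge--Tate weights in $[0,p-2]$ and a suitable category of strongly divisible $\cO$-lattices (with coefficients). Applying this to $\rho$, which is crystalline and hence semistable, produces a strongly divisible lattice $\widehat{\cM}_\cO$; reducing modulo $\varpi$ yields a Breuil module $\cM$ with $\F$-coefficients whose underlying Breuil--Kisin module has height at most~$h$. The isomorphism $T(\cM)\cong\rhobar$ follows from the compatibility of Liu's equivalence with reduction modulo~$p$ on both sides, which is built into the construction of the functor $T$ via $\widehat{A}_{\operatorname{st}}\otimes_S k[u]/u^{ep}$ (cf.\ the discussion after~\cite[(A.3.1)]{MR3705291}).

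Next I would verify the crystallinity condition $N(\cM)\subseteq u\cM$. Since the representation associated to $\rho$ after inverting~$p$ is crystalline, the monodromy operator~$N$ on $D_{\operatorname{st}}$ vanishes; on the integral side, $N$ on $D_{\operatorname{st}}$ is obtained from the monodromy on $\widehat{\cM}_\cO$ by inverting~$p$ and reducing modulo $\operatorname{Fil}^1 S$. Consequently $N$ sends $\widehat{\cM}_\cO$ into $\operatorname{Fil}^1 S\cdot \widehat{\cM}_\cO$, and since the image of $\operatorname{Fil}^1 S$ in $\overline{S}_\F$ lies in $u\overline{S}_\F$, passing to the quotient by $\varpi$ gives $N(\cM)\subseteq u\cM$, as required. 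For the identification of the underlying Breuil--Kisin module with the reduction of~$\gM_\cO$, I would argue by uniqueness: both modules have height at most~$h$ and, under $T_\infty$, recover $\rhobar|_{G_{K_\infty}}$ (using the isomorphism $T(\cM)|_{G_{K_\infty}}\isoto T_\infty(\cM)$ recalled before Theorem~\ref{thm: properties of crystalline Breuil modules}, together with compatibility of Kisin's construction with reduction mod~$\varpi$). Full faithfulness of $T_\infty$ on Breuil--Kisin modules of bounded height~$h$ --- which follows from the fact that $T_\infty$ is an equivalence on \'etale $\varphi$-modules and the recovery formula $\gM=\{x\in\gM[1/u]:(1\otimes\varphi)(x)\in E(u)^h\varphi^*\gM\}$ --- then identifies the two modules canonically.

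The main obstacle is the bookkeeping involved in the compatibility between Liu's integral strongly divisible lattices and Kisin's Breuil--Kisin modules under reduction mod~$\varpi$: these are attached to the representation by quite different constructions (one ``over~$S$'', the other ``over~$\gS$''), and while the rational picture is classical, one must trace through the functors carefully at the integral level. Fortunately this is sidestepped here, since on the $G_{K_\infty}$-side the representation $\rhobar|_{G_{K_\infty}}$ plus the height bound determines the Breuil--Kisin module uniquely, so the comparison is forced by $T_\infty$.
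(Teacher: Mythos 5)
Your overall strategy --- invoking Liu's equivalence between lattices in semistable representations and strongly divisible modules, then reducing modulo $\varpi$ --- is the same as the paper's, and the identification $T(\cM)\cong\rhobar$ and the crystallinity check are handled in essentially the same way (the paper cites \cite[\S 3.2]{MR2388556} for the compatibility with $N$ on the weakly admissible module, where you give a somewhat more explicit version of the same idea). However, the final step of your argument --- identifying the underlying Breuil--Kisin module of $\cM$ with the reduction mod~$\m_\cO$ of~$\gM_\cO$ --- contains a genuine gap.

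You assert full faithfulness of $T_\infty$ on the category of Breuil--Kisin modules with $\F$-coefficients of height $\le h$, and then conclude the two mod-$p$ modules agree because they have the same $T_\infty$ and the same height bound. But $T_\infty$ is \emph{not} fully faithful on $p$-torsion Breuil--Kisin modules of bounded height $\le p-2$ once $K$ is ramified. Indeed, using the discussion surrounding Definitions~\ref{defn:rank-one} and~\ref{defn:alpha} and the citation to \cite[Lem.\ 5.1.2]{MR3324938}: two rank-one modules $\gM(\underline s;a)$ and $\gM(\underline t;a)$ have the same $T_\infty$ as soon as $\alpha_i(\gM(\underline s;a))-\alpha_i(\gM(\underline t;a))\in\Z$ for all $i$, which (e.g.\ for $f=1$, $e\ge p-1$, $s_0=p-1$, $t_0=0$) can hold with both $s_0,t_0\le e(p-2)$ and $\underline s\ne\underline t$. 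So a bounded height together with the restriction to $G_{K_\infty}$ does \emph{not} pin down the Breuil--Kisin module. The ``recovery formula'' $\gM=\{x\in\gM[1/u]:(1\otimes\varphi)(x)\in E(u)^h\varphi^*\gM\}$ is circular as written (the right-hand side already uses $\gM$), and the maximal/minimal models of a given étale $\varphi$-module of bounded height are genuinely distinct in general.

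The paper sidesteps this by not arguing through $T_\infty$ at all: it observes, via the commutative diagram at the end of \cite[\S 3.4]{MR2388556} and the fully faithful functor of \cite[Cor.\ 3.3.2]{MR2388556}, that the strongly divisible module $\widehat{\cM}$ produced by Liu has $\gM_\cO$ as its underlying Breuil--Kisin module over~$\cO$, so the identification after reducing mod $\m_\cO$ is direct. To repair your argument you would need either this direct comparison, or an additional input showing that the Breuil--Kisin module underlying $\cM$ is itself the reduction of a $p$-torsion-free module and that this forces the agreement --- but that is essentially what the cited commutative diagram provides, and it is not a consequence of full faithfulness of $T_\infty$ on mod~$p$ objects.
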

\begin{proof}
  Since crystalline representations are in particular semistable, it
  is immediate from~\cite[Prop.\ 3.1.4, Lem.\ 3.2.2]{MR3079258} that
  there is a not-necessarily crystalline Breuil module~$\cM$ with
  $\F$-coefficients such that $\rhobar\cong T(\cM)$, whose underlying
  Breuil--Kisin module has height at most~$h$ (note that
  our~$h$ is the integer~$r$ in the statement of \cite[Prop.\ 3.1.4]{MR3079258}). We claim that
  the Breuil module provided by these results is necessarily
  crystalline. To see this, note first that (in the case at hand, with no
  descent data) \cite[Prop.\ 3.1.4]{MR3079258} is a trivial
  consequence of the main result~\cite[Thm.\ 2.3.5]{MR2388556} of
  Liu's paper~\cite{MR2388556}, and gives an equivalence of categories
  between $G_K$-stable $\cO$-lattices inside semistable
  $E$-representations of~$G_K$ with Hodge--Tate weights in the range
  $[0,p-2]$ and strongly divisible modules with~$\cO$-coefficients. In
  particular, there is a strongly divisible module
  with~$\cO$-coefficients $\widehat{\cM}$ corresponding to~$\rho$. 

  We do not recall the notion of a strongly divisible module here, but
  we note that they are by definition modules over a coefficient ring
  $S_{\cO}$, equipped with a Frobenius, a filtration and a monodromy
  operator~$N$, and by \cite[Lem.\ 3.2.2]{MR3079258}, the Breuil
  module~$\cM$ is obtained from the strongly divisible
  module~$\widehat{\cM}$ by tensoring over~$S_{\cO}$
  with~$\overline{S}_{\F}$.  By the
  commutative diagram at the end of~\cite[\S 3.4]{MR2388556}, the
  strongly divisible module~$\widehat{\cM}$ has underlying
  Breuil--Kisin module~$\gM_{\cO}$ (via the fully faithful functor
  of~\cite[Cor.\ 3.3.2]{MR2388556}). It follows immediately that the
  underlying Breuil--Kisin module of~$\cM$ is the reduction
  modulo~$\m_{\cO}$ of~$\gM_{\cO}$, as claimed.

   It remains to show that if~$\rho$ is
   crystalline, the monodromy operator~$N$ on $\cM$ vanishes mod $u$. This follows immediately from the
   compatibility between ~$\widehat{\cM}$ and the weakly
   admissible module~$D$ associated to~$\rho$, for which see
   ~\cite[\S 3.2]{MR2388556}. %
\end{proof}

\subsection{Extensions of rank one Breuil modules}\label{subsec:
  extensions rank one Breuil modules}
In this section we make a computation of the possible extensions of
rank one Breuil modules, and prove the crucial Lemma~\ref{lem:criteria for missing extensions}, which gives a constraint on the Breuil modules which can
  witness sufficiently generic extensions of characters.  A key input to the proof of this Lemma is
  Lemma \ref{lem: Breuil module extensions}, which constrains the shapes of
  extensions of rank one Breuil modules. To prove Lemma~\ref{lem:criteria for
    missing extensions}, we simply write down an explicit extension of
  characters (after restriction to $G_{K_{\infty}}$) and show that it cannot
  arise from a Breuil module satisfying these constraints. These
  calculations are elementary, but are complicated in the case of a
  general field~$K/\Qp$, and the reader may find it helpful to firstly
  work through the case that~$K/\Qp$ is totally ramified, where the
  calculations simplify dramatically; if furthermore~$n=2$, then the monodromy
  condition is automatic and the calculations simplify further to a basic
  exercise with Breuil--Kisin modules.

Let $\sigmabar_0:k\into\F$ be a fixed embedding. Inductively define
$\sigmabar_1,\dots,\sigmabar_{f-1}$ by
$\sigmabar_{i+1}=\sigmabar_i\circ\varphi^{-1}$, where~$\varphi$ is the
arithmetic Frobenius on~$k$; we will often consider the
numbering to be cyclic, so that $\sigmabar_f=\sigmabar_0$.
There are idempotents $\epsilon_i\in
k\otimes_{\Fp}\F$ such that if $M$ is any $k\otimes_{\Fp}\F$-module,
then $M=\bigoplus_i M_i$, where $M_i := \epsilon_iM$ is the subset of $M$ consisting of
elements $m$ for which $(x\otimes 1)m=(1\otimes\sigmabar_i(x))m$ for all
$x\in k$. Note that $(\varphi\otimes 1)(\epsilon_i) = \epsilon_{i+1}$ for all $i$.  

As explained above, we are free to work with coefficients in~$\Fpbar$
rather than~$\F$, and for convenience we do so throughout this
section. (To be precise, this means that we apply the definitions
above with $\overline{S}_{\F}$ replaced by
$(k\otimes_{\Fp}\Fpbar)[u]/u^{ep}$.) It will be clear to the reader
that the coefficients do not intervene in any way in the calculations,
and we could equally well work with coefficients in any finite
extension of~$\F$.
\begin{defn}%
  \label{defn:rank-one}
   Let $s_0,\ldots,s_{f-1}$ be non-negative integers, and let $a \in
   \Fpbar^{\times}$.  Let $\gM(\underline{s};a)$ be the  rank one
   Breuil--Kisin module with~$\Fpbar$ coefficients such that
   $\gM(\underline{s}; a)_i$ is generated by $e_i$ with
\[\varphi(e_{i-1}) = (a)_{i} u^{s_{i}} e_{i}.\]
Here and below, $(a)_0 = a$  and $(a)_i = 1$ if ~$i\ne 0$.
\end{defn}

By~\cite[Lem.\ 6.2]{MR3164985}, any rank one Breuil--Kisin module %
is isomorphic to (exactly) one of the form
$\gM(\underline{s}; a)$.
\begin{defn}
  \label{defn:alpha}
 Set $\alpha_i(\gM(\underline{s};a)) := \frac{1}{p^f-1}
 \sum_{j=1}^{f} p^{f-j} s_{j+i}$.
\end{defn}

 By~\cite[Lem.\ 5.1.2]{MR3324938},  there exists a nonzero
  map $\gM(\underline{s};a) \to \gM(\underline{t};b)$ if and only if $\alpha_i(\gM(\underline{s};a)) -
  \alpha_i(\gM(\underline{t};b)) \in \Z_{\ge 0}$ for all $i$, and
  $a=b$. We now show that each rank one Breuil--Kisin module of height at
most~$(p-2)$ %
underlies  a unique rank one (crystalline) Breuil module. (In particular, all rank one
  Breuil modules are crystalline.) %

\begin{lem}\label{lem: classification of rank one Breuil modules}
  If each~$s_i\in[0,e(p-2)]$ then the rank one Breuil--Kisin module
  $\gM(\underline{s}; a)$ underlies a unique height~$(p-2)$
  Breuil module $\cM=\cM(\underline{s}; a)$ with  %
   \[\cM_j^{p-2}=\langle u^{e(p-2)-s_j}(1\otimes e_{j-1})\rangle,\] \[\varphi(
    u^{e(p-2)-s_j}(1\otimes e_{j-1})) = (a)_{j} (1\otimes
    e_{j}),\] \[N((1\otimes e_{j-1}))=0.\]  %
\end{lem}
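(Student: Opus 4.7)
The plan is to construct $\cM$ explicitly from $\gM(\underline{s};a)$ via the equivalence of categories between Breuil--Kisin modules and quasi-Breuil modules, and then to show that the monodromy $N$ is uniquely forced by the axioms, with the key mechanism being that $\varphi(u^e) = u^{pe} = 0$ in $\overline{S}_{\Fpbar}$.

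The quasi-Breuil module structure is immediate from \cite[Thm.\ 4.1.1]{MR1695849} applied to $\gM$. Setting $\cM := \varphi^{\ast}\gM / u^{pe}\varphi^{\ast}\gM$, the element $1\otimes e_{j-1}$ lies in $\cM_j$ (the indexing shifts because $(\varphi\otimes 1)(\epsilon_{j-1}) = \epsilon_j$) and generates it freely. To compute $\cM_j^{p-2}$, I would unwind the preimage $\gM^{p-2} = (1\otimes\varphi)^{-1}(u^{e(p-2)}\gM)$: since $(1\otimes\varphi)$ sends $f(u)(1\otimes e_{j-1})$ to $f(u)(a)_j u^{s_j} e_j$, the hypothesis $s_j \le e(p-2)$ identifies $\cM_j^{p-2}$ as the submodule generated by $u^{e(p-2)-s_j}(1\otimes e_{j-1})$. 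Applying the recipe $\varphi_{p-2}(u^{e(p-2)}y) = \varphi(y)$ then yields $\varphi(u^{e(p-2)-s_j}(1\otimes e_{j-1})) = (a)_j(1\otimes e_j)$, and since $(a)_j \in \Fpbar^{\times}$ this element generates $\cM_{j+1}$, confirming the surjectivity axiom $\overline{S}_{\Fpbar}\cdot\varphi(\cM^{p-2}) = \cM$.

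To upgrade this quasi-Breuil module to a Breuil module I would declare $N(1\otimes e_{j-1}) := 0$ and extend by Leibniz using $N = -u\partial/\partial u$ on scalars. The crystalline condition $N(\cM) \subseteq u\cM$ is then automatic, and the containment $u^e N(\cM^{p-2}) \subseteq \cM^{p-2}$ follows from a one-line calculation showing that $N$ acts on the generator $x_j := u^{e(p-2)-s_j}(1\otimes e_{j-1})$ by the scalar $-(e(p-2)-s_j)$. The crux is the commutation $\varphi(u^e N(x)) = cN(\varphi(x))$: its left-hand side is always zero for any $x \in \cM^{p-2}$, because $\varphi(u^e \cdot y) = u^{pe}\cdot \varphi(y) = 0$ in $\overline{S}_{\Fpbar}$, regardless of the choice of $N$. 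For our choice the right-hand side also vanishes, since $N(\varphi(s)) = 0$ for any $s \in \overline{S}_{\Fpbar}$ (the image of $\varphi$ is a polynomial in $u^p$, which $N$ annihilates in characteristic $p$), and $N$ itself kills the generators $1\otimes e_j$ of $\cM$ by construction.

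The same vanishing of the left-hand side drives uniqueness. Suppose $N'$ is any other Breuil module monodromy on $\cM$ and write $N'(1\otimes e_{j-1}) = g_j(u)(1\otimes e_{j-1})$. The commutation relation applied to the generator $x_j$ gives
\[ 0 \;=\; \varphi(u^e N'(x_j)) \;=\; c N'(\varphi(x_j)) \;=\; c(a)_j\, g_{j+1}(u)(1\otimes e_j), \]
and since $c$ and $(a)_j$ are units this forces $g_{j+1} = 0$ for all $j$, hence $N' = N$. I expect no substantial obstacle beyond careful bookkeeping with the idempotent decomposition and the resulting index shifts under the $\varphi$-semilinear maps; the mod-$u^{pe}$ collapse of the left-hand side of the commutation relation really does all the work.
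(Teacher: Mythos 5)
Your proof is correct and follows essentially the same path as the paper: construct the quasi-Breuil module via the standard equivalence, then exploit that $\varphi(u^e\cdot\text{anything})=u^{ep}\varphi(\cdot)=0$ in $\overline{S}_{\F}$ to force the left side of the $\varphi$-$N$ commutation relation to vanish, from which the coefficient of $N$ on each basis vector must be zero. The paper's $\nu_j$ is your $g_j(u)$; the bookkeeping with the idempotent index shift is handled the same way.
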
%
\begin{proof}
We begin by noting that since  
 $(\varphi^*\gM(\underline{s}; a))_i$ is generated
by~$(1\otimes e_{i-1})$, the quasi-Breuil module~$\cM=\varphi^*\gM(\underline{s}; a)/u^{ep}$
corresponding to $\gM(\underline{s}; a)$ has the given form. %
  It is easy to see that
  taking ~$N((1\otimes e_{j-1}))=0$ gives~$\cM$
  the structure of a Breuil module. To see that this is the only
  possibility, write   $N((1\otimes e_{j-1}))=\nu_j(1\otimes e_{j-1})$. Then
  we have \[N(u^{e(p-2)-s_j}(1\otimes e_{j-1}))=u^{e(p-2)-s_j}\left(s_j-e(p-2)+\nu_j\right)(1\otimes e_{j-1})\in \cM_j^{p-2}, \]
  so that
  \[\varphi(u^eN(u^{e(p-2)-s_j}(1\otimes e_{j-1})))\in u^{ep}\varphi(\cM_j)=0,\] and the equation
  $\varphi(u^eN(u^{e(p-2)-s_j}(1\otimes e_{j-1})))=cN\varphi(u^{e(p-2)-s_j}(1\otimes e_{j-1}))$ gives
  $\nu_{j+1}=0$ for each~$j$, as required.
\end{proof}

The extensions of rank one Breuil--Kisin modules are computed as
follows. 

\begin{prop}
  \label{prop:kisin-module-extensions}
  Let $\gM$ be an extension of $\gM(\underline{s};a)$ by
  $\gM(\underline{t};b)$.  Then we can choose bases $e_i,f_i$ of
  the $\gM_i$ so that $\varphi$ has the form \numeqnarray\label{eqn:
    basis for extension of BK modules}
  \varphi(e_{i-1}) & = &(b)_i u^{t_i} e_i \\
  \varphi(f_{i-1}) & = &(a)_i u^{s_i} f_i + y_i e_i \nonumber
  \end{eqnarray}
with $y_i \in \F\llb u \rrb$ a polynomial with $\deg(y_i) < s_i$,
except that when there is a nonzero map $\gM(\underline{s};a)\to \gM(\underline{t};b)$ we must
also allow $y_j$ to have a term of degree $s_j + \alpha_j(\gM(\underline{s};a))-\alpha_j(\gM(\underline{t};b))
$ for any one choice of $j$.
\end{prop}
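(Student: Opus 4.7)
The plan is to write down explicit bases and directly compute. After choosing a basis $e_i$ of $\gM(\underline{t};b)_i$ and lifting a basis of $\gM(\underline{s};a)_i$ to an element $f_i \in \gM_i$, the fact that $\gM(\underline{t};b)\subset \gM$ is a sub-object forces $\varphi$ into the stated upper-triangular shape for some $y_i \in \Fpbar\llb u\rrb$. All remaining freedom lies in the change of basis $f_i \mapsto f_i + g_i e_i$ with $g_i \in \Fpbar\llb u\rrb$, under which
\[ y_i \;\longmapsto\; y_i - (a)_i u^{s_i} g_i + (b)_i u^{t_i} \varphi(g_{i-1}). \]
So the proposition reduces to choosing the $g_i$ so as to bring the $y_i$ to the claimed form.

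Extracting the coefficient of $u^k$ for $k\geq s_i$, the condition $[y'_i]_k = 0$ reads
\[ (a)_i [g_i]_{k-s_i} \;=\; [y_i]_k \;+\; (b)_i [g_{i-1}]_{(k-t_i)/p}, \]
with the last term present only when $p\mid (k-t_i)$ and $k\geq t_i$. I view these as a system of recursive equations in the unknowns $[g_i]_m$ ($m\geq 0$), encoded in a directed graph on the vertex set $\{(i,m): 0\leq i < f,\, m\geq 0\}$ with an edge from $(i-1,(m+s_i-t_i)/p)$ to $(i,m)$ whenever the indicated quantity is a non-negative integer. Since each vertex has in-degree and out-degree at most one, the connected components of this graph are paths or simple cycles, and along any path the unknowns can be determined uniquely starting from the source.

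The heart of the argument is to identify the possible cycles. Set $\Delta_i := \alpha_i(\gM(\underline{s};a)) - \alpha_i(\gM(\underline{t};b))$. The definition of $\alpha$ immediately yields $s_{i+1} - t_{i+1} = p\Delta_i - \Delta_{i+1}$, and iterating the backward step $f$ times around the cycle of indices sends $(i,m)$ to $(i, m^{(f)})$ with $p^f m^{(f)} = m + (p^f-1)\Delta_i$. Thus the only fixed point is $m = \Delta_i$, so the only possible cycle is $\{(i,\Delta_i)\}_i$, and it actually occurs in the graph precisely when every $\Delta_i\in\Z_{\geq 0}$. On that cycle the equations read $(a)_i x_i - (b)_i x_{i-1} = c_i$ in the variables $x_i = [g_i]_{\Delta_i}$ with constants $c_i = [y_i]_{s_i+\Delta_i}$; composing around the loop picks up a factor $b/a$ from the single place where $(a)_0$ and $(b)_0$ differ from~$1$, so the cyclic system is uniquely solvable whenever $a\neq b$.

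The obstruction therefore appears only in the exceptional case $a=b$ with all $\Delta_i\geq 0$, which by~\cite[Lem.\ 5.1.2]{MR3324938} is exactly the condition that $\Hom(\gM(\underline{s};a), \gM(\underline{t};b)) \neq 0$. In that case I pick any $j$, relax the single equation at $(j,s_j+\Delta_j)$ so that $[y'_j]_{s_j+\Delta_j}$ is allowed to be nonzero, and solve the resulting nondegenerate $(f-1)\times(f-1)$ cyclic system in the $x_i$'s; outside the cycle the equations are solved as in the generic case, giving the normal form asserted. The only real care needed is in verifying the identity for $m^{(f)}$ and tracking indices cyclically modulo~$f$, both of which are elementary once the setup is fixed; there is no deep new ingredient beyond organized bookkeeping of the recursive structure.
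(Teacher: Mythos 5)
Your argument is essentially correct, and it is worth noting at the outset that the paper itself offers no proof of this proposition: it simply cites \cite[Prop.~5.1.3]{MR3324938}. So what you have written is a genuine self-contained proof where the paper has a one-line reference, and the strategy you follow --- extract the change-of-basis transformation $y_i \mapsto y_i - (a)_i u^{s_i} g_i + (b)_i u^{t_i}\varphi(g_{i-1})$, read off coefficient equations, organize them into a functional graph on pairs $(i,m)$, and observe that components are paths or a single potential cycle --- is almost certainly the same linear-algebraic bookkeeping used in the cited source. The key computations check out: the relation $s_{i+1}-t_{i+1} = p\Delta_i - \Delta_{i+1}$ follows directly from the definition of $\alpha_i$; the fixed-point argument correctly isolates $\{(i,\Delta_i)\}$ as the only candidate cycle, realized in the graph precisely when every $\Delta_i \in \Z_{\ge 0}$; and the determinant of the cyclic system is $\prod_i (a)_i - \prod_i (b)_i = a-b$, so the obstruction is exactly the case $a=b$ with all $\Delta_i\in\Z_{\ge 0}$, which matches the existence criterion for a nonzero map $\gM(\underline{s};a) \to \gM(\underline{t};b)$.

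Two small points of precision. First, in the exceptional case your phrase ``solve the resulting nondegenerate $(f-1)\times(f-1)$ cyclic system'' misdescribes what happens: dropping the equation at $(j,\Delta_j)$ breaks the cycle into a chain with $f$ unknowns and $f-1$ equations, so $x_j$ is a free parameter and the rest are determined by forward substitution. It is then a short further check (worth writing) that because the loop product equals $b/a = 1$, the residual quantity $[y'_j]_{s_j+\Delta_j} = [y_j]_{s_j+\Delta_j} + (b)_j x_{j-1} - (a)_j x_j$ is \emph{independent} of the choice of $x_j$, so you obtain a single well-defined exceptional coefficient rather than a one-parameter family. Second, when you write ``all $\Delta_i\geq 0$'' you mean (and elsewhere say) $\Delta_i \in \Z_{\geq 0}$; the integrality is needed for the cycle vertices to exist and is not automatic, so it should be stated uniformly. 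With these minor clarifications the proof is complete and correct.
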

\begin{proof}
This is~\cite[Prop.\ 5.1.3]{MR3324938}.
\end{proof}
\begin{rem}
  \label{rem: we think these are different extensions}\begin{enumerate}
  	\item In our application to `generic' $\rhobar$, we could avoid considering the special case where there is a nonzero map $\gM(\underline{s};a)\to \gM(\underline{t};b)$ (for example by ensuring that $a\ne b$), but we have included it for completeness.
  	\item While this is
  not claimed in~\cite[Prop.\ 5.1.3]{MR3324938}, we expect that it is
  possible to show that distinct choices of the~$y_i$
  in~\ref{prop:kisin-module-extensions} give distinct extensions of
  Breuil--Kisin modules.
  \end{enumerate}
\end{rem}

We now compute a constraint on extension classes of rank 1 Breuil
modules. 
\begin{lem}\label{lem: Breuil module extensions}%
  Let~$\cM$ be a crystalline Breuil module which is an extension of $\cM(\underline{s};a)$ by
$\cM(\underline{t};b)$, with underlying Breuil--Kisin
module~$\gM$ as in Proposition~\ref{prop:kisin-module-extensions}. %

For each~$i$ we set
\numequation\label{eqn: definition of ni}
  n_i:=\frac{1}{p^f-1}\sum_{j=1}^fp^{f-j}(s_{j+i-1}-t_{j+i-1}-e)\in\Q.
  \end{equation}
Then the  $y_j$ in Proposition~\ref{prop:kisin-module-extensions} cannot have any terms of degree $l< s_j-e +\max(n_{j+1},1)$ with
$l\not\equiv t_j\pmod{p}$.
\end{lem}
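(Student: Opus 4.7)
The strategy is to use the compatibility between $N$ and $\varphi$ on a crystalline Breuil module, combined with the explicit description of $\varphi$ coming from Proposition~\ref{prop:kisin-module-extensions}, to get a recursion that forces certain coefficients of $y_j$ to vanish.

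First I would make the bases and the associated structures completely explicit. Using the bases $e_i, f_i$ of Proposition~\ref{prop:kisin-module-extensions}, the quasi-Breuil module $\cM = \varphi^\ast\gM/u^{ep}$ has each piece $\cM_j$ spanned by $1\otimes e_{j-1}$ and $1\otimes f_{j-1}$. A direct calculation of $(1\otimes\varphi)^{-1}(u^{e(p-2)}\gM)$ shows that $\cM^{p-2}_j$ is generated by $u^{e(p-2)-t_j}(1\otimes e_{j-1})$ together with a lift of the form
\[
z_j := u^{e(p-2)-s_j}(1\otimes f_{j-1}) + \gamma_j(1\otimes e_{j-1}),
\]
where $\gamma_j \in \Fpbar\llb u\rrb/u^{ep}$ is determined (modulo the $u^{e(p-2)-t_j}\Fpbar\llb u\rrb$ ambiguity of the sub-generator) by the condition that $(1\otimes\varphi)(z_j)\in u^{e(p-2)}\gM$; concretely $\gamma_j \equiv -(b)_j^{-1} u^{e(p-2)-s_j-t_j} y_j$. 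This determines $\varphi(z_j) = (a)_j(1\otimes e_j)$ up to contributions from the $e$-line that can be read off from $y_j$.

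Next I would analyse the monodromy. Since $\cM(\underline{t};b)$ is a rank-one crystalline Breuil submodule, we have $N(1\otimes e_{j-1}) = 0$, and the crystalline condition forces
\[
N(1\otimes f_{j-1}) = x_j(1\otimes e_{j-1}) \qquad \text{with } x_j \in u\cdot\Fpbar\llb u\rrb/u^{ep}.
\]
The Leibniz rule then determines $N$ on any element of $\cM$, and one checks (using Remark~\ref{rem: another u divisibility for N} if desired) that the condition $u^e N(\cM^{p-2})\subseteq \cM^{p-2}$ already imposes non-trivial divisibilities on $x_j$ in terms of $y_j$. I would then impose the main compatibility $\varphi(u^e N(z_j)) = c\, N(\varphi(z_j))$: the right-hand side only involves $N((a)_j(1\otimes e_j)) = 0$ plus contributions coming from the polynomial $y_j$ after differentiation, while the left-hand side is the image under $\varphi$ of an explicit polynomial in $u$ built out of $s_j$, $t_j$, $e$, $x_j$ and (via $\gamma_j$) $y_j$. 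Expanding both sides modulo $u^{ep}$ produces a system of the shape
\[
(a)_j u^{p s_j}\, \varphi(x_{j+1}) \equiv c\cdot\bigl(\text{explicit polynomial in } x_j,\ y_j,\ \gamma_j,\ \text{and their derivatives}\bigr) \pmod{u^{ep}},
\]
which, together with the cyclic indexing, can be iterated $f$ times to solve for each $x_j$ in terms of the family $(y_k)_k$.

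The congruence-mod-$p$ obstruction and the degree bound both fall out of this recursion. Every monomial appearing on the $\varphi$-side has $u$-adic degree a multiple of $p$ shifted by~$t_j$ (since $\varphi(e_{j-1}) = (b)_j u^{t_j}e_j$), so any monomial in $y_j$ of degree $l\not\equiv t_j\pmod p$ that is produced on the right-hand side must be cancelled by another contribution; the only available contributions come from $x_j$-terms, whose $u$-divisibility and mod~$u^{ep}$ truncation are precisely controlled by the iterated recursion. Unwinding the cycle, the recursion for $(n_i)$ given by $p n_{j+1} = n_j + (s_j - t_j - e)$ admits the unique solution~\eqref{eqn: definition of ni}, and the threshold below which $y_j$-terms of wrong residue cannot be cancelled comes out to exactly $s_j - e + \max(n_{j+1},1)$: the $s_j - e$ shift records the $u^{e(p-2)-s_j}$ weight of $z_j$ together with the single factor of $u^e$ applied to $N$, while $\max(n_{j+1},1)$ encodes both the stable part of the recursion (the $n_{j+1}$) and the extra unit of $u$ enforced by the crystalline condition $x_{j+1}\in u\cdot\Fpbar\llb u\rrb$. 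The main obstacle will be the careful bookkeeping of the correction term $\gamma_j$ and the $x_j$ contributions together with the truncation modulo $u^{ep}$, so that the congruence and the degree threshold emerge cleanly rather than being conflated.
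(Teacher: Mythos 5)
Your plan is essentially the paper's own argument: set up the explicit basis $E_j, F_j$ of $\cM_j^{p-2}$ (your $z_j$ and $\gamma_j$ match the paper's $F_j$ exactly), parametrize the crystalline monodromy by $N(1\otimes f_{j-1}) = \mu_j(1\otimes e_{j-1})$ with $\mu_j \in u\Fpbar[u]$, impose the commutation relation $\varphi(u^e N(F_j)) = c N(\varphi(F_j))$, observe that there is no cancellation between the $\varphi$-image terms (degrees $\equiv 0 \bmod p$ after shifting) and the $y_j$-derivative terms (degrees $\not\equiv 0 \bmod p$), and sum the resulting valuation inequalities around the $f$-cycle to obtain $d_j \ge \max(1, n_j)$ and hence the vanishing of the bad coefficients. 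One small index slip: the stable recursion is $p n_j = n_{j+1} + (s_j - t_j - e)$, not $p n_{j+1} = n_j + (s_j - t_j - e)$ as you wrote, but the rest of the bookkeeping you outline is exactly what the paper carries out.
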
%
\begin{proof}
The quasi-Breuil module corresponding to~$\gM$ has  $\cM_j$  generated by $(1\otimes e_{j-1}),(1\otimes f_{j-1})$ and   %
$\cM_j^{p-2}$ generated by $E_{j}:= u^{e(p-2)-t_j}(1\otimes e_{j-1})$ and \[F_{j}:= u^{e(p-2)-s_j}(1\otimes f_{j-1})-u^{e(p-2)-s_j-t_j}(b^{-1})_jy_j(1\otimes e_{j-1}).\] The map $\varphi: \cM^{p-2} \to \cM$ is given by
\[\varphi(E_j)  = (b)_{j} (1\otimes e_{j}), \varphi(F_j) = (a)_{j} (1\otimes f_{j}).\]
(Note that~$\gM$ must have height at most $(p-2)$, since it underlies
a Breuil module, so~$y_j$ is indeed divisible by $u^{s_j+t_j-e(p-2)}$.)

We have $N(1\otimes e_{j-1})=0$, and we write $N(1\otimes
f_{j-1})=\mu_j(1\otimes e_{j-1})$ with $\mu_j\in u\Fpbar[u]$ (since $\cM$ is crystalline),
where  for each~$j$ we must have %
  \[u^eN(F_j)\in
    \cM^{p-2}_j\] by the second property of $N$ required in the definition of a Breuil module. Given this, the third property of $N$ gives the commutation relation
  \begin{align*}
       \varphi (u^eN(F_j)) &=
       cN\varphi(F_j)
     \\&=cN((a)_{j} (1\otimes f_{j}))\\&=c(a)_j\mu_{j+1}(1\otimes e_{j}).
      \end{align*}
We have %
\begin{align*}
N(F_j) & = (s_j-e(p-2))u^{e(p-2)-s_j}(1\otimes f_{j-1})+u^{e(p-2)-s_j}\mu_j(1\otimes e_{j-1})\\&\quad +N(-u^{e(p-2)-s_j-t_j}(b^{-1})_jy_j)(1\otimes e_{j-1})\\
& = (s_j-e(p-2))(u^{e(p-2)-s_j}(1\otimes f_{j-1})-u^{e(p-2)-s_j-t_j}(b^{-1})_jy_j(1\otimes e_{j-1})) \\
& \quad + (u^{e(p-2)-s_j}\mu_j+N(-u^{e(p-2)-s_j-t_j}(b^{-1})_jy_j))(1\otimes e_{j-1})\\ & \quad +((s_j-e(p-2))u^{e(p-2)-s_j-t_j}(b^{-1})_jy_j)(1\otimes e_{j-1}),
\end{align*}
  so we need the quantity %
  \[
    u^e(u^{e(p-2)-s_j}\mu_j+N(-u^{e(p-2)-s_j-t_j}(b^{-1})_jy_j)+(s_j-e(p-2))u^{e(p-2)-s_j-t_j}(b^{-1})_jy_j)\]
  to
  be divisible by $u^{e(p-2)-t_j}$;
  assuming this holds, the commutation relation with~$\varphi$
  reads
  \begin{multline*}
 c(a)_{j}\mu_{j+1}= (b)_{j}u^{p(e-e(p-2)+t_j)}\varphi\biggl(u^{e(p-2)-s_j}\mu_j\\+N(-u^{e(p-2)-s_j-t_j}(b^{-1})_jy_j)+(s_j-e(p-2))u^{e(p-2)-s_j-t_j}(b^{-1})_jy_j\biggr).
  \end{multline*}
  
  In
  particular, since $c = \varphi(d) \in \im(\varphi)$, we see that $\mu_{j+1}\in\im\varphi$. Writing
  $\mu_{j+1}=\varphi(\mu'_{j+1})$ and rearranging, %
we obtain %
\numequation\label{eqn: phi commutation rank 2}
u^{e-s_j+t_j}\left((b)_{j}\varphi(\mu'_j)-u^{-t_j}(t_jy_j+N(y_j))\right)=d(a)_{j}\mu'_{j+1}. %
\end{equation}
  (Strictly speaking, since $\varphi(u^e)=u^{ep}=0$, this is
  only an equation modulo~$u^e$; but it is easily checked that all
  terms have degree less than~$e$, so it holds literally.)

Examining the left hand side of~\eqref{eqn: phi commutation
  rank 2}, we note that
there can be no cancellation between the terms in $\varphi(\mu'_j)$
and  $u^{-t_j}(t_jy_j+N(y_j))$, as the exponents of~$u$ in $\varphi(\mu'_j)$ are
all divisible by~$p$, while none of the  exponents of~$u$ in
$u^{-t_j}(t_jy_j+N(y_j))$ are divisible by~$p$ (the terms in $t_jy_j$ with exponent $\equiv  t_j \mod{p}$ cancel with terms in $N(y_j)$). Let~$d_j\ge 1$ be
the $u$-adic valuation of~$\mu'_j$  (setting  $d_j = e$
if $\mu_j'$ is divisible by~$u^e$). Then, since $d$ is a $u$-adic unit, ~\eqref{eqn: phi commutation
  rank 2} gives us the inequality \numequation\label{eqn: valuation
  inequality to get ni}pd_j-(s_j-t_j-e)\ge d_{j+1}.\end{equation}
(To see this, note that if the left hand side is at least~$e$, there is nothing
to prove; and if~$d_j=e$, then since $s_j-t_j-e\le e(p-3)$, the left hand side is at least
$3e>e$. Otherwise the term~$u^{e-s_j+t_j}(b)_{j}\varphi(\mu'_j)$ means
that the left hand side of~\eqref{eqn: phi commutation
  rank 2} has a term of degree~$pd_j-(s_j-t_j-e)$, because
of the lack of cancellation.)
Multiplying the inequalities~\eqref{eqn: valuation
  inequality to get ni}
by suitable powers of~$p$ and summing, we have \[\sum_{j=1}^fp^{f-j}(pd_{j+i-1}-d_{j+i})\ge
  \sum_{j=1}^fp^{f-j}(s_{j+i-1}-t_{j+i-1}-e), \]which simplifies
to $d_i\ge n_i$, where~$n_i$ is as in~\eqref{eqn: definition of
  ni}. Since~$\cM$ is crystalline by assumption, we also have~$d_i\ge 1$, so that $d_i\ge\max(1,n_i)$ for all~$i$.

Returning to~\eqref{eqn: phi commutation
  rank 2}, since the right hand side has valuation~$d_{j+1}$, the lack
of cancellation implies  that the term  $u^{e-s_j}(t_jy_j+N(y_j))$ on
the left hand side is
divisible by~$u^{\max(1,n_{j+1})}$;  equivalently, the terms in~$y_j$ of degree
less than  $s_j-e+\max(1,n_{j+1})$ and not congruent to~$t_j$ modulo~$p$
vanish, as claimed.
\end{proof}
\begin{rem}
  \label{rem:explicit Baer sum}It follows easily from the definitions
  that if we have two extensions of  $\cM(\underline{s};a)$ by
$\cM(\underline{t};b)$ as in Lemma~\ref{lem:
    Breuil module extensions}, then their Baer sum corresponds to the extension
  obtained by summing the~$y_j$ (and has~$N$ given by summing the~$\mu_j$).
\end{rem}

In the following arguments it will be useful to note that by the
definition of the~$n_j$ in~\eqref{eqn: definition of ni}, we have \numequation\label{eqn: relation of
  nj} n_j+(s_{j-1}-t_{j-1}-e)=pn_{j-1}\end{equation} for all~$j$.

\begin{lem}
  \label{lem: the r i} Write \numequation\label{eqn: ri}r_i:=s_i-t_i-e+\lfloor n_{i+1}\rfloor-p\lfloor n_i\rfloor
    +1.\end{equation}Then $r_i\in [1,p]$ for each~$i$.
\end{lem}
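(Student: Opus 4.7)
The plan is to rewrite $r_i$ in terms of the fractional parts $\{n_i\}$ and $\{n_{i+1}\}$, and exploit the key relation \eqref{eqn: relation of nj} rewritten as
\[
n_{i+1} = p n_i - (s_i - t_i - e),
\]
which (crucially) says that $n_{i+1}$ and $p n_i$ differ by an integer, so that $\{n_{i+1}\} = \{p n_i\}$.

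First I would substitute $s_i - t_i - e = p n_i - n_{i+1}$ into the definition \eqref{eqn: ri}, obtaining
\[
r_i = (p n_i - n_{i+1}) + \lfloor n_{i+1}\rfloor - p \lfloor n_i \rfloor + 1
= p\{n_i\} - \{n_{i+1}\} + 1.
\]
Using $\{n_{i+1}\} = \{p n_i\}$, and the elementary identity $p\{n_i\} - \{p n_i\} = \lfloor p n_i\rfloor - p\lfloor n_i\rfloor = \lfloor p\{n_i\}\rfloor$, this collapses to
\[
r_i = \lfloor p\{n_i\}\rfloor + 1.
\]
Since $\{n_i\} \in [0,1)$, we have $p\{n_i\} \in [0,p)$, so $\lfloor p\{n_i\}\rfloor \in \{0,1,\dots,p-1\}$, giving $r_i \in [1,p]$ as claimed.

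The expression also automatically shows $r_i$ is an integer (which is already clear from the definition, since each term in \eqref{eqn: ri} is). There is no real obstacle here — the statement is essentially a formal consequence of the recursion $n_{i+1} \equiv p n_i \pmod{\Z}$ together with the trivial bound $\lfloor p\{n_i\}\rfloor \le p-1$.
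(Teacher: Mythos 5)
Your proof is correct and takes essentially the same approach as the paper: both start by rewriting $r_i - 1 = p\{n_i\} - \{n_{i+1}\}$ via the relation \eqref{eqn: relation of nj}. The paper then simply bounds $r_i - 1 \in (-1,p)$ using $\{n_i\},\{n_{i+1}\} \in [0,1)$ and invokes integrality, whereas you additionally observe $\{n_{i+1}\}=\{pn_i\}$ to reach the closed form $r_i = \lfloor p\{n_i\}\rfloor + 1$; this is a mild refinement rather than a different argument.
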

\begin{proof}
Using~\eqref{eqn: relation of
  nj}, we have $r_i-1=p(n_i-\lfloor n_i\rfloor)-(n_{i+1}-\lfloor
n_{i+1}\rfloor)$ and since for any real number $x$ we have $(x-\lfloor
x\rfloor) \in [0,1)$ we have $r_i-1\in (-1,p)$. Since~$r_i$ is an integer, the result follows.
\end{proof}

Write
\[\Ext^1_{\crBrMod}(\cM(\underline{s};a),\cM(\underline{t};b))\]
for the~$\Ext^1$ group computed in the exact category~$\crFpbarBrMod$.
Write \[\chibar_1:=T(\cM(\underline{t};b)),\]
\[\chibar_2:=T(\cM(\underline{s};a)).\] Then the restriction maps
\[\Ext^1_{\crBrMod}(\cM(\underline{s};a),\cM(\underline{t};b))\xrightarrow{\,\resK\,}
  \Ext^1_{G_K}(\chibar_2,\chibar_1)\xrightarrow{\,\resKinfty\,}\Ext^1_{G_{K_\infty}}(\chibar_2,\chibar_1)\]
are
homomorphisms of $\Fpbar$-vector spaces. Regarding
elements of $\Ext^1_{G_{K_\infty}}(\chibar_2,\chibar_1)$ as \'etale
$\varphi$-modules, we have the following description of the image of
the restriction map~$\resKinfty$. (In our key Lemma~\ref{lem:criteria for missing extensions}, we will show that the composition $\resKinfty\circ\resK$ has smaller image.)

\begin{lem}%
  \label{lem: classes that show up for G_K representations}
  \leavevmode
  \begin{enumerate}
  \item \label{resKinfty injective} The restriction map~$\resKinfty$ is injective unless
$\chibar_1\chibar_2^{-1}=\varepsilonbar$, in which case its kernel is
$1$-dimensional, and is generated by the tr\`es ramifi\'ee line given
by the Kummer extension
corresponding to the chosen uniformizer~$\pi$ of~$K$.
\item \label{item: dim image resKinfty}The image of~$\resKinfty$ has dimension $[K:\Qp]$, unless $\chibar_1=\chibar_2$, in
which case it has dimension~$[K:\Qp]+1$.
  \item The \'etale $\varphi$-modules~ $M$ in the image of~$\resKinfty$ are precisely those for which we can choose a basis $e_i,f_i$ of~ $M_i$ so that $\varphi$ has
 the form
   \numeqnarray\label{eqn: the Mi in image of GK}
    \varphi(e_{i-1}) & = &(b)_i u^{t_i} e_i \\
    \varphi(f_{i-1}) & = &(a)_i u^{s_i} f_i + y_i e_i \nonumber
  \end{eqnarray} %
where $y_i \in \Fpbar[u,u^{-1}]$ %
has nonzero terms 
only in degrees $[s_i+\lfloor
n_{i+1}\rfloor-e+1,\dots,s_i+\lfloor n_{i+1}\rfloor]$;
except that when $\chibar_1=\chibar_2$ we 
also allow $y_i$ to have a term of degree \[s_i +
  \frac{1}{p^f-1}\sum_{j=1}^fp^{f-j}(s_{j+i}-t_{j+i})\] (necessarily an integer in this case) for any one
choice of $i$.%
  \end{enumerate}
\end{lem}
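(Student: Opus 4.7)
The plan is to prove parts (1) and (2) using standard local Galois cohomology, and part (3) by working with étale $\varphi$-modules through the equivalence $T_\infty$, lifting normal-form extensions to $G_K$-representations through the Breuil module dictionary established in Theorem~\ref{thm: properties of crystalline Breuil modules}. For (1), the Kummer extension $K(\pi^{1/p})/K$ defines a nontrivial class $\kappa \in H^1(G_K, \mu_p) \cong \Ext^1_{G_K}(\mathbf{1}, \varepsilonbar)$ whose restriction to $G_{K_\infty}$ vanishes since $\pi^{1/p} \in K_\infty$ by construction; twisting by $\chibar_2$ identifies this as the asserted $1$-dimensional très ramifiée line in $\ker(\resKinfty)$ when $\chibar_1\chibar_2^{-1} = \varepsilonbar$. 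Injectivity of $\resKinfty$ in every other case follows from an inflation--restriction computation over the Galois closure $\widetilde{K} = K_\infty \cdot K(\mu_{p^\infty})$ of $K_\infty/K$: any kernel class factors through a quotient of $\Gal(\widetilde{K}/K)$ whose only nontrivial contribution to $H^1$ with character coefficients comes from the Kummer tower, which forces $\chibar_1\chibar_2^{-1} = \varepsilonbar$.

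For (2), the local Euler characteristic formula combined with Tate local duality gives
\[ \dim_{\Fpbar} \Ext^1_{G_K}(\chibar_2, \chibar_1) = [K:\Qp] + \dim_{\Fpbar} H^0(G_K, \chibar_1 \chibar_2^{-1}) + \dim_{\Fpbar} H^0(G_K, \chibar_2 \chibar_1^{-1} \varepsilonbar). \]
The last two summands each contribute $1$ precisely in the respective cases $\chibar_1 = \chibar_2$ and $\chibar_1\chibar_2^{-1} = \varepsilonbar$; subtracting the kernel dimension from (1) gives image dimension $[K:\Qp]+1$ in the first case and $[K:\Qp]$ otherwise, matching the claim.

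For (3), an extension of $\chibar_2$ by $\chibar_1$ as $G_{K_\infty}$-representations corresponds via $T_\infty$ to an étale $\varphi$-module extension of $\gM(\underline{s}; a)[1/u]$ by $\gM(\underline{t}; b)[1/u]$, which after choice of basis is parametrized by a tuple $(y_i) \in \Fpbar((u))^f$ of the form \eqref{eqn: the Mi in image of GK}, modulo the coboundary $y_i \sim y_i - (a)_i u^{s_i} z_i + (b)_i u^{t_i}\varphi(z_{i-1})$ coming from $f_i \mapsto f_i + z_i e_i$. Because $\varphi(u^m) = u^{pm}$, the $u^{t_i}\varphi(z_{i-1})$ terms occupy only those $u$-degrees congruent to $t_i$ modulo $p$ while the $u^{s_i} z_i$ terms are unrestricted; a careful inductive use of this dichotomy, together with the recursion \eqref{eqn: relation of nj} among the $n_j$, shows that every class admits a representative supported in the length-$e$ window $[s_i + \lfloor n_{i+1}\rfloor - e + 1, s_i + \lfloor n_{i+1}\rfloor]$, plus one extra admissible degree $s_i + (p^f-1)^{-1}\sum_{j=1}^f p^{f-j}(s_{j+i} - t_{j+i})$ (for any fixed $i$) exactly when $\gM(\underline{s}; a) \cong \gM(\underline{t}; b)$, i.e.\ when $\chibar_1 = \chibar_2$. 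To show that every such normal-form class genuinely arises from a $G_K$-extension, I would shift back (by another change of basis) into the degree range allowed by Proposition~\ref{prop:kisin-module-extensions}, promote the resulting Breuil--Kisin module extension to a Breuil module via Lemma~\ref{lem: classification of rank one Breuil modules}, and apply the functor $T$ of Theorem~\ref{thm: properties of crystalline Breuil modules}; a dimension count against (2) then forces the image of $\resKinfty$ to coincide with the normal-form space. The main obstacle is precisely this normal-form reduction: checking that the length is exactly $e$ and that the shift $\lfloor n_{i+1}\rfloor$ is the one attained (rather than some other congruence class), and isolating the exceptional extra admissible degree in the isomorphic case, requires careful combinatorial bookkeeping of the semilinear coboundary action coupled with the recursion \eqref{eqn: relation of nj}.
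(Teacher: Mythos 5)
Your sketches of parts (1) and (2) are in the right spirit. The Kummer-class observation for $\ker(\resKinfty)$ and the Euler characteristic/duality computation for the dimension are standard and essentially what the paper uses (part (1) being cited from~\cite{MR3324938}), though the inflation--restriction step needs care because $K_\infty/K$ is not Galois: one passes to the Galois closure $K_\infty(\mu_{p^\infty})$ and has to analyse the resulting coefficient module carefully.

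The argument for part (3), however, has two genuine gaps. First, you claim that every class in $\Ext^1_{G_{K_\infty}}(\chibar_2,\chibar_1)$ reduces under the coboundary relation $y_i \sim y_i - (a)_i u^{s_i}z_i + (b)_i u^{t_i}\varphi(z_{i-1})$ to a representative supported in a finite window. This cannot be true: $\Ext^1_{G_{K_\infty}}$ is infinite-dimensional over~$\Fpbar$ (e.g.\ for $\chibar_1=\chibar_2$ trivial and $k=\F_p$ one sees that $u^{-n}$ for $n>0$, $p\nmid n$, lie in distinct classes modulo the image of $\varphi-1$, since the formal solution $-\sum_k u^{-np^k}$ is not a Laurent series). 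The window, by contrast, has dimension $[K:\Qp]$ or $[K:\Qp]+1$. The normal-form reduction can at most apply to a suitably bounded subspace, and one must separately argue that the image of $\resKinfty$ lands in such a subspace before the reduction even starts. Second, the mechanism you propose to show each window class is actually in the image of $\resKinfty$ --- shift into the range of Proposition~\ref{prop:kisin-module-extensions}, promote the Breuil--Kisin module extension to a Breuil module via Lemma~\ref{lem: classification of rank one Breuil modules}, and apply $T$ --- does not go through. Lemma~\ref{lem: classification of rank one Breuil modules} only concerns rank one modules; for a rank two extension one has to construct a compatible monodromy operator $N$, and the whole point of Lemma~\ref{lem: Breuil module extensions} is that this imposes nontrivial constraints on the $y_j$: not every Breuil--Kisin module extension of rank one modules admits a Breuil module structure. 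The classes that fail this are precisely the ``generic'' ones that drive Lemma~\ref{lem:criteria for missing extensions}, so the promotion step is exactly where things break. The paper avoids this by exhibiting the window classes as reductions of explicit crystalline $G_K$-representations (pseudo--Barsotti--Tate representations of weight $\{r_i\}$, via Theorems~5.1.5, 5.4.1 of~\cite{MR3324938}), from which the Breuil module structure is automatic by Theorem~\ref{thm: reductions of crystalline reps are crystalline Breuil modules}; that detour is not a convenience but the substance of the proof.
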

\begin{proof}
The first part is~\cite[Lem.\ 5.4.2]{MR3324938}. The second part then
follows from the usual computation of the dimension of
$\Ext^1_{G_K}(\chibar_2,\chibar_1)$ via Tate's Euler characteristic
formula and local duality.
  
The final part can presumably be proved in an elementary way, but for
convenience we explain how to deduce it from the results of
\cite{MR3324938} on the Breuil--Kisin modules associated to certain
crystalline representations with small Hodge--Tate weights. This was
first explained in~\cite[Thm.\ 3.3.2]{MR3705280} in the case~$K/\Qp$
unramified (which employed the earlier results~\cite{MR3164985}), and
in general~ \cite[Thm.\ 4.2]{MR4504652}, under the assumption that
$\chibar_1\chibar_2^{-1}\ne\varepsilonbar$. However the comparison to
the notation used in~\cite[Thm.\ 4.2]{MR4504652} is not immediate, and
we need to treat the case~$\chibar_1\chibar_2^{-1}=\varepsilonbar$, so
for the convenience of the reader we explain in our notation how to
extract the claim from~\cite{MR3324938}. %

It is easy to check that the \'etale
$\varphi$-modules in~\eqref{eqn: the Mi in image of GK} span a space
of the dimension computed in part~(2). In particular, in the case when $\chibar_1 = \chibar_2$, considering the change of basis adding on a suitable multiple of $e_i$ to $f_i$ shows that the additional $\varphi$-modules in that case do not depend on the choice of $i$ for which $y_i$ is allowed to have an extra term.  It now suffices to show that all
of the possibilities in ~\eqref{eqn: the Mi in image of GK} do indeed
arise from $G_K$-representations.  We can and do twist so that that~$t_i=0$ for
all~$i$. (This has the effect of replacing~$s_i$ by~$s_i-t_i$, and
leaving~$n_i$ unchanged, so the general statement follows immediately
by twisting back.) Then our strategy is to show that our
\'{e}tale $\varphi$-modules arise from the reductions of certain crystalline
representations. In fact, we will see that they arise from the reductions of crystalline extensions of $p$-adic characters.  %

We make the change of variables
$f_i'=u^{-\lfloor n_{i+1}\rfloor}f_i$, and
write~$y_i':=u^{-p\lfloor n_{i}\rfloor}y_i$. 
Then we have
\[ \varphi(e_{i-1})  = (b)_i e_i,\]
\[\varphi(f_{i-1}')= (a)_i u^{r_i+e-1}
  f_i'+y'_ie_i.\] %
By~\eqref{eqn: ri} and our assumption that $t_i=0$, we have
\[r_i+p\lfloor n_i\rfloor=r_i+t_i+p\lfloor n_i\rfloor=s_i-e+\lfloor n_{i+1}\rfloor +1,\] so we
need to show that every choice of~$y'_i$ having nonzero terms in
degrees  $[r_i,r_i+e-1]$ occurs (together with
the additional term in the statement in the case
that~$\chibar_1=\chibar_2$). If we make a further change of
variables to replace $f'_i$ with $f'_i+z_ie_i$ for all~$i$, with
$z_i\in\Fpbar$, then we may exchange the terms in~$y_i'$ of degree
$r_i+e-1$ with terms in~$y_{i-1}$ of degree~$0$ (cf.\ \eqref{eqn:
  change of variables from BK to etale}), so it suffices in turn to
show that every choice of~$y_i'$ having nonzero terms in degrees $0$
and $[r_i,r_i+e-2]$ occurs in the image of~$\resKinfty$ (again, together with
the additional term in the statement in the case
that~$\chibar_1=\chibar_2$).

Recall~\cite[Defn.\ 2.3.1]{MR3324938} that a  pseudo-Barsotti--Tate
representation of weight~$\{r_i\}$ is a 2-dimensional crystalline
representation whose labelled Hodge--Tate weights are~$\{0,1\}$,
except at a chosen set of~$f$ embeddings lifting the embeddings
$\sigma_i:k\into\Fpbar$, where they are~$\{0,r_i\}$. By~\cite[Defn.\
4.1.3]{MR3324938}, these are the representations which have
$\sigma_{r-1,0}:=\otimes_{i}\Sym^{r_i-1}k^2\otimes_{k,\sigma_i}\Fpbar$
as a Serre weight.

Now consider~\cite[Thm.\ 5.1.5]{MR3324938}, taking the~$t_i$ there to
be zero, the~$x_i$ to be~$e-1$, and the~$s_i$ there to be
our~$r_i+e-1$ (which are not necessarily equal to our~$s_i$ -- we
apologize for this temporary notation). Note that %
with this choice,  the Breuil--Kisin modules
spanned by our basis $e_i,f'_i$ are precisely the extensions of Breuil--Kisin modules
in \cite[Thm.\ 5.1.5]{MR3324938}, for the rank one
Breuil--Kisin modules %
which are the minimal and
maximal models of $\chibar_1,\chibar_2$ as in the statement
of~\cite[Prop.\ 5.3.4]{MR3324938}.  %
So by ~\cite[Prop.\ 5.3.4, Thm.\ 5.1.5]{MR3324938}, %
if~$\psi\in\Ext^1_{G_K}(\chibar_2,\chibar_1)$ comes
from the reduction of a pseudo-Barsotti--Tate representations of weight~$\{r_i\}$, then~$\resKinfty(\psi)$
is given by an \'etale $\varphi$-module as in~\eqref{eqn: the Mi in
  image of GK}.  It therefore suffices to show that these classes
~$\resKinfty(\psi)$ %
span the
image of~$\resKinfty$.

To see this, we consider the reductions of reducible crystalline
representations. As in the proof of \cite[Thm.\ 5.4.1]{MR3324938}, we
choose crystalline characters $\chi_{1,\min}, \chi_{2,\max}$
 which lift $\chibar_1, \chibar_2$ respectively. More precisely, these
 characters  are
determined (up to unramified twist, which we do not specify) by their
Hodge--Tate weights, which (recalling that $t_i=0$ for all~$i$) we can
and do choose so that~$\chi_{2,\max}$ is unramified, and so that any
crystalline extension of~$\chi_{2,\max}$ by~$\chi_{1,\min}$ is
pseudo-Barsotti--Tate of weight~$\{r_i\}$. %

The space of crystalline extensions of~ $\chi_{2,\max}$
by~$\chi_{1,\min}$ is identified with the Galois cohomology group
$H^1_{f}(G_K,\chi_{1,\min}\chi_{2,\max}^{-1})$, and as in the proof
of~\cite[Thm.\ 5.4.1]{MR3324938}, one immediately computes that the
dimension of the image of the reduction map \numequation\label{eqn:
  redn of H1f}H^1_{f}(G_K,\chi_{1,\min}\chi_{2,\max}^{-1})\to
H^1(G_K,\chibar_1\chibar_2^{-1})\end{equation} is~$[K:\Qp]$,
unless~$\chibar_1=\chibar_2$ in which case it is~$[K:\Qp]+1$; so by
part~\eqref{item: dim image resKinfty}, this image has the same
dimension as the image of~$\resKinfty$.

In particular we see that we are done if the restriction
of~$\resKinfty$ to the image of~\eqref{eqn: redn of H1f} is
injective. If $\chibar_1\chibar_2^{-1}\ne\varepsilonbar$ then this is
automatic by part~\eqref{resKinfty injective},  so we may suppose that
$\chibar_1\chibar_2^{-1}=\varepsilonbar$.  If some~$r_i\ne p$, then
by~\cite[Lem.\ A.4]{caraiani2022local} the image
of~\eqref{eqn: redn of H1f} is contained in the peu ramifi\'ee
subspace, so %
we  again conclude by part~\eqref{resKinfty injective}. %
Finally if~$r_i=p$ for all~$i$, then as in the proof of~\cite[Thm.\
6.1.18]{MR3324938}, the union of the images of~\eqref{eqn: redn of
  H1f} as $\chi_{1,\min},\chi_{2,\max}$ range over their twists by
unramified characters with trivial reduction is all of
$H^1(G_K,\chibar_1\chibar_2^{-1})$, so we are done.%
\end{proof}

  \begin{lem}%
    \label{lem: we also win here}Suppose that
    $\sum_{j=1}^f(s_j-t_j-e)<0$. Then either there  exists an~$i$ with
    $\lfloor n_{i+1}\rfloor=-1$ and $r_i\ne p$, or there exists an~$i$ with
    $\lfloor n_{i+1}\rfloor\le -2$.
  \end{lem}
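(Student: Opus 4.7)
The plan is to argue by contradiction: assume that neither conclusion holds, so (H1) $\lfloor n_{i+1}\rfloor \ge -1$ for every $i$, and (H2) whenever $\lfloor n_{i+1}\rfloor = -1$ we have $r_i = p$. From these assumptions I will derive $\sum_{j=1}^f (s_j - t_j - e) \ge 0$, contradicting the hypothesis.

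The key algebraic step is to solve the definition \eqref{eqn: ri} of $r_i$ for $s_i - t_i - e$, giving
\[
s_i - t_i - e \;=\; (r_i - 1) \;+\; p\lfloor n_i\rfloor \;-\; \lfloor n_{i+1}\rfloor.
\]
Summing over $i \in \Z/f\Z$, the middle terms telescope cyclically since $\sum_i \lfloor n_{i+1}\rfloor = \sum_i \lfloor n_i\rfloor$, yielding
\[
S \;:=\; \sum_{i=1}^f (s_i - t_i - e) \;=\; \sum_{i=1}^f (r_i - 1) \;+\; (p-1)\sum_{i=1}^f \lfloor n_i\rfloor.
\]

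Now I bound each term under (H1) and (H2). Let $I := \{i \in \Z/f\Z : \lfloor n_i\rfloor = -1\}$ and $k := |I|$. By (H1) each $\lfloor n_i\rfloor \ge -1$, with equality only on $I$, so $\sum_i \lfloor n_i\rfloor \ge -k$. The set of $i$ with $\lfloor n_{i+1}\rfloor = -1$ is the cyclic shift $I - 1$, which also has cardinality $k$; by (H2) each such $i$ satisfies $r_i = p$, while the remaining $f - k$ indices satisfy $r_i \ge 1$ by Lemma~\ref{lem: the r i}. Therefore
\[
\sum_{i=1}^f (r_i - 1) \;\ge\; (p-1)k.
\]
Combining the two bounds gives $S \ge (p-1)k + (p-1)(-k) = 0$, contradicting the assumption $S < 0$. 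Hence one of the two stated alternatives must hold.

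The only nontrivial point is identifying the right bookkeeping: once one notices that (H2) produces exactly $k$ indices where $r_i$ attains its maximum value $p$, and that this is precisely the same $k$ controlling the negative contribution $-k$ to $\sum \lfloor n_i\rfloor$, the factor $(p-1)$ in the telescoped sum makes the two contributions cancel exactly. I do not expect a serious obstacle beyond setting up this accounting correctly.
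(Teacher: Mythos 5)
Your proof is correct and rests on the same key algebraic step as the paper's: summing the defining equation \eqref{eqn: ri} for $r_i$ over $i\in\Z/f\Z$ and using cyclic telescoping to get $\sum_i(s_i-t_i-e)=\sum_i\bigl((p-1)\lfloor n_{i+1}\rfloor+(r_i-1)\bigr)$. The paper then finishes in one line: since the sum is negative, some summand $(p-1)\lfloor n_{i+1}\rfloor+(r_i-1)$ is negative, and (because $r_i\ge1$) this forces $\lfloor n_{i+1}\rfloor\le-1$, with $r_i<p$ in the boundary case $\lfloor n_{i+1}\rfloor=-1$. Your contrapositive version — defining the set $I$, tracking the cyclic shift $I-1$, and observing the exact $(p-1)k$ cancellation — is valid but does extra bookkeeping to reach the same conclusion; the direct observation that a negative sum must have a negative term is the more economical route.
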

  \begin{proof}Summing~\eqref{eqn: ri} over all~$j$, we
    have \[\sum_{j=1}^f(s_j-t_j-e)=\sum_{j=1}^f\left((p-1)\lfloor
      n_{j+1}\rfloor+(r_j-1)\right). \]If this is negative, there exists
    an~$i$ with  \[(p-1)\lfloor n_{i+1}\rfloor+(r_i-1)< 0.\]
    Since $r_i\ge 1$, we must have  $\lfloor n_{i+1}\rfloor <0$. If
    $\lfloor n_{i+1}\rfloor=-1$ then we have $1-p+r_i-1<0$, so
    $r_i<p$, as required.
  \end{proof}
  
\begin{lem}%
  \label{lem:criteria for missing extensions}Suppose that
  $\sum_{j=1}^f(s_j-t_j-e)<0$. Then the restriction map \[\Ext^1_{\crBrMod}(\cM(\underline{s};a),\cM(\underline{t};b))\xrightarrow{\,\resK\,}
  \Ext^1_{G_K}(\chibar_2,\chibar_1)\] is not surjective.
\end{lem}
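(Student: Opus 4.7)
The plan is to exploit the gap between the two local descriptions of extension classes at hand. On one side, Lemma~\ref{lem: classes that show up for G_K representations}(3) tells us that classes in $\Ext^1_{G_{K_\infty}}(\chibar_2,\chibar_1)$ coming from $\Ext^1_{G_K}(\chibar_2,\chibar_1)$ are exactly those \'etale $\varphi$-modules of the form \eqref{eqn: the Mi in image of GK} whose $y_i$ is supported in the window $[s_i+\lfloor n_{i+1}\rfloor-e+1,\,s_i+\lfloor n_{i+1}\rfloor]$. On the other side, Lemma~\ref{lem: Breuil module extensions} forbids a crystalline Breuil extension from having any $y_j$-term of degree $l<s_j-e+\max(n_{j+1},1)$ with $l\not\equiv t_j\pmod p$. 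The hypothesis $\sum_{j=1}^f(s_j-t_j-e)<0$, through Lemma~\ref{lem: we also win here}, creates an index $i$ where these two windows overlap; I would produce a class living in the first window with a nonzero term at a degree in the forbidden range.

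Concretely, fix $i$ as in Lemma~\ref{lem: we also win here}, so either \textbf{(i)} $\lfloor n_{i+1}\rfloor=-1$ and $r_i\ne p$, or \textbf{(ii)} $\lfloor n_{i+1}\rfloor\le -2$. In both cases $n_{i+1}<0$, so $\max(n_{i+1},1)=1$ and the Breuil constraint forbids terms of $y_i$ of degree $l\le s_i-e$ with $l\not\equiv t_i\pmod p$. In case (i) I would take $l=s_i-e$, which lies in the allowed window $[s_i-e,s_i-1]$; by \eqref{eqn: ri} applied with $\lfloor n_{i+1}\rfloor=-1$ one finds $s_i-e-t_i\equiv r_i\pmod p$, and since $1\le r_i\le p-1$ this gives $l\not\equiv t_i\pmod p$. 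In case (ii) the intersection of the allowed window with $\{l\le s_i-e\}$ is nonempty, and one checks it has length at least $\min(e,-\lfloor n_{i+1}\rfloor)\ge 2$ as soon as $e\ge 2$, so there is room to pick such an $l$ avoiding the single residue class $t_i\bmod p$. (The borderline case $e=1$ with $r_i=p$ collapses the window to a single point congruent to $t_i$ mod $p$; this is the one place where extra care is needed, see below.)

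With $l$ chosen, form the \'etale $\varphi$-module $M$ of shape \eqref{eqn: the Mi in image of GK} with $y_i=u^l$ and $y_j=0$ for $j\ne i$. By Lemma~\ref{lem: classes that show up for G_K representations}(3), the class $[M]\in \Ext^1_{G_{K_\infty}}(\chibar_2,\chibar_1)$ lies in the image of $\resKinfty$; choose any preimage $\psi\in \Ext^1_{G_K}(\chibar_2,\chibar_1)$ (in the exceptional case $\chibar_1\chibar_2^{-1}=\varepsilonbar$ the preimage is a coset, but either representative suffices, since the kernel of $\resKinfty$ is tr\`es ramifi\'ee and any Breuil model would produce a canonical form of \eqref{eqn: the Mi in image of GK} contradicting our choice of $l$ modulo that line). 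If $\psi=\resK(\widetilde\psi)$ for a crystalline Breuil module extension $\widetilde\psi$, then by exactness of $T_\infty$ the \'etale $\varphi$-module $T_\infty(\widetilde\psi)$ represents $[M]$; after normalizing via basis change to the canonical representative determined by Lemma~\ref{lem: classes that show up for G_K representations}(3) we obtain $M$ itself, whose $y_i$ contains $u^l$, contradicting Lemma~\ref{lem: Breuil module extensions}.

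The main obstacle I anticipate is the case analysis at the end of step two, specifically the narrow subcase $e=1$, $\lfloor n_{i+1}\rfloor\le -2$, $r_i=p$, where the single allowed degree is congruent to $t_i\bmod p$. Resolving this requires either refining the choice of index in Lemma~\ref{lem: we also win here} (e.g.\ showing a different $i$ with $\lfloor n_{i+1}\rfloor=-1$ must coexist, using that $\sum_j((p-1)\lfloor n_{j+1}\rfloor+r_j-1)=(p-1)\sum_j n_j<0$), or producing a combination of basis changes across several $y_j$'s to witness a canonical representative outside the Breuil constraint. Apart from this combinatorial edge case, the argument reduces to comparing two explicit windows of degrees.
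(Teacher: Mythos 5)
Your opening steps match the paper closely: the choice $l=s_i-e$ when $\lfloor n_{i+1}\rfloor=-1$ and $r_i\ne p$ is exactly the paper's $x_i$, and the plan of exhibiting an \'etale $\varphi$-module in the image of $\resKinfty$ but outside the image of $\resKinfty\circ\resK$ is the right one. However, the decisive step is missing.

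The gap is in the sentence ``after normalizing via basis change to the canonical representative determined by Lemma~\ref{lem: classes that show up for G_K representations}(3) we obtain $M$ itself, whose $y_i$ contains $u^l$, contradicting Lemma~\ref{lem: Breuil module extensions}.'' Lemma~\ref{lem: Breuil module extensions} constrains the $y_j$'s appearing in the \emph{Breuil--Kisin normal form} of Proposition~\ref{prop:kisin-module-extensions}, where $y_j\in\Fpbar\llbracket u\rrbracket$ is a polynomial of degree $<s_j$. The $y_j$'s in the \'etale normal form of Lemma~\ref{lem: classes that show up for G_K representations}(3) are supported in the window $[s_j+\lfloor n_{j+1}\rfloor-e+1,\,s_j+\lfloor n_{j+1}\rfloor]$, which can include negative degrees and in general lies in a different range. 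The two descriptions are related by a change of basis $f'_j\mapsto f_j+\lambda_j e_j$ with $\lambda_j\in\Fpbar((u))$, which transforms the $y_j$'s according to~\eqref{eqn: change of variables from BK to etale}. Because this change of basis is nontrivial, having a ``bad'' term $u^l$ in the \'etale-normal-form $y_i$ does \emph{not} imply the Breuil--Kisin-normal-form $y_i$ has a bad term; the constraint of Lemma~\ref{lem: Breuil module extensions} simply does not see the \'etale coordinate. The substance of the paper's proof is precisely the analysis of whether any such $\lambda_j$'s can exist: starting from the forced existence of a term of degree $l-s_i\le -e$ in $\lambda_i$ (because neither $y_i$ nor $u^{t_i}\varphi(\lambda_{i-1})$ can supply it), one propagates a lower bound $v_j\le -e$ on the $u$-adic valuations of all $\lambda_j$ around the cycle, and then derives from $pv_j-v_{j+1}\ge -e(p-2)$ the contrary bound $v_j>-e$. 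This contradiction is what actually rules out a crystalline Breuil source; without it the argument does not close.

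There is also a secondary, smaller gap in the $e=1$, $\lfloor n_{i+1}\rfloor\le -2$ subcase, which you flag but do not resolve. In fact it is vacuous: since $s_j-t_j-e\ge -e(p-1)$ for all $j$, the definition~\eqref{eqn: definition of ni} gives $n_j\ge -e$ for all $j$, so if $e=1$ then $\lfloor n_{i+1}\rfloor\ge -1$ and this subcase cannot occur. Neither of the alternatives you suggest (refining the choice of $i$, or combining basis changes across several $y_j$'s) is needed.
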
%
\begin{proof}%
  It suffices to show that $\im(\resKinfty\circ\resK)$ is a proper
  subspace of~$\im(\resKinfty)$. %
  Viewing classes in
  $\Ext^1_{G_{K_\infty}}(\chibar_2,\chibar_1)$ as \'etale
  $\varphi$-modules, it therefore suffices to exhibit an \'etale 
  $\varphi$-module as in the statement of %
  Lemma~\ref{lem: classes that show up for G_K representations}~(3) which
  is not in the image of~$\resKinfty\circ\resK$.

By Lemma~\ref{lem: we also win here}, we may assume that for some~$i$
we either have  $\lfloor
n_{i+1}\rfloor=-1$ and $r_i\ne p$, or we have $\lfloor n_{i+1}\rfloor\le
-2$. If~$r_i\ne p$ then we set $x_i=s_i+\lfloor
  n_{i+1}\rfloor-e+1$, %
  while if  $r_i=p$ (so that $\lfloor n_{i+1}\rfloor\le
-2$) we set $x_i=s_i+\lfloor
n_{i+1}\rfloor-e+2$. %
It follows from~\eqref{eqn: ri}  that $s_i+\lfloor
  n_{i+1}\rfloor-e+1\equiv t_i+r_i\pmod{p}$, so we have
\numequation\label{eqn: xi not ti mod p}x_i\not\equiv
t_i\pmod{p}.\end{equation} %
We claim that we also have
\numequation\label{eqn: xi
  inequality for image restriction} s_i+\lfloor
  n_{i+1}\rfloor-e+1\le x_i\le s_i+\lfloor
  n_{i+1}\rfloor.\end{equation}Indeed by definition we have $x_i=s_i+\lfloor
n_{i+1}\rfloor-e+1$ or $x_i=s_i+\lfloor
n_{i+1}\rfloor-e+2$, so the lower bound is immediate, and the upper
bound is also automatic unless~$e=1$. If~$e=1$, we need to rule out
the possibility that we are in the case $x_i=s_i+\lfloor
n_{i+1}\rfloor-e+2$. In this case we assumed that $\lfloor n_{i+1}\rfloor\le
-2$, so in particular $n_{i+1}<-1$; but since we have $s_j-t_j-e\ge
-e(p-1)$ for all~$j$, it follows from~\eqref{eqn: definition of
  ni} that we have $n_j\ge -e$ for all~$j$, and in particular if
~$e=1$ we  have~$n_{i+1}\ge -1$, as required.

We also have $x_i\le s_i-e$ (because if $\lfloor n_{i+1}\rfloor=-1$
then $x_i=s_i+\lfloor n_{i+1}\rfloor-e+1=s_i-e$, and otherwise
$\lfloor n_{i+1}\rfloor\le -2$ and we have
$x_i=s_i+\lfloor n_{i+1}\rfloor-e+2\le s_i-e$), i.e.\
\numequation\label{eqn: xi inequality} x_i < s_i-e+1 = 
s_i-e+\max(n_{i+1},1).\end{equation} (We have written the inequality in this form so that we can apply Lemma~\ref{lem: Breuil module extensions}.)
Set~$y'_i=u^{x_i}$ and~$y'_j=0$ for
all~$j\ne i$. By~\eqref{eqn: xi
inequality for image restriction} and Lemma~\ref{lem: classes that show up for G_K
  representations}, it suffices to show that the  \'etale
$\varphi$-module~$M$ arising from taking the~$y_j$ in~\eqref{eqn: the Mi
  in image of GK} to be our~$y'_j$ 
is not of the form~$\gM[1/u]$ for any Breuil--Kisin module~$\gM$ satisfying the constraints of
Lemma~\ref{lem: Breuil module extensions}. 

Suppose on the contrary that~$\gM$ as in~\eqref{eqn: basis for
  extension of BK modules} has $\gM[1/u]\cong M$. This means that there is a change of variables
$e'_j=e_j$, $f_j'=f_j+\lambda_j e_j$ with $\lambda_j\in \Fpbar((u))$
having the property that for all~$j$, we have
\[\varphi(f'_{j-1})= (a)_j u^{s_j} f'_j + y_j e_j.\] Equivalently, for
each~$j$ we must have 
\numequation\label{eqn: change of variables
  from BK to etale}y_j=y'_j+ (b)_j u^{t_j}\varphi(\lambda_{j-1})
-(a)_j u^{s_j}\lambda_j.\end{equation}

Recall that we chose
$y'_i=u^{x_i}$, where~$x_i$ satisfies~\eqref{eqn: xi not ti mod p} and~\eqref{eqn: xi inequality}, so  the coefficient of~ $u^{x_i}$ in $y_i$ is
zero by Lemma~\ref{lem: Breuil module
  extensions}. The coefficient of~$u^{x_i}$
in $u^{t_i}\varphi(\lambda_{i-1})$ is also zero (again by~\eqref{eqn: xi not ti mod p}), so it follows
from~\eqref{eqn: change of variables from BK to etale} with~$j=i$ that
the coefficient of~$u^{x_i}$ in~$u^{s_i}\lambda_i$ is nonzero. Thus
$\lambda_i$ has a term of degree $x_i-s_i$. By~\eqref{eqn: xi
  inequality} we have $x_i-s_i\le -e$.

We claim that this implies that every~$\lambda_j$ has a term of degree
at most $-e$. To see this we rewrite ~\eqref{eqn: change of variables
  from BK to etale} for~$j$ replaced by~$j+1$ in the form
\numequation\label{eqn: rewritten for lambda valuation}(a)_{j+1} u^{s_{j+1}}\lambda_{j+1}=(y'_{j+1}-y_{j+1})+ (b)_{j+1} u^{t_{j+1}}\varphi(\lambda_{j}).\end{equation}
If~$j+1\ne i$ then $y'_{j+1}-y_{j+1}\in\Fpbar[[u]]$, so if $\lambda_{j}$ has a term of degree
at most $-e$, then  $u^{t_{j+1}}\varphi(\lambda_{j})$ has a term of
degree at most $t_{j+1}-ep$, which must cancel with a term in
$u^{s_{j+1}}\lambda_{j+1}$. Thus~$\lambda_{j+1}$ has a term of degree at most
$t_{j+1}-ep-s_{j+1}\le e(p-2)-ep=-2e<-e$, so the claim follows from induction
(beginning with the case~$j=i$).

Now~$v_j\le -e$ denote the $u$-adic
valuation of~$\lambda_j$. Then $u^{t_{j+1}}\varphi(\lambda_{j})$  has a nonzero term of degree
$t_{j+1}+pv_j$, which again must cancel with a term in
$u^{s_{j+1}}\lambda_{j+1}$. (Indeed, we have $t_{j+1}+pv_j\le
e(p-2)-ep<0$, and the only possible term in any $y'_{j+1}-y_{j+1}$ of negative
degree is the term~$u^{x_i}$ in~$y'_i$, which cannot cancel with a
term of degree $t_{i}+pv_{i-1}$ by~\eqref{eqn: xi not ti mod p}). We
therefore have \[v_{j+1}\le t_{j+1}-s_{j+1}+pv_j\le
  t_{j+1}+pv_j\le e(p-2)+pv_j,\] i.e.\ \numequation\label{eqn: lower
  bounds on lambda from phi relation}pv_j-v_{j+1}\ge -e(p-2).\end{equation} Summing
these inequalities multiplied by appropriate powers of~$p$, we have
 \[\sum_{j=1}^fp^{f-j}(pv_{j+i-1}-v_{j+i})\ge
  -e(p-2)(p^f-1)/(p-1), \] so that 
 $v_j\ge
-e(p-2)/(p-1)>-e$ for each~$j$. Since we already saw that $v_j\le -e$
for all~$j$, we have a contradiction, and we are done.
\end{proof}

\begin{defn}\label{defn: generic extension class}
Let $\chibar_1,\chibar_2:G_K\to\Fpbartimes$ be two characters. We say that an element of $\Ext^1_{G_K}(\chibar_2,\chibar_1)$ is generic if it is not in the image of the restriction map \[\Ext^1_{\crBrMod}(\cM(\underline{s};a),\cM(\underline{t};b))\xrightarrow{\,\resK\,}
  \Ext^1_{G_K}(\chibar_2,\chibar_1)\]
for any rank 1 Breuil modules $\cM(\underline{s};a)$ and $\cM(\underline{t};b))$ with $T(\cM(\underline{t};b))=\chibar_1$,
$T(\cM(\underline{s};a))=\chibar_2$ and $\sum_{j=1}^f(s_j-t_j-e)<0$.
\end{defn}
\begin{rem}\label{rem: non generic is finite proper union}
  Note that by Lemma \ref{lem:criteria for missing extensions}, the
  generic extensions in $\Ext^1_{G_K}(\chibar_2,\chibar_1)$ are the
  complement of the union of finitely many proper subspaces.
\end{rem}

\begin{rem}
  \label{rem: ad hoc working with Fpbar points}Definition~\ref{defn:
    generic extension class} may seem a little ad hoc, but it is
  closely related to the condition of being a generic $\Fpbar$-point
  on an irreducible component of the 2-dimensional Emerton--Gee stack
  (which we recall in Section~\ref{subsec:
  special fibre EG stacks}). %
To make this precise, we
would need to work simultaneously with arbitrary unramified twists of the
characters~$\chibar_1,\chibar_2$. While it is clear that the arguments
above are uniform across such unramified twists, and we could presumably
formulate and prove our results in the context of stacks of Breuil
modules (and Breuil--Kisin modules), there does not seem to be any
benefit in doing so. Indeed, while working with $\Fpbar$-points
occasionally leads to slightly clumsy formulations, we view it as a
feature of the structural results proved in~\cite{emertongeepicture}
(see e.g.\ Theorem~\ref{thm:Xdred is algebraic}) that we can prove
statements about families of Galois representations (e.g.\ lifting
rings) by only thinking about representations valued in~$\Fpbar$.
\end{rem}

\begin{rem}\label{rem: extension class is close to sharp}While it may
  be possible to use other integral $p$-adic Hodge theories (e.g.\
  $(\varphi,\Ghat)$-modules) to prove a version of Lemma~\ref{lem:criteria for missing extensions} which could apply to the reductions of crystalline
  representations in a greater range of Hodge--Tate weights than
  $[0,p-2]$, it is unlikely that it can be significantly improved. %
  Indeed already for $K=\Qp$, there are irreducible 2-dimensional
  crystalline representations of $G_{\Qp}$ with Hodge--Tate weights
  $0,p+2$ whose corresponding mod~$p$ Breuil--Kisin modules are of the
  form $\begin{pmatrix}bu^p&x\\0&au^2\end{pmatrix}$ where
  $a,b\in\Fpbartimes$ and $x\in\Fpbar$ are arbitrary, and consequently
  give all extensions of the corresponding characters of~$G_{\Qp}$
  when~$a\ne b$. (In addition, it is not clear to us whether the
  analogue of Lemma~\ref{lem: Breuil module extensions} holds for
  $(\varphi,\Ghat)$-modules, even in height $[0,p-2]$, although we
  have not seriously pursued this question.) %
\end{rem}

\subsection{Generic weight 0 crystalline representations}%
         \label{subsec: generic weight 0}                       %
In this subsection and the next, in order to be compatible with the notation of \cite{emertongeepicture}, we work with $d$-dimensional rather than $n$-dimensional
representations.                            
\begin{defn}\label{defn: generic rhobar}
We say that a representation $\rhobar:G_K\to\GL_d(\FF)$ is generic if it has the form
 \begin{equation*}\rhobar\cong \begin{pmatrix}
      \chibar_d &*&\dots &*\\
      0& \chibar_{d-1}&\dots &*\\
      \vdots&& \ddots &\vdots\\
      0&\dots&0& \chibar_1\\
    \end{pmatrix}  \end{equation*}
    and for $i=1,\ldots,d-1$, the off diagonal extension class in $\Ext^1_{G_K}(\chibar_i,\chibar_{i+1})$ is generic in the sense of Definition \ref{defn: generic extension class}.
\end{defn}
\begin{thm}\label{thm: rho generic ordinary}
Suppose $p>d$ and let $\rho:G_K\to\GL_d(\cO)$ be a weight 0 crystalline representation such that $\rhobar$ is generic in the sense of Definition \ref{defn: generic rhobar}.  Then
\begin{equation*}
\rho\simeq\begin{pmatrix}
    \ur_{\lambda_d} &*&\dots &*\\
      0& \ur_{\lambda_{d-1}}\varepsilon^{-1}&\dots &*\\
      \vdots&& \ddots &\vdots\\
      0&\dots&0& \ur_{\lambda_1}\varepsilon^{1-d}\\
\end{pmatrix}
\end{equation*}
for some $\lambda_1,\ldots,\lambda_d\in\cO^\times$.
\end{thm}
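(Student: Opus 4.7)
The plan is to invoke Theorem~\ref{thm: reductions of crystalline reps are crystalline Breuil modules}: since $p>d$ ensures $d-1 \le p-2$, the reduction $\rhobar$ is of the form $T(\cM)$ for a crystalline Breuil module $\cM$ with $\F$-coefficients, whose underlying Breuil--Kisin module $\gM$ is the reduction modulo $\varpi$ of the Kisin module $\gM_{\cO}$ attached to $\rho|_{G_{K_\infty}}$. By the order-preserving bijection of Theorem~\ref{thm: properties of crystalline Breuil modules}(2), the upper-triangular filtration on $\rhobar$ lifts to a filtration $0 = \cM_0 \subset \cM_1 \subset \cdots \subset \cM_d = \cM$ of Breuil submodules of $\cM$ whose rank~$1$ graded pieces are of the form $\cM(\underline{s}_k; a_k)$, with $\gr_k$ realizing $\chibar_k$ under $T$.

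Next I would extract two families of constraints on the parameters $s_{k,j}$. For each $i \in \{1,\ldots,d-1\}$, the adjacent subquotient in the filtration realizes the off-diagonal extension class of $\rhobar$ in $\Ext^1_{G_K}(\chibar_i, \chibar_{i+1})$ as the image under $\resK$ of an element of $\Ext^1_{\crBrMod}(\cM(\underline{s}_i; a_i), \cM(\underline{s}_{i+1}; a_{i+1}))$. Since this class is generic by hypothesis (Definition~\ref{defn: generic extension class}), Lemma~\ref{lem:criteria for missing extensions} forces the inequality $\sum_{j=1}^f (s_{i,j} - s_{i+1,j} - e) \ge 0$. On the other hand, the weight~$0$ assumption on $\rho$ makes $\det(1\otimes\varphi)$ on $\gM_{\cO}$ equal to $E(u)^{d(d-1)/2}$ up to a unit in each component; reducing modulo $\varpi$ and using $E(u) \equiv u^e \pmod p$ gives $\sum_k s_{k,j} = e \cdot d(d-1)/2$ for every $j$. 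Writing $S_k := \sum_j s_{k,j}$, the inequalities $S_k - S_{k+1} \ge fe$ combined with $\sum_k S_k = fe \cdot d(d-1)/2$ then force $S_k = (d-k)fe$ for every $k$.

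The next step is to upgrade these totals to the pointwise identities $s_{k,j} = (d-k)e$. The extremal layers are immediate: $S_d = 0$ and $s_{d,j} \ge 0$ give $s_{d,j} = 0$, while $S_1 = (d-1)fe$ together with the height bound $s_{1,j} \le (d-1)e$ gives $s_{1,j} = (d-1)e$. For intermediate $k$ I would pass to the integral level: a compatible filtration of $\gM_{\cO}$ has rank $1$ graded pieces which are Kisin modules of crystalline characters $\chi_k$ lifting $\chibar_k$, whose HT weights $h_{k,\tau}$ satisfy the dictionary $s_{k,j} = \sum_{\tau | \sigma_j} h_{k,\tau}$. The weight~$0$ condition on $\rho$ forces $\{h_{k,\tau}\}_{k=1}^d = \{0,1,\ldots,d-1\}$ as a multiset at each $\tau$, and a descending induction on $k$ starting from $k=d$, combining this per-$\tau$ multiset constraint with $\sum_\tau h_{k,\tau} = (d-k)[K:\Qp]$ and $h_{k,\tau} \ge 0$, pins down $h_{k,\tau} = d-k$ for all $k,\tau$. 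Each $\chi_k$ is thus a crystalline character of parallel HT weight $d-k$ and hence equal to $\ur_{\lambda_k}\varepsilon^{k-d}$ for some $\lambda_k \in \cO^\times$, which gives the asserted upper-triangular form of $\rho$.

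The hardest part is justifying that the Breuil submodule filtration on $\cM$ actually lifts to a Breuil--Kisin submodule filtration on $\gM_{\cO}$ (equivalently, that $\rho$ itself is upper-triangular). This is where Kisin's deformation theory of ordinary Breuil modules \cite[Prop.\ 2.4.14]{kis04} enters: one must argue that the sum-level identities $S_k = (d-k)fe$ already suffice to place $\cM$ in Kisin's ordinary regime, so that every crystalline $\cO$-lift automatically inherits a compatible filtration. Packaging the mod-$\varpi$ analysis so as to certify ordinariness without already assuming the pointwise identities is the delicate technical core of the argument.
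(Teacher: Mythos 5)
Your framework is the right one (Liu's theorem, the filtration bijection, the genericity inequalities from Lemma~\ref{lem:criteria for missing extensions}), and your first block of manipulations does correctly pin down the extremal layers $s_{d,j}=0$ and $s_{1,j}=(d-1)e$. But the last two paragraphs contain a genuine gap, and it is precisely the gap you yourself flag: nothing in your argument actually produces a filtration of $\gM_{\cO}$, or even certifies that $\rho$ is upper-triangular. Worse, the paragraph about the $h_{k,\tau}$ is circular — it \emph{assumes} a compatible filtration of $\gM_{\cO}$ exists in order to define the $\chi_k$ and derive $h_{k,\tau}=d-k$, which is the very statement you are trying to establish. Determining the intermediate $s_{k,j}$ does not help here: even with all $s_{k,j}$ known exactly, you still have no mechanism to lift the full Breuil--Kisin filtration from $\overline{\gM}$ to $\gM_{\cO}$.

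The paper avoids this entirely by inducting on $d$. One does \emph{not} need the intermediate $s_{k,j}$ and does not need to lift the whole filtration at once: it suffices to produce a single rank-$1$ quotient $\rho\onto\ur_{\lambda_1}\varepsilon^{1-d}$ and then apply the inductive hypothesis to the kernel (whose reduction inherits genericity, being the unique $(d-1)$-dimensional subrepresentation). To produce that quotient, one uses exactly your observation $s(d)_j=e(d-1)$: this says $\overline{\gM}$ has a rank-$1$ quotient with $\varphi\sim u^{e(d-1)}$, and after passing to the height-$(d-1)$ dual this becomes a rank-$1$ multiplicative \emph{sub}module of $\overline{\gM}^*$. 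Kisin's maximal multiplicative submodule argument (\cite[Prop.\ 1.2.11]{kis04}, suitably generalized to height $d-1$) then lifts it to $\gM_{\cO}^*$; the weight-$0$ hypothesis guarantees the rank stays exactly $1$ (the unramified subrepresentation of $\rho$ has dimension at most $1$); dualizing back and invoking full faithfulness of the Breuil--Kisin functor on crystalline lattices yields the desired map. So the ``delicate technical core'' you correctly identify is tractable precisely because the induction reduces it to lifting a single rank-$1$ quotient rather than a full flag.
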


\begin{proof}
The proof will be by induction on $d$.  The base case $d=1$ is trivial.  For the inductive step, we claim that $\rho$ fits in an exact sequence
$$0\to\rho'\to\rho\to\ur_{\lambda_1}\varepsilon^{1-d}\to 0.$$
Admitting this for the moment, $\rho'$ is a $d-1$ dimensional crystalline representation of weight 0, and $\rhobar'$ is generic (since $\rhobar$ has a unique $d-1$ dimensional subrepresentation, which is generic). We conclude by induction on $d$.

We now prove the key claim above.  As the Hodge--Tate weights of $\rho$ are contained in the interval $[0,d-1]\subseteq[0,p-2]$, by Theorem~\ref{thm: reductions of crystalline reps are crystalline
    Breuil modules} there is a crystalline Breuil module~$\cM$ of
  rank~$d$ with $\rhobar\cong T(\cM)$, whose underlying Breuil--Kisin
  module has height at most~$(d-1)$. By Theorem~\ref{thm: properties of crystalline Breuil
    modules}~(2), the unique maximal filtration on~$\rhobar$ determines
  a filtration $0=\cM^0\subset\cM^1\subset\dots\subset\cM^d=\cM$ by
  crystalline Breuil submodules.  Write $\cM^i/\cM^{i-1}\simeq \cM(\underline{s(i)};a_i)$ in the notation of Lemma~\ref{lem: classification of rank
    one Breuil modules}.
    
    It follows from Lemma \ref{lem:criteria for missing extensions} and the definition of genericity that for each $1\leq i\leq d-1$ we have
   \[\sum_{j=1}^f(s(i+1)_j-s(i)_j-e)\ge 0.\]  Summing
these inequalities over~$i$, we obtain
  \[\sum_{j=1}^f(s(d)_j-s(1)_j)\ge ef(d-1).\] Since the underlying Breuil--Kisin
  module of~$\cM$ has height at most~$(d-1)$, we have $s(i)_j\leq e(d-1)$ for all $i,j$, and hence $s(d)_j-s(1)_j\leq e(d-1)$ for all $j$.  Since we also have the reverse inequality summed over $f$ this implies that $s(d)_j-s(1)_j=e(d-1)$ for all $j$, and hence $s(1)_j=0$ and $s(d)_j=e(d-1)$ for all $j$.
  
Now let $\gM/\gS_{\cO}$ be the Breuil--Kisin module associated to $\rho$, and $\overline{\gM}=\gM\otimes_{\cO}\FF$.  This is the Breuil--Kisin module underlying $\mathcal{M}$ by Theorem \ref{thm: reductions of crystalline reps are crystalline
    Breuil modules}. Since $s(d)_j = e(d-1)$ for all $j$, we have shown that $\overline{\gM}$ has a
  rank $1$ quotient $\overline{\gM}\to\gS_{\FF}\cdot v$ where
  $\varphi(\overline{v})=\overline{\lambda} u^{e(d-1)}\overline{v}$ for some
  $\overline{\lambda}\in (k\otimes\FF)^\times$.  It follows from
  ~\cite[Prop.\ 1.2.11]{kis04} (or rather its obvious
  generalization from height $1$ to height $(d-1$) Breuil--Kisin
  modules) %
  that this lifts to a quotient $\gM\to\gS_{\cO}\cdot v$
  where $\varphi(v)=\lambda E(u)^{d-1}v$ for some $\lambda\in
  (W(k)\otimes\cO)^\times$. Indeed, using height $(d-1)$ duality \cite[\S3.1]{Liu07}, we need to lift a rank one `multiplicative' submodule of $\overline{\gM}^*$ to ${\gM}^*$, where multiplicative means that the linearization of $\varphi$ is an isomorphism. As in~\cite[Prop.\ 1.2.11]{kis04}, we have a maximal multiplicative submodule ${\gM}^{*,m}$ of ${\gM}^*$ which lifts the maximal multiplicative submodule of $\overline{\gM}^*$ and therefore has rank at least one. Since $\rho$ is weight $0$ crystalline, its maximal unramified subrepresentation has dimension at most one. It follows that ${\gM}^{*,m}$ has rank one and is the desired lift.
  
  Finally it follows from the full
  faithfulness of the functor from lattices in crystalline
  representations to Breuil--Kisin modules (see~\cite[Prop.\
  1.3.15]{MR2263197}, or for the precise statement we are using here~\cite[Thm.\ 1.2.1]{kisin-abelian}) that there is a nonzero map
  $\rho\to\ur_{\lambda_1}\varepsilon^{1-d}$.
\end{proof}

\begin{cor}\label{cor: rhobar generic ordinary}
Let $\overline{\rho}:G_K\to\GL_d(\FF)$ be a generic representation.  Suppose that $\overline{\rho}$ has a crystalline lift of weight 0.  Then $\overline{\rho}$ has the form
\begin{equation*}\rhobar\cong\begin{pmatrix}
    \ur_{\overline{\lambda}_d} &*&\dots &*\\
      0& \ur_{\overline{\lambda}_{d-1}}\varepsilonbar^{-1}&\dots &*\\
      \vdots&& \ddots &\vdots\\
      0&\dots&0& \ur_{\overline{\lambda}_1}\varepsilonbar^{1-d}\\
    \end{pmatrix}
    \end{equation*}
    and moreover the off-diagonal extensions are peu ramifi\'ee.
    \end{cor}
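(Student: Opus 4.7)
\medskip

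\noindent\textbf{Proof plan.} The first assertion is immediate: applying Theorem~\ref{thm: rho generic ordinary} to the given crystalline lift~$\rho$ of~$\overline{\rho}$ and reducing modulo~$\varpi$ yields the claimed upper-triangular shape for~$\overline{\rho}$, with $\overline{\lambda}_i$ the reduction of~$\lambda_i$. The content is therefore the statement that the consecutive off-diagonal extension classes are peu ramifi\'ee.

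The plan is to extract this from the crystalline lift. For each $1\le i\le d-1$, the ordinary filtration provided by Theorem~\ref{thm: rho generic ordinary} gives a two-dimensional subquotient $\rho_i$ of~$\rho$ whose graded pieces, from top to bottom, are $\ur_{\lambda_{d-i+1}}\varepsilon^{1-i}$ and $\ur_{\lambda_{d-i}}\varepsilon^{-i}$. Since $\rho$ is crystalline of weight~$0$ and the filtration matches the Hodge filtration (being the unique one with these graded pieces on an ordinary representation), $\rho_i$ is itself crystalline, with Hodge--Tate weights~$\{i-1,i\}$. Twisting by $\varepsilon^{i-1}$ we obtain a crystalline representation $\rho_i\otimes\varepsilon^{i-1}$ of Hodge--Tate weights $\{0,1\}$, which is therefore the generic fibre of a $p$-divisible group over~$\cO_K$ (by the theorems of Breuil/Kisin, or simply because such representations are by definition Barsotti--Tate).

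Reducing modulo~$\varpi$, the extension $\overline{\rho}_i\otimes\varepsilonbar^{i-1}$ is thus the generic fibre of a finite flat $\cO_K$-group scheme, i.e.\ an extension of $\ur_{\overline{\lambda}_{d-i}}\varepsilonbar^{-1}$ by $\ur_{\overline{\lambda}_{d-i+1}}$ arising from a finite flat group scheme. By the classical characterisation of peu ramifi\'ee extensions (a class in $\Ext^1_{G_K}(\ur_{\mu}\varepsilonbar^{-1},\ur_{\mu'})$ is peu ramifi\'ee iff it comes from a finite flat $\cO_K$-group scheme; equivalently, under Kummer theory it lies in the image of $\cO_K^\times/(\cO_K^\times)^p$), this says exactly that the corresponding extension class in $\Ext^1_{G_K}(\chibar_i,\chibar_{i+1})$ is peu ramifi\'ee. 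Since this holds for every~$i$, we are done.

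The only potential subtlety is to make sure that the two-dimensional \emph{subquotient} obtained from rows $i,i+1$ of~$\rho$ really is the lift of the corresponding subquotient of~$\overline{\rho}$; this is immediate from the fact that the filtration on~$\rho$ reduces modulo~$\varpi$ to the unique maximal filtration on~$\overline{\rho}$, which is the one appearing in Definition~\ref{defn: generic rhobar}. No further calculation is needed.
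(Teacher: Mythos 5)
Your proposal is correct and takes essentially the same approach as the paper's proof. The paper's argument is much more terse — it deduces the first claim from Theorem~\ref{thm: rho generic ordinary} and then invokes the fact (with a citation to \cite[Lem.\ A.4]{caraiani2022local}) that the reduction of a crystalline representation of the shape $\begin{pmatrix}\ur_{\lambda_2}&*\\0&\ur_{\lambda_1}\varepsilon^{-1}\end{pmatrix}$ is finite flat and hence peu ramifi\'ee — whereas you spell out the reduction to $2$-dimensional subquotients, the twist normalising to Hodge--Tate weights $\{0,1\}$, and the Barsotti--Tate/finite-flat-group-scheme characterisation of peu ramifi\'ee classes, but the underlying argument is the same.
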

    
    \begin{proof}
    The first statement is immediate from Theorem \ref{thm: rho generic ordinary}, while the claim about the extensions follows from the fact that the  reduction of a crystalline representation
    $$\begin{pmatrix}\ur_{\lambda_2}&*\\0&\ur_{\lambda_1}\varepsilon^{-1}\end{pmatrix}$$
    is peu ramifi\'ee (e.g.~by~\cite[Lem.\ A.4]{caraiani2022local}; the reduction of such a representation is finite flat, hence peu ramifi\'{e}e).
    \end{proof}

\subsection{Recollections on Emerton--Gee stacks}\label{subsec:
  special fibre EG stacks}We now recall some of the main results
of~\cite{emertongeepicture}, and prove a slight extension of them. We
use the notation of~\cite{emertongeepicture}, and in particular we continue to work with $d$-dimensional rather than $n$-dimensional
representations.

As above, we let $E/\Qp$ be a finite extension containing the Galois closure of~$K$, with ring of
integers~$\cO$, uniformizer~$\varpi$, and residue
field~$\cO/\varpi=\F$. %
The stack~$\cX_d$
over~$\Spf\cO$ is defined in~\cite[Defn.\
3.2.1]{emertongeepicture}. It is a stack of
$(\varphi,\Gamma)$-modules, but if $\F'/\F$ is a finite extension (or
if~$\F'=\Fpbar$), then the groupoid of points $x\in\cX_d(\F')$ is
canonically equivalent to the groupoid of Galois representations
$\rhobar: G_K \to \GL_d(\F')$ \cite[\S 3.6.1]{emertongeepicture}, and
we use this identification without comment below.  The stack~$\cX_d$
is a Noetherian formal algebraic stack \cite[Cor.\
5.5.18]{emertongeepicture}, and it admits closed substacks cut out by
(potentially) crystalline or semistable conditions. In particular
there is a closed substack~$\cX_d^{\crys,0}$ of~$\cX_d$ corresponding
to crystalline representations of weight~$0$, which has the
following properties.
\begin{prop}\leavevmode  \label{prop: crystalline weight 0 substack}
  \begin{enumerate}
  \item $\cX_d^{\crys,0}$ is a $p$-adic formal algebraic stack, which
    is flat over~$\Spf\cO$ and of finite type. In particular, the
    special fibre
    $\overline{\cX}_{d}^{\crys,0}:=\cX_{d}^{\crys,0}\times_{\Spf\cO}\Spec
    \F$ is an algebraic stack.
  \item  If~$\Acirc$ is a finite
    flat~$\cO$-algebra, then $\cX_{d}^{\crys,0}(\Acirc)$ \ is
    the subgroupoid of~$\cX_{d}(\Acirc)$ consisting of
    $G_K$-representations which after inverting~$p$ are crystalline of
    weight~$0$.%
     \item The special fibre $\overline{\cX}_{d}^{\crys,0}:=\cX_{d}^{\crys,0}\times_{\Spf\cO}\Spec \F$   is equidimensional of dimension~$[K:\Q_p] d(d-1)/2$. 
  \item For any finite extension $\F'$ of $\F$ and any point  $x:\Spec \F'\to\cXbar_{d}^{\crys,0}$,
  there
is a versal morphism $\Spf
R^{\crys,0,\cO'}_{\rhobar}\to\cX_{d}^{\crys,0}$ at~$x$, where
$\rhobar: G_K \to \GL_d(\F')$ is the representation corresponding to~$x$,
$\cO':=W(\F')\otimes_{W(\F)}\cO$,  and $R^{\crys,0,\cO'}_{\rhobar}$
is the weight~$0$ crystalline lifting ring. %
   \end{enumerate}

\end{prop}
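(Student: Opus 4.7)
The proposal is that this proposition is essentially a direct citation from~\cite{emertongeepicture}, assembled and specialized to the weight~$0$ case, together with one dimension computation. I would proceed as follows.

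First, for parts~(1), (2), and~(4), I would appeal directly to the construction of potentially crystalline substacks in~\cite{emertongeepicture}. More precisely, for each dominant cocharacter $\lambda$ of $(\Res_{K/\Qp}\GL_d)_{\cO}$, Emerton--Gee construct a closed $p$-adic formal algebraic substack $\cX_d^{\crys,\lambda}\subseteq\cX_d$, flat and of finite type over $\Spf\cO$, characterized on finite flat $\cO$-algebras $A^\circ$ by the property that its $A^\circ$-points are exactly those $\cX_d(A^\circ)$-points whose associated $G_K$-representation on $A^\circ[1/p]$ is crystalline with Hodge--Tate cocharacter $\lambda$. Taking $\lambda$ to be the parallel cocharacter $(0,1,\dots,d-1)$ at every embedding $\sigma:K\hookrightarrow\Qpbar$ gives $\cX_d^{\crys,0}$ and yields~(1) and~(2) immediately. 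Part~(4) is then the statement that the versal ring of $\cX_d^{\crys,\lambda}$ at a finite residue field point $\rhobar$ is the corresponding potentially crystalline lifting ring; this is the content of the comparison between the Galois deformation problem represented by $R_{\rhobar}^{\crys,0,\cO'}$ and the deformation problem represented by the formal completion of $\cX_d^{\crys,0}$ at~$x$, which is one of the main results of the construction.

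For part~(3), the equidimensionality and the exact dimension come from the Emerton--Gee dimension formula for crystalline substacks. For a dominant weight $\lambda=(\lambda_{\sigma,1}>\dots>\lambda_{\sigma,d})_\sigma$, the dimension of the special fibre $\overline{\cX}_d^{\crys,\lambda}$ is
\[ [K:\Qp]\cdot\#\{(\sigma,i,j):1\le i<j\le d\}\;=\;[K:\Qp]\,\binom{d}{2} \]
in the regular case, because each pair $(i,j)$ with $\lambda_{\sigma,i}>\lambda_{\sigma,j}$ contributes~$1$ to the dimension. In our setting $\lambda_{\sigma,i}=d-i$ is strictly decreasing at every $\sigma$, so every pair $i<j$ contributes, giving total dimension $[K:\Qp]\cdot d(d-1)/2$. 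The equidimensionality follows from the same result (it is part of the statement of the Emerton--Gee dimension formula). The main obstacle here is really just bookkeeping: matching the normalization of Hodge--Tate weights (and the sign convention for the cocharacter) used in~\cite{emertongeepicture} to the one used in this paper, and making sure that ``weight~$0$'' in our sense corresponds to the regular cocharacter $(d-1,d-2,\dots,0)$ at each embedding, so that the dimension formula indeed applies without degeneracies.

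Finally, the assertion that the special fibre $\cXbar_d^{\crys,0}$ is an algebraic stack (as opposed to just a $p$-adic formal algebraic stack) is automatic once we know $\cX_d^{\crys,0}$ is a $p$-adic formal algebraic stack of finite type over $\Spf\cO$: its reduction modulo $\varpi$ is by definition an algebraic stack of finite type over $\Spec\F$. No further argument is needed beyond citing~\cite{emertongeepicture}, and the only non-trivial input is the dimension count, which as above reduces to a direct computation.
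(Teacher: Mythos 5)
Your proposal is correct and takes essentially the same route as the paper: define $\cX_d^{\crys,0}$ to be the stack $\cX_{K,d}^{\crys,\underline{\lambda},\tau}$ of \cite[Defn.~4.8.8]{emertongeepicture} with $\lambda_{\sigma,i}=d-i$ for all $\sigma,i$ and trivial inertial type, and then read off parts (1)--(2), (3), and (4) from \cite[Thm.~4.8.12]{emertongeepicture}, \cite[Thm.~4.8.14]{emertongeepicture}, and \cite[Prop.~4.8.10]{emertongeepicture} respectively. The dimension count you spell out is the content of the Emerton--Gee formula applied to a regular weight, and your final observation about the special fibre being an algebraic stack is correct and requires no additional argument.
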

\begin{proof}
We define $\cX_d^{\crys,0}$ to be the
stack~$\cX_{K,d}^{\crys,\underline{\lambda},\tau}$ of~\cite[Defn.\
4.8.8]{emertongeepicture}, taking~$\underline{\lambda}$ to be given by
~$\lambda_{\sigma,i}=d-i$ %
for all~$\sigma,i$, and~$\tau$ to be trivial. Then the first two claims are~\cite[Thm.\ 4.8.12]{emertongeepicture},
the third is~\cite[Thm.\
4.8.14]{emertongeepicture}, and the final claim is
\cite[Prop.\ 4.8.10]{emertongeepicture}.
\end{proof}

We now recall some definitions from~\cite[\S 5.5]{emertongeepicture}. By a \emph{Serre
weight} $\underline{k}$ we mean a tuple of \index{Serre weight} \index{\underline{k}}
integers~$\{k_{\sigmabar,i}\}_{\sigmabar:k\into\Fpbar,1\le i\le d}$
with the properties that \begin{itemize}
\item $p-1\ge k_{\sigmabar,i}-k_{\sigmabar,i+1}\ge 0$ for each $1\le i\le
  d-1$, and
\item $p-1\ge k_{\sigmabar,d}\ge 0$, and not every~$k_{\sigmabar,d}$
  is equal to~$p-1$.
\end{itemize}
For each ~$\sigmabar:k\into\F$,
we define the fundamental character~$\omega_{\sigmabar}$ to be
the composite
\[\omega_{\sigmabar}:I_K \xrightarrow{}%
W_K^{\operatorname{ab}} \xrightarrow{\,\Art_K^{-1}\,} \cO_K^{\times}\xrightarrow{}  k^{\times} \xrightarrow{\,\sigmabar\,}
\F^{\times}.\] %
As in~\cite[\S
5.5]{emertongeepicture}, for each Serre weight~$\underline{k}$ we
choose characters~$\omega_{\underline{k},i}:G_K\to\F^\times$
($i=1,\dots,d$) with
\[\omega_{\underline{k},i}|_{I_K}=\prod_{\sigmabar:k\into\F}\omega_{\sigmabar}^{-k_{\sigmabar,i}},\]in
such a way that if $k_{\sigmabar,i}-k_{\sigmabar,i+1}=p-1$ for
all~$\sigmabar$, then
$\omega_{\underline{k},i}=\omega_{\underline{k},i+1}$. (In~\cite[\S
5.5]{emertongeepicture} it was erroneously claimed that we could
impose further constraints on the~$\omega_{\underline{k},i}$, but as
explained in~\cite{EGaddenda}, these properties are all that we
require.) For any~$\nu\in\Fpbar$ we write
$\ur_{\nu}:G_K\to\Fpbartimes$ for the unramified character taking a
geometric Frobenius to~$\lambda$.

We say that a
representation~$\rhobar:G_K\to\GL_d(\Fpbar)$ is \emph{maximally
  nonsplit of niveau~$1$} \index{maximally
  nonsplit of niveau~$1$} if it has a unique filtration by~$G_K$-stable
$\Fpbar$-subspaces such that all of the graded pieces are one-dimensional
representations of~$G_K$.   We assign a unique Serre weight~$\underline{k}$ to
each such~$\rhobar$ in the following way:
  we say that~$\rhobar$
  is of weight~$\underline{k}$ if and only we can write %
 \numequation\label{eqn:weight k version of maximally nonsplit structure}\rhobar\cong \begin{pmatrix}
      \ur_{\nu_d}\omega_{\underline{k},d} &*&\dots &*\\
      0& \ur_{\nu_{d-1}}\varepsilonbar^{-1}\omega_{\underline{k},d-1}&\dots &*\\
      \vdots&& \ddots &\vdots\\
      0&\dots&0& \ur_{\nu_1}\varepsilonbar^{1-d}\omega_{\underline{k},1}\\
    \end{pmatrix};  \end{equation}
this uniquely determines~$\underline{k}$, except that  if
 $\omega_{\underline{k},i}=\omega_{\underline{k},i+1}$  then we need
 to say whether
   $k_{\sigmabar,i}-k_{\sigmabar,i+1}=p-1$ for all $\sigmabar$ or
   $k_{\sigmabar,i}-k_{\sigmabar,i+1}=0$ for all $\sigmabar$. We distinguish these
   possibilities as follows:  if
 $\omega_{\underline{k},i}=\omega_{\underline{k},i+1}$, then we set $k_{\sigmabar,d-i}-k_{\sigmabar,d+1-i}=p-1$ for all $\sigmabar$  if
    and only if
    $\nu_i=\nu_{i+1}$ and the element
    of~ \[\Ext^1_{G_K}(\ur_{\nu_i}\varepsilonbar^{i-d}\omega_{\underline{k},i},
  \ur_{\nu_{i+1}}\varepsilonbar^{i+1-d}\omega_{\underline{k},i+1})=H^1(G_K,\varepsilonbar)\]
    determined by~$\rhobar$ is
    tr\`es ramifi\'ee. %

    Let $(\Gm)^d_{\underline{k}}$ denote the closed subgroup scheme of
    $(\Gm)^d$ parameterizing tuples~$(x_1,\dots,x_d)$ for
    which~$x_i=x_{i+1}$
    whenever~$k_{\sigmabar,i}-k_{\sigmabar,i+1}=p-1$ for
    all~$\sigmabar$. By the definition that we just made, if~$\rhobar$
    is maximally nonsplit of niveau~$1$ and weight~$\underline{k}$,
    then the
    tuple~$(\nu_1,\dots,\nu_d)$ is an  $\Fpbar$-point of
    $(\Gm)^d_{\underline{k}}$ (where  the $\nu_i$ are as in
    in~\eqref{eqn:weight k version of maximally nonsplit structure}).

We have the following slight variant on~\cite[Thm.\ 5.5.12]{emertongeepicture}.
\begin{thm}
	\label{thm:Xdred is algebraic}\leavevmode
        \begin{enumerate}
        \item 	The Ind-algebraic stack $\cX_{d,\red}$
is an algebraic stack, of finite presentation over $\F$.
\item  $\cX_{d,\red}$ is
        equidimensional of dimension
	$[K:\Q_p] d(d-1)/2$. 
      \item The irreducible components of $\cX_{d,\red}$
        are indexed by the Serre weights~$\underline{k}$. More
        precisely, for each~$\underline{k}$ there is an irreducible
        component~$\cX_{d,\red}^{\underline{k}}$ containing a dense open
  substack~$\cU^{\underline{k}}$, all of whose $\Fpbar$-points are 
  maximally nonsplit of niveau one and
  weight~$\underline{k}$; and the~$\cX_{d,\red}^{\underline{k}}$
  exhaust the irreducible components of~$\cX_{d,\red}$.
  \item\label{item: we can get generic Ext classes in rhobar}
     There is an open subscheme~
  $T$ of $(\Gm)^d_{\underline{k}}$ such that for
  all~$(t_1,\dots,t_d)\in T(\Fpbar)$, there is an $\Fpbar$-point
  of $\cU^{\underline{k}}$ corresponding to a
  representation~\eqref{eqn:weight k version of maximally nonsplit
    structure} with $\nu_i=t_i$ for all~$i$, and which is generic in the sense of Definition~\ref{defn: generic rhobar}.
  \end{enumerate}
\end{thm}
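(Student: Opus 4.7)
Parts (1)--(3) amount to a restatement of \cite[Thm.~5.5.12]{emertongeepicture}; the only subtlety is the convention, introduced just before the statement, for assigning a Serre weight to a maximally nonsplit $\rhobar$ of niveau one when $\omega_{\underline{k},i}=\omega_{\underline{k},i+1}$, which is purely a matter of unravelling definitions and has no bearing on the underlying proof. So I concentrate on part (4).

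For part (4), the plan is to exploit the iterated-extension description of $\cU^{\underline{k}}$ implicit in the construction of \cite[\S 5.5]{emertongeepicture}. That construction realizes $\cU^{\underline{k}}$ as (an open substack of) a moduli of successive extensions of rank-one $G_K$-representations whose inertial characters are fixed by $\underline{k}$, and it comes equipped with a natural morphism $\pi: \cU^{\underline{k}} \to (\Gm)^d_{\underline{k}}$ recording the diagonal unramified twists $(\nu_1,\dots,\nu_d)$ appearing in \eqref{eqn:weight k version of maximally nonsplit structure}. The morphism $\pi$ is smooth (of the expected relative dimension) on a dense open substack, and its geometric fibers over a tuple $(t_1,\dots,t_d)$ parameterize, up to the automorphisms given by diagonal scaling, the tuples of successive off-diagonal extension classes in $\bigoplus_{i=1}^{d-1} \Ext^1_{G_K}(\chibar_i,\chibar_{i+1})$, where the $\chibar_i$ are determined by $(t_1,\dots,t_d)$ together with the data of $\underline{k}$.

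The genericity requirement of Definition \ref{defn: generic rhobar} imposes $d-1$ conditions, one on each successive subquotient extension class. By Remark \ref{rem: non generic is finite proper union}, each of these conditions cuts out, fiberwise, the complement of a finite union of proper linear subspaces of the corresponding $\Ext^1$. Globally, the genericity conditions therefore define an open substack $\cV \subseteq \cU^{\underline{k}}$ whose complement $Z$ meets each fiber of $\pi$ in a proper closed subset.

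The main (relatively mild) obstacle is then to pass from this pointwise nonemptiness of the generic locus in each fiber to the existence of an open subscheme $T \subseteq (\Gm)^d_{\underline{k}}$ over which the nonemptiness persists. Since $\pi$ is smooth, hence open, on a dense open substack of $\cU^{\underline{k}}$, the image $\pi(\cV)$ contains an open subscheme $T \subseteq (\Gm)^d_{\underline{k}}$; for any $(t_1,\dots,t_d) \in T(\Fpbar)$, any $\Fpbar$-point of $\cV \cap \pi^{-1}(t_1,\dots,t_d)$ then furnishes a maximally nonsplit $\rhobar$ of the form \eqref{eqn:weight k version of maximally nonsplit structure} with $\nu_i = t_i$ for all $i$ and generic off-diagonal extension classes, as required. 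The remaining ingredient to verify is the smoothness of $\pi$ on a dense open substack together with the precise structure of its fibers; this follows from the standard deformation theory of $(\varphi,\Gamma)$-modules combined with local Galois cohomology, both of which are used systematically throughout \cite{emertongeepicture}.
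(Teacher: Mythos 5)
Your treatment of parts (1)--(3) is fine and matches the paper's approach. The gap is in part (4), and it is a real one: you assert that the geometric fibers of $\pi: \cU^{\underline{k}} \to (\Gm)^d_{\underline{k}}$ ``parameterize, up to the automorphisms given by diagonal scaling, the tuples of successive off-diagonal extension classes in $\bigoplus_{i=1}^{d-1}\Ext^1_{G_K}(\chibar_i,\chibar_{i+1})$.'' This is false. A tuple of successive extension classes $(\psi_1,\dots,\psi_{d-1})$ assembles into a $d$-dimensional representation only if the obstruction classes $\psi_i\cup\psi_{i+1}\in\Ext^2_{G_K}$ all vanish, and these $\Ext^2$ groups need not be zero. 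The correct statement (which is what the version of \cite[Thm.~5.5.12]{emertongeepicture} proved in \cite{EGaddenda} actually gives) is that a dense Zariski open subset of the closed subvariety cut out by the cup product conditions is realized by points of $\cU^{\underline{k}}$. Your argument then silently assumes the full affine space is realized, so when you invoke Remark~\ref{rem: non generic is finite proper union} to say the generic locus meets each fiber, you are not entitled to that conclusion: the generic locus could a priori fail to meet the cup-product-vanishing subvariety.

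This is exactly where the paper does nontrivial work. The paper first shrinks $T$ so that the cup product can only be nonzero in a single confined situation (namely when $\varepsilonbar=1$ and a run of consecutive characters coincide), and then, in that situation, constructs a generic tuple lying in the cup-product-vanishing locus by a careful inductive argument using the perfect pairing $\Ext^1\times\Ext^1\to\Ext^2\simeq\F$: one picks a generic $\psi_m$ avoiding the annihilators of the finitely many excluded subspaces, then observes that $\langle\psi_m\rangle^\perp$ cannot be covered by the bad subspaces together with their annihilators, picks a generic $\psi_{m+1}\in\langle\psi_m\rangle^\perp$, and iterates. Your proposal would need to supply something of this kind, and the ``image of $\cV$ under the open map $\pi$'' step does not substitute for it, because $\cV$ itself might be empty in the relevant fibers before any such argument is made.
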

\begin{proof}Everything except for part~\eqref{item: we can get
    generic Ext classes in rhobar} is part of~\cite[Thm.\ 5.5.12,
  Thm.\ 6.5.1]{emertongeepicture}. 
Part~\eqref{item: we can get generic Ext
    classes in rhobar} follows from the version of \cite[Thm.\
  5.5.12]{emertongeepicture} proved in~\cite{EGaddenda}, as we explain
  below.
  
  We begin by taking $T$ to be an open contained in the image of the eigenvalue morphism $\cU^{\underline{k}} \to (\Gm)^d_{\underline{k}}$, and then further shrink it so that for any~$m<n$ and $(t_1,\dots,t_d)\in
  T(\Fpbar)$ either:
  \begin{itemize}
  \item we have $k_{\sigmabar,i}-k_{\sigmabar,i+1}=p-1$ for
  all~$\sigmabar$ and all~$m\le i<n$, or 
  \item we have $(\ur_{t_n}\varepsilonbar^{n-d}\omega_{\underline{k},n})/(\ur_{t_m}\varepsilonbar^{m-d}\omega_{\underline{k},m})\not=\varepsilonbar$.
  \end{itemize}

  We then fix~$(t_1,\dots,t_d)\in
  T(\Fpbar)$, and regard each $\Ext^1_{G_K}(\ur_{t_i}\varepsilonbar^{i-d}\omega_{\underline{k},i},
     \ur_{t_{i+1}}\varepsilonbar^{i+1-d}\omega_{\underline{k},i+1})$
     as an affine space over~$\Fpbar$, and as
     in~\cite{EGaddenda} we define
\[\Ext^1_{(t_1,\dots,t_d),\underline{k}}\subseteq\prod_{i=1}^{d-1}\Ext^1_{G_K}(\ur_{t_i}\varepsilonbar^{i-d}\omega_{\underline{k},i},
     \ur_{t_{i+1}}\varepsilonbar^{i+1-d}\omega_{\underline{k},i+1})\]
   to be the closed subvariety of tuples of extension classes $(\psi_1,\dots,\psi_{d-1})$ determined by the condition that for
   each~$i=1,\dots,d-2$, the cup product $\psi_i\cup\psi_{i+1}$ vanishes.

The version of \cite[Thm.\ 5.5.12]{emertongeepicture}
  proved in~\cite{EGaddenda} states in particular that for a dense %
  Zariski open subset $U$ of $\Ext^1_{(t_1,\dots,t_d),\underline{k}}$, the
  corresponding extension classes are realized by
  some~$\rhobar\in\cU^{\underline{k}}(\Fpbar)$; so it suffices to show that $U$ contains a point
  $(\psi_1,\dots,\psi_{d-1})$ with each~$\psi_i$ generic. As the
  locus of generic classes in $\Ext^1_{(t_1,\dots,t_d),\underline{k}}$
  is open, and $U$ is dense, it suffices in turn to exhibit a single generic
  class in $\Ext^1_{(t_1,\dots,t_d),\underline{k}}$. %

          To do this, first note that
          $\psi_i\cup\psi_{i+1}$ is an element of
          the~$\Ext^2$ group
     \[\Ext^2_{G_K}(\ur_{t_{i}}\varepsilonbar^{i-d}\omega_{\underline{k},i},
       \ur_{t_{i+2}}\varepsilonbar^{i+2-d}\omega_{\underline{k},i+2}).\]
     This group vanishes unless
     $(\ur_{t_{i+2}}\varepsilonbar^{i+2-d}\omega_{\underline{k},i+2})/(\ur_{t_i}\varepsilonbar^{i-d}\omega_{\underline{k},i})=\varepsilonbar$,
     which by our choice of $T$ can only occur when
     $k_{\sigmabar,i}-k_{\sigmabar,i+1}=k_{\sigmabar,i+1}-k_{\sigmabar,i+2}=p-1$
     for all $\sigmabar$ and $\varepsilonbar=1$.  Thus if
     $\varepsilonbar\not=1$, we can just choose each~ $\psi_i$ to be any generic extension class, and the cup product condition is automatically satisfied.

     We assume from now on that $\varepsilonbar=1$ and fix a maximal interval $m<n$ such that $k_{\sigmabar,i}-k_{\sigmabar,i+1}=p-1$ for all~$\sigmabar$ and all~$m\le i< n$.  The
     characters~$\ur_{t_i}\varepsilonbar^{i-d}\omega_{\underline{k},i}$
     for $m\le i\leq n$ are all equal, and we write~$\chibar$ for their
     common value. 
     The cup product pairing is a perfect %
          pairing \[\Ext^1_{G_K}(\chibar,\chibar)\times
            \Ext^1_{G_K}(\chibar,\chibar)\to
            \Ext^2_{G_K}(\chibar,\chibar)=H^2(G_K,\varepsilonbar)\simeq
            \FF.\] The generic classes are the complement of the union of finitely many proper subspaces $L_j \subset \Ext^1_{G_K}(\chibar,\chibar)$, with annihilators $L_j^\perp$ under the pairing. Pick a generic class $\psi_m$ which is not in any $L_j^\perp$. Then the annihilator $\langle \psi_m \rangle^\perp$ cannot be contained in $\bigcup_j L_j \cup \bigcup_j L_j^\perp$ (otherwise $\langle \psi_m \rangle^\perp$ is contained in one of the $L_j$ or $L_j^\perp$ which implies that $L_j$ or $L_j^\perp$ is contained in $\langle \psi_m \rangle$). So we can find a generic class $\psi_{m+1} \in \langle \psi_m \rangle^\perp$ which is also not in any $L_j^\perp$. Repeating, we can find a sequence $\psi_m,\psi_{m+1},\ldots \psi_{n-1}$ of generic classes such that $\psi_i\cup\psi_{i+1} = 0$ for $m\le i < n$.     
   \end{proof}

\subsection{Generic reducedness}\label{subsec: generic reducedness}
We now compute the underlying cycle of the weight~$0$ crystalline
stack, and deduce our main result on generic reducedness
(Theorem~\ref{thm: special fibre weight 0 crystalline def ring
  generically reduced}).

This underlying cycle is defined as follows. By Theorem~\ref{thm:Xdred
  is algebraic}, the special fibre $\overline{\cX}_{d}^{\crys,0}$  is
a closed substack of the
special fibre~$\cXbar_d$,  and its irreducible components (with the
induced reduced substack structure) are therefore closed substacks of
the algebraic stack~$\cXbar_{d,\red}$ (see~\cite[\href{https://stacks.math.columbia.edu/tag/0DR4}{Tag 0DR4}]{stacks-project}
for the theory of irreducible components of algebraic stacks and their
multiplicities). Furthermore, $\overline{\cX}_{d}^{\crys,0}$
and~$\cXbar_{d,\red}$ are both
 algebraic stacks over~$\F$ which
are equidimensional of dimension~$[K:\Qp]d(d-1)/2$. It follows that
the irreducible components of~  $\overline{\cX}_{d}^{\crys,0}$
 are irreducible components
of~$\cXbar_{d,\red}$, and are therefore of the
form~$\cXbar_{d,\red}^{\underline{k}}$ for some Serre
weight~$\underline{k}$.

For each~$\underline{k}$, we
write~$\mu_{\underline{k}}(\cXbar^{\crys,0}_d)$ for the
multiplicity of~$\cXbar_{d,\red}^{\underline{k}}$ as a component
of~$\cXbar^{\crys,0}_d$.  We
write~$Z_{\crys,0}=Z(\cXbar^{\crys,0}_d)$ for
the corresponding cycle, i.e.\ for the formal sum
\numequation\label{eqn: cris HS multiplicity
  stack}Z_{\crys,0}=\sum_{\underline{k}}\mu_{\underline{k}}(\cXbar^{\crys,0}_d)\cdot\cXbar_{d,\red}^{\underline{k}}, \end{equation}
which we regard as an element of the finitely generated free abelian
group~$\Z[\cX_{d,\red}]$ whose generators are the irreducible
components~$\cXbar_{d,\red}^{\underline{k}}$.

\begin{thm}%
  \label{thm: BM cycle weight 0}Suppose that $p>d$. Then we have an
  equality of
  cycles \[Z_{\crys,0}=\cXbar_{d,\red}^{\underline{0}},\] where
  $\underline{0}$ is the Serre weight $\underline{k}$ with
  $k_{\sigma,i}=0$ for all $\sigma,i$.
  In particular, the  special fibre $\overline{\cX}_{d}^{\crys,0}$ is
  generically reduced. 
\end{thm}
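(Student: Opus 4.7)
The ``in particular'' statement is immediate from the cycle equality (multiplicity one along a unique component), so I focus on proving $Z_{\crys,0}=\cXbar_{d,\red}^{\underline{0}}$. Since $\overline{\cX}_d^{\crys,0}$ is a closed substack of $\cXbar_d$ that is equidimensional of the same dimension $[K:\Qp]d(d-1)/2$ as $\cXbar_{d,\red}$ (Proposition~\ref{prop: crystalline weight 0 substack}(3) and Theorem~\ref{thm:Xdred is algebraic}(2)), each of its irreducible components coincides with some $\cXbar_{d,\red}^{\underline{k}}$. The claim therefore reduces to (a)~no $\underline{k}\ne\underline{0}$ occurs, and (b)~the component $\cXbar_{d,\red}^{\underline{0}}$ occurs with multiplicity one.

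For (a), suppose that $\cXbar_{d,\red}^{\underline{k}}\subseteq\overline{\cX}_d^{\crys,0}$. By Theorem~\ref{thm:Xdred is algebraic}(4) I can find $\rhobar\in\cU^{\underline{k}}(\Fpbar)$ which is also generic in the sense of Definition~\ref{defn: generic rhobar}; by Proposition~\ref{prop: crystalline weight 0 substack}(2) this $\rhobar$ admits a crystalline lift of weight~$0$. Corollary~\ref{cor: rhobar generic ordinary} then pins down the diagonal of $\rhobar$ to be $\ur_{\overline{\lambda}_d},\ur_{\overline{\lambda}_{d-1}}\varepsilonbar^{-1},\dots,\ur_{\overline{\lambda}_1}\varepsilonbar^{1-d}$ with peu ramifi\'ee successive extensions. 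Comparing with the shape~\eqref{eqn:weight k version of maximally nonsplit structure}, each character $\omega_{\underline{k},l}$ is forced to be unramified, so $\prod_{\sigmabar}\omega_{\sigmabar}^{-k_{\sigmabar,l}}$ is trivial on $I_K$ for every $l$. I would then run a descending induction on $l$: the base case $l=d$ (where $k_{\sigma,d}\in[0,p-1]$) forces $k_{\sigma,d}\in\{0,p-1\}$ uniformly in $\sigma$, and the Serre weight constraint that not every $k_{\sigma,d}$ equals $p-1$ gives $k_{\sigma,d}=0$; the inductive step combines the freshly-acquired $k_{\sigma,l+1}=0$ with the peu ramifi\'ee disambiguation rule from the definition of the Serre weight attached to $\rhobar$ (which, in the presence of peu ramifi\'ee adjacent extensions, selects $k_{\sigma,l}-k_{\sigma,l+1}=0$ over $p-1$) to conclude $k_{\sigma,l}=0$. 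This gives $\underline{k}=\underline{0}$.

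For (b), by versality (Proposition~\ref{prop: crystalline weight 0 substack}(4)) it is enough to exhibit a single closed point $\rhobar\in\cU^{\underline{0}}(\Fpbar)$ at which the lifting ring $R^{\crys,0,\cO'}_{\rhobar}$ has reduced special fibre. I would again take $\rhobar$ generic in the sense of Definition~\ref{defn: generic rhobar}. Theorem~\ref{thm: rho generic ordinary} then guarantees that every weight~$0$ crystalline lift of such a $\rhobar$ is already ordinary, so $R^{\crys,0,\cO'}_{\rhobar}$ coincides with the ordinary crystalline lifting ring of $\rhobar$. The only real remaining work — and the point I expect to be the main obstacle — is to verify that this ordinary lifting ring is formally smooth over $\cO'$. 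Granted sufficient genericity of the characters $\ur_{\overline{\lambda}_i}\varepsilonbar^{1-i}$ (which can be arranged by further shrinking the open from Theorem~\ref{thm:Xdred is algebraic}(4) so that no two diagonal characters coincide and no ratio equals $\varepsilonbar$), the obstructions to successively deforming the characters and their extensions lie in local $H^2$-groups which vanish, and comparison with the expected dimension from Proposition~\ref{prop: crystalline weight 0 substack}(3) forces the lifting ring to be a power series ring over $\cO'$. Its special fibre is then reduced, so the multiplicity is one and the cycle equality follows.
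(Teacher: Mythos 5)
Your approach is essentially the same as the paper's: show every component of the special fibre must be $\cXbar_{d,\red}^{\underline{0}}$ by picking a generic (Definition~\ref{defn: generic rhobar}) $\Fpbar$-point of $\cU^{\underline{k}}$ via Theorem~\ref{thm:Xdred is algebraic}\eqref{item: we can get generic Ext classes in rhobar} and applying Corollary~\ref{cor: rhobar generic ordinary}, then establish multiplicity one by exhibiting a point whose weight-$0$ crystalline lifting ring coincides (via Theorem~\ref{thm: rho generic ordinary}) with an ordinary lifting ring that is formally smooth. The paper handles the formal smoothness you flag as the "main obstacle" by imposing the genericity condition you describe and citing \cite[Lem.\ 2.4.7]{cht}, and it addresses the final descent from the versal ring to the stack (reducedness being smooth-local) which your last sentence elides; your more detailed unwinding of why Corollary~\ref{cor: rhobar generic ordinary} forces $\underline{k}=\underline{0}$ is correct and in line with what the paper leaves implicit.
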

\begin{proof}
  Suppose (in the notation of Theorem~\ref{thm:Xdred is
    algebraic}~(3)) that~$\cXbar_{d,\red}^{\underline{k}}$ is an
  irreducible component of~$\cX_{d,\red}$ contained in the
  $\overline{\cX}_{d}^{\crys,0}$. We begin by showing
  that~$\underline{k}=\underline{0}$.  By Theorem \ref{thm:Xdred is algebraic} (\ref{item: we can get generic Ext classes in rhobar}) after possibly enlarging $\F$, we can pick a point $x:\Spec\F\to\cU^{\underline{k}}$ so that the corresponding representation $\rhobar:G_K\to\GL_d(\F)$ is generic in the sense of Definition \ref{defn: generic rhobar} (it is also maximally nonsplit of niveau one and weight $\underline{k}$, since it comes from a point of $\cU^{\underline{k}}$).  Since~$x$ is in~$\overline{\cX}_{d}^{\crys,0}$, $\rhobar$ has a crystalline lift of weight 0.  We can now apply Corollary \ref{cor: rhobar generic ordinary} to conclude that $\underline{k}=0$.

  We have now shown that the support of $Z_{\crys,0}$ is indeed
  $\cXbar_{d,\red}^{\underline{0}}$, i.e.\ that the underlying reduced
  substack of $\overline{\cX}_{d}^{\crys,0}$ is equal to
  $\cXbar_{d,\red}^{\underline{0}}$, and it remains to determine the generic multiplicity. To do this, we modify our choice of
  point~$x$ as follows: by definition, we have
  $(\Gm)^d_{\underline{0}}=(\Gm)^d$, so we can and do choose our
  point~$x$ such that if $i\not=j$, then
  \numequation\label{eqn: generic eigenvalues for ordinary smooth}(\ur_{\nu_i}\varepsilonbar^{i-d})/(\ur_{\nu_j}\varepsilonbar^{j-d})\not=1,\varepsilonbar.\end{equation}%
  We will show that $\overline{\cX}_{d}^{\crys,0}$ is
  reduced in some open neighbourhood of~$x$. Since the reduced locus
  is open, and $\overline{\cX}_{d}^{\crys,0}$ is irreducible, this
  implies that $\overline{\cX}_{d}^{\crys,0}$ is generically reduced.

  We claim that the crystalline lifting ring
  $R^{\crys,0,\cO}_{\rhobar}$ is formally smooth, where $\rhobar$ corresponds to our chosen point $x$. Indeed, by Theorem
  \ref{thm: rho generic ordinary}, crystalline lifts of~$\rhobar$ of
  weight~$0$ are ordinary, and so $R^{\crys,0,\cO}_{\rhobar}$ is the
  weight 0 ordinary lifting ring of $\rhobar$. Since~$\rhobar$ is
  maximally nonsplit (i.e.\ has a unique filtration with rank~$1$
  graded pieces) and satisfies~\eqref{eqn: generic eigenvalues for
    ordinary smooth}, the deformation problem represented
  by~$R^{\crys,0,\cO}_{\rhobar}$ coincides with the one considered
  in~\cite[2.4.2]{cht} (taking~$F_{\tilde{v}}$ there to be our~$K$, $n$ to
  be our~$d$, and~$\chi_{v,i}$ to be~$\varepsilon^{-i}$), and the formal
  smoothness is~\cite[Lem.\ 2.4.7]{cht}.

  By Theorem~\ref{prop: crystalline weight 0
    substack} we have a versal morphism
  $\Spec R^{\crys,0,\cO}_{\rhobar}/\varpi\to \overline{\cX}_{d}^{\crys,0}$ at~$x$, where
  $R^{\crys,0,\cO}_{\rhobar}/\varpi$ is formally smooth and in
  particular reduced. By
  ~\cite[\href{https://stacks.math.columbia.edu/tag/0DR0}{Tag
    0DR0}]{stacks-project} we may find a smooth morphism $V\to \overline{\cX}_{d}^{\crys,0}$
  with source a finite type $\cO/\varpi$-scheme, and a point $v\in V$
  with residue field~$\F$, %
   such that there is an isomorphism
  $\widehat{\cO}_{V,v}\cong R^{\crys,0,\cO}_{\rhobar}/\varpi$,
  compatible with the given morphism to~$\overline{\cX}_{d}^{\crys,0}$. By
  \cite[\href{https://stacks.math.columbia.edu/tag/00MC}{Tag
    00MC}]{stacks-project} and
  \cite[\href{https://stacks.math.columbia.edu/tag/033F}{Tag
    033F}]{stacks-project}, the local ring ${\cO}_{V,v}$ is
  reduced. Since being reduced is an open condition, we see that~$V$
  is reduced in an open neighbourhood of~$v$; and since it is also a
  smooth local condition
  (see~\cite[\href{https://stacks.math.columbia.edu/tag/04YH}{Tag
    04YH}]{stacks-project}) it follows
  that~$\overline{\cX}_{d}^{\crys,0}$ is reduced in an open
  neighbourhood of~$x$, and we are done.
 \end{proof}

\begin{rem}
  \label{rem: this is a BM cycle}%
  Since the algebraic representation of~$\GL_d$ of highest
  weight~$0$ is the trivial representation, Theorem~\ref{thm: BM cycle
    weight 0} shows that if~$p>d$, the cycle~$Z_{\underline{0}}$ in
  the geometric Breuil--M\'ezard conjecture \cite[Conj.\
  8.2.2]{emertongeepicture} is necessarily equal to
  $\cXbar_{d,\red}^{\underline{0}}$. As far as we are aware, this is
  the only instance in which such a cycle has been computed for~$d>2$
  and~$K/\Qp$ arbitrary.
\end{rem}

\begin{thm}%
  \label{thm: special fibre weight 0 crystalline def ring generically
    reduced}Suppose that $p>d$, that $K/\Qp$ is a finite extension,
  and that $E/\Qp$ is a finite extension containing the Galois closure
  of~$K$, with ring of integers~$\cO$ and residue field~$\F$.

  Then for any $\rhobar: G_K \to \GL_d(\F)$, the special fibre $
  R^{\crys,0,\cO}_{\rhobar}/\varpi $ of the weight~
  $0$ crystalline lifting ring is generically reduced.
\end{thm}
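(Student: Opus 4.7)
The strategy will be to deduce this theorem from Theorem \ref{thm: BM cycle weight 0}, which establishes generic reducedness of the stack $\overline{\cX}_d^{\crys,0}$. The deduction simply translates the geometric property of the stack into the corresponding property of its versal ring at an arbitrary $\F$-point, running in the opposite direction to the final step of the proof of Theorem \ref{thm: BM cycle weight 0}.

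Concretely, after enlarging $E$ (using \cite[Lem.\ 1.2.1]{BLGGT}) so that it contains the Galois closure of $K$, Proposition \ref{prop: crystalline weight 0 substack}(4) provides a versal morphism $\Spf R^{\crys,0,\cO}_{\rhobar} \to \cX_d^{\crys,0}$ at the point $x$ determined by $\rhobar$. As in the proof of Theorem \ref{thm: BM cycle weight 0}, I would appeal to \cite[\href{https://stacks.math.columbia.edu/tag/0DR0}{Tag 0DR0}]{stacks-project} to produce a smooth morphism $V \to \overline{\cX}_d^{\crys,0}$ with $V$ a finite-type $\F$-scheme, together with a point $v \in V$ of residue field $\F$ satisfying $\widehat{\cO}_{V,v} \cong R^{\crys,0,\cO}_{\rhobar}/\varpi$.

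By Theorem \ref{thm: BM cycle weight 0}, $\overline{\cX}_d^{\crys,0}$ admits a dense open reduced substack. Since reducedness is smooth-local (\cite[\href{https://stacks.math.columbia.edu/tag/04YH}{Tag 04YH}]{stacks-project}), the preimage of this substack in $V$ is an open dense subscheme, so $V$ is generically reduced in a neighbourhood of $v$.

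The final step is to transfer generic reducedness from $\cO_{V,v}$ to its completion $\widehat{\cO}_{V,v} \cong R^{\crys,0,\cO}_{\rhobar}/\varpi$. Since $V$ is of finite type over $\F$, the local ring $\cO_{V,v}$ is excellent, so the formal fibres of $\cO_{V,v} \to \widehat{\cO}_{V,v}$ are geometrically regular. A direct check using faithful flatness then gives the desired transfer: any minimal prime $\q$ of $\widehat{\cO}_{V,v}$ contracts to a minimal prime $\p$ of $\cO_{V,v}$, at which $(\cO_{V,v})_\p$ is a field (forcing $\p\cdot(\widehat{\cO}_{V,v})_\q = 0$), while $\widehat{\cO}_{V,v}/\p\widehat{\cO}_{V,v}$ is reduced as the completion of the excellent local domain $\cO_{V,v}/\p$, so $(\widehat{\cO}_{V,v})_\q$ is a field. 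This last transfer is the only technical point; given Theorem \ref{thm: BM cycle weight 0}, the rest of the argument is essentially formal, which is why the substantive work has already been carried out in the previous subsections.
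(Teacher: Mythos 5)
Your argument is correct and follows the same route as the paper: versal morphism from Proposition~\ref{prop: crystalline weight 0 substack}, smooth presentation via Stacks Tag 0DR0, and smooth-locality of reducedness applied to the dense reduced open substack supplied by Theorem~\ref{thm: BM cycle weight 0}. The only difference is cosmetic: where the paper cites \cite[Lem.~4.5]{caraiani2022geometric} to pass generic reducedness from $\cO_{V,v}$ to $\widehat{\cO}_{V,v}$, you spell out the excellence/faithful-flatness argument directly (and your density claim, which you state without comment, does follow since the smooth map $V\to\cXbar_d^{\crys,0}$ is flat and hence sends generic points to generic points).
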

\begin{proof} We follow the proof of~\cite[Thm.\
  4.6]{caraiani2022geometric}. By Proposition~\ref{prop: crystalline
    weight 0 substack}, we have a versal morphism
  $\Spf R^{\crys,0,\cO}_{\rhobar}/\varpi
  \to\cXbar_d^{\crys,0}$ at the $\F$-point of~$\cX_{d,\red}$
  corresponding to
  ~$\rhobar$. By~\cite[\href{https://stacks.math.columbia.edu/tag/0DR0}{Tag
    0DR0}]{stacks-project} we may find a smooth morphism
  $V\to \cXbar_d^{\crys,0}$ with source a finite type $\F$-scheme, and
  a point $v\in V$ with residue field~$\F$, such that there is an
  isomorphism $\widehat{\cO}_{V,v}\cong R^{\crys,0,\cO}_{\rhobar}/\varpi$,
  compatible with the given morphism to~$\cXbar_d^{\crys,0}$.

  By Theorem~\ref{thm: BM cycle weight 0}, there is a dense open
  substack~$\cU$ of $\cXbar_d^{\crys,0}$ such that~$\cU$ is
  reduced. Since being reduced is a smooth local property, the
  pullback of~$\cU$ to~$V$ is a reduced open subscheme of~$V$; and
  this pullback is furthermore dense in~$V$, because the formation
  of the scheme-theoretic image of~$\cU$ in ~ $\cXbar_d^{\crys,0}$
  commutes with flat base change
  \cite[\href{https://stacks.math.columbia.edu/tag/0CMK}{Tag
    0CMK}]{stacks-project}.
Thus~$V$ is generically reduced, and the
  complete local rings of~$V$ at finite type points are generically
  reduced by ~\cite[Lem.\ 4.5]{caraiani2022geometric}. In particular
  $ R^{\crys,0,\cO}_{\rhobar}/\varpi\cong \widehat{\cO}_{V,v}$ is
  generically reduced, as required.
\end{proof}

\begin{rem}
  \label{rem: comparison to CEGS}The case~$d=2$ of Theorem~\ref{thm:
    special fibre weight 0 crystalline def ring generically reduced}
  is a special case of~\cite[Thm.\ 4.6]{caraiani2022geometric}. In
  both cases the statement is deduced from the corresponding statement
  for the stack~$\cXbar_d^{\crys,0}$, and indeed in the case~$d=2$,
  Theorem~\ref{thm: BM cycle weight 0} is a special case
  of~\cite[Thm.\ 7.1, 7.6]{caraiani2022geometric} (although the
  generic reducedness statement is proved earlier in~\cite[Prop.\
  4.1]{caraiani2022geometric}). 

  The argument that we use to prove Theorem~\ref{thm: BM cycle weight
    0} is necessarily rather different from the proof of~\cite[Thm.\
  4.6]{caraiani2022geometric}, which was written
  before~\cite{emertongeepicture}, and in particular could not use the
  structure of generic points on the irreducible components
  of~$\cX_{2,\red}$. Instead, the proof
  in~\cite{caraiani2022geometric} uses the Kisin resolution of
  ~$\cX_2^{\crys,0}$ (originally defined for lifting rings
  in~\cite{kis04}). By results on local models for Shimura varieties,
  this Kisin resolution has reduced special fibre, and the arguments
  in~\cite{caraiani2022geometric} show that the map from the Kisin
  resolution is an isomorphism on dense open substacks of the source
  and target. In dimension greater than~$2$ we do not know of a
  candidate Kisin resolution for which we could expect to argue in
  this way.

  The result~\cite[Thm.\ 4.6]{caraiani2022geometric} is more general
  than Theorem~\ref{thm: special fibre weight 0 crystalline def ring
    generically reduced} (as always, in the special case $d=2$),
  because it also proves the analogous statement for the potentially
  crystalline lifting ring of weight~$0$ and any tame type. The
  Breuil--M\'ezard conjecture implies that the analogous statement
  necessarily fails for $d\ge 4$ (even if~$K=\Qp$), because the
  reductions modulo~$p$ of the corresponding inertial types contain
  Serre weights with multiplicities greater than~$1$, even for generic
  choices of type (see~\cite[Rem.\ 8.1.4]{MR4549091}). Similarly,
  Theorem~\ref{thm: BM cycle weight 0} is best possible in the sense
  that for any parallel Serre weight~$\underline{k}$ greater than~$0$
  (i.e.\ $k_{\sigma,i}$ is independent of~$\sigma$, and
  the~$k_{\sigma,i}$ are not all equal), the
  stack~$\cX_d^{\crys,\underline{k}}$ cannot have generically reduced
  special fibre once~$K$ is sufficiently ramified. (While the
  Breuil--M\'ezard conjecture is not known, standard arguments with
  Taylor--Wiles patching give the expected lower bounds for
  Breuil--M\'ezard multiplicities, so it is presumably possible to
  prove unconditionally that the special fibres of the corresponding
  stacks are not generically reduced.)  \end{rem}

\begin{rem}\label{rem: generic reduced might well be fine for small p
    but we can't prove it}
  Despite
  Remark~\ref{rem: extension class is close to sharp}, it seems
  plausible to us that Theorem~\ref{thm: BM cycle weight 0} should
  also hold if~$p\le d$, but any proof will necessarily be more
  complicated, and presumably cannot rely only on an analysis of the
  successive extension classes of characters of the kind that we have
  made here. %
\end{rem}

\section{An automorphy lifting theorem in weight~$0$}

\subsection{Preliminaries}
Our goal in this section is to state and prove
Theorem~\ref{thm:main_automorphy_lifting_theorem}, which is an automorphy lifting
theorem for $n$-dimensional crystalline weight~ $0$ $p$-adic
representations of~$G_F$, where~$F$ is an imaginary CM
field in which~$p$ is arbitrarily ramified. The key innovations
that allow us to prove this theorem are the local-global compatibility
result of~\cite{caraiani-newton} and the generic reducedness result
that we proved in Theorem~\ref{thm: special fibre weight 0 crystalline def ring generically
    reduced}. Given these ingredients, the proof is very close to
  those of~ \cite[Theorem
6.1.1]{10author} and \cite[Theorem 1.2]{miagkov-thorne}, and we
refer to those papers for some of the details of the arguments, and
for any unfamiliar terminology.

We begin by introducing some terminology and notation we will need for the statement and proof. 

\subsubsection{Galois preliminaries}
Fix a continuous irreducible representation $\rhobar: G_F \to \GL_n(\Fpbar)$ for a
number field $F$. We fix a coefficient field $E/\Qp$ such that
$\rhobar(G_F) \subset \GL_n(\FF)$. %

We will use the notion of a \emph{decomposed generic} representation $\rhobar$, defined in \cite[Definition 4.3.1]{10author}. We will also use the notion of an \emph{adequate subgroup} of $\GL_n(\FF)$, see for example \cite[Definition 1.1.1]{miagkov-thorne}.

Let $v$ be a finite place of $F$. As in \cite[\S6.2.1]{10author}, a
\emph{local deformation problem} is a $\widehat{\PGL}_n$-stable
subfunctor of the lifting functor $\cD_v^\square:= \cD_{\rhobar|_{G_{F_v}}}^{\square,\cO}$, (pro-)representable by a quotient $R_v$ of the lifting ring $R_v^\square$. The following local deformation problems will be relevant:

\begin{itemize}
	\item the lifting functor itself, $\cD_v^\square$,
	\item for $v|p$, weight $0$ crystalline lifts $\cD_v^{\crys,\underline{0}}$, represented by $R^{\crys,\underline{0},\cO}_{\rhobar|_{G_{F_v}}}$,
	\item the local deformation problem $\cD_v^\chi$ defined in \cite[\S6.2.15]{10author}. In this case, we assume that $q_v\equiv 1 \mod p$, that $\rhobar|_{G_{F_v}}$ is trivial, that $p > n$, and we have a tuple $(\chi_{i})_{i = 1, \dots, n}$ of characters $\chi_{i} : \cO_{F_v}^\times \to \cO^\times$ which are trivial modulo $\varpi$. Then $\cD_v^\chi$ classifies lifts $\rho \colon G_{F_v} \rightarrow \GL_n(A)$ such that
	\[
	\mathrm{char}_{\rho(\sigma)}(X) = \prod_{i=1}^n (X - 
	\chi_i(\mathrm{Art}_{F_v}^{-1}(\sigma)))
	\]
	for all $\sigma \in I_{F_v}$. 
\end{itemize}

Let $S$ be a finite set of finite places of $F$ containing the $p$-adic places $S_p$ and
all places at which $\rhobar$ is ramified. Then we use the notion of a
\emph{global deformation problem} from \cite[Definition
6.2.2]{10author}. We will be able to restrict to the case where
$\Lambda_v = \cO_v$ for all $v \in S$, so our global deformation
problems will be tuples $\cS= (\rhobar,S,\{\cO\}_{v \in
  S},\{\cD_v\}_{v \in S})$. Each $\cD_v$ is a local deformation
problem, representable by a quotient $R_v$ of $R_v^\square$. There is
an associated functor $\cD_\cS$ of deformations of $\rhobar$
satisfying the local condition $\cD_v$ for each $v \in S$. It is
representable by $R_{\cS}$. More generally, if $T \subset S$, we have
a functor~$\cD_{\cS}^{T}$ of $T$-framed deformations, which is
representatble by~ $R_{\cS}^T$. The $T$-framed global deformation ring $R_{\cS}^T$ receives a natural $\cO$-algebra map from $R_{\cS}^{T,\loc} := \widehat{\otimes}_{v\in T,\cO} R_v$.

\subsubsection{Automorphic preliminaries}
Now we assume that $F$ is an imaginary CM number field. On the automorphic side, we will be interested in cuspidal automorphic representations of $\GL_n(\A_F)$ which are \emph{regular algebraic of weight $0$}. This means that the infinitesimal character of $\pi_\infty$ matches the infinitesimal character of the trivial representation of $\GL_n(F_\infty)$. These automorphic representations contribute to the cohomology groups with trivial coefficients of locally symmetric spaces.

Let $X_\infty = \GL_n(F_\infty)/\R_{> 0}K_\infty$ be the symmetric space, with $K_\infty$ a maximal compact subgroup of $\GL_n(F_\infty)$ (since $F$ is totally imaginary, $K_\infty$ is connected). Suppose we have a \emph{good subgroup} $K \subset \GL_n(\A_F^\infty)$. In other words, $K$ is neat, compact, open, and factorizes as  $K = \prod_v K_v$ for compact open subgroups $K_v \subset \GL_n(F_v)$. Then we can define a smooth manifold \[X_K = \GL_n(F) \backslash \left(X_\infty \times \GL_n(\A_F^\infty)/K \right).\]

Fix a finite set of finite places $S$ of $F$ containing $S_p$, with $K_v = \GL_n(\cO_v)$ for $v \notin S$. We factorize $K=K_S K^S$. We have an abstract Hecke algebra $\mathcal{H}(\GL_n(\A_F^{\infty,S}),K^S)$ with coefficients in $\Z$, a tensor product of spherical Hecke algebras over finite places $v \notin S$. 

Suppose that $V$ is a finite $\cO$-module with an action of $G(F)\times K_S$. Then, as explained in \cite[\S2.1.2]{10author}, $V$ descends to a local system of $\cO$-modules $\cV$ on $X_K$, and we have a natural Hecke action 
\[\mathcal{H}(\GL_n(\A_F^{\infty,S}),K^S)\otimes_{\Z}\cO \to \End_{\mathbf{D}(\cO)}(R\Gamma(X_K,\cV)).\] The image of this $\cO$-algebra map is a finite $\cO$-algebra denoted by $\TT^S(K,\cV)$. If $\m$ is a maximal ideal of $\TT^S(K,\cV)$, it has an associated semisimple Galois representation \[\rhobar_\m: G_{F,S'} \to \GL_n(k(\m))\] for a suitable set of places $S'$ containing $S$ \cite[Theorem 2.3.5]{10author}. For $v \notin S'$, the characteristic polynomial of $\rhobar_\m(\Frob_v)$ equals the image of \[\begin{split} P_v(X) = X^n&-T_{v, 1}X^{n-1} + \dots + (-1)^iq_v^{i(i-1)/2}T_{v, i}X^{n-i}+\dots \\ & + q_v^{n(n-1)/2}T_{v,n} \in \mathcal{H}(\GL_n(F_v),\GL_n(\cO_{F_v}))[X].
\end{split}\] in the residue field $k(\m)$. We write $T_{v, i} \in \mathcal{H}(\GL_n(F_v), \GL_n(\cO_{F_v}))$ for the double coset operator
\[ T_{v, i} = [ \GL_n(\cO_{F_v}) \mathrm{diag}(\varpi_v, \dots, \varpi_v, 1, \dots, 1) \GL_n(\cO_{F_v})], \]
where $\varpi_v$ appears $i$ times on the diagonal.

When $\rhobar_\m$ is absolutely irreducible, the cohomology groups $H^i(X_K,\cO)_\m \otimes_{\cO} E$ can be described in terms of cuspidal automorphic representations which are regular algebraic of weight 0 \cite[Theorem 2.4.10]{10author}.
 
\subsection{An automorphy lifting theorem}The rest of this section is
devoted to the proof of the following theorem, which is a version of \cite[Theorem 6.1.1]{10author} and \cite[Theorem 1.2]{miagkov-thorne} allowing arbitrary ramification at primes dividing $p$, at the price of restricting to weight~$0$ automorphic representations. 

\begin{theorem}\label{thm:main_automorphy_lifting_theorem}
	Let $F$ be an imaginary CM or totally real field
and let $p>n$ be a prime. Suppose given a continuous representation $\rho : G_F \to \GL_n(\Qpbar)$ satisfying the following conditions:
	\begin{enumerate}
		\item $\rho$ is unramified almost everywhere.
		\item For each place $v | p$ of $F$, the
                  representation $\rho|_{G_{F_v}}$ is crystalline of
                  weight~$0$, i.e.\ with Hodge--Tate weights $HT_{\tau}(\rho)=\{0,1,2,\ldots,n-1\}$ for each $\tau:F_v\hookrightarrow\Qpbar$. %
		\item\label{part: ALT cond dgi} $\overline{\rho}$ is absolutely irreducible and decomposed generic. The image of $\overline{\rho}|_{G_{F(\zeta_p)}}$ is adequate \emph{(}as a subgroup of $\GL_n(\FF)$, for sufficiently large $\FF$\emph{)}. 
		\item\label{pa4fl}  \label{part:scalartoremark} There exists $\sigma \in G_F - G_{F(\zeta_p)}$ such that $\overline{\rho}(\sigma)$ is a scalar. 
		\item There exists a cuspidal automorphic representation $\pi$ of $\GL_n(\A_F)$ satisfying the following conditions:
		\begin{enumerate}
			\item $\pi$ is regular algebraic of weight $0$.
			\item There exists an isomorphism $\iota : \Qpbar \to \C$ such that $\overline{\rho} \cong \overline{r_\iota(\pi)}$.
			\item\label{assm:connects at p} If $v | p$ is a place of $F$, then $\pi_v$ is unramified and $r_\iota(\pi)|_{G_{F_v}} \sim \rho|_{G_{F_v}}$ \emph{(}``connects to'', in the sense of \cite[\S1.4]{BLGGT}\emph{)}.
		\end{enumerate}
	\end{enumerate}
	Then $\rho$ is automorphic: there exists a cuspidal automorphic representation $\Pi$ of $\GL_n(\A_F)$ of weight $\lambda$ such that $\rho \cong r_\iota(\Pi)$. Moreover, if $v$ is a finite place of $F$ and either $v | p$ or both $\rho$ and $\pi$ are unramified at $v$, then $\Pi_v$ is unramified.
\end{theorem}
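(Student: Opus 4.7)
The overall plan is to use the Taylor--Wiles--Kisin patching framework of \cite[Thm.\ 6.1.1]{10author} and \cite[Thm.\ 1.2]{miagkov-thorne}, but modified so as to exploit the two new local ingredients: the generic reducedness at $v \mid p$ provided by Theorem~\ref{thm: special fibre weight 0 crystalline def ring generically reduced}, and the local--global compatibility of \cite{caraiani-newton} at arbitrarily ramified $p$-adic places. After a preliminary solvable CM base change (preserving the decomposed generic and adequacy conditions of hypothesis~\ref{part: ALT cond dgi}, as well as the scalar element of \ref{part:scalartoremark}), I would reduce to a situation where $F$, $\rho$, and $\pi$ are in a convenient form: in particular, $\rho$ and $r_\iota(\pi)$ are ramified only in a suitably small set of places~$S$ containing $S_p$, and the ramification at places in $S \setminus S_p$ has been put into a standard shape.

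Next I would set up two global deformation problems $\cS_1$ and $\cS_\chi$ for $\rhobar$, differing only at a collection of auxiliary places $v \nmid p$ chosen as in Taylor's Ihara avoidance argument~\cite{tay}: at those places $\cS_1$ imposes the trivial (``$\chi = 1$'') version of the deformation problem $\cD_v^\chi$ while $\cS_\chi$ imposes a generic version. At each $v \mid p$, both problems would use the weight-zero crystalline condition $\cD_v^{\crys,\underline{0}}$; hypothesis~\ref{assm:connects at p} ensures that $\rho|_{G_{F_v}}$ and $r_\iota(\pi)|_{G_{F_v}}$ define points on the same irreducible component of the generic fibre of $R_v^{\crys,\underline{0}}$. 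I would then apply the patching construction in the form used in \cite[\S 6]{10author} to obtain patched modules $M_\infty^1$ and $M_\infty^\chi$ over the completed tensor products $R_\infty^1$ and $R_\infty^\chi$ of local lifting rings, together with a Hecke/deformation ring comparison. Here the local--global compatibility of \cite{caraiani-newton} is essential, as it provides the patched module structure (and the expected $p$-adic local behaviour) in the absence of a Shimura-variety setup.

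The main obstacle, and the place where the new ingredients actually intervene, is executing Ihara avoidance in the presence of wildly ramified $p$-adic places. Concretely, one must show that $M_\infty^\chi$ meets every irreducible component of $\Spec R_\infty^\chi$, and then conclude via the congruence $M_\infty^\chi / \varpi = M_\infty^1 / \varpi$ that the point of $R_\infty^1$ cut out by $\rho$ lies in the support of $M_\infty^1$. This transfer requires that every generic point of the special fibre of the relevant local lifting ring at each place in $S$ has a unique generalization to the generic fibre; at auxiliary places $v \nmid p$ this is supplied by~\cite[Prop.\ 3.1]{tay}, while at places $v \mid p$ it is exactly the content of Theorem~\ref{thm: special fibre weight 0 crystalline def ring generically reduced}, since generic reducedness of the special fibre implies this unique generalization property. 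Putting these together allows Taylor's argument to run without needing $p$ unramified in $F$ or a Fontaine--Laffaille/ordinary hypothesis at the $p$-adic places.

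Once the support of $M_\infty^1$ is shown to contain the point associated to~$\rho$, the automorphy of $\rho$ follows by the standard argument of \cite[\S 6.5]{10author}, producing a cuspidal automorphic representation $\Pi$ of weight~$0$ with $\rho \cong r_\iota(\Pi)$. The additional unramifiedness statement for~$\Pi_v$ at places where $\pi$ and $\rho$ are both unramified is then automatic from the choice of auxiliary deformation conditions (no ramified lifts are allowed at such places).
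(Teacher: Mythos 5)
Your proposal follows essentially the same strategy as the paper: reduce by solvable CM base change to a favorable situation, set up two global deformation problems $\cS_1$ and $\cS_\chi$ differing only at Ihara-avoidance places $v \in R$, with the weight-zero crystalline condition at $v \mid p$, patch using the Calegari--Geraghty framework of \cite[\S 6]{10author} with the local--global compatibility of \cite{caraiani-newton}, and then run Ihara avoidance using the unique-generalization property supplied at $v \mid p$ by Theorem~\ref{thm: special fibre weight 0 crystalline def ring generically reduced}. The only imprecision is in the Ihara avoidance step: rather than showing $M_\infty^\chi$ meets \emph{every} irreducible component of $\Spec R_\infty^\chi$, the paper's Theorem~\ref{thm:alt_after_bc} argues more locally — it uses the ``connects'' hypothesis at $v\mid p$ (together with the bijection between components of $R_{\cS_\chi}^{S,\loc}$ and $R_{\cS_1}^{S_p,\loc}$ and the unique-generalization property, all collected in Lemma~\ref{lem:local def ring props}) to show that the component of $R_\infty$ containing the target point $\ker(f_2)$ and the automorphic component $C_a$ reduce mod $\varpi$ to generic points generalizing to a \emph{single} component of $R_\infty'$, and then applies \cite[Prop.\ 5.4.2]{caraiani-newton}; this finer bookkeeping is necessary because the weight-zero crystalline lifting rings at $v\mid p$ can have several components, so the support of the patched module need not spread across all of $\Spec R_\infty^\chi$.
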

\begin{rem}
In assumption (\ref{assm:connects at p}), we are using \cite[Theorem
4.3.1]{caraiani-newton} which shows that $r_\iota(\pi)|_{G_{F_v}}$ is
crystalline with the same labelled Hodge--Tate weights as
$\rho|_{G_{F_v}}$. Choose a $p$-adic coefficient field $E$ which
contains the Galois closure of $F$ and such that $\rhobar(G_F)\subset
\GL_n(\FF)$. Then assumption~\eqref{assm:connects at p} is that $r_\iota(\pi)|_{G_{F_v}}$ and $\rho|_{G_{F_v}}$ define points on the same irreducible component of the weight $0$ crystalline lifting ring $R^{\crys,\underline{0},\cO}_{\rhobar|_{G_{F_v}}}\otimes_{\cO}\Qpbar$.  
\end{rem}
We begin by imposing some additional assumptions, under which we can
use the Calegari--Geraghty version of the Taylor--Wiles--Kisin
patching method to prove
an automorphy lifting theorem. We then deduce
Theorem~\ref{thm:main_automorphy_lifting_theorem} by a standard base
change argument. We refer the reader to~\cite{10author} for any
unfamiliar notation.

We let $F$ be an imaginary CM field with maximal totally real
subfield~$F^+$ and complex conjugation $c \in \Gal(F/F^+)$. We fix an integer $n \ge 1$, %
an odd prime $p > n$ and an isomorphism $\iota : \Qpbar \cong \C$. We let $\pi$ be a
cuspidal automorphic representation of $\GL_n(\A_F)$, which is regular
algebraic of weight $0$. We suppose we have a finite set $S$ of finite
places of $F$, containing the set~$S_p$ of places of~$F$ above $p$, and a (possibly empty)
subset $R \subset (S\setminus S_p)$.%

Then we assume that the following conditions are satisfied:
\begin{enumerate}
	\item If $l$ is a prime lying below an element of $S$, or which is ramified in $F$, then $F$ contains an imaginary quadratic field in which $l$ splits. In particular, each place of $S$ is split over $F^+$ and the extension $F / F^+$ is everywhere unramified. 
	\item For each $v \in S_p$, let $\overline{v}$ denote the place of $F^+$ lying below $v$. Then there exists a place $\overline{v}' \neq \overline{v}$ of $F^+$ such that $\overline{v}' | p$ and 
	\[ \sum_{\overline{v}'' \neq \overline{v}, \overline{v}'} [ F^+_{\overline{v}''} : \Qp ] > \frac{1}{2} [ F^+ : \Qp ]. \]
	\item The residual representation $\overline{r_\iota(\pi)}$ is absolutely irreducible and decomposed generic, and $\overline{r_\iota(\pi)}|_{G_{F(\zeta_p)}}$ has adequate image.
	\item If $v$ is a place of $F$ lying above $p$, then $\pi_v$ is unramified.
	\item If $v \in R$, then $\pi_v^{\Iw_v} \neq 0$, $q_v \equiv 1 \text{ mod }p$ and $\overline{r_\iota(\pi)}|_{G_{F_v}}$ is trivial.
	\item If $v \in S - (R \cup S_p)$, then $\pi_v$ is unramified, $v\notin R^c$, and $H^2(F_v, \ad \overline{r_\iota(\pi)}) = 0$. %
	\item $S-(R \cup S_p)$ contains at least two places with distinct residue characteristics.
	\item If $v \not\in S$ is a finite place of $F$, then $\pi_v$ is unramified. 
\end{enumerate}
We define an open compact subgroup $K = \prod_v K_v$ of $\GL_n(\widehat{\cO}_F)$ as follows:
\begin{itemize}
	\item If $v \not\in S$, or $v \in S_p$, then $K_v = \GL_n(\cO_{F_v})$.
	\item If $v \in R$, then $K_v = \Iw_v$.
	\item If $v \in S - (R \cup S_p)$, then $K_v = \Iw_{v,1}$ is the pro-$v$ Iwahori subroup of $\GL_n(\cO_{F_v})$.
\end{itemize} 

By \cite[Theorem 2.4.10]{10author}, we can find a coefficient field $E \subset \Qpbar$ and a maximal ideal $\ffrm \subset \TT^S(K, \cO)$ such that $\overline{\rho}_\ffrm \cong \overline{r_\iota(\pi)}$. After possibly enlarging $E$, we can and do assume that the residue field of $\m$ is equal to $\FF$, the residue field of $E$.
For each tuple $(\chi_{v, i})_{v \in R, i = 1, \dots, n}$ of characters $\chi_{v, i} : k(v)^\times \to \cO^\times$ which are trivial modulo $\varpi$, we define a global deformation problem  
\[ \cS_\chi = (\overline{\rho}_\ffrm, S, \{ \cO \}_{v \in S}, \{ \cD_v^{\crys,\underline{0}} \}_{v \in S_p} \cup \{ \cD_v^\chi \}_{v \in R} \cup \{ \cD_v^\square \}_{v \in S - (R \cup S_p)}). \]
We will assume that either $\chi_{v,i} = 1$ for all $v \in R$ and all
$1\le i\le n$, or that for each $v \in R$ the $\chi_{v,i}$ are
pairwise distinct. %

Extending $\cO$ if necessary, we may assume that all irreducible components of our local lifting rings and their special fibres are geometrically irreducible. We fix representatives $\rho_{\cS_\chi}$ of the universal deformations which are identified modulo $\varpi$ (via the identifications $R_{\cS_\chi} / \varpi \cong R_{\cS_1} / \varpi$). We define an $\cO[K_S]$-module $\cO(\chi^{-1})$, where $K_S$ acts by the composition of $\chi^{-1}$ with the projection \[K_S \to K_R = \prod_{v \in R} \Iw_v \to \prod_{v \in R} (k(v)^\times)^n.\]
\begin{prop}\label{prop:existence_of_Hecke_Galois_with_LGC}
	There exists an integer $\delta \geq 1$, depending only on $n$ and $[F : \Q]$, an ideal $J \subset \TT^S( R \Gamma(X_K, \cV_\lambda(\chi^{-1})))_\ffrm$ such that $J^\delta = 0$, and a continuous surjective homomorphism
	\[ f_{\cS_\chi} : R_{\cS_\chi}  \to \TT^S( R \Gamma(X_K, \cO(\chi^{-1})))_\ffrm / J \]
	such that for each finite place $v \not \in S$ of $F$, the characteristic polynomial of $f_{\cS_\chi} \circ \rho_{\cS_\chi}(\Frob_v)$ equals the image of $P_v(X)$ in $\TT^S( R \Gamma(X_K, \cO(\chi^{-1})))_\ffrm / J$.
\end{prop}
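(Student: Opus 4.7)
The plan is to construct the map $f_{\cS_\chi}$ by combining Scholze's construction of Galois representations attached to the Betti cohomology of locally symmetric spaces with the local--global compatibility result of Caraiani--Newton and a standard analysis of the local conditions at the auxiliary primes. The structure of the argument follows the proof of the analogous Proposition~6.2.22 of \cite{10author}, with the crucial new input being the generic reducedness statement of Theorem~\ref{thm: special fibre weight 0 crystalline def ring generically reduced} needed to control the crystalline weight~$0$ deformation problem (but see below, where in fact only Caraiani--Newton is used directly).

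First, I would invoke the construction of Galois representations attached to classes in $R\Gamma(X_K,\cO(\chi^{-1}))_\ffrm$, which produces an integer $\delta\geq 1$ depending only on $n$ and $[F:\Q]$, a nilpotent ideal $J$ with $J^\delta=0$ inside $\TT^S(R\Gamma(X_K,\cO(\chi^{-1})))_\ffrm$, and a continuous representation
\[ \rho_\ffrm : G_{F,S'} \to \GL_n(\TT^S(R\Gamma(X_K,\cO(\chi^{-1})))_\ffrm/J) \]
lifting $\overline{\rho}_\ffrm$ (after possibly conjugating so that the reduction is actually $\overline{\rho}_\ffrm$, using absolute irreducibility) and such that the characteristic polynomial of $\rho_\ffrm(\Frob_v)$ for $v\notin S'$ is the image of $P_v(X)$. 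The next task is to check that this representation satisfies the three local conditions defining $\cS_\chi$.

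For places $v\in S-(R\cup S_p)$ the condition is the unrestricted one $\cD_v^\square$ and there is nothing to verify. For places $v\in R$, the level structure $\Iw_v$ together with the twist by $\cO(\chi^{-1})$ forces $\rho_\ffrm|_{G_{F_v}}$ to satisfy the $\chi$-condition: on characteristic zero points this is the standard local--global compatibility with Iwahori level (after passing to an automorphic representation using absolute irreducibility of $\rhobar_\ffrm$), and since $R_v^\chi$ is $\cO$-flat, it propagates to $\TT^S/J$ after possibly enlarging $\delta$. For places $v\in S_p$, the residual representation corresponds to a point of $\overline{\cX}_d^{\crys,\underline{0}}$, and local--global compatibility at $p$-adic places (Theorem~4.3.1 of \cite{caraiani-newton}) combined with the $\cO$-flatness of $R_v^{\crys,\underline{0}}$ shows that, after further enlarging $\delta$ if necessary, the local representation at each $v|p$ factors through $R_v^{\crys,\underline{0}}$; here the hypothesis that $\pi$ is unramified at $p$-adic places means the module $\cO(\chi^{-1})$ is trivial at $v|p$, so the relevant condition is precisely weight~$0$ crystalline.

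Combining these local factorizations, the universal property of $R_{\cS_\chi}$ produces a continuous $\cO$-algebra map $f_{\cS_\chi}$ as in the statement. Surjectivity is Chebotarev: the coefficients of the characteristic polynomials $P_v(X)$ for $v\notin S$ generate $\TT^S/J$ topologically, and each such coefficient is the image under $f_{\cS_\chi}$ of the corresponding coefficient of the characteristic polynomial of $\rho_{\cS_\chi}(\Frob_v)$. The main technical obstacle is propagating the characteristic-zero local--global compatibility at $p$-adic places to the full Hecke algebra $\TT^S/J$: this is handled by observing that the crystalline weight~$0$ condition cuts out a closed subscheme of $\Spec R_v^\square[1/p]$ whose $\overline{E}$-points contain all characteristic-zero specializations, and since $R_v^{\crys,\underline{0}}$ is $\cO$-flat and reduced (even the special fibre is generically reduced by Theorem~\ref{thm: special fibre weight 0 crystalline def ring generically reduced}, though here only flatness is needed), the map to $\TT^S/J$ must factor through this quotient after enlarging the nilpotent ideal by a bounded amount.
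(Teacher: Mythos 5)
The overall shape of your argument matches the paper's approach: invoke Scholze's construction of the Galois representation valued in $\TT^S(R\Gamma(X_K,\cO(\chi^{-1})))_\ffrm/J$ with a nilpotent ideal of bounded exponent, verify the local conditions at each $v\in S$, and conclude via the universal property of $R_{\cS_\chi}$ together with Chebotarev for surjectivity. The paper's proof is precisely a one-line statement to the effect that this is Proposition~6.5.3 of \cite{10author} with Caraiani--Newton used to supply the crystalline weight~$0$ condition at $p$-adic places. You are also correct that the generic reducedness result of Theorem~\ref{thm: special fibre weight 0 crystalline def ring generically reduced} is not used here (it enters only later, via Lemma~\ref{lem:local def ring props}\,(2), for the Ihara avoidance part of the patching argument).

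However, there is a genuine gap in the way you handle the $p$-adic places. You cite \cite[Theorem 4.3.1]{caraiani-newton}, which is the \emph{classical} local--global compatibility statement for a single regular algebraic automorphic representation (i.e.\ for $E$-valued points of the Hecke algebra), and you then try to upgrade this to a factorization $R_v^\square \to R_v^{\crys,\underline{0}} \to \TT^S/J$ for the full torsion Hecke algebra by appealing to $\cO$-flatness of $R_v^{\crys,\underline{0}}$ and ``enlarging the nilpotent ideal by a bounded amount.'' This step does not work. Knowing that every $\overline{E}$-point of $\Spec\TT^S/J$ kills the ideal $I_v=\ker(R_v^\square\to R_v^{\crys,\underline{0}})$ only tells you the image of $I_v$ lies in the kernel of $\TT^S/J \to (\TT^S/J)[1/p]/\mathrm{nil}$; this kernel need not be nilpotent, because $\TT^S/J$ can have a large $p$-torsion part that is invisible to characteristic-zero points (e.g.\ $\TT^S/J \cong \cO\times\F$ gives a non-nilpotent idempotent ideal killed by all $\overline{E}$-points). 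The correct input here is \cite[Theorem 4.2.15]{caraiani-newton}, which proves the factorization directly at the level of the torsion Hecke algebra, using the formulation of the crystalline weight~$0$ condition via the Emerton--Gee stack rather than via a density-of-characteristic-zero-points argument. In short: the derived local--global compatibility at $p$ is a theorem in its own right, not a corollary of the classical one plus flatness, and your proof should cite it as such.
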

\begin{proof}
	This is a version of \cite[Proposition 6.5.3]{10author}, using \cite[Theorem 4.2.15]{caraiani-newton} to verify that we satisfy the crystalline weight $0$ condition at $v \in S_p$.
\end{proof}

This proposition means that it makes sense to talk about the support of  $H^\ast(X_K, \cO)_\ffrm$ over $ R_{\cS_{1}}$, since $f_{\cS_1}$ realizes $\Spec(\TT^S(K,\cO)_\m)$ as a closed subset of $\Spec(R_{\cS_1})$. 

Here are the essential properties of the (completed tensor products of) local deformation rings in our situation:
\begin{lem}\label{lem:local def ring props}Fix a tuple $\chi = (\chi_{v, i})_{v \in R, i = 1, \dots, n}$ of characters $\chi_{v, i} : k(v)^\times \to \cO^\times$ which are trivial modulo $\varpi$. We assume that either $\chi_{v,i} = 1$ for all $v \in R$ and all
	$1\le i\le n$, or that for each $v \in R$ the $\chi_{v,i}$ are
	pairwise distinct.
	\begin{enumerate}
		\item $R_{\cS_{\chi}}^{S,\loc}$ is equidimensional of dimension $1+n^2|S| + \frac{n(n-1)}{2}[F:\Q]$ and every generic point has characteristic $0$.
		\item\label{uniquegen} Each generic point of $\Spec R_{\cS_{\chi}}^{S,\loc}/\varpi$ is the specialization of a unique generic point of $\Spec R_{\cS_{\chi}}^{S,\loc}$.
		\item\label{irredcompsatp} Assume that $\chi_{v,1},\ldots,\chi_{v,n}$ are pairwise distinct for each $v\in R$. 
		Then the natural map $\Spec R_{\cS_{\chi}}^{S,\loc}\to\Spec R_{\cS_{\chi}}^{S_p,\loc} = \Spec R_{\cS_{1}}^{S_p,\loc}$ induces a bijection on irreducible components. 
		\item\label{regular generic fibre} Each characteristic zero point of $\Spec R_{\cS_{1}}^{S_p,\loc}$ lies on a unique irreducible component.
		\item Assume that $\chi_{v,1},\ldots,\chi_{v,n}$ are
                  pairwise distinct for each $v\in R$, and let $C$ be
                  an irreducible component of $\Spec
                  R_{\cS_{1}}^{S,\loc}$. Write  $C_p$ for the
                  image of~$C$ in
                   $\Spec R_{\cS_{1}}^{S_p,\loc}$ \emph{(}so that~$C_p$ is an
                   irreducible component of $\Spec
                   R_{\cS_{1}}^{S_p,\loc}$\emph{)}. Then the generic points
                   of $C\cap \Spec R_{\cS_{1}}^{S,\loc}/\varpi$
                   generalize \emph{(}via the equality $\Spec
                   R_{\cS_{1}}^{S,\loc}/\varpi= \Spec
                   R_{\cS_{\chi}}^{S,\loc}/\varpi$\emph{)} to the generic
                   point of $\Spec R_{\cS_{\chi}}^{S,\loc}$
                   corresponding  to $C_p$ via the bijection of part~\eqref{irredcompsatp}. \emph{(}By part (\ref{uniquegen}), each of these points has a unique generalization.\emph{)}
	\end{enumerate} 
\end{lem}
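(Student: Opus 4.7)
The plan is to establish each of the five assertions by decomposing $R^{S,\loc}_{\cS_\chi}$ as the completed $\cO$-tensor product of the three families of local lifting rings, indexed by $v \in S_p$, $v \in R$, and $v \in S \setminus (R \cup S_p)$, and analyzing each family separately. First I would collect the per-factor inputs. For $v \in S_p$, $R_v^{\crys,\underline{0},\cO}$ is $\cO$-flat and equidimensional of relative dimension $n^2 + \tfrac{n(n-1)}{2}[F_v:\Qp]$, with regular generic fibre (by \cite[Thm.\ 3.3.8]{kis04}) and---crucially---with generically reduced special fibre by Theorem~\ref{thm: special fibre weight 0 crystalline def ring generically reduced}. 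For $v \in R$, both $R_v^{1}$ and $R_v^{\chi}$ are $\cO$-flat and equidimensional of relative dimension $n^2$ with generically reduced special fibres; moreover when the $\chi_{v,i}$ are pairwise distinct $R_v^{\chi}$ is a \emph{domain}, and the identification $R_v^{\chi}/\varpi = R_v^{1}/\varpi$ induces a bijection between the generic points of this common special fibre and the irreducible components of $R_v^{1}$. These are Taylor's Ihara-avoidance results, packaged as in \cite[\S6.2]{10author}. For $v \in S \setminus (R \cup S_p)$, the assumption $H^2(F_v, \ad \rhobar_\ffrm) = 0$ makes $R_v^{\square}$ formally smooth over $\cO$ of relative dimension $n^2$.

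With these inputs, parts (1) and (2) follow from the standard fact that a completed tensor product over $\cO$ of $\cO$-flat, equidimensional rings with geometrically generically reduced special fibres has the same properties, and that dimensions add (after accounting for the shared $\cO$-direction). This is the reason for the preliminary enlargement of $\cO$: the irreducible components of each factor's special fibre are then geometrically irreducible, and \cite[Lem.\ 3.3]{blght} (see also the analogous statement in \cite[\S6.2]{10author}) gives both the equidimensionality and the unique-generalization property for the completed tensor product. Part (4) is an analogous but simpler statement about $R_{\cS_1}^{S_p,\loc}$ alone: the regularity of each factor's generic fibre, combined with the compatibility of regularity with completed tensor product over $\cO$ in characteristic zero, shows that the generic fibre of $R_{\cS_1}^{S_p,\loc}$ is regular, so any point lies on a unique irreducible component.

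For part (3), when the $\chi_{v,i}$ are pairwise distinct the factors at $v \in R$ are irreducible domains and those at $v \in S \setminus (R \cup S_p)$ are formally smooth domains; tensoring them into $R_{\cS_1}^{S_p,\loc}$ creates no new irreducible components, and the claimed bijection is immediate. For part (5), let $\eta$ be a generic point of $C \cap V(\varpi)$. By part (2), $\eta$ admits a unique generalization $\widetilde{\eta}$ in $\Spec R^{S,\loc}_{\cS_\chi}$, and by part (3) $\widetilde{\eta}$ lies on a unique irreducible component $\widetilde{C}$ which is the preimage of some component $D_p$ of $\Spec R^{S_p,\loc}_{\cS_1}$. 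To identify $D_p$ with $C_p$, I would observe that the projections $\Spec R^{S,\loc}_{\cS_1} \to \Spec R^{S_p,\loc}_{\cS_1}$ and $\Spec R^{S,\loc}_{\cS_\chi} \to \Spec R^{S_p,\loc}_{\cS_\chi}$ agree modulo $\varpi$ under the identification of special fibres; hence the image of $\eta$ in the special fibre of $R^{S_p,\loc}_{\cS_1}$ generalizes both to the generic point of $C_p$ (through $C$) and to the generic point of $D_p$ (through $\widetilde{C}$), which forces $C_p = D_p$ by the uniqueness of part (4).

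The main obstacle is entirely the input at the crystalline places: parts (1) and (2) at $v \in S_p$ demand generic reducedness of the special fibre of the weight-zero crystalline lifting ring for \emph{arbitrary} $\rhobar$ and \emph{arbitrarily ramified} $F_v$, which is exactly Theorem~\ref{thm: special fibre weight 0 crystalline def ring generically reduced} and constitutes the essential new input relative to \cite{10author} and \cite{miagkov-thorne}. Once that theorem is available, the remaining steps are a bookkeeping exercise, and the restrictions $p > n$ and weight~$0$ in Theorem~\ref{thm:main_automorphy_lifting_theorem} can be traced back to the hypotheses of the generic-reducedness theorem.
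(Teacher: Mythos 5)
Your decomposition, the per-factor inputs, and the overall structure match the paper's proof essentially exactly: the crucial new ingredient is Theorem~\ref{thm: special fibre weight 0 crystalline def ring generically reduced} at $v\mid p$, combined with \cite[Lem.\ 3.3]{blght} and the Ihara-avoidance facts from \cite[\S 6.2]{10author} at $v\in R$, and formal smoothness elsewhere. There is, however, a slip in the final step of part (5): you write that $C_p=D_p$ is forced ``by the uniqueness of part (4),'' but part (4) concerns characteristic-zero points of $\Spec R_{\cS_1}^{S_p,\loc}$ lying on a unique component (a consequence of regularity of the generic fibre), whereas what you actually need is that a generic point of the \emph{special fibre} of $R_{\cS_1}^{S_p,\loc}$ admits a unique generalization --- that is, part (2) specialized to the $S_p$-factor, which is what generic reducedness of the weight-$0$ crystalline lifting rings supplies. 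This is what the paper invokes; the substitution of part (4) would not close the argument, since part (4) does not control points of the special fibre.
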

\begin{proof} We begin by noting that \cite[Lemma 3.3]{blght} allows
  us to describe the set of irreducible components of
  $\Spec(R_{\cS_{\chi}}^{S,\loc})$ (respectively, its special fibre)
  as the product over $v \in S$ of the sets of irreducible components
  of the local deformation rings (respectively, their special
  fibres). (Here we use that the irreducible components of the local
  deformation rings that we consider are all in characteristic zero)
	
The first part follows from \cite[Lemma 6.2.25]{10author} (we
        have a different deformation condition at $p$, but
        $R^{\crys,\underline{0},\cO}_{\rhobar|_{G_{F_v}}}$ is
        $\cO$-flat by definition and equidimensional of dimension
        $1+n^2 + \frac{n(n-1)}{2}[F_v : \Qp]$ by \cite[Theorem
        3.3.4]{kisindefrings}).

        For the second part, for each $v \in S$ and local deformation ring $R_v$ we need to check that the generic points of $\Spec R_v/\varpi$ have unique generalizations to $\Spec R_v$. For $v|p$, this follows from Theorem~\ref{thm: special fibre weight 0 crystalline def ring generically
        	reduced}%
              --- see \cite[Lemma 5.3.3]{caraiani-newton} for the
              argument that generically reduced special fibre implies
              unique generalizations of its generic points, and note that we are assuming  $p > n$. For $v \in
              R$, the property we need follows from \cite[Props.~6.2.16,
              6.2.17]{10author}. For $v \in S - (R\cup S_p)$, $R_v =
              R_v^{\square}$ is formally smooth over $\cO$.

              The third part follows from the irreducibility of the
              local deformation rings for $v \in S-S_p$ when the $\chi_{v,i}$ are pairwise distinct
              \cite[Prop.~6.2.17]{10author}, and the fourth part  from
              the regularity of
              $R_{\cS_{1}}^{S_p,\loc}\left[1/p\right]$ \cite[Theorem
              3.3.8]{kisindefrings}.

           For the final part, by the third part is enough to note
           that as we saw above, it follows from 
           Theorem~\ref{thm: special fibre weight 0 crystalline def
             ring generically reduced} that the generic points
           of the special fibre of $\Spec R_{\cS_{1}}^{S_p,\loc}/\varpi
           = \Spec R_{\cS_{\chi}}^{S_p,\loc}/\varpi$ uniquely
           generalize to generic points of~$\Spec R_{\cS_{1}}^{S_p,\loc}
           = \Spec R_{\cS_{\chi}}^{S_p,\loc}$.
\end{proof}

\begin{theorem}\label{thm:alt_after_bc}
	Suppose we are given two homomorphisms $f_1, f_2: R_{\cS_1}\to \cO$ with associated liftings $\rho_1, \rho_2: G_{F,S} \to \GL_n(\cO)$. Suppose $\ker(f_1) \in \Supp_{R_{\cS_1}}(H^\ast(X_K, \cO)_\ffrm)$ and $\rho_1|_{G_{F_v}} \sim \rho_2|_{G_{F_v}}$ for each $v \in S_p$.  Then $\ker(f_2) \in \Supp_{R_{\cS_1}}(H^\ast(X_K, \cO)_\ffrm)$.
\end{theorem}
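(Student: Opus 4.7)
The plan is to run Taylor's Ihara avoidance argument (\cite{tay}, see also~\cite[Thm.~6.5.11]{10author} and~\cite[Thm.~5.2.1]{miagkov-thorne}), with the crucial new ingredient being the unique-generalization property at places $v\in S_p$ supplied by Lemma~\ref{lem:local def ring props}(2), which in turn rests on the generic reducedness result Theorem~\ref{thm: special fibre weight 0 crystalline def ring generically reduced}. First, I would apply Taylor--Wiles--Kisin patching in the Calegari--Geraghty formulation appropriate to the imaginary CM setting (as in~\cite[\S6.5]{10author}, accounting for the defect $\ell_0=[F^+:\Q]$), simultaneously to $\cS_1$ and to $\cS_\chi$ for a fixed tuple $\chi=(\chi_{v,i})_{v\in R,\,i=1,\ldots,n}$ with pairwise distinct $\chi_{v,i}$. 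This yields patched modules $M_\infty$ and $M_\infty(\chi)$ over patched rings $R_\infty = R_{\cS_1}^{S,\loc}\widehat{\otimes}_{\cO}\cO\llb x_1,\ldots,x_g\rrb$ and $R_\infty(\chi) = R_{\cS_\chi}^{S,\loc}\widehat{\otimes}_{\cO}\cO\llb x_1,\ldots,x_g\rrb$ with a common $g$. The key outputs are: (a) each of $\Supp_{R_\infty}M_\infty$ and $\Supp_{R_\infty(\chi)}M_\infty(\chi)$ is a union of irreducible components of the corresponding patched ring (using the equidimensionality of Lemma~\ref{lem:local def ring props}(1) and Cohen--Macaulayness of the patched module); (b) since $\chi\equiv 1\pmod{\varpi}$ the complexes defining the two patched modules agree mod $\varpi$, so $R_\infty/\varpi=R_\infty(\chi)/\varpi$ and $M_\infty/\varpi=M_\infty(\chi)/\varpi$; and (c) $\Supp_{R_{\cS_1}}H^*(X_K,\cO)_\ffrm$ coincides, up to the nilpotent ideal $J$ of Proposition~\ref{prop:existence_of_Hecke_Galois_with_LGC}, with $\Supp_{R_\infty}M_\infty\cap\Spec R_{\cS_1}$.

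Next, I would transfer the component of $\Supp M_\infty$ witnessing $f_1$ across to $\Supp M_\infty(\chi)$. Let $\tilde f_1\colon R_\infty\twoheadrightarrow R_{\cS_1}\xrightarrow{f_1}\cO$, so by the hypothesis and (c), $\ker\tilde f_1\in\Supp_{R_\infty}M_\infty$. By Lemma~\ref{lem:local def ring props}(4), $\ker\tilde f_1$ lies on a unique irreducible component $C_1$ of $\Spec R_\infty$, which must be contained in $\Supp M_\infty$ by (a). Let $C_{1,p}$ be the image of $C_1$ in $\Spec R_\infty^{S_p,\loc}$; by Lemma~\ref{lem:local def ring props}(3) there is a unique irreducible component $C_\chi$ of $\Spec R_\infty(\chi)$ lying over $C_{1,p}$. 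Lemma~\ref{lem:local def ring props}(5) shows that the generic points of $C_1\cap\Spec R_\infty/\varpi$ generalize (inside $\Spec R_\infty(\chi)$, via the shared special fibre from (b)) to the generic point of $C_\chi$; since these generic points lie in $\Supp M_\infty(\chi)/\varpi$ by (b), Lemma~\ref{lem:local def ring props}(2) combined with (a) forces $C_\chi\subset\Supp M_\infty(\chi)$.

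Finally, I would reverse the comparison to reach $f_2$. Define $\tilde f_2$ analogously, and let $C_2$ be the unique irreducible component of $\Spec R_\infty$ containing $\ker\tilde f_2$. The hypothesis $\rho_1|_{G_{F_v}}\sim\rho_2|_{G_{F_v}}$ for each $v\in S_p$ says precisely that $C_1$ and $C_2$ have the same image $C_{1,p}=C_{2,p}$ in $\Spec R_\infty^{S_p,\loc}$; hence $C_2$ also corresponds to $C_\chi$ under Lemma~\ref{lem:local def ring props}(3). Applying Lemma~\ref{lem:local def ring props}(5) to $C_2$, the generic points of $C_2\cap\Spec R_\infty/\varpi$ lie in $\Supp M_\infty(\chi)/\varpi=\Supp M_\infty/\varpi$ by the previous paragraph and (b); by Lemma~\ref{lem:local def ring props}(2) their unique generalizations in $\Spec R_\infty$ are exactly the generic point of $C_2$, so $C_2\subset\Supp M_\infty$. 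Specializing via (c) gives $\ker f_2\in\Supp_{R_{\cS_1}}H^*(X_K,\cO)_\ffrm$, as required.

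The main substantive obstacle---and the reason this theorem becomes available only now---is the unique-generalization property at $v\in S_p$ provided by Lemma~\ref{lem:local def ring props}(2), which itself rests on the generic reducedness Theorem~\ref{thm: special fibre weight 0 crystalline def ring generically reduced}. Once this input is in place, the surrounding Ihara avoidance argument is by now standard, and its adaptation to the imaginary CM situation (positive defect $\ell_0$, derived patching, and the local--global compatibility at $v\mid p$ from Proposition~\ref{prop:existence_of_Hecke_Galois_with_LGC} coming from~\cite{caraiani-newton}) follows the framework of~\cite[\S6.5]{10author} with only routine bookkeeping changes.
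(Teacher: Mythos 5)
Your proposal follows the same overall strategy as the paper: patch à la Calegari--Geraghty in the $\ell_0>0$ setting, exploit the new unique-generalization property at $v\mid p$ coming from Theorem~\ref{thm: special fibre weight 0 crystalline def ring generically reduced} via Lemma~\ref{lem:local def ring props}, and run Taylor's Ihara-avoidance argument to transfer support between irreducible components that agree after reduction mod $\varpi$. You have correctly isolated the new ingredient and the role it plays.

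However, your step (a) is not correct as stated, and this is where the argument needs repair. In the $\ell_0>0$ derived patching setting, the patched object is a perfect complex $C_\infty$ of $S_\infty$-modules concentrated in a range of $\ell_0+1$ degrees, and its cohomology $H^*(C_\infty)$ is \emph{not} a Cohen--Macaulay $R_\infty$-module. Only the top cohomology group $H^{q_0+\ell_0}(C_\infty)$ acquires the depth bound $\geq \dim S_\infty - \ell_0 = \dim R_\infty$ from Auslander--Buchsbaum, and one cannot simply conclude from this that $\Supp_{R_\infty} H^*(C_\infty)$ is a union of irreducible components of $\Spec R_\infty$. Your appeal to ``Cohen--Macaulayness of the patched module'' therefore does not establish (a), and the rest of your transfer argument (steps 6--7) leans on (a) to pass from containment of mod-$\varpi$ generic points to containment of the full component $C_\chi$.

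The paper sidesteps this by working instead with the patched Hecke algebra $T_\infty$ (the image of $R_\infty$ in $\End_{\bD(S_\infty)}(C_\infty)$, whose spectrum agrees with $\Supp H^*(C_\infty)$ as a topological space) and citing~\cite[Prop.~5.4.2]{caraiani-newton}: the first part shows directly that the irreducible component of $\Spec R_\infty$ through the automorphic characteristic-zero point $\ker(\tilde f_1)$ is contained in $\Spec T_\infty$ (a localisation/dimension-count argument, not a global union-of-components statement), and the second part is the Ihara-avoidance propagation step that you re-derive explicitly through $\Spec R_\infty(\chi)$. So the conclusion you want is available, but via that proposition rather than via (a); the remaining bookkeeping in your proposal matches the paper's proof and, as you observe, the final passage from $\Supp_{R_\infty} H^*(C_\infty)$ to $\Supp_{R_{\cS_1}} H^*(X_K,\cO)_\ffrm$ is handled as in~\cite[Cor.~6.3.9]{10author}.
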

\begin{proof}
We patch, as in \cite[\S6.5]{10author} and \cite[\S8]{miagkov-thorne}, replacing the Fontaine--Laffaille local condition at $v \in S_p$ with the crystalline weight $0$ condition. Once again, we use \cite[Theorem 4.2.15]{caraiani-newton} to ensure that we have the necessary maps from deformation rings with these local conditions to our Hecke algebras. We record the output of this patching process, complete details of which can be found in \cite[\S6.4--6.5]{10author}. Fix a tuple $\chi = (\chi_{v, i})_{v \in R, i = 1, \dots, n}$ of characters $\chi_{v, i} : k(v)^\times \to \cO^\times$ which are trivial modulo $\varpi$, and with $\chi_{v,1},\ldots,\chi_{v,n}$ pairwise distinct for each $v\in R$. Patching will provide us with the following:

\begin{enumerate}
	\item A power series ring $S_\infty=\cO\llb X_1,\cdots,X_r\rrb$ with augmentation ideal $\mathfrak{a}_\infty = (X_1, \dots, X_r)$. 
	\item\label{ihara_complex_iso} Perfect complexes $C_\infty, 
	C'_\infty$ of $S_\infty$-modules, an isomorphism 
	\[ C_\infty \otimes^\bL_{S_\infty} S_{\infty} / \varpi \cong C'_\infty \otimes^\bL_{S_\infty} S_{\infty} / \varpi \]
	in $\mathbf{D}(S_\infty / \varpi)$ and an isomorphism \[C_\infty \otimes^\bL_{S_\infty} S_{\infty} / \mathfrak{a}_{\infty} \cong R \Hom_\cO( R \Gamma(X_K, \cO)_\m, \cO)[-d]\] in $\mathbf{D}(\cO)$.
	\item Two $S_\infty$-subalgebras
	\[ T_\infty\subset 
	\End_{\bD(S_\infty)}(C_\infty) \]
	and
	\[ T'_\infty \subset 
	\End_{\bD(S_\infty)}(C'_\infty), \]
	which have the same image in
	\[ \End_{\bD(S_\infty/\varpi)}(C_\infty \otimes^\bL_{S_\infty} S_{\infty} / \varpi ) = \End_{\bD(S_\infty/\varpi)}(C'_\infty \otimes^\bL_{S_\infty} S_{\infty} / \varpi ), \]
	where these endomorphism algebras are identified using the fixed isomorphism in (\ref{ihara_complex_iso}). Call this common image 
	$\overline{T}_\infty$. Note that $T_\infty$ and $T'_\infty$ are finite 
	$S_\infty$-algebras. The map \[T_\infty \to \End_{\mathbf{D}(\cO)}(C_\infty \otimes^\bL_{S_\infty} S_{\infty} / \mathfrak{a}_{\infty}) = \End_{\mathbf{D}(\cO)}(R \Gamma(X_K, \cO)_\m)^{op}\] factors through a map $T_\infty \to \TT(K,\cO)_\m$.
	\item Two Noetherian complete local $S_\infty$-algebras $R_\infty$ and $R'_\infty$, which are power series algebras over $R_{\cS_{1}}^{S,\loc}$ and $R_{\cS_{\chi}}^{S,\loc}$ respectively. We have a surjective $R_{\cS_{1}}^{S,\loc}$-algebra map $R_\infty \to R_{\cS_{1}}$, which factors through an $\cO$-algebra map $R_\infty/\mathfrak{a}_{\infty} \to R_{\cS_{1}}$. We also have surjections $R_\infty\onto T_\infty/I_\infty$, $R'_\infty\onto 
	T'_\infty/I'_\infty$, where $I_\infty$ and $I'_\infty$ are nilpotent ideals. We write $\overline{I}_\infty$ and $\overline{I}'_\infty$ for the image of these 
	ideals in $\overline{T}_\infty$. These maps fit into a commutative diagram
\[ \xymatrix{ R_\infty  \ar[d] \ar[r] & R_{\cS_1}\ar[d] \\
	T_\infty/I_\infty \ar[r] & \TT(K,\cO)_\m^{\red}.}\]
	\item An isomorphism $R_\infty/\varpi\cong R'_\infty/\varpi$ 
	compatible with the $S_\infty$-algebra structure and the actions 
	(induced from $T_\infty$ and $T'_\infty$) on 
	\[ H^*( C_\infty \otimes^\bL_{S_\infty} S_{\infty} / \varpi)/(\overline{I}_\infty+\overline{I}'_\infty) = H^*( C'_\infty \otimes^\bL_{S_\infty} S_{\infty} / \varpi)/(\overline{I}_\infty+\overline{I}'_\infty), \]
	where these cohomology groups are identified using the fixed isomorphism. 
	\item Integers $q_0 \in \Z$ and $l_0 \in \Z_{\geq 0}$ such that \[ H^\ast(X_K,E)_\m\neq 0, \]
	and these groups are non-zero only for degrees in the interval $[q_0,q_0+l_0]$. Moreover, $\dim R_\infty=\dim R'_\infty=\dim S_\infty -l_0$.
\end{enumerate}
With that out of the way, we  let $x \in \Spec R_\infty$ be the automorphic point coming from $\ker(f_1)$. By the first part of \cite[Proposition 5.4.2]{caraiani-newton}, there is an irreducible component $C_a$ of $\Spec R_\infty$, containing $x$, with $C_a \subset \Spec T_\infty$.  Let $C$ be any irreducible component of $\Spec R_\infty$ containing $\ker(f_2)$. Since $\rho_1|_{G_{F_v}} \sim \rho_2|_{G_{F_v}}$ for each $v \in S_p$, $C$ and $C_a$ map to the same irreducible component of $\Spec R_{\cS_1}^{S_p,\loc}$ (we are using part (\ref*{regular generic fibre}) of Lemma~\ref{lem:local def ring props} here, which says that each characteristic $0$ point lies in a unique irreducible component of $R_{\cS_1}^{S_p,\loc}$). By Lemma \ref{lem:local def ring props}, the generic points of $C \cap \Spec R_\infty/\varpi$ and $C_a \cap \Spec R_\infty/\varpi$ all generalize to the same irreducible component of $\Spec R_\infty'$. We can apply the second part of \cite[Proposition 5.4.2]{caraiani-newton} to deduce that $C \subset \Spec \TT_\infty$, and therefore $\ker(f_2)$ is in the support of $H^*(C_\infty)$. It follows as in \cite[Corollary 6.3.9]{10author} (see also \cite[Corollary 5.4.3]{caraiani-newton}) that $\ker(f_2)$ is in the support of $H^\ast(X_K,\cO)_\m$, as desired.
\end{proof}

\begin{proof}[Proof of Theorem \ref{thm:main_automorphy_lifting_theorem}]
  This is immediate from
  Theorem \ref{thm:alt_after_bc} via a standard base change
  argument identical to the one found in \cite[\S6.5.12]{10author}.
\end{proof}

\section{The Dwork family}\label{ssec:dwork}

\subsection{Definitions}
We begin by introducing the Dwork motives we need
to consider. For our purposes, we need to consider
the non-self dual motives (with coefficients) studied
in~\cite{Qian,QianPotential} rather than the self-dual
(generalized) symplectic motives previously considered
in~\cite{HSBT,blght}.

Let $n > 2$ and $N > 100 n + 100$ be integers, with $N$ odd and $(N, n) = 1$. Let $\zeta_N \in \overline{\Q}$ be a primitive $N^\text{th}$ root of unity. Let $R_0 = \Z[\zeta_N, N^{-1}]$, $T_0 = \Spec R_0[ t, (1-t^N)^{-1}]$, and let $Z \subset \mathbf{P}^{N-1}_{T_0}$ be the family of smooth hypersurfaces of degree $N$ and dimension $N - 2$ defined by the equation
\[ X_1^N + \dots + X_N^N = N t X_1 \dots X_N. \]
We write $\pi : Z \to T_0$ for the natural projection. Let $\mu_N$ denote the group of $N^\text{th}$ roots of unity in $\Z[\zeta_N]^\times$. Then the group $H = \mu_N^N / \Delta(\mu_N)$ acts on $ \mathbf{P}^{N-1}$ by multiplication of coordinates, and the subgroup
\[ H_0 = \ker( \prod : H \to \mu_N ) \]
preserves $Z$. The action of $H_0$ extends to an action of $H$ on the central fibre $Z_0$ (which is a Fermat hypersurface). 

Let $M = \Q(e^{2 \pi i / N}) \subset \C$, and set
\[ X = \Hom(H, M^\times), \]
\[ X_0 = \Hom(H_0, M^\times). \]
A choice of embedding $\tau : \Q(\zeta_N) \to \C$ determines an isomorphism 
\[ f_\tau : X \cong \ker\left( \sum : (\Z / N \Z)^N \to \Z / N \Z) \right), \]
 but we do not fix a preferred choice. We do choose a character $\underline{\chi} \in (\chi_1, \dots, \chi_N) \in X$ with the following properties:
\begin{itemize}
\item The trivial character of $\mu_N$ occurs $n+1$ times among $\chi_1, \dots, \chi_N$, and each other character appears at most once. 
\item Let $\rho_1, \dots, \rho_n$ be the $n$ distinct non-trivial characters $\mu_N \to M^\times$ which do not appear in $\chi_1, \dots, \chi_N$. Then the stabilizer of the set $\{ \rho_1, \dots, \rho_n \}$ in $\Gal(M / \Q)$ is trivial.
\end{itemize}
The existence of such $\underline{\chi}$ is established in \cite[Lem.\ 3.1]{QianPotential}, as a consequence of the assumption $N > 100 n + 100$. The precise choice is not important. 

For any place $\lambda$ of $M$ of characteristic $l$, we define $\cV_\lambda = (\pi[1/l]_\ast \cO_{M_\lambda})^{H_0 = \underline{\chi}|_{H_0}}$. It is a lisse sheaf of finite free $\cO_{M_\lambda}$-modules on $T_0[1/l]$. If $k$ is a perfect field which is an $R_0[1/l]$-algebra, and $t \in T_0(k)$, then we write $V_{t, \lambda} = \cV_{\lambda, \overline{t}}$ for the stalk at a geometric point lying above $t$; it is an $\cO_{M_\lambda}[G_k]$-module, finite free as $\cO_{M_\lambda}$-module. 

Katz \cite{KatzExponential, KatzDwork} defines hypergeometric sheaves on $T_1 = \Spec R_0[t, t^{-1}, (1-t)^{-1}]$. We give the definition just in the case of interest. Let $j : T_1 \to \mathbf{G}_{m, R_0}$ be the natural open immersion, and let $f : T_1 \to \mathbf{G}_{m, R_0}$ be the map induced by $t \mapsto 1-t$. Fixing again a place $\lambda$ of $M$ of characteristic $l$, let $\cL_i$ denote the rank~ 1 lisse $M_\lambda$-sheaf  on $\mathbf{G}_{m, R_0}[1/l]$ associated to $\rho_i$ and the $\mu_N$-torsor $\GG_m\xrightarrow{(\cdot)^N}\GG_m$, and let $\mathcal{F}_i = j[1/l]_! f[1/l]^\ast \cL_i$. We set
\[ \cE_\lambda = j[1/l]^\ast (\mathcal{F}_1 \ast_! \mathcal{F}_2 \ast_! \dots \ast_! \mathcal{F}_n)[n-1], \]
where $\ast_!$ denotes multiplicative convolution with compact support. 

\subsection{Basic properties and good ordinary points}
Associated to specializations of~$\cE_\lambda$ are compatible systems
of Galois representations. In this section, we establish some of their basic properties.
Most importantly, we prove (in Proposition~\ref{ordinarypoints}) the existence of many specializations which have crystalline
ordinary reduction. (In~\cite{Qian}, Qian proves that specializations sufficiently
close to~$t = \infty$ are semistable ordinary, but that is not sufficient for our purposes where
we need to work with crystalline representations.)

\begin{theorem}\label{thm_hypergeometric_sheaf} \leavevmode
\begin{enumerate}
\item $\cE_\lambda$ is a lisse $M_\lambda$-sheaf on $T_1[1/l]$ of rank $n$. The sheaf $\cE_\lambda \otimes_{M_\lambda} \overline{M}_\lambda$ is geometrically irreducible. Moreover, $\cE_\lambda$ is pure of weight $n-1$ and there is an isomorphism $\det \cE_\lambda \cong M_\lambda(n(1-n)/2)$. 
\item Let $k$ be an $R_0[1/l]$-algebra which is a finite field of cardinality $q$, and let $x \in T_1(k)$. Then we have
\numequation\label{eqn: trace on E} \tr( \Frob_k \mid \cE_{\lambda, \overline{x}} ) = (-1)^{n-1} \sum_{\substack{x_1, \dots, x_n \in k \\ \prod_{i=1}^n x_i = x}} \prod_{i=1}^n \rho_i( ( 1-x_i )^{(q-1)/N} ) \end{equation}
where we identify $\mu_N = k^\times[N]$ and extend $\rho_i$ by $\rho_i(0) = 0$.
\item There exists a \emph{(}unique\emph{)} continuous character 
$$\Psi_\lambda : \pi_1(\Spec R_0[1/l]) \to \cO_{M_\lambda}^\times$$
 with the following property: let $T_0' = T_0[1/t]$, let $j' : T_0' \to T_0$ be the natural open immersion, and let $g : T_0' \to T_1$ be the map induced by $t \mapsto t^{-N}$. Then there is an isomorphism of $M_\lambda$-sheaves on $T_0'[1/l]$:
\numequation\label{eqn: V to E iso} (j'[1/l])^\ast \cV_\lambda \otimes_{\cO_{M_\lambda}} M_\lambda \cong g^\ast \cE_\lambda \otimes_{M_\lambda} M_\lambda(\Psi_\lambda). \end{equation}
\end{enumerate}
\end{theorem}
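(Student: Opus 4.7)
The plan is to treat the three parts in order, with parts (1) and (2) being essentially formal applications of Katz's theory of hypergeometric sheaves and the Grothendieck trace formula, and part (3) being the key comparison that requires a Gauss sum calculation.

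For part (1), the sheaf $\cE_\lambda$ is by construction a Katz hypergeometric sheaf built from the characters $\rho_1, \dots, \rho_n$, with no ``bottom'' characters. The rank is $n$ because the iterated compact convolution of $n$ rank-$1$ sheaves of this shape is concentrated in a single cohomological degree (the ``top'' one, fiber dimension $n-1$), and the shift $[n-1]$ in the definition of $\cE_\lambda$ puts it in degree~$0$ with rank~$n$ stalks. Geometric irreducibility of $\cE_\lambda \otimes_{M_\lambda}\overline{M}_\lambda$ follows from the pairwise distinctness and nontriviality of the $\rho_i$, via the standard Tannakian/Mellin-transform analysis of hypergeometric sheaves. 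Purity of weight $n-1$ is Deligne's theorem that compact convolutions of pure sheaves are pure, combined with the weights of the Kummer sheaves $\cL_i$ (weight~$0$) and the shift by $[n-1]$. The determinant is $\det \cE_\lambda \cong M_\lambda(n(1-n)/2)$: geometrically this is trivial because $\prod_{i=1}^n \rho_i = 1$ (since $N$ is odd and $\chi_1,\dots,\chi_N$ satisfies $\prod \chi_j = 1$ with the trivial character appearing $n+1$ times), and the Tate twist is the standard Gauss-sum contribution to the determinant of a hypergeometric sheaf.

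For part (2), the trace on each $(\mathcal{F}_i)_{\overline{x_i}}$ is $\rho_i\bigl((1-x_i)^{(q-1)/N}\bigr)$ when $x_i \neq 0,1$ and vanishes otherwise, by the definition of the Kummer sheaf $\cL_i$, the map $f(t)=1-t$, and the extension by zero via $j_!$. Iterated application of the Grothendieck trace formula to the multiplication map $\mu \colon \GG_m^n \to \GG_m$ then identifies $\tr(\Frob_k \mid (\mathcal{F}_1 *_! \dots *_! \mathcal{F}_n)_{\overline{x}})$ with the sum of products of these local traces over $k$-points of the fiber $\{\prod x_i = x\}$. The shift $[n-1]$ contributes the sign $(-1)^{n-1}$, yielding the claimed formula.

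For part (3), I would argue by matching Frobenius traces on a Zariski-dense set of closed points and then invoking Chebotarev together with the geometric irreducibility from part~(1). Let $k$ be a finite field over $R_0[1/l]$ with $|k|=q$, and let $t_0\in T_0'(k)$ with $x_0:=t_0^{-N}\in T_1(k)$. The Lefschetz trace formula applied to $Z_{t_0}$, combined with the isotypic projector for $\underline{\chi}|_{H_0}$, writes $\tr(\Frob_k \mid V_{t_0,\lambda})$ as a character sum over $k$-points of the Fermat-like variety; isolating the $\underline{\chi}$-isotypic component and applying standard Gauss/Jacobi sum manipulations (as in Katz, and as used in Qian's work \cite{Qian,QianPotential}) rewrites it as a constant depending only on~$q$, multiplied by the hypergeometric sum of part~(2) evaluated at~$x_0$. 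This constant assembles, via the Hasse--Davenport relations, into a continuous character $\Psi_\lambda : \pi_1(\Spec R_0[1/l]) \to \cO_{M_\lambda}^\times$ which is independent of $t_0$ and~$k$. By Chebotarev density, the resulting identity of traces forces a geometric isomorphism; combined with the geometric irreducibility of $\cE_\lambda \otimes_{M_\lambda}\overline{M}_\lambda$ from part (1), this upgrades to the stated isomorphism of $M_\lambda$-sheaves. Uniqueness of $\Psi_\lambda$ is automatic: any two choices would differ by a character of $\pi_1(\Spec R_0[1/l])$ whose pullback to $T_0'[1/l]$ is a self-twist of the irreducible sheaf $g^*\cE_\lambda \otimes_{M_\lambda} \overline{M}_\lambda$, and hence trivial.

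The main obstacle is the trace identification in part~(3): one must pin down the Gauss-sum constant relating the Dwork-family trace to the hypergeometric trace and confirm it depends only on the residue field (not on $t_0$), so that it genuinely comes from a character of the arithmetic base $\pi_1(\Spec R_0[1/l])$. This is classical in spirit but requires careful bookkeeping of the embedding $\tau$, the identification $f_\tau$, and the labelling of the characters $\rho_i$; no new ideas are needed beyond those already present in the work of Katz and Qian.
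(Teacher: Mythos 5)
The paper's proof of this theorem is essentially a citation: parts (1) and (2) are attributed to Katz's construction (\cite{KatzExponential, KatzDwork}, with the determinant from Katz's Theorem~8.12.2(1a) and the fact $\prod\rho_i=\mathbf{1}$), and part (3) is read off directly from \cite[Theorem~5.3]{KatzDwork}, using a pullback-by-$[-1]$ identity to translate Katz's normalization; uniqueness then follows from geometric irreducibility and Schur. Your proposal instead aims to re-derive Katz's comparison in part (3) by matching Frobenius traces via the Lefschetz trace formula and Gauss/Jacobi sums, and then invoking Chebotarev. That is in fact the spirit of Katz's own proof, so the routes are not deeply different; you are just inlining rather than citing.

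Two points in your write-up are not quite right as stated. First, for purity in part (1): there is no general theorem that compact ($*_!$) convolutions of pure objects are pure, so the appeal to ``Deligne's theorem that compact convolutions of pure sheaves are pure'' is not a legitimate citation. Purity of $\cE_\lambda$ is a genuine theorem of Katz about hypergeometric sheaves, hinging on the disjointness of the upstairs and downstairs characters (here: $\mathbf{1}^{\oplus n}$ versus the nontrivial $\rho_i$); it uses Katz's local analysis at $0,1,\infty$, not a soft convolution argument.

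Second, in part (3) your Chebotarev step has a gap. You need the right-hand side $g^*\cE_\lambda\otimes M_\lambda(\Psi_\lambda)$ to be (geometrically) irreducible, or at least arithmetically semisimple, to upgrade equality of semisimplifications to a genuine isomorphism. You invoke geometric irreducibility of $\cE_\lambda$ from part (1), but $g$ is a finite \'etale cover of degree $N$ (the map $t\mapsto t^{-N}$), and pullback of a geometrically irreducible lisse sheaf along a nontrivial finite \'etale cover need not remain geometrically irreducible. In the present situation it does, but establishing this requires additional input (e.g.\ that the geometric monodromy group of $\cE_\lambda$ is large enough --- say contains $\SU_n$ or $\SL_n$, as in the later Proposition on monodromy --- so that a degree-$N$ cover cannot break irreducibility), not merely the statement of part (1). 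Alternatively one can argue via purity and geometric semisimplicity (Weil~II) on both sides and then use the dimension count to force genuine semisimplicity; but the bare appeal to part (1) as written does not close the gap. Your parenthetical justification that $\prod\rho_i=\mathbf{1}$ is also incomplete: besides $\prod_j\chi_j=\mathbf{1}$ you also need the fact that, for $N$ odd, the product of all nontrivial characters of $\mu_N$ is $\mathbf{1}$, and that $\{\rho_1,\ldots,\rho_n\}$ is the complement of the nontrivial $\chi_j$ among those characters.
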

\begin{proof}
The construction and properties of $\cE_\lambda$ are summarized in \cite{KatzDwork} (where it is the sheaf denoted $\mathcal{H}^{can}( \mathbf{1} (n \text{ times}), \{ \rho_i \})$) and given in detail in \cite[Ch. 8]{KatzExponential}. See also \cite[\S A.1.6]{DrinfeldKedlaya}. The computation of the determinant follows from \cite[Theorem 8.12.2(1a)]{KatzExponential}, noting that $\prod_{i=1}^n \rho_i = \mathbf{1}$. Property (2) follows from the definition. The existence of $\Psi_\lambda$ as in (3) follows from \cite[Theorem 5.3]{KatzDwork} (we have also used the relation $[-1]^\ast \mathcal{H}^{can}( \mathbf{1} (n \text{ times}), \{ \rho_i \}) \cong \mathcal{H}^{can}( \{ \rho^{-1}_i \}, \mathbf{1} (n \text{ times}))$). The uniqueness follows from geometric irreducibility and Schur's lemma. 
\end{proof}
\begin{lemma}
There exists a Hecke character $\Psi : \Q(\zeta_N)^\times \backslash
\mathbf{A}_{\Q(\zeta_N)}^\times \to \C^\times$ of type $A_0$,
unramified  away from $N$, and with field of definition contained inside $M$, such that $\Psi_\lambda$ is associated to $\Psi$. In other words, $\Psi_\lambda$ is `independent of $\lambda$'. Moreover, $\Psi$ is defined over $M$ and we have $\Psi c(\Psi) = | \cdot |^{N-n}$. \end{lemma}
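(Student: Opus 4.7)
The plan is to produce $\Psi$ by matching Frobenius traces at a single geometric point of $T_0'$ via the isomorphism~\eqref{eqn: V to E iso}, then to deduce all of the required global properties from the motivic nature of both $\cV_\lambda$ and $\cE_\lambda$.

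First I would show that $\Psi_\lambda(\Frob_v)$ is independent of $\lambda$ and lies in $M$ for every prime $v$ of $\Q(\zeta_N)$ not dividing $Nl$. Pick a geometric point $t_0 \in T_0'(\overline{k(v)})$; then~\eqref{eqn: V to E iso} specializes to an isomorphism of $M_\lambda[G_{k(v)}]$-modules, and the value $\Psi_\lambda(\Frob_v)$ is determined (say, by taking determinants, using that $\det\cE_\lambda \cong M_\lambda(n(1-n)/2)$ from Theorem~\ref{thm_hypergeometric_sheaf}(1)) by comparing the Frobenius action on $V_{t_0,\lambda}$ with that on $E_{g(t_0),\lambda}$. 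The former is controlled by point counts on the fiber $Z_{t_0}$ of the Dwork pencil, and the latter by Katz's explicit formula~\eqref{eqn: trace on E}. Both are manifestly independent of $\lambda$ and take values in $M$, so $\Psi_\lambda(\Frob_v) \in M$ is as well. By Chebotarev together with the uniqueness part of Theorem~\ref{thm_hypergeometric_sheaf}(3), the system $\{\Psi_\lambda\}$ is thereby rigidified to an honest compatible system of one-dimensional $\lambda$-adic characters of $G_{\Q(\zeta_N)}$ taking algebraic values in $M$.

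Second, I would promote this compatible system to a type-$A_0$ Hecke character. Because $\cV_\lambda$ is obtained from the étale cohomology of the Dwork family and $\cE_\lambda$ is motivic (Katz's realization of hypergeometric sheaves, e.g.\ as summands of cohomology of Kloosterman-type varieties), each~$\Psi_\lambda$ is de Rham at every place of~$\Q(\zeta_N)$, with Hodge--Tate--Sen weights that can be read off from the known Hodge numbers of $\cV_\lambda$ and $\cE_\lambda$ and are independent of~$\lambda$. Combined with the algebraicity of Frobenius traces, this gives (via class field theory in the one-dimensional case) a type-$A_0$ Hecke character $\Psi$ of $\Q(\zeta_N)$ with field of definition inside~$M$ such that $\Psi_\lambda$ is the $\lambda$-adic avatar of~$\Psi$. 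For unramification away from~$N$: if $v \nmid N$ lies over a rational prime~$p$, choose any $\lambda$ of residue characteristic $l \neq p$; then $\cV_\lambda$ and $\cE_\lambda$ are both lisse in a neighborhood of~$v$, so $\Psi_\lambda$ is unramified at~$v$, and this conclusion transfers to~$\Psi$ since $M$ has primes of every residue characteristic.

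For the identity $\Psi \cdot c(\Psi) = |\cdot|^{N-n}$, I would invoke Poincaré duality on the smooth projective fibers $Z_t$, which identifies the $\underline{\chi}$-isotypic and $\underline{\chi}^{-1}$-isotypic pieces of $H^{N-2}(Z_{\bar t})$ up to a Tate twist by $N-2$. Since complex conjugation on~$M$ sends $\underline{\chi}$ to $\underline{\chi}^{-1}$, applying~\eqref{eqn: V to E iso} both for $\underline{\chi}$ and for $\underline{\chi}^{-1}$ (and using the duality of hypergeometric sheaves associated to $\{\rho_i\}$ and $\{\rho_i^{-1}\}$, which involves a Tate twist by $n-1$ as in~\cite[Ch.~8]{KatzExponential}) yields the claimed identity after balancing weights. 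The main obstacle, in my view, is not any single conceptual step but the careful bookkeeping of Tate twists and normalizations across Katz's conventions for hypergeometric sheaves, the étale cohomology of the Dwork family, and the geometric/arithmetic normalization of the Hecke character; once these are consistently fixed, everything else (Frobenius matching, existence of $\Psi$, unramifiedness, the duality identity) follows in a routine way.
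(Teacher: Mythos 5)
Your proposal follows essentially the same route as the paper's proof: establish $\lambda$-independence and $M$-rationality of the Frobenius traces of $\Psi_\lambda$ (the paper points to \cite[Question~5.5]{KatzDwork}, you match traces/determinants at a geometric point and invoke the uniqueness in Theorem~\ref{thm_hypergeometric_sheaf}(3)); upgrade this compatible system of $\lambda$-adic characters to a type-$A_0$ Hecke character $\Psi$; and derive $\Psi\, c(\Psi)=|\cdot|^{N-n}$ from Poincar\'e duality on the Dwork fibres together with the duality $\cE_{c(\lambda)}\cong\cE_\lambda^\vee(1-n)$ of hypergeometric sheaves, finishing by Schur's lemma — exactly the paper's $c$-linear isomorphisms.

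One small imprecision worth flagging: in the second step, ``class field theory in the one-dimensional case'' does not by itself convert a compatible system of $\lambda$-adic characters with algebraic Frobenius traces into an algebraic Hecke character. The ingredient the paper actually invokes is the algebraicity theorem for abelian $\ell$-adic representations (the main result of \cite{henniart}, going back to Serre and Waldschmidt). Your surrounding observations — de Rham at $\lambda$, $\lambda$-independent Hodge--Tate weights, Frobenius traces in $M$ — are precisely the hypotheses of that theorem, so your argument is correct in substance; you just need to name the theorem rather than CFT alone. The Tate-twist bookkeeping you flag as delicate is likewise where the paper is terse, and your account is consistent with it.
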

\begin{proof}
We may argue as in \cite[Question 5.5]{KatzDwork} to see that almost all Frobenius traces of $\Psi_\lambda$ lie in $M$, and are independent of $\lambda$. The existence of the character $\Psi$, of type $A_0$, follows from the main result of \cite{henniart}. There are $c$-linear isomorphisms $\cV_{c(\lambda)} \cong \cV_\lambda^\vee(1-N)$ and $\cE_{c(\lambda)} \cong \cE_\lambda^\vee(1-n)$, so the final part is again a consequence of Schur's lemma. 
\end{proof}
For any place $\lambda$ of $M$ of characteristic $l$, we define
$\cW_\lambda = \cV_\lambda \otimes_{\cO_{M_\lambda}}
\cO_{M_\lambda}(\Psi_\lambda^{-1})$, a lisse sheaf of finite free
$\cO_{M_\lambda}$-modules on $T_0[1/l]$. Thus $\cW_\lambda
\otimes_{\cO_{M_\lambda}} M_\lambda$ is a lisse $M_\lambda$-sheaf of rank
$n$ which is pure of weight $n-1$, geometrically irreducible, and of
determinant $M_\lambda(n(1-n)/2)$. If $k$ is a perfect field which is
an $R_0[1/l]$-algebra, and $t \in T_0(k)$, then we write $W_{t,
  \lambda} = \cW_{\lambda, \overline{t}}$ for the stalk at a geometric
point lying above $t$; it is an $\cO_{M_\lambda}[G_k]$-module, finite
free as $\cO_{M_\lambda}$-module. The local systems $\cW_\lambda$ are
the ones we will use in building the moduli spaces used in the
Moret-Bailly argument in~\S\ref{subsec_proof_of_pot_aut}.
 In particular, let us write $\overline{\cW}_\lambda = \cW_\lambda \otimes_{\cO_{M_\lambda}} k(\lambda)$ and define $\overline{W}_{t, \lambda}$ similarly. 
\begin{prop}\label{prop_independence_of_l} Let $F / \Q(\zeta_N)$ be a number field.
\begin{enumerate}
\item Let $v$ be a finite place of $F$ of characteristic $l$, and let
  $\lambda$ be a place of $M$ of characteristic not equal to $l$. If $l \nmid N$ and $t \in T_0(\cO_{F_v})$, then $W_{t, \lambda}$ is unramified, and the polynomial $Q_v(X) = \det(X - \Frob_v \mid W_{t, \lambda})$ has coefficients in $\cO_M[X]$ and is independent of $\lambda$. 
\item Let $v$ be a finite place of $F$ of characteristic $l$, and let $\lambda$ be a place of $M$ of the same characteristic. Let $t \in T_0(F_v)$. Then $W_{t, \lambda}$ is de Rham and for any embedding $\tau : F_v \to \overline{M}_\lambda$, we have $\mathrm{HT}_\tau(W_{t, \lambda}) = \{ 0, 1, \dots, n-1 \}$. If $l \nmid N$ and $t \in T_0(\cO_{F_v})$, then $W_{t, \lambda}$ is crystalline and the characteristic polynomial of $\Frob_v$ on $\mathrm{WD}(W_{t, \lambda})$ equals $Q_v(X)$. In particular, $W_{t, \lambda}$ is ordinary if and only if the roots of $Q_v(X)$ in $\overline{M}_\lambda$ have $l$-adic valuations $0, [k(v) : \F_l], \dots, (n-1) [k(v) : \F_l]$. 
\item Let $t \in T_0(F)$, and let $S$ be the set of finite places $v$ of $F$ such that either $v | N$, or $v \nmid N$ and $t \not\in T_0(\cO_{F_v}) \subset T_0(F_v)$. Then
\[ ( M, S, \{ Q_v(X) \}_{v \not \in S}, \{ W_{t, \lambda}^{ss} \}_\lambda, \{ \{0, 1, \dots, n-1\}\}_\tau  ) \]
is a weakly compatible system of $l$-adic representations of $G_F$ over $M$ of rank $n$, pure of weight $n-1$, in the sense of \cite[\S 5.1]{BLGGT}.
\end{enumerate} 
\end{prop}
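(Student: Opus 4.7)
The plan is to derive parts (1) and (2) from the geometric construction of $\cW_\lambda$ and the hypergeometric description in Theorem~\ref{thm_hypergeometric_sheaf}, and then to assemble part (3) as a formal consequence. For part (1), lisseness of $\cW_\lambda$ on $T_0[1/l]$ together with the hypothesis $t \in T_0(\cO_{F_v})$ and $v \nmid l$ means that $t$ extends to a morphism $\Spec \cO_{F_v} \to T_0[1/l]$, so $W_{t,\lambda}$ comes from a lisse sheaf on $\Spec \cO_{F_v}$ and is therefore unramified. For the independence of $\lambda$ and the integrality of $Q_v(X)$, I would restrict to $t \ne 0$ so that $t$ factors through $T_0' = T_0[1/t]$ and use \eqref{eqn: V to E iso} to express $W_{t,\lambda} \otimes M_\lambda$ as $g^*\cE_\lambda$ twisted by the Hecke character $\Psi$; the $\Psi_\lambda$-twist on the right of \eqref{eqn: V to E iso} cancels the inverse twist built into the definition of $\cW_\lambda$. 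The trace formula \eqref{eqn: trace on E} then expresses the Frobenius trace on $g^*\cE_\lambda$ as an explicit sum of products of $N$th roots of unity, manifestly integral in $\cO_M$ and independent of $\lambda$, and the contribution of $\Psi$ lies in $M$ by the preceding lemma. The remaining point $t = 0$ is the Fermat fibre and can be handled by a direct classical computation.

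For part (2), when $t \in T_0(\cO_{F_v})$ and the residue characteristic $l$ of $v$ satisfies $l \nmid N$, the hypersurface $Z_t$ extends to a smooth proper scheme over $\cO_{F_v}$, so Faltings's $p$-adic comparison theorem implies that its middle \'etale cohomology is crystalline as a $G_{F_v}$-representation. Projecting to the $(\underline{\chi}|_{H_0})$-eigenspace and twisting by $\Psi_\lambda^{-1}$ (which is crystalline because $\Psi$ is unramified at $v$) then yields the crystalline structure on $W_{t,\lambda}$, and comparison with crystalline Frobenius identifies the associated Weil--Deligne representation with $Q_v(X)$. De Rham-ness at general $t \in T_0(F_v)$ follows from $p$-adic comparison for smooth proper varieties over $F_v$. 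The main obstacle is the Hodge--Tate weight computation, which I would carry out either via Griffiths's presentation of the $(\underline{\chi}|_{H_0})$-isotypic part of $H^{N-2}(Z_t)$ in terms of monomials in a Jacobian ring (shifted by the infinity type of $\Psi$), or by transporting the known Hodge structure of the hypergeometric sheaf $\cE_\lambda$ underlying the Katz--Dwork theory through \eqref{eqn: V to E iso}. As a consistency check, the determinant $M_\lambda(n(1-n)/2)$ from Theorem~\ref{thm_hypergeometric_sheaf}(1), together with the convention $\HT_\sigma(\varepsilon) = \{-1\}$, forces the sum of the Hodge--Tate weights to equal $n(n-1)/2$, compatibly with $\{0, 1, \ldots, n-1\}$. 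Once the weights are in hand, the ordinarity criterion is the standard equality between the Hodge and Newton polygons of a crystalline representation.

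Part (3) is then a formal assembly of (1) and (2) with the purity assertion of Theorem~\ref{thm_hypergeometric_sheaf}(1): each of the conditions for a weakly compatible system in the sense of \cite[\S 5.1]{BLGGT}, namely finite ramification, $\lambda$-independence and integrality of the Frobenius characteristic polynomials away from $S$, the de Rham property with the prescribed Hodge--Tate weights at places above the residue characteristic of $\lambda$, and purity of weight $n-1$, has been verified in the preceding two parts.
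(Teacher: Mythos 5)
Your proposal is correct in its overall structure but diverges from the paper's own proof in a few places worth comparing.

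For part (1), the paper invokes the Katz--Messing theorem~\cite[Theorem 2(2)]{MR332791}, which directly asserts that for any $h \in H$ the characteristic polynomial of $h \cdot \Frob_v$ on $H^{N-2}_{\text{\'et}}$ lies in $\cO_M[X]$ and is independent of $\lambda \nmid l$; this is then inherited by the $\underline{\chi}|_{H_0}$-eigenspace and survives the $\Psi_\lambda^{-1}$-twist without any case distinction on $t$. You instead propose to pass through the hypergeometric isomorphism \eqref{eqn: V to E iso} and the explicit trace formula \eqref{eqn: trace on E}. This is a legitimate alternative for $t \neq 0$, but two points need filling in: (i) the trace formula gives only the traces of Frobenius over extensions of $k(v)$, so to upgrade to the full characteristic polynomial lying in $\cO_M[X]$ you must additionally observe that the eigenvalues are algebraic integers (which follows since $\cW_\lambda$ is a sheaf of finite free $\cO_{M_\lambda}$-modules, or from purity) and then apply Newton's identities; and (ii) once the $\Psi_\lambda$-twists have cancelled, $\Psi$ contributes nothing to the trace, so your parenthetical ``the contribution of $\Psi$ lies in $M$'' is vacuous. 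The Katz--Messing route also avoids the separate treatment of $t = 0$ that your route requires.

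For part (2) the essential difference is in the Hodge--Tate weight computation. You propose Griffiths transversality or transport of the Hodge structure of $\cE_\lambda$ as the main argument, and cast the determinant calculation merely as a ``consistency check.'' The paper's argument is sharper and inverts this logic: it first cites \cite[Lemma 3.10]{QianPotential}, which establishes that the $\mathrm{HT}_\tau$ multiset of $V_{t,\lambda}$ is a block of $n$ \emph{consecutive} integers $\{M_\tau, \dots, M_\tau + n-1\}$, and then the determinant identity $\det \cW_\lambda \cong M_\lambda(n(1-n)/2)$ is what \emph{pins down} $M'_\tau = 0$ (after the $\Psi_\lambda^{-1}$-twist). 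Without a result establishing consecutiveness, the determinant alone gives only the sum of the weights and does not determine the multiset. Your Griffiths-style route is morally what \cite{QianPotential} does internally, so it can be made to work, but as stated it is the one genuine gap: you need an argument or reference pinning down consecutiveness before the determinant step can close the computation. The crystalline comparison and the ordinarity criterion (Newton polygon above Hodge polygon with endpoints matching) coincide in substance with the paper's use of \cite{MR904940} and \cite[Lemma 2.32]{ger}, and part (3) is, as you say, a formal assembly.
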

\begin{proof}
For the first part, we note that $Z_t$ is smooth and proper over $\cO_{F_v}$, so by smooth proper base change $H^\ast_{\text{\'et}}(\overline{Z}_{t, F_v}, \cO_{M_\lambda})$ is unramified and there is an isomorphism
\[ H^{N-2}_{\text{\'et}}(Z_{t, \overline{F}_v}, \cO_{M_\lambda}) \cong H^{N-2}_{\text{\'et}}(Z_{\overline{t}, \overline{k(v)}}, \cO_{M_\lambda}), \]
where $\overline{t}$ denotes the image of $t$ in $T_0(k(v))$. \cite[Theorem 2(2)]{MR332791}  shows that for any $h \in H$ the characteristic polynomial of $h \cdot \Frob_v$ on this group has coefficients in $\cO_M$ and is independent of the choice of $\lambda \nmid l$, and this implies that $Q_v(X)$ also has coefficients in $\cO_M[X]$ and is independent of $\lambda$. 

For the second part, note that $W_{t, \lambda}$ is de Rham because it is a subquotient of $H_{\text{\'et}}^{N-2}(Z_t, M_\lambda) \otimes M_\lambda(\Psi_\lambda^{-1})$, which is de Rham. To compute the Hodge--Tate weights, we use \cite[Lemma 3.10]{QianPotential}, which implies that there is an integer $M_\tau$ such that $\mathrm{HT}_\tau(V_{t, \lambda}) = \{ M_\tau, M_\tau + 1, \dots, M_\tau + (n-1) \}$. Since $W_{t, \lambda}$ is a twist of $V_{t, \lambda}$, there is an integer $M'_\tau$ such that $\mathrm{HT}_\tau(W_{t, \lambda}) = \{ M'_\tau, M'_\tau + 1, \dots, M'_\tau + (n-1) \}$. Looking at determinants shows that $n M'_\tau + n(n-1)/2 = n(n-1)/2$, hence $M'_\tau = 0$. 

If further $l \nmid N$ and $t \in T_0(\cO_{F_v})$ then again $Z_t$ is smooth and proper, so $H_{\text{\'et}}^{N-2}(Z_t, M_\lambda)$ is crystalline, and also $M_\lambda(\Psi_\lambda^{-1})$ is crystalline, hence $W_{t, \lambda}$ is crystalline. The crystalline comparison theorem implies that there is an isomorphism 
\[ D_{cris}( H_{\text{\'et}}^{N-2}(Z_{t, \overline{F}_v}, \Q_l) ) \cong H^{N-2}_{cris}(Z_{\overline{t}} / F_{v, 0}), \]
respecting the action of Frobenius $\phi_v$ and $H_0$ on each side. Here $F_{v, 0}$ denotes the maximal absolutely unramified subfield of $F_v$.  Choosing an embedding $\sigma_0 : F_{v, 0} \to \overline{M}_\lambda$, there is an isomorphism
 \[ D_{cris}( H_{\text{\'et}}^{N-2}(Z_{t, \overline{F}_v}, \Q_l) ) \otimes_{F_{v, 0}, \sigma_0} \overline{M}_\lambda \cong H^{N-2}_{cris}(Z_{\overline{t}} / F_{v, 0}) \otimes_{F_{v, 0}, \sigma_0} \overline{M}_\lambda, \]
 equivariant for the $\overline{M}_\lambda$-linear action of $\phi_v^{[k(v) : \F_l]}$. By definition, $\mathrm{WD}(W_{t, \lambda})$ is the unramified representation of $W_{F_v}$ over $\overline{M}_\lambda$ afforded by the $\Psi_\lambda^{-1}$-twist of the $\underline{\chi}|_{H_0}$-isotypic subspace of the left-hand side. We therefore need to check that the characteristic polynomial of $\phi_v^{[k(v) : \F_l]}$ on the $\Psi_\lambda^{-1}$-twist of the $\underline{\chi}|_{H_0}$-isotypic subspace of the right-hand side equals $Q_v(X)$. This follows again from \cite[Theorem 2(2)]{MR332791} (applicable here by the main result of \cite{MR904940}). The characterization of ordinary representations follows from \cite[Lemma 2.32]{ger}. 
 
 The third part follows from the first two parts and the definition of
 a weakly compatible system. 
\end{proof}
We now apply the results of Drinfeld--Kedlaya \cite{DrinfeldKedlaya}
to deduce that the~$W_{t,\lambda}$ are ordinary for generic choices of~$t$.
\begin{prop} \label{ordinarypoints}
Let $v$ be a place of $\Q(\zeta_N)$ of characteristic $l \nmid N$, and let $\lambda$ be a place of $M$ of the same characteristic. Then there exists a non-empty Zariski open subset $U(v; \lambda) \subset T_{0, k(v)}$ with the following property: for any finite extension $F_w / \Q(\zeta_N)_v$ and any $t \in T_0(\cO_{F_w})$ such that $\overline{t} = t \text{ mod }(\varpi_w) \in U(v; \lambda)(k(w))$, $W_{t, \lambda}$ is a crystalline ordinary representation of $G_{F_w}$. 
\end{prop}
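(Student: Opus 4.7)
The plan is to take $U(v;\lambda)$ to be the locus in $T_{0,k(v)}$ on which the Newton polygon of the overconvergent $F$-isocrystal associated with $\cW_\lambda$ agrees with its Hodge polygon $(0,1,\ldots,n-1)$. By Grothendieck--Katz semicontinuity of Newton polygons, this locus is open, and it remains to show it is non-empty; Proposition~\ref{prop_independence_of_l}(2) will then translate the Newton-equals-Hodge equality at $\bar t \in U(v;\lambda)(k(w))$ into the crystalline ordinarity of $W_{t,\lambda}$ for any lift $t\in T_0(\cO_{F_w})$, since we already know its Hodge--Tate weights are $\{0,1,\ldots,n-1\}$.

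First I would set up the $p$-adic side. Since $v$ and $\lambda$ have the same residue characteristic, $\cW_\lambda$ itself does not live on the special fibre; instead, via the comparison between crystalline and rigid cohomology, the family of crystalline representations $W_{t,\lambda}$ is interpolated by an overconvergent $F$-isocrystal $\cW^{\dagger}$ on $T_{0,k(v)}$ whose fibre at $\bar t\in T_0(k(w))$ is canonically identified with $D_{\cris}(W_{t,\lambda})$. Proposition~\ref{prop_independence_of_l}(2) implies that the Hodge polygon of $\cW^{\dagger}$ has slopes $0,1,\ldots,n-1$, each of multiplicity one, while the determinant computation in Theorem~\ref{thm_hypergeometric_sheaf}(1) forces the generic Newton slopes $s_1\le\cdots\le s_n$ of $\cW^{\dagger}$ to sum to $n(n-1)/2$.

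Next I would appeal to geometric irreducibility. Theorem~\ref{thm_hypergeometric_sheaf}(1) gives geometric irreducibility of $\cE_\lambda\otimes\overline{M}_\lambda$, which via the isomorphism \eqref{eqn: V to E iso} and the twist by $\Psi_\lambda$ yields geometric irreducibility of $\cW_\lambda\otimes\overline{M}_\lambda$, and hence of $\cW^{\dagger}$ as an overconvergent $F$-isocrystal. The main theorem of Drinfeld and Kedlaya \cite{DrinfeldKedlaya} then applies, constraining the generic Newton slopes of $\cW^{\dagger}$ so that consecutive slopes differ by at most $1$. Combined with Mazur's inequality $s_1\ge 0$ and the sum constraint $\sum_i s_i = n(n-1)/2$, this forces $s_i = i-1$ for every $i$, so the generic Newton polygon equals the Hodge polygon.

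The main obstacle is this last inference: the bound $s_{i+1}-s_i\le 1$ together with $s_1\ge 0$ and the correct slope sum does not on its own pin the polygon down to $(0,1,\ldots,n-1)$ in every rank (for instance $(1,1,1)$ is allowed for $n=3$). Closing this gap uses the stronger form of \cite{DrinfeldKedlaya}, which shows that the generic slopes of an irreducible overconvergent $F$-isocrystal actually form an arithmetic progression of common difference exactly $1$; alternatively, one may exhibit a single ordinary $\overline{k(v)}$-point of $T_{0,k(v)}$ directly, for example via the semistable-ordinary analysis near $t=\infty$ in \cite{Qian}, and then invoke openness of the ordinary stratum. Once such a non-empty open ordinary locus is in hand, Proposition~\ref{prop_independence_of_l}(2) immediately delivers the required $U(v;\lambda)$.
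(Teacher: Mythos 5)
Your overall strategy matches the paper's: control the generic Newton slopes via Drinfeld--Kedlaya, fix the slope sum via the determinant, and reduce to showing that the smallest generic slope vanishes; you also correctly flag that the no-gap inequality plus the slope sum do not determine the slopes (for instance $(1,1,1)$ for $n=3$, or $(1/2,1,3/2)$, both satisfy no-gap, Mazur's inequality, and the slope sum $3$). However, neither of the two routes you offer to close this gap works. There is no ``stronger form'' of Drinfeld--Kedlaya asserting that the generic slopes of an irreducible overconvergent $F$-isocrystal form an arithmetic progression of common difference $1$: the result the paper actually cites, \cite[Theorem~1.3.3]{DrinfeldKedlaya}, gives constancy of slopes on a nonempty open together with $s_{i+1}-s_i\le 1$ and nothing finer, and indeed nothing finer can be true in that generality (an isoclinic simple $F$-isocrystal of slope $1/2$ is irreducible of rank $2$ with slopes $(1/2,1/2)$). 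Your second alternative, Qian's semistable-ordinary analysis near $t=\infty$, does not produce a point of $T_{0,k(v)}$: specializations ``near $t=\infty$'' have $v(t)<0$, so $t\notin T_0(\cO_{F_w})$, and the corresponding Galois representations are merely semistable with nontrivial monodromy rather than crystalline; the discussion at the start of \S\ref{ssec:dwork} explicitly identifies this deficiency of Qian's result as the reason the present proposition is needed. The paper closes the gap by an explicit computation that your proposal lacks: using the trace formula \eqref{eqn: trace on E} for $\cE_\mu$ (with $\mu$ an auxiliary place of $M$ of residue characteristic prime to $l$), expanding the character sum by the binomial theorem and orthogonality, and invoking Lucas' theorem, it shows that $\tr(\Frob_x\mid\cE_{\mu,\overline{x}})$ reduces mod $\lambda$ to $\mathbf{N}_{k/k(v)}(u(x))$ for an explicit nonzero polynomial $u\in k(v)[T]$ with constant term $1$; hence on the nonempty open $T_{1,k(v)}[1/u]$ the trace is a $\lambda$-adic unit, forcing $s_n(x)=0$ there. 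Without some substitute for this step your argument is incomplete.
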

\begin{proof}
Fix an auxiliary place $\mu$ of $M$ of characteristic not $l$. If $k /
k(v)$ is a finite extension of cardinality $q$ and $x \in T_0(k)$, we
write $Q_x(X) \in \cO_M[X]$, for the characteristic polynomial of $\Frob_x$ on $W_{x, \mu}$. Let $s_1(x) \geq s_2(x) \geq \dots \geq s_n(x)$ denote $[k : \F_l]^{-1}$ times the $l$-adic valuations of the roots of $Q_x(X)$ in $\overline{M}_\lambda$. Observe that these normalized slopes $s_i(x)$ do not change if $k$ is replaced by a larger extension (leaving the point $x$ unchanged). By Proposition \ref{prop_independence_of_l}, it suffices to show the existence of a non-empty Zariski open subset $U \subset T_{0, k(v)}$ such that if $k / k(v)$ is a finite extension and $x \in U(k)$, then $s_i(x) = n-i$ for each $i = 1, \dots, n$. 

By \cite[Theorem 1.3.3]{DrinfeldKedlaya}, we can find a non-empty Zariski open subset $V \subset T_{0, k(v)}$ such that the numbers $s_i(x)$ are constant for $x \in V(k)$, and moreover such that $s_i(x) \leq s_{i+1}(x) + 1$ for each $i = 1, \dots, n-1$. To complete the proof, it suffices to show that there is a non-empty Zariski open subset $U \subset V$ such that if $x \in U(k)$, then $s_n(x) = 0$. Indeed, consideration of determinants shows that $s_1(x) + s_2(x) + \dots + s_n(x) = n(n-1)/2$ for all $x \in T_0(k)$. If $x \in V(k)$ and $s_n(x) = 0$ then $s_i(x) \leq n-i$ for each $i = 1, \dots, n$, hence $s_1(x) + \dots + s_n(x) \leq n(n-1)/2$, with equality if and only if $s_i(x) = n-i$ for each $i = 1, \dots, n$. 

Finally, by~\eqref{eqn: V to E iso}, it is enough to show the analogous statement for the pullback of $\cE_\mu$ to $T_{1, k(v)}$. We will prove this by showing that there is a non-empty Zariski open subset $U_1 \subset T_{1, k(v)}$ such that if $x \in T_{1, k(v)}(k)$, then  $\tr( \Frob_x \mid \cE_{\mu, \overline{x}}) \not\equiv 0 \text{ mod }\lambda$, or in other words (using~\eqref{eqn: trace on E}) that
\[ \sum_{\substack{x_1, \dots, x_n \in k \\ \prod_{i=1}^n x_i = x}} \prod_{i=1}^n \rho_i( ( 1-x_i )^{(q-1)/N} ) \not\equiv 0 \text{ mod }\lambda. \]
We will produce this set $U_1$ by a computation following \cite[\S A.3]{DrinfeldKedlaya}. Let $\tau : \Q(\zeta_N) \to M$ be an isomorphism identifying the place $v$ with the place $\lambda$. The character $k(v)^\times \to k(v)^\times$, $z \mapsto \tau^{-1} \rho_i( z^{(q_v-1)/N} )$, is given by the formula $z \mapsto z^{c_i}$ for some integer $c_i$ with $1 \leq c_i \leq q_v-2$. If $q = q_v^d$ then we find that the pre-image under $\tau$ of the left-hand side of the displayed equation is given by 
\[ \sum_{\substack{x_1, \dots, x_n \in k \\ \prod_{i=1}^n x_i = x}} \prod_{i=1}^n \mathbf{N}_{k / k(v)} (1 - x_i)^{c_i} = \sum_{\substack{x_1, \dots, x_n \in k \\ \prod_{i=1}^n x_i = x}} \prod_{i=1}^n  (1 - x_i)^{\widetilde{c}_i}, \]
where $\widetilde{c}_i = c_i \cdot (q-1) / (q_v-1) < q - 1$. This we can in turn compute as
\begin{multline*}   \sum_{\substack{x_1, \dots, x_n \in k \\ \prod_{i=1}^n x_i = x}} \sum_{\substack{ 0 \leq r_i \leq \widetilde{c}_i \\ i = 1, \dots, n}} \prod_{i=1}^n \binom{\widetilde{c}_i}{r_i} (-x_i)^{r_i} \\
 =   \sum_{\substack{ 0 \leq r_i \leq \widetilde{c}_i \\ i = 1, \dots, n}} (-1)^{r_1 + \dots + r_n} \left(\prod_{i=1}^n \binom{\widetilde{c}_i}{r_i}\right) \sum_{x_1, \dots, x_{n-1} \in k^\times}   \left( \prod_{i=1}^{n-1} x_i^{r_i - r_n} \right) x^{r_n}. \end{multline*} 
 We now use that if $r \in \Z$ and $r \not\equiv 0 \text{ mod }(q-1)$, then $\sum_{z \in k^\times} z^r = 0$. This implies that the inner sum vanishes except if $r_i = r_n$ for each $i = 1, \dots, n-1$. We obtain  (noting that there are only finitely many non-zero terms in the sum on the right-hand side):
 \[ \sum_{0 \leq r \leq \min(\widetilde{c}_i)} (-1)^{nr}  \prod_{i=1}^n \binom{\widetilde{c}_i}{r} (-1)^{n-1} x^r = (-1)^{n-1} \sum_{r \geq 0} (-1)^{nr}   \prod_{i=1}^n \binom{\widetilde{c}_i}{r} x^r. \]
 Define a polynomial $u(T) \in k(v)[T]$:
 \[ u(T) = \sum_{r \geq 0} (-1)^{nr}   \prod_{i=1}^n \binom{c_i}{r} T^r. \]
 We claim that there is an equality 
 \[ \sum_{r \geq 0} (-1)^{nr}   \prod_{i=1}^n \binom{\widetilde{c}_i}{r} x^r = \mathbf{N}_{k / k(v)}( u(x) ). \]
 This will complete the proof: indeed, it will imply that we can take $U_1 = T_{1, k(v)}[1/u]$ (noting that $u$ is non-zero, since its constant term is 1, so $U_1$ is indeed non-empty, and also $u$ is independent of the field extension $k/k(v)$). To show this, we expand
\begin{multline*}  \mathbf{N}_{k / k(v)}( u(x) ) = \left( \sum_{r \geq 0} (-1)^{nr}   \prod_{i=1}^n \binom{c_i}{r} x^r \right)^{1 + q_v + \dots + q_v^{d-1}} 
\\ = \sum_{r_0, \dots, r_{d-1} \geq 0} (-1)^{n(r_0 + \dots + r_{d-1} q_v^{d-1})} \prod_{i=1}^n \prod_{j=0}^{d-1} \binom{c_i}{r_j} x^{r_0 + r_1 q_v + \dots + r_d q_v^{d-1}}.
\end{multline*}   
We now observe that a given tuple $(r_0, \dots, r_{d-1})$ can contribute a non-zero summand only if $r_j < q_v-1$ for each $j$, so each value of $r = r_0 + r_1 q_v + \dots + r_{d-1} q_v^{d-1}$ is represented at most once. Furthermore, since $(1+X)^{\widetilde{c}_i} = \prod_{j=0}^{d-1} (1+X^{q_v^j})^{c_i}$ in $\F_l[X]$, we have in this case a congruence
\[ \binom{\widetilde{c}_i}{r} \equiv \prod_{j=0}^{d-1} \binom{c_i}{r_j} \text{ mod }l, \]
showing that $\mathbf{N}_{k / k(v)}( u(x) )$ indeed equals
\[ \sum_{r \geq 0} (-1)^{nr} \prod_{i=1}^n \binom{\widetilde{c}_i}{r} x^r, \]
 as desired. 
\end{proof}

\subsection{Basics on unitary groups over finite fields}
In order to discuss the possible (residual) images of the Galois
representations associated to our Dwork family, we recall here some basic facts
about unitary groups over finite fields which will be used in the sequel. (Nothing here
is original but we include it for convenience of exposition.)

Let~$l/k$ be a quadratic extension of finite fields.
 Let $p$ be a prime and let~$M \in M_n(l)$. Let~$M^t$ denote the transpose of~$M$ and~$M^{c}$
the conjugate of~$M$ by the generator of~$\Gal(l/k)$. We define the adjoint~$M^{\dagger}$
of~$M$ to be~$M^{\dagger}:= (M^{c})^t = (M^t)^{c}$.
  Note that~$(AB)^{\dagger} = B^{\dagger} A^{\dagger}$. 
  We recall:
 
 \begin{df} \label{unitary} The unitary group~$\GU_n(l)$
 is the subgroup of matrices~$M \in \GL_n(l)$ satisfying~$M^{\dagger} M = \lambda \in k^{\times}$.
  Let~$\nu$ be the
  multiplier character~$\nu: \GU_n(l) \rightarrow k^{\times}$ sending~$M$ to~$M^{\dagger} M$ and let~$\SU_n(l)$ denote the kernel of~$\nu$.
 \end{df}

  If~$V$ is a representation of a finite group~$G$ over~$l$, let~$V^{c}:=V\otimes_{l,c}l$ denote
 the representation obtained by conjugating the coefficients by the
 generator of~$\Gal(l/k)$. If $x \in V$, we set $x^c := x\otimes 1 \in V^c$.
 If~$x \in l$, we write either~$cx$ or~$x^c$ for the conjugate of~$x$ by~$c \in \Gal(l/k)$. 
 
 \begin{df}\label{hermetian} If~$l/k$ is a quadratic extension of finite fields and~$\Gal(l/k) \simeq \langle c \rangle$,
 then a Hermitian form on a vector space~$V$ over~$l$ is an $l$-valued pairing on~$V$ which is~$k$-bilinear, satisfies
 $\langle  ax,y \rangle = a \langle x,y\rangle$ and~$\langle x,ay \rangle = ca \langle x,y\rangle$ for $a \in l$,
 and moreover satisfies~$\langle y,x \rangle = \langle x,y \rangle^c  =
c \langle x,y \rangle$.
 \end{df}
 
\begin{rem} \label{minushermetian}
Scaling the pairing by an element~$\eta \in l$ such that~$c \eta = - \eta$, all
the conditions remain true except that now~$\langle x,y \rangle = - c \langle x,y \rangle$.
\end{rem}

 The basic fact concerning  unitary groups over finite fields is that there is essentially
 only one non-degenerate Hermitian form. In practice, it will be useful to formulate
 this in the following lemma.
 
 \begin{lemma} \label{selfdual} Let~$V$ be a vector space over~$l$ with an absolutely
 irreducible representation of a group~$G$.
 Suppose that there is an $l$-linear isomorphism
 \begin{equation}
 \label{twistselfdual}
 V^{\vee} \simeq V^{c} \otimes \chi^{-1}
\end{equation}
 for some multiplier character~$\chi: G \rightarrow k^{\times}$. Then, after a suitable choice of basis for~$V$,
  the corresponding map~$G \rightarrow \GL_n(l)$ has image in~$\GU_n(l)$.
 \end{lemma}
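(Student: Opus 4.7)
The plan is to interpret the given isomorphism as a sesquilinear pairing, show via Schur's lemma that it is Hermitian up to scaling, and then invoke the classification of Hermitian forms over finite fields.

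First, I would rewrite $V^\vee \simeq V^c \otimes \chi^{-1}$ as a non-degenerate $G$-equivariant bilinear pairing $V \otimes (V^c \otimes \chi^{-1}) \to l$, which unwinds (using the fact that $V^c$ has the same underlying set as $V$, with $l$ acting through $c$) to a non-degenerate pairing $B \colon V \times V \to l$ that is $l$-linear in the first variable, $c$-semilinear in the second, and satisfies $B(gx, gy) = \chi(g) B(x,y)$ for all $g \in G$.

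Next, consider the flipped pairing $B'(x,y) := c(B(y,x))$. A direct check (using that $\chi$ takes values in the fixed field $k^\times$) shows that $B'$ is of the same type as $B$: linear in the first variable, $c$-semilinear in the second, and $G$-equivariant up to $\chi$. Since $V$ is absolutely irreducible, Schur's lemma yields $B' = \mu B$ for a unique $\mu \in l^\times$. Applying the flipping construction twice recovers $B$, so $\mu \cdot c(\mu) = 1$, i.e.\ $N_{l/k}(\mu) = 1$.

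By Hilbert~90 for the cyclic extension $l/k$, write $\mu = \eta/c(\eta)$ for some $\eta \in l^\times$, and set $\widetilde{B} := \eta B$. Then the identity $\widetilde{B}(y,x) = c(\widetilde{B}(x,y))$ follows from the computation $c(\widetilde{B}(y,x)) = c(\eta) B'(x,y) = c(\eta) \mu B(x,y) = \eta B(x,y) = \widetilde{B}(x,y)$, so $\widetilde{B}$ is Hermitian in the sense of Definition~\ref{hermetian}, and is still $G$-equivariant with multiplier $\chi$.

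Finally, over the finite field $l$ any non-degenerate Hermitian form is equivalent to the standard one $(x,y) \mapsto \sum_i x_i c(y_i)$ (a classical fact whose proof is a Gram--Schmidt-type orthogonalisation together with the surjectivity of the norm $N_{l/k}$ on $l^\times$). Choosing a basis of $V$ in which $\widetilde{B}$ becomes this standard form, the condition $\widetilde{B}(gx, gy) = \chi(g) \widetilde{B}(x,y)$ translates, by the computation relating $M^\dagger M$ to the standard form, into $g^\dagger g = \chi(g)$, so the resulting representation $G \to \GL_n(l)$ factors through $\GU_n(l)$ with $\nu = \chi$. The main (minor) subtlety is bookkeeping with the sesquilinear conventions; Hilbert~90 is the key input that guarantees we can rescale to a genuinely Hermitian (rather than merely sesquilinear) pairing.
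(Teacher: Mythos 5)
Your proof is correct and follows essentially the same route as the paper's: interpret the isomorphism as a sesquilinear pairing, use Schur's lemma to show the ``flip'' is a scalar $\mu$ with $N_{l/k}(\mu)=1$, apply Hilbert~90 to rescale to a genuine Hermitian form, and then invoke the classification of non-degenerate Hermitian forms over finite fields. The only cosmetic difference is that you identify $V^c$ with $V$ from the start and work with a pairing $B\colon V\times V\to l$, whereas the paper keeps $V^c$ as a separate space and writes the pairing as $\psi\colon V\times V^c\to\chi$; the underlying computation is identical.
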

 
 \begin{proof} An isomorphism~(\ref{twistselfdual}) is equivalent to the existence of a~$G$-equivariant
 non-degenerate bilinear pairing:
 $$\psi: V \times V^{c} \rightarrow \chi,$$
 where by abuse of notation we consider~$\chi$ as a~$1$-dimensional
 vector space over~$l$. By Schur's Lemma,  the isomorphism in~(\ref{twistselfdual})
 is unique up to scaling and thus~$\psi$ is also unique up to scaling. If we define~$\psi'$ to be the map:
  $$\psi'(x,y^{c}) = \psi(y,x^{c})^{c},$$
then~$\psi'$ is also a~$G$-equivariant bilinear map from~$V \times V^{c}$ to~$\chi$ and hence~$\psi'$
 is equal to~$\psi' = \lambda \psi$ for some~$\lambda \in l^\times$, that is,
 $$\psi(y,x^{c})^{c} = \psi(x,y^{c}) \cdot \lambda.$$ 
 Applying this twice, we get~$\psi(x,y^{c}) = \lambda \cdot \lambda^{c}\cdot  \psi(x,y^{c})$,
 and thus~$N_{l/k}(\lambda) = 1$. By Hilbert Theorem~$90$, it follows that~$\lambda = c \eta/\eta$
 for some~$\eta \in l$. Replacing~$\psi(x,y)$ by~$\psi(x,y)/\eta$, we deduce
 that~$\psi(x,y^{c}) = \psi(y,x^{c})^c$.
It follows that
 $$\langle x,y \rangle := \psi(x,y^{c})$$
defines a non-degenerate Hermitian form on $V$ in the sense
of Definition~\ref{hermetian}.
 Let~$A$ denote the matrix associated to this Hermitian form, so that~$A^{\dagger} = A$.
 Then~$G \subset \GU(V,A)$, that is, matrices $M$ such that~$M^{\dagger} A M = \lambda \cdot A$ for some~$\lambda \in k^{\times}$.
 But now we use the fact that there is a unique non-degenerate equivalence class
 of Hermitian forms
associated to~$l/k$, namely, they are all equivalent to~$A = I$ and so~$G \subset \GU_n(l)$.
(See, for example,~\cite[\S4]{MR656449}.)
  \end{proof}

\subsection{Moduli spaces and monodromy}\label{subsec_def_of_dwork_moduli_space_with_level_structure}
We shall now discuss a number of moduli spaces related to finding Dwork motives
with fixed residual representations, and compute the corresponding monodromy groups.
Since it will be important to find such motives whose~$p$-adic representations
are related to symmetric powers of ``niveau two'' representations, for our applications
we will have to take~$p \equiv -1 \bmod N$, and thus be in cases excluded by~\cite{QianPotential}.

\begin{df} \label{pqmoduli}
Let $l_1, l_2 \nmid 2N$ be distinct primes and let $\lambda_1, \lambda_2$ be places of $\Q(\zeta_N)$ of these characteristics. Suppose we are given the following data:
\begin{enumerate}
\item \label{arepresentation} A field $F / \Q(\zeta_N)$ and for each $i = 1, 2$ an \'etale sheaf $\overline{U}_{\lambda_i}$ on $\Spec F$ of $k(\lambda_i)$-modules of rank $n$.
\item \label{determinant} For each $i = 1, 2$ an isomorphism $\eta_i : \wedge^n \overline{\cW}_{\lambda_i} \to \wedge^n \overline{U}_{\lambda_i, T_{0, F}}$ of sheaves of $k(\lambda_i)$-modules.
\item \label{another} For each $i = 1, 2$, if $-1 \text{ mod } N \in \langle l_i \rangle \leq (\Z / N \Z)^\times$ (equivalently: if $c \in \Gal(k(\lambda_i) / \F_{l_i})$), then we fix in addition a perfect $\F_l$-bilinear morphism $\langle \cdot, \cdot \rangle_{\overline{U}_{\lambda_i}} : \overline{U}_{\lambda_i} \times \overline{U}_{\lambda_i} \to k(\lambda_i)(1-n)$  satisfying the following conditions:
\begin{enumerate}
\item For all $x, y \in \overline{U}_{\lambda_i}$, $a \in k(\lambda_i)$, we have $\langle a x, y \rangle = a \langle x, y \rangle$, $\langle x, a y \rangle = c(a) \langle x, y \rangle$.
\item For all $x, y \in \overline{U}_{\lambda_i}$, we have $\langle y, x \rangle = - c \langle x, y \rangle$ .
\end{enumerate}
That is, the pairing is Hermitian in the sense of Definition~\ref{hermetian} up to a scalar~$\eta$ with~$c \eta = - \eta$, 
see Remark~\ref{minushermetian}.
\end{enumerate}
Note that if $-1 \text{ mod } N \in \langle l_i \rangle$ then there is also a perfect $\F_l$-bilinear morphism $\langle \cdot, \cdot \rangle_{\overline{\cW}_{\lambda_i}} : \overline{\cW}_{\lambda_i} \times \overline{\cW}_{\lambda_i} \to k(\lambda_i)(1-n)$ satisfying the same two conditions, induced by taking Poincar\'e duality on the relative cohomology with $\cO_{M^+_\lambda}$-coefficients of the hypersurface $Z \to T_0$ and extending sesquilinearly to $\cO_{M_\lambda}$-coefficients. If $k / F$ is a field extension and $t \in T_0(k)$, then we write $\langle \cdot, \cdot \rangle_{\overline{W}_{t, \lambda_i}}$ for the induced perfect pairing on $\overline{W}_{t, \lambda_i}$. 

Given such data, let us write $\mathcal{F}(\{ \overline{U}_{\lambda_i} \})$ for the functor which sends a scheme $S \to T_{0, F}$ to the set of pairs of isomorphisms $\phi_i : \overline{\cW}_{\lambda_i, S} \to \overline{U}_{\lambda_i, S}$ $( i = 1, 2)$ satisfying the following conditions:
\begin{itemize}
\item For each $i = 1, 2$, $\wedge^n \phi_i = \eta_i$.
\item For each $i = 1, 2$, if $-1 \text{ mod } N \in \langle l_i \rangle$, then $\phi_i$ intertwines $\langle \cdot, \cdot \rangle_{\overline{\cW}_{\lambda_i}, S}$ and $\langle \cdot, \cdot \rangle_{\overline{U}_{\lambda_i}, S}$. 
\end{itemize}
Then $\mathcal{F}(\{ \overline{U}_{\lambda_i} \})$ is represented by a finite \'etale $T_{0, F}$-scheme $T(\{ \overline{U}_{\lambda_i} \})$. 
\end{df}

We need the following variant of~\cite[Proposition 3.8]{QianPotential}.

\begin{prop} \label{monodromy}
With notation as above, 
$T(\{ \overline{U}_{\lambda_i} \})$ is a geometrically irreducible smooth $F$-scheme.
  \end{prop}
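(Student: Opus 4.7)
The plan is to reduce geometric irreducibility to a geometric monodromy calculation. Since $T(\{\overline{U}_{\lambda_i}\}) \to T_{0,F}$ is finite \'etale and $T_{0,F}$ is smooth over $F$, smoothness is automatic, and the only thing left is geometric connectedness. Fix a geometric point $\bar t \in T_{0,\overline{F}}$; the fibre is a torsor under the finite group $G = G_1 \times G_2$, where $G_i = \SL_n(k(\lambda_i))$ when $-1 \bmod N \notin \langle l_i \rangle$, and $G_i = \SU_n(k(\lambda_i))$ (with respect to the Hermitian form specified in Definition~\ref{pqmoduli}) when $-1 \bmod N \in \langle l_i \rangle$. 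Geometric connectedness is therefore equivalent to surjectivity of the monodromy representation
\[
\rho : \pi_1^{\text{\'et}}(T_{0,\overline{F}}, \bar t) \longrightarrow G_1 \times G_2
\]
induced by the natural actions on the geometric fibres $\overline{\cW}_{\lambda_i, \bar t}$.

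The first task is to check the containment $\im(\rho) \subseteq G_1 \times G_2$. The geometric monodromy of $\det \cW_{\lambda_i}$ is trivial, since after inverting $\varpi$ one has $\det \cW_{\lambda_i}\otimes M_{\lambda_i} \cong M_{\lambda_i}(n(1-n)/2)$, which is a Tate twist with trivial geometric monodromy; this forces the determinant of $\rho_i := \mathrm{pr}_i \circ \rho$ to agree (via $\eta_i$) with that of $\overline U_{\lambda_i}$. In the Hermitian case, the fact that $\im(\rho_i) \subseteq \GU_n(k(\lambda_i))$ preserving the specified pairing follows from Lemma~\ref{selfdual} applied to the self-duality isomorphism $\overline{\cW}_{\lambda_i}^{\vee} \cong \overline{\cW}_{\lambda_i}^c(1-n)$ obtained by extending Poincar\'e duality on $Z \to T_0$ sesquilinearly, together with the geometric irreducibility of $\overline{\cW}_{\lambda_i}$.

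The heart of the argument is to show that each $\rho_i$ surjects onto $G_i$. Using~\eqref{eqn: V to E iso} and the fact that $g : T_0' \to T_1$ is a finite connected \'etale cover, this reduces to the analogous statement for the hypergeometric sheaf $\overline{\cE}_{\lambda_i}$ on $T_{1,\overline{F}}$. Katz's monodromy computations for hypergeometric sheaves \cite[Ch.~8]{KatzExponential} identify the Zariski closure of the geometric monodromy group of $\cE_{\lambda_i} \otimes \overline{M}_{\lambda_i}$ as $\SL_n$ in the case $c \notin \Gal(k(\lambda_i)/\F_{l_i})$, and as the special unitary group of a nondegenerate Hermitian form in the case $c \in \Gal(k(\lambda_i)/\F_{l_i})$. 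A classification-of-finite-subgroups argument, as in the proof of \cite[Proposition~3.8]{QianPotential}, then upgrades this Zariski density to the statement that the residual monodromy image in $G_i$ is all of $G_i$. In the Hermitian case one must additionally identify Katz's abstract pairing with the Poincar\'e-duality pairing on $\overline{\cW}_{\lambda_i}$ (twisted by $\Psi_{\lambda_i}$), using the uniqueness of the nondegenerate Hermitian form on an absolutely irreducible representation.

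Surjectivity of the product $\rho$ onto $G_1 \times G_2$ is then forced by Goursat's lemma: any proper subgroup of $G_1 \times G_2$ projecting onto each factor would give a nontrivial isomorphism between nonabelian simple quotients of $G_1$ and $G_2$, but these have distinct characteristics $l_1 \ne l_2$ and so are nonisomorphic. The main obstacle is the Hermitian case $-1 \bmod N \in \langle l_i \rangle$, which lies outside the setting of \cite{QianPotential}: one needs both to carry out the pairing identification mentioned above, and to adapt the large-image argument to produce the full $\SU_n(k(\lambda_i))$ rather than the full $\SL_n(k(\lambda_i))$.
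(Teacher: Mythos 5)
Your overall strategy agrees with the paper's: reduce geometric irreducibility to surjectivity of the mod-$\lambda_i$ geometric monodromy onto each factor $G_i$ (namely $\SL_n(k(\lambda_i))$ or $\SU_n(k(\lambda_i))$), then invoke Goursat's lemma using simplicity and non-isomorphism of the associated projective simple groups. But you explicitly flag, and do not close, the crucial gap in the Hermitian case $-1 \bmod N \in \langle l_i \rangle$, which is precisely the content not already contained in \cite[Prop.\ 3.8]{QianPotential}. The paper closes this gap with a concrete arithmetic observation: running Qian's argument from \cite[Lem.\ 3.7]{QianPotential} \emph{without} the hypothesis $-1 \bmod N \notin \langle l_i \rangle$ (which Qian imposes to exclude a unitary image), one finds $H_i \subseteq \SU_n(k(\lambda_i))$ maps to $\SU_n(k(\lambda_i))$ or $\SL_n(k(\lambda_i))$ with image a normal subgroup of index dividing $N$; since $(N,n)=1$ by hypothesis, the only possibility is $H_i \cong \SU_n(k(\lambda_i))$. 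Without that step your proof is incomplete in exactly the case this proposition was introduced to handle.

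There is also a conceptual misstatement worth correcting: the Zariski closure of the geometric monodromy of $\cE_{\lambda_i}\otimes\overline{M}_{\lambda_i}$ over the algebraically closed coefficient field is $\SL_n$ in \emph{both} cases; it is not, and cannot meaningfully be, a special unitary group, since the unitary form requires a quadratic extension that does not survive base change to $\overline{M}_{\lambda_i}$. The Hermitian structure is a feature of the \emph{residual} mod-$\lambda_i$ monodromy, coming from the integral Poincar\'e duality pairing on $\overline{\cW}_{\lambda_i}$, and it is at that finite level (not at the level of the algebraic monodromy group) that the distinction between $\SL_n(k(\lambda_i))$ and $\SU_n(k(\lambda_i))$ appears. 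The finite-subgroup classification argument must therefore start from Zariski density in $\SL_n$ in all cases.
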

\begin{proof}
We need to show that the geometric monodromy group $\pi_1(T_{0, \overline{\Q}})$ acts transitively on the fibres of $T(\{ \overline{U}_{\lambda_i} \})$ over $T_0$. 
The existence of the pairing $\langle \cdot, \cdot \rangle_{\overline{\cW}_{\lambda_i}}$ shows that if $-1 \text{ mod } N \in \langle l_i \rangle$, then the image of the geometric monodromy group acting on the geometric generic fibre of $\overline{\cW}_{\lambda_i}$ may be identified with a subgroup of $\mathrm{SU}_n(k(\lambda_i))$,
and otherwise it may be identified with a subgroup of~$\mathrm{SL}_n(k(\lambda_i))$.

 We claim that it is enough to show that equality holds in either of these cases. Indeed, let $H_i$ denote the image at each prime $l_i$ (which would then be either $\mathrm{SU}_n(k(\lambda_i))$ or $\SL_n(k(\lambda_i))$). 
 Since we are assuming~$l_1, l_2 \nmid 2$ and~$n > 2$, it follows that
the $H_i$ are perfect and their associated projective groups (i.e.\ the $H_i$ modulo their subgroups of scalar matrices) are simple (Lemma~\ref{steinberg}), and
moreover $H_1 \not\cong H_2$ (also by Lemma~\ref{steinberg}).
Goursat's lemma implies that the image of geometric monodromy acting on $\overline{\cW}_{\lambda_1} \times \overline{\cW}_{\lambda_2}$ must be $H_1 \times H_2$, completing the proof.

If~$-1 \text{ mod } N \not\in \langle l_i \rangle$, then the required statement follows 
from~\cite[Lemma 3.7]{QianPotential}.
Now suppose that $-1 \text{ mod } N \in \langle l_i \rangle$. In this case, we can follow the proof of \cite[Lemma 3.7]{QianPotential} (now allowing the case $-1 \text{ mod } N \in \langle l_i \rangle$, which is used there to exclude the possibility of image a special unitary group) to conclude that $H_i$ is isomorphic to a subgroup of $\mathrm{SU}_n(k(\lambda_i))$ which maps to $\mathrm{SU}_n(k(\lambda_i))$ or $\SL_n(k(\lambda_i))$ with image a normal subgroup of index dividing $N$. Since $N$ is coprime to $n$ by assumption, the only possibility is that this map is in fact an isomorphism and that $H_i \cong \mathrm{SU}_n(k(\lambda_i))$, as required. 
\end{proof}

\begin{rem} \label{pqmoduliremark} If~$l \equiv 1 \bmod N$, so that~$l$ splits completely in~$\Q(\zeta_N)$, and~$\lambda | l$, then
the data
 of an \'etale sheaf $\overline{U}_{\lambda}$ on $\Spec F$  satisfying
 conditions~(\ref{arepresentation}), (\ref{determinant}),
 and~(\ref{another}) of Definition~\ref{pqmoduli} is nothing more than a representation
 $$\rbar_l: G_F \rightarrow \GL_n(\F_l)$$
 with determinant~$\varepsilonbar^{-n(n-1)/2}$.
 If $l \equiv -1 \bmod N$, however,  then (in light of Lemma~\ref{selfdual})
 these conditions correspond to a representation
 $$\rbar_l: G_F \rightarrow \GU_n(\F_{l^2})$$
  with multiplier character~$\varepsilonbar^{1-n}$ (and determinant~$\varepsilonbar^{-n(n-1)/2}$).
  In practice, we shall only consider the moduli spaces~$T$ with primes~$l_1,l_2$ that are either~$\pm 1 \bmod N$. in which case we sometimes write  $T(\{ \overline{U}_{\lambda_i} \})$ as~$T(\rbar_{\lambda_1},\rbar_{\lambda_2})$,
  replacing the  \'etale sheaf with the corresponding representations, which are always assumed to satisfy 
  conditions~(\ref{arepresentation}), (\ref{determinant}),
 and~(\ref{another})  of Definition~\ref{pqmoduli}.
  We will also use the simpler variant $T(\rbar_{\lambda})$ where there is a single prime $l \equiv 1 \bmod N$, a choice of place $\lambda | l$, and a representation $\rbar_{\lambda}: G_F \rightarrow \GL_n(\F_l)$ of determinant $\varepsilonbar^{-n(n-1)/2}$. The $F$-scheme $T(\rbar_{\lambda})$ is geometrically irreducible.
   \end{rem}

The following lemma will be used to prove the existence of local points on $T(\{ \overline{U}_{\lambda_i} \})$ in certain cases. 
\begin{lemma}  \label{specializations}
Let $l > n$ be a prime such that $l \equiv -1 \text{ mod }N$, and let $v, \lambda$ be places of $\Q(\zeta_N)$, $M$, respectively, of residue characteristic $l$.
Let $\tau, c \tau : k(v) \to k(\lambda)$ be the two distinct isomorphisms, and let $\omega_\tau : I_{\Q(\zeta_N)_v} \to k(\lambda)^\times$ be the character $\tau \circ \Art_{\Q(\zeta_N)_v}^{-1}$. Then there is an isomorphism
\[ \overline{W}_{0, \lambda} \cong \bigoplus_{i=1}^n \omega_\tau^{i-1} \omega_{c \tau}^{n-i}. \]
\end{lemma}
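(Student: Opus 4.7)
The plan is to exploit the extra symmetry of the Fermat fiber at $t=0$ to split $W_{0,\lambda}$ into one-dimensional summands, each attached to an algebraic Hecke character of $\Q(\zeta_N)$, and then to pin down their restrictions to inertia at $v$ by combining the Hodge--Tate weight information of Proposition~\ref{prop_independence_of_l} with purity and Fontaine--Laffaille (which applies since $l>n$).

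For the first step I would observe that the special fiber $Z_0 = \{X_1^N + \cdots + X_N^N = 0\}$ carries the action of the full group $H$ (not just $H_0$), and since $N$ is odd one has $H^{N-2}(\mathbf{P}^{N-1}) = 0$, so all of $H^{N-2}(Z_{0,\overline{\Q(\zeta_N)}}, M_\lambda)$ is primitive and breaks up over $M_\lambda$ (using $\zeta_N \in M$) as a direct sum of one-dimensional $H$-eigenspaces $V(\underline{\chi}')$ indexed by those $\underline{\chi}' \in X$ with every component $\chi'_i$ nontrivial. An elementary count then shows that the $\mu_N$-twists $\underline{\chi}\cdot b\cdot(1,\ldots,1)$ of $\underline{\chi}$ whose components are all nontrivial are exactly the $n$ twists with $b=\rho_j^{-1}$ for $j=1,\dots,n$, matching the rank of $V_{0,\lambda}$ and giving
\[ V_{0,\lambda} = \bigoplus_{j=1}^n V(\underline{\chi}'_j), \qquad \underline{\chi}'_j := \underline{\chi}\cdot \rho_j^{-1}\cdot(1,\ldots,1). \]
Because $|H|$ is coprime to $l$, this is a decomposition at the level of $\cO_{M_\lambda}$-lattices; twisting by $\Psi_\lambda^{-1}$ we obtain a Galois-equivariant splitting $W_{0,\lambda} = \bigoplus_{j=1}^n \psi_j$, with each $\psi_j : G_{\Q(\zeta_N)} \to \cO_{M_\lambda}^{\times}$ corresponding to an algebraic Hecke character of $\Q(\zeta_N)$, and reducing mod~$\lambda$ gives $\overline{W}_{0,\lambda} = \bigoplus_{j=1}^n \overline{\psi}_j$.

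Next I would pin down the Hodge--Tate weights of the $\psi_j$ at $v$. Each $\psi_j$ is crystalline at $v$ by Proposition~\ref{prop_independence_of_l}, and setting $\alpha_j := \HT_\tau(\psi_j)$ and $\beta_j := \HT_{c\tau}(\psi_j)$, the same proposition forces $\{\alpha_j\}_{j=1}^n = \{\beta_j\}_{j=1}^n = \{0,1,\dots,n-1\}$ as multisets. Since $\cW_\lambda$ is pure of weight $n-1$ (Theorem~\ref{thm_hypergeometric_sheaf}), each Hecke character attached to $\psi_j$ has infinity type satisfying $e_\sigma + e_{c\sigma} = n-1$ for every archimedean embedding $\sigma$, and specializing to the pair $(\tau, c\tau)$ yields $\alpha_j + \beta_j = n-1$ for each single $j$. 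Combined with $\{\alpha_j\} = \{0,\dots,n-1\}$ this forces $\beta_j = n-1-\alpha_j$, so after reindexing we may assume $(\alpha_i,\beta_i) = (i-1,n-i)$ for $i=1,\dots,n$.

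Finally, since $l > n$ the Hodge--Tate weights lie in the Fontaine--Laffaille range, and so for each $i$ the reduction $\overline{\psi}_i$ is a tame crystalline character of $G_{\Q(\zeta_N)_v} = G_{\Q_{l^2}}$ whose restriction to inertia is uniquely determined by its Hodge--Tate weights; unwinding the paper's normalization of $\omega_\tau$ this gives $\overline{\psi}_i|_{I_v} = \omega_\tau^{i-1}\omega_{c\tau}^{n-i}$, and summing over $i$ yields the claimed isomorphism. The main obstacle in this plan is the bookkeeping in the first step, namely verifying that exactly $n$ of the $\mu_N$-twists of $\underline{\chi}$ contribute and that the $H$-decomposition descends to $\cO_{M_\lambda}$-lattices; the other ingredients (purity, determinant, Fontaine--Laffaille) are standard once this is in place.
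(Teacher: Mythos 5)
Your plan is correct, and the overall skeleton — decompose $\overline W_{0,\lambda}$ into rank-one $H$-eigenspaces for the Fermat fibre, pin down the inertial characters via Hodge--Tate weights, then exploit a conjugate-duality constraint to relate the $\tau$-weight to the $c\tau$-weight — is exactly the paper's. You diverge from the paper in how you establish the key relation $\alpha_j+\beta_j = n-1$. The paper stays entirely on the mod-$\lambda$ side: it observes that the eigenspace decomposition is orthogonal with respect to the sesquilinear Poincar\'e-duality pairing $\langle\cdot,\cdot\rangle_{\overline W_{0,\lambda}}$ valued in $k(\lambda)(1-n)$ (defined in Definition~\ref{pqmoduli}(3)), so each line satisfies $\overline W_{0,\lambda,j}\otimes_{k(\lambda),c}k(\lambda)\cong\overline W_{0,\lambda,j}^\vee\varepsilon^{1-n}$, and the relation drops out by comparing inertial characters. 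You instead lift each $\psi_j$ to its algebraic Hecke character (the Jacobi-sum character), invoke purity of weight $n-1$ of $\cW_\lambda$ from Theorem~\ref{thm_hypergeometric_sheaf}, and deduce the archimedean constraint $e_\sigma+e_{c\sigma}=n-1$, specializing to the pair $(\tau,c\tau)$ of $l$-adic embeddings. Both routes are sound. The paper's argument is shorter and purely local/characteristic-$p$, avoiding any appeal to the Hecke-character interpretation of the summands; your route requires (a) the standard but uncited fact that the $H$-eigenspaces in Fermat cohomology realize Jacobi-sum Hecke characters, and (b) the translation between purity at finite places and the infinity-type condition, after which the endgame via Fontaine--Laffaille is identical. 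You also spell out the counting of $\mu_N$-twists giving exactly $n$ lines, which the paper compresses into a citation of \cite{MR654325}. No gaps, just a slightly longer path for the duality step.
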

\begin{proof}
The action of $H_0$ on $Z_0$ extends to an action of $H$, leading to a decomposition
\[ \overline{W}_{0, \lambda} = \bigoplus_{j = 1}^n \overline{W}_{0, \lambda, j}, \]
where $W_{0, \lambda, j}$ is the $\Psi_\lambda^{-1}$-twist of the $H$-eigenspace in $H^{N-1}(Z_{0, \overline{\Q}}, \cO_{M_\lambda})$ for the character $(\chi_1 \rho_j^{-1}, \dots, \chi_N \rho_j^{-1})$, and $\overline{W}_{0, \lambda, j} := W_{0, \lambda, j}\otimes_{\cO_{M_\lambda}}k(\lambda)$. Here we have used the computation of \cite[I.7.4]{MR654325}, which moreover shows that each summand here has rank 1 over $k(\lambda)$. Moreover, this decomposition is orthogonal with respect to $\langle \cdot, \cdot \rangle_{\overline{W}_{0, \lambda}}$, showing that $\overline{W}_{0, \lambda, j} \otimes_{k(\lambda), c} k(\lambda) \cong \overline{W}_{0, \lambda, j}^\vee \varepsilon^{1-n}$ as $k(\lambda)[G_{\Q(\zeta_N)}]$-modules. 

After permuting $\rho_1, \dots, \rho_n$, we can assume that $\mathrm{HT}_\tau(W_{0, \lambda, j}) = j-1$. Then we have $ \overline{W}_{0, \lambda, j} \cong k(\lambda)(\omega_\tau^{j-1} \omega_{c \tau}^{a_j})$ for some integers $a_j$ with $\{ a_1, \dots, a_n \} = \{ 0, \dots, n-1 \}$. The last sentence of the previous paragraph shows that we must in fact have $j-1 + a_j = n-1$, completing the proof.
\end{proof}

\subsection{A result of Moret-Bailly}\label{subsec: MB variant}
We will use the following variant of the extensions \cite[Theorem 3.1]{frankII}, \cite[Proposition 3.1.1]{BLGGT} of the main result of \cite{mb}. 
\begin{prop}\label{prop_variation_of_MB}
Let $F$ be an imaginary CM field, Galois over $\Q$, and let $T / F$ be a smooth, geometrically irreducible variety. Suppose given the following data:
\begin{enumerate}
\item A finite extension $F^{\avoid} / F$ and disjoint finite sets $S_0$ of rational primes.
\item For each $l \in S_0$ and each place $v | l$ of $F$, a Galois extension $L_v / F_v$. These have the property that if $\sigma \in G_{\Q_l}$ then $\sigma(L_v) = L_{\sigma(v)}$.
\item For each $l \in S_0$ and each place $v | l$ of $F$, a non-empty open subset $\Omega_v \subset T(L_v)$, invariant under the action of $\Gal(L_v / F_v)$.
\end{enumerate}
Then we can find a finite CM extension $F' / F$ and a point $P \in T(F')$ with the following properties:
\begin{enumerate}
\item $F' / \Q$ is Galois and $F' / F$ is linearly disjoint from $F^{\avoid} / F$.
\item For each $l \in S_0$ and each place $v | l$ of $F$ and $w | v$ of $F'$, there is an isomorphism $F'_w \cong L_v$ of $F_v$-algebras such that $P \in \Omega_v \subset T(F'_w) \cong T(L_v)$. 
\end{enumerate}
Suppose given further a finite group $G$ and a surjective homomorphism $f : \pi_1^{\text{\'et}}(T) \to G$. Then we can further choose $P$ so that the image of $f \circ P_\ast : G_{F'} \to G$ is surjective. 
\end{prop}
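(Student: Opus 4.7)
The plan is to reduce to the base Moret-Bailly variant of \cite[Thm.\ 3.1]{tay} (equivalently \cite[Prop.\ 3.1.1]{BLGGT}), using auxiliary primes with carefully chosen local data to force Frobenius at suitable places of $F'$ to realize every conjugacy class of $G$, after which Jordan's theorem forces surjectivity.

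Let $\widetilde{T} \to T$ denote the finite \'etale $G$-Galois cover of $T$ determined by $f$, and let $F_0/F$ be its field of constants, so that $\widetilde{T}_{F_0}$ is geometrically connected and $\Gal(F_0/F) = G/N$ where $N = f(\pi_1^{\text{\'et}}(T_{\overline{F}}))$. First I would enlarge $F^{\avoid}$ to contain $F_0$; this is harmless, since enlarging $F^{\avoid}$ only strengthens the conclusion, and it ensures that any CM extension $F'/F$ linearly disjoint from $F^{\avoid}/F$ automatically satisfies $F' \cap F_0 = F$.

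Next, for each conjugacy class $C$ of $G$, Chebotarev's density theorem applied to the finite \'etale cover $\widetilde{T} \to T$ (combined with ordinary Chebotarev for $F_0/F$ to select a prime with the correct projection to $G/N$, and Lang--Weil to produce closed points of the appropriate reduction) yields a closed point $x_C$ of $T$ with $f(\Frob_{x_C}) \in C$ and with residue characteristic $l_C$ as large as desired; I arrange the $l_C$ to be pairwise distinct, disjoint from $S_0$, and unramified in $F^{\avoid}$. For each such $x_C$ and each place $v$ of $F$ above $l_C$, I set $L_v$ to be the Galois closure over $F_v$ of the completion of $k(x_C)$ at a place above $v$, and $\Omega_v$ to be a sufficiently small $\Gal(L_v/F_v)$-invariant open neighborhood in $T(L_v)$ of the Galois orbit of the canonical lift of $x_C$. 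These choices are made symmetrically in $v$, so that the required compatibility $\sigma(L_v) = L_{\sigma(v)}$ holds.

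Applying \cite[Thm.\ 3.1]{tay} to the enriched local data $S_0 \cup \{l_C\}_C$ then produces a Galois CM extension $F'/\Q$ linearly disjoint from $F^{\avoid}/F$, and a point $P \in T(F')$ satisfying every local condition. By construction, at each auxiliary place $w_C$ of $F'$ above $l_C$, the point $P$ is $l_C$-adically close to a Galois conjugate of the specialization of $x_C$; since $\widetilde{T} \to T$ is finite \'etale, this forces $\Frob_{w_C}$ to act on $P^\ast \widetilde{T}$ via an element lying in the conjugacy class $C$. The image $H = (f \circ P_\ast)(G_{F'}) \leq G$ therefore meets every conjugacy class of $G$, so by Jordan's theorem $H = G$ and $f \circ P_\ast$ is surjective. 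The main technical step is the Chebotarev density input used to produce the points $x_C$; everything else is a routine extension of the existing Moret-Bailly argument.
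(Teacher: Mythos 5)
Your proposal follows essentially the same strategy as the paper: first cite the Moret-Bailly variant of \cite[Prop.\ 3.1.1]{BLGGT} (with $F'$ made CM via the observation of \cite[Thm.\ 3.1]{frankII}), then enlarge the set of local conditions by auxiliary primes chosen so that Frobenius elements of $F'$ hit every conjugacy class of $G$, and conclude surjectivity by Jordan's theorem. That is exactly the paper's proof.

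A few points of comparison, and one slip. Your citation ``\cite[Thm.\ 3.1]{tay}'' is wrong --- Section 3 of \cite{tay} concerns Ihara avoidance, not Moret-Bailly; the correct reference is \cite[Thm.\ 3.1]{frankII} (which you implicitly use anyway for the CM observation). More substantively, the paper cites Serre's Chebotarev density theorem for schemes of finite type over $\Z$ (\cite[Cor.\ 9.12]{MR2920749}), which for each conjugacy class $C$ directly produces a place $v$ of arbitrarily large norm and a point $x \in \cT(k(v))$ \emph{of degree one} over $k(v)$ with $f(\Frob_x) \in C$. Because the residue field is exactly $k(v)$, no extension $L_v/F_v$ is needed at the auxiliary primes; one simply takes $\Omega_{v_C}$ to be the preimage of $x_C$ in $\cT(\cO_{F_{v_C}}) \subset T(F_{v_C})$ (and $\Omega_v = T(F_v)$ at the other places $v | l_C$, which you omit to say). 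Your plan instead reconstructs the density statement from scratch via the cover $\widetilde{T}$, its constant field $F_0$, ordinary Chebotarev, and Lang--Weil; this can certainly be made to work, but it is in effect reproving Serre's corollary, and it introduces closed points of possibly higher degree (hence the awkward ``Galois closure of the completion of $k(x_C)$'' step and the Galois-invariance bookkeeping for $\Omega_v$) that the paper's citation sidesteps entirely. The core idea --- auxiliary local conditions at pairwise distinct large primes, Frobenius landing in each class, Jordan's theorem --- is the same.
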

\begin{proof}
Without the last sentence, this is a special case of \cite[Proposition 3.1.1]{BLGGT}  (taking $K_0 = \Q$ in the notation there), noting that (as in \cite[Theorem 3.1]{frankII}) we can choose $F'$ to be of the form $F' = F E$ for a Galois, totally real extension $E / \Q$, and therefore in particular to be CM. 

To get the last sentence, it suffices to add further local conditions
at places of sufficiently large norm, ensuring using a Chebotarev density theorem for schemes of finite type over $\Z$ that the image of $f
\circ P_\ast$ meets every conjugacy class of $G$ (in close analogy
with the argument of  \cite[Proposition 3.2]{frankII} -- the
surjectivity is then a consequence of Jordan's theorem). To define the
necessary local conditions, we can spread $T$ out to a geometrically
irreducible scheme $\cT$, smooth and of finite type over $\cO_F$, %
such that $f$ factors through $\pi_1^{\text{\'et}}(\cT)$. Then \cite[Corollary 9.12]{MR2920749} shows that for any $X > 0$ and any conjugacy class $C \subset G$, we can find a finite place $v$ of $F$ of norm $q_v > X$ and a point $x \in \cT(k(v))$ such that the image of (arithmetic) Frobenius under $f \circ x_\ast$ lies in $C$. For each conjugacy class $C$ of $G$, we choose one such  place $v_C$ and point $x_C$ for each conjugacy class of $G$ and take $\Omega_{v_C}$ to be the pre-image of $x_C$ in $\cT(\cO_{F_{v_C}}) \subset T(F_{v_C})$.  We may assume that if $C \neq C'$ then $v_C$ and $v_{C'}$ have distinct residue characteristics $l_C \neq l_{C'}$, and then replace $S_0$ by $S_0 \cup \{ l_C \mid C \subset G \}$. Finally, if $v | l_C$ and $v \neq v_C$, we take $\Omega_v = T(F_v)$. Provided the norm $q_{v_C}$ is sufficiently large, these sets $\Omega_v$ will also be non-empty, as required. 
\end{proof}

\section{Preliminaries on deformation rings and Galois theory}

\subsection{Lemmas on components of Galois deformation rings}

We begin by defining a certain local representation which shall appear repeatedly in the sequel.

\begin{df} \label{df: defn of rho0}
For $n, m \in \Z_{\ge 1}$, let $\varepsilon_2, \varepsilon_2' : G_{\Q_{p^2}} \to \overline{\Z}_p^\times$ be the two Lubin--Tate characters trivial on $\Art_{\Q_{p^2}}(p)$, and let~$\rho_{n,m, 0}$ denote
the representation
\numequation\label{eqn: defn of rho0}\rho_{n,m,0} = \bigoplus_{i=1}^n \varepsilon^{m(n-i)}_2 (\varepsilon'_2)^{m(i-1)}:
G_{\Q_{p^2}} \rightarrow \GL_{n}(\overline{\Z}_p).\end{equation}
We assume that $p > nm$, so the representation~$\rho_{n,m,0}$ is Fontaine--Laffaille.
If the value of~$n$ is implicit, we often simply write~$\rho_0$ for~$\rho_{n,1,0}$. 
\end{df}

\begin{lemma} \label{prep} Let $K_0 / \Q_{p^2}$ be an unramified extension and let~$\rho: G_{K_0} \rightarrow \GL_{n}(\overline{\Z}_p)$
be any crystalline representation of Hodge--Tate weights $\{ 0, m, 2m, \dots, (n-1)m \}$ \emph{(}with respect to any embedding $K_0 \to \Qpbar$\emph{)}
such that~$\rhobar|_{I_{K_0}}=\rhobar_{n, m, 0}|_{I_{K_0}}$.
Then:
\begin{enumerate}
\item\label{item: finite unramified makes them the same} There is a  finite unramified 
extension~$K_1 / K_0$ such that $\rhobar  |_{G_{K_1}} = \rhobar_{n, m, 0} |_{G_{K_1}}$.
\item For any finite extension~$K/ K_0$ such that $\rhobar  |_{G_{K}} = \rhobar_{n, m, 0} |_{G_{K}}$, 
we have~$ \rho |_{G_K} \sim  \rho_{n, m, 0} |_{G_K}$ \emph{(}``connects to'', in the sense of \cite[\S1.4]{BLGGT}\emph{)}.
\end{enumerate}
\end{lemma}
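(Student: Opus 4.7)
The plan is to prove Part~(1) by the usual inertia--Frobenius analysis, and Part~(2) by reducing to the maximal unramified subextension of $K/K_0$, over which Fontaine--Laffaille theory is available.

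For Part~(1), since $p > nm$, the characters $\varepsilonbar_2^{m(n-i)}(\varepsilonbar_2')^{m(i-1)}|_{I_{K_0}}$ for $i = 1, \dots, n$ are pairwise distinct niveau-two characters of $I_{K_0}$. The common inertia restriction therefore decomposes $\rhobar$ as a $G_{K_0}$-representation into a sum of Frobenius orbits of one-dimensional inertia eigenspaces, each differing from the corresponding summand of $\rhobar_{n,m,0}$ by an unramified twist. A sufficiently large finite unramified extension $K_1/K_0$ trivializes both the permutation action of Frobenius on the character set and the remaining unramified twists, yielding $\rhobar|_{G_{K_1}} = \rhobar_{n,m,0}|_{G_{K_1}}$.

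For Part~(2), let $K/K_0$ be any finite extension with $\rhobar|_{G_K} = \rhobar_{n,m,0}|_{G_K}$, and let $K^{\mathrm{un}}$ be the maximal unramified subextension of $K/K_0$. The crucial first step is that this residual equality already holds over $G_{K^{\mathrm{un}}}$. Indeed $K^{\mathrm{un}}/K_0$ is unramified, so $I_{K^{\mathrm{un}}} = I_{K_0}$ and the hypothesis $\rhobar|_{I_{K_0}} = \rhobar_{n,m,0}|_{I_{K_0}}$ gives equality on inertia; while $K/K^{\mathrm{un}}$ is totally ramified, so $K$ and $K^{\mathrm{un}}$ share a residue field, and any lift $\sigma \in G_K$ of a Frobenius of $K$ is simultaneously a Frobenius lift in $G_{K^{\mathrm{un}}}$, and the given equality on $G_K$ forces $\rhobar(\sigma) = \rhobar_{n,m,0}(\sigma)$. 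Since $G_{K^{\mathrm{un}}}$ is topologically generated by $I_{K^{\mathrm{un}}}$ together with any such $\sigma$, this gives $\rhobar|_{G_{K^{\mathrm{un}}}} = \rhobar_{n,m,0}|_{G_{K^{\mathrm{un}}}}$.

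Next, $K^{\mathrm{un}}/\Q_p$ is unramified (since $K_0/\Q_p$ and $K^{\mathrm{un}}/K_0$ are), and because $p > nm$ the Hodge--Tate weights $\{0, m, \ldots, (n-1)m\}$ lie in $[0, p-2]$, so Fontaine--Laffaille theory applies to crystalline $G_{K^{\mathrm{un}}}$-representations with these weights. Because $\rhobar_{n,m,0}|_{G_{K^{\mathrm{un}}}}$ is a direct sum of crystalline characters with pairwise distinct Hodge--Tate weights at every embedding, a standard unobstructedness calculation in the style of \cite[Lem.\ 2.4.7]{cht} shows that the framed crystalline lifting ring of $\rhobar_{n,m,0}|_{G_{K^{\mathrm{un}}}}$ with weights $\{0, m, \ldots, (n-1)m\}$ is formally smooth over $\cO$, and hence has irreducible generic fibre. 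Both $\rho|_{G_{K^{\mathrm{un}}}}$ and $\rho_{n,m,0}|_{G_{K^{\mathrm{un}}}}$ lie on its unique irreducible component, so $\rho|_{G_{K^{\mathrm{un}}}} \sim \rho_{n,m,0}|_{G_{K^{\mathrm{un}}}}$. Finally, restriction of lifts from $G_{K^{\mathrm{un}}}$ to $G_K$ is induced by a morphism from the $\Spec$ of the crystalline lifting ring over $K^{\mathrm{un}}$ to the $\Spec$ of the crystalline lifting ring over $K$; since the image of an irreducible subscheme is irreducible, the image of the common component above is contained in a single component of the target, yielding $\rho|_{G_K} \sim \rho_{n,m,0}|_{G_K}$. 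The main conceptual step is the initial promotion of the residual equality from $G_K$ to $G_{K^{\mathrm{un}}}$; once this reduction is in place, the unramified Fontaine--Laffaille input and the ramified descent are essentially formal.
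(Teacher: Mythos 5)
Your proof is correct and follows essentially the same route as the paper's: reduce to an unramified subextension of $K/K_0$ over which Fontaine--Laffaille theory applies, conclude formal smoothness of the crystalline lifting ring there, and note that the connectedness relation is preserved under restriction to a further finite extension. The paper does this more tersely by taking $K_1$ to be the \emph{minimal} extension over which $\rhobar$ and $\rhobar_{n,m,0}$ agree (which is unramified by the inertia hypothesis and is contained in $K$), whereas you work with the maximal unramified subextension $K^{\mathrm{un}} \subset K$; either choice works, and your explicit topological-generation argument for the equality over $G_{K^{\mathrm{un}}}$ is the spelled-out version of what the paper leaves implicit. One small point: the smoothness of the crystalline lifting ring in the Fontaine--Laffaille range over an unramified base holds for \emph{any} residual representation with those weights and does not rely on $\rhobar_{n,m,0}$ having a distinguished filtration; the reference you want is the Fontaine--Laffaille unobstructedness result, e.g.\ \cite[\S 2.4.1]{cht}, rather than the ordinary lifting ring lemma \cite[Lem.\ 2.4.7]{cht}.
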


\begin{proof} The first claim is clear. For the second, choose $K_1 / K_0$ minimal such that $\rhobar|_{G_{K_1}} = \rhobar_{n,m,0}|_{G_{K_1}}$. Since $p > nm$ and $K_1$ is unramified, the lifting ring $R^{\crys,\{ 0, \dots, (n-1)m \},\cO}_{\rhobar|_{G_{K_1}}}$ is formally smooth by Fontaine--Laffaille theory. It follows that~$ \rho |_{G_{K_1}} \sim  \rho_0|_{G_{K_1}}$,
and then these representations are still connected after passing to any further finite extension.
\end{proof}

\begin{lemma}\label{prep2}
Let $K$ be a finite extension of $\Q_p$.  Let $\rho_1,\rho_2$ be
ordinary, crystalline weight~$0$ representations of $G_K$ with
$\rhobar_1=\rhobar_2$ the trivial representation.  Then $\rho_1\sim\rho_2$.
\end{lemma}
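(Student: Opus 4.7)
The plan is to show that $\rho_1$ and $\rho_2$ lie on the same irreducible component of $\Spec R^{\crys,\underline{0},\cO}_{\rhobar}$ by connecting them through explicit irreducible families of ordinary crystalline weight~$0$ lifts. Since each $\rho_j$ has distinct Hodge--Tate weights $\{0,1,\dots,n-1\}$ and is ordinary, it carries a canonical ordinary filtration whose $i$-th graded piece is a character $\psi_i^{(j)} = \ur_{\lambda_i^{(j)}} \varepsilon^{i-n}$ for some $\lambda_i^{(j)} \in \cO^\times$ (the triviality of $\rhobar_j$ forces in particular $\varepsilonbar^{i-n}|_{I_K}=1$ and constrains each $\bar\lambda_i^{(j)}$ to a specific element of $\F^\times$).

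The first step is to deform $\rho_j$ to the split extension $\bigoplus_i \psi_i^{(j)}$. I would do this by scaling the extension cocycle of $\rho_j$ by a rigid-analytic parameter $t$: at $t=1$ we recover $\rho_j$, at $t=0$ we obtain the direct sum, and because the subspace of crystalline extension classes inside the full $\Ext^1$ is an $E$-linear subspace, every fibre of this family remains crystalline, ordinary, and of weight~$0$. This produces a morphism from an irreducible rigid disk into the generic fibre of $\Spec R^{\crys,\underline{0},\cO}_{\rhobar}$, so $\rho_j \sim \bigoplus_i \psi_i^{(j)}$.

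The second step is to connect $\bigoplus_i \psi_i^{(1)}$ and $\bigoplus_i \psi_i^{(2)}$. The family of direct sums $\bigoplus_i \ur_{\mu_i} \varepsilon^{i-n}$, parameterized by $n$-tuples $(\mu_i)$ varying in the irreducible formal disks consisting of those $\mu_i \in \cO^\times$ whose residues match those of $\lambda_i^{(1)}$ and $\lambda_i^{(2)}$ (these residues agree for both $j$, being determined by triviality of $\rhobar$), yields an irreducible formal family of crystalline weight~$0$ ordinary lifts whose image contains both direct sums. Combining the two steps gives $\rho_1 \sim \bigoplus_i \psi_i^{(1)} \sim \bigoplus_i \psi_i^{(2)} \sim \rho_2$.

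The main technical obstacle is justifying the scaling family in step one: one needs to check that multiplying a crystalline extension cocycle by a parameter $t$ produces not just a pointwise-crystalline family of Galois representations, but one corresponding to a genuine morphism into $\Spec R^{\crys,\underline{0},\cO}_{\rhobar}$. This can be verified on the level of Wach modules (or $(\varphi,\Gamma)$-modules), where the operation of scaling the cocycle is visibly an operation on the module structure that preserves the crystallinity condition, and the rigid-analytic family is constructed directly from the scaled Wach module.
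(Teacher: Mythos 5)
The paper's proof is a one-liner: it cites Geraghty \cite[Lemma 3.14]{ger}, which establishes that the ordinary weight~$0$ crystalline lifting ring of the trivial representation is irreducible; since $\rho_1,\rho_2$ both define points on this irreducible scheme they automatically connect. Your approach is genuinely different: you try to exhibit the connection directly by writing down two irreducible families (a scaling family degenerating $\rho_j$ to its associated graded, and a torus of unramified twists joining the two associated gradeds). In effect you are re-deriving the irreducibility by hand rather than invoking Geraghty.

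The outline is sound and step~2 is correct (your observation that the residues $\bar\lambda_i^{(1)}=\bar\lambda_i^{(2)}$ are forced by triviality of $\rhobar$ is right, since the ordinary filtration is $G_K$-stable and its graded pieces are subquotients of the trivial representation). The gap is in the justification of step~1. The claim that linearity of $H^1_f\subset H^1$ makes every fibre of the scaling family crystalline is a complete argument only for $n=2$. For $n>2$, scaling by $\diag(1,t,\dots,t^{n-1})$ rescales an entire $n$-step filtration, and the spaces in which the higher extension classes live (e.g.\ the class of $F_{i+2}$ as an extension of $\psi_{i+2}$ by $F_{i+1}/F_{i-1}$) themselves depend on $t$; linearity of $H^1_f$ in a fixed coefficient module does not bootstrap to the full representation. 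The correct way to conclude is a density argument. The morphism $\alpha:\Spf\cO\llbracket t\rrbracket\to\Spf R^\square_{\rhobar}$ defined by the family sends every $t_0\in\cO^\times$ to a lift conjugate over $E$ to $\rho_j$, hence to a point of $R^{\crys,\underline 0,\cO}_{\rhobar}$. These $t_0$ give a Zariski-dense set of closed points of $\Spec\cO\llbracket t\rrbracket[1/p]$, and since $\cO\llbracket t\rrbracket$ is a domain flat over $\cO$, the pullback under $\alpha^*$ of the ideal defining $R^{\crys,\underline 0,\cO}_{\rhobar}$ must vanish, so $\alpha$ factors through $\Spf R^{\crys,\underline 0,\cO}_{\rhobar}$; since the source is irreducible, $t=1$ and $t=0$ map to the same component. (This also retroactively shows every fibre is crystalline, but only after the factorization is established, not as its hypothesis.) Finally, the mention of Wach modules is imprecise: Wach modules require $K/\Q_p$ unramified, whereas $K$ here is an arbitrary finite extension; Breuil--Kisin modules would be the applicable tool, and in any case that verification is gestured at but not carried out. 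Once the density argument replaces the linearity claim, your proof is correct and gives a self-contained alternative to the paper's citation.
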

\begin{proof}
This follows immediately from \cite[Lemma 3.14]{ger} --- the ordinary weight $0$ crystalline lifting ring of the trivial representation is irreducible.
\end{proof}

 \begin{lemma} \label{smooth} Let~$K$ be a finite extension of~$\Q_p$,
   and let~$\rho: G_{K} \rightarrow \GL_{n}(\Zpbar)$ be crystalline of
 weight~$0$. %
Then  there exists a constant~$c = c(K,\rho,n)$ 
with the following property:
\begin{itemize}\item if~$t: G_K \rightarrow \GL_{n}(\Zpbar)$ is
  crystalline of weight~$0$,
and~$t \equiv \rho|_{G_K} \bmod p^c$, then~$t \sim \rho|_{G_K}$.\end{itemize}
\end{lemma}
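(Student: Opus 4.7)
The plan is to exploit that the weight-$0$ crystalline lifting ring is Noetherian, so has only finitely many irreducible components in characteristic zero; the constant~$c$ will then arise by separating, in the $p$-adic topology, the components passing through~$\rho$ from those avoiding~$\rho$.

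I would first enlarge $E$ (with ring of integers $\cO$ and residue field~$\F$) so that $\rho$ takes values in $\GL_n(\cO)$, and let $\rhobar: G_K \to \GL_n(\F)$ be its reduction. Set $R := R^{\crys, \underline{0}, \cO}_{\rhobar}$. After a further enlargement of $E$, I may assume that every irreducible component of $\Spec R[1/p]$ is geometrically irreducible, so that components correspond bijectively under any further extension of scalars. Any weight-$0$ crystalline lift of $\rhobar$ valued in $\GL_n(\cO')$, for $\cO'$ the ring of integers in some finite extension of~$E$, corresponds to an $\cO$-algebra morphism $R \to \cO'$, hence to a point of $\Spec(R \otimes_\cO \cO')[1/p]$ lying on the base change of a unique irreducible component of $\Spec R[1/p]$.

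Enumerate the irreducible components of $\Spec R[1/p]$ as $C_1, \ldots, C_k$, and suppose, after relabeling, that the point $x_\rho$ corresponding to~$\rho$ lies on $C_1, \ldots, C_r$ and on no others. For each $j > r$, the prime ideal $\mathfrak{p}_j \subset R[1/p]$ defining $C_j$ contracts to a prime of $R$ not contained in $\ker x_\rho$, so I may pick $f_j \in \mathfrak{p}_j \cap R$ with $x_\rho(f_j) \neq 0$ in $\cO$. Setting $v_j := v_p(x_\rho(f_j)) \in \Z_{\geq 0}$, define $c := 1 + \max_{j > r} v_j$.

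To finish, suppose $t$ is weight-$0$ crystalline with $t \equiv \rho|_{G_K} \bmod p^c$. The matrix entries of the universal framed lift topologically generate $R^{\square, \cO}_{\rhobar}$ over~$\cO$, and hence topologically generate~$R$ as well. The maps $x_\rho, x_t : R \to \cO'$ agree modulo $p^c$ on these generators by hypothesis, so by continuity of $\cO$-algebra maps out of the complete Noetherian local ring~$R$ they agree modulo~$p^c$ on all of~$R$. Applied to $f_j$, this gives $v_p(x_t(f_j)) = v_j < c$, so $x_t(f_j) \neq 0$, and therefore $x_t$ avoids the base change of each $C_j$ for $j > r$. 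Since $x_t$ lies on some irreducible component, it lies on the base change of some $C_i$ with $i \leq r$; thus $\rho|_{G_K}$ and~$t$ share a geometrically irreducible component, i.e., $t \sim \rho|_{G_K}$. There is no substantive obstacle here; the argument is essentially a ``continuity of component'' statement, reducing to the Noetherianness of~$R$ and the compatibility of components with base change (which is arranged by our preliminary enlargement of~$E$).
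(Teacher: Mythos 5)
Your proof is correct and follows essentially the same strategy as the paper's: work in the weight-$0$ crystalline lifting ring $R$, enumerate its finitely many generic-fibre components, pick for each component missing $\rho$ a ``witness'' element on which $x_\rho$ is nonzero, and take $c$ larger than the valuations of those witnesses. The one stylistic difference is that the paper first invokes formal smoothness of $R[1/p]$ at the point $\pp$ corresponding to $\rho$ (\cite[Thm.~3.3.8]{kisindefrings}) to conclude there is a \emph{unique} component $\PP$ through $\rho$, after which $t$ is forced onto $\PP$. You avoid this input by allowing $\rho$ to lie on several components $C_1,\dots,C_r$ and concluding only that $t$ shares \emph{some} $C_i$ with $\rho$, which already suffices for $t\sim\rho$ in the sense of \cite[\S1.4]{BLGGT}. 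Your version is thus marginally more self-contained, at no extra cost; the paper's version is cleaner given that formal smoothness is available. Your preliminary enlargement of $E$ to make components geometrically irreducible is the right move (it is implicit in the paper's appeal to the $\sim$ relation, which is defined over $\Zpbar$), and the ``closed subalgebra'' propagation of the mod $p^c$ congruence from matrix entries to all of $R$ is carried out correctly.
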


\begin{proof} Up to conjugation, the image of~$\rho$  lands in~$\GL_n(\OL_E)$
for some finite extension~$E/\Q_p$ with residue field~$k$.
Let~$R = R^{\crys,\underline{0}}_{\rhobar|_{G_{K}}} \otimes_{W(k)} \OL_E$
  denote the weight~$0$ crystalline lifting ring of~$\rhobar |_{G_K}: G_K \rightarrow \GL_n(k)$.
By assumption, $R$ has specializations corresponding to~$\rho$ and to~$t$.
We may choose a finite set of elements~$\{g_k: 1 \le k \le d\}$ of~$G_K$
such that \[R = \OL_E\llbracket X_{ijk} : 1 \le i,j \le n, 1\le k \le d
  \rrbracket/I\] for an ideal $I$, and the universal
lifting~$\rho^{\univ}:G_K \to \GL_n(R)$ of $\rhobar$
satisfies \[\rho^{\univ}(g_k) = \rho(g_k)  + [X_{ijk}]_{i,j=1}^{n},\]
so that $\pp = (X_{ijk})$ is
the dimension one prime associated to~$\rho$. The condition that~$t \equiv \rho|_{G_K} \bmod p^c$
is then equivalent to the condition that the corresponding homomorphism $t: R \to \Zpbar$ satisfies $v_p(t(X_{ijk})) \ge c$ for all $i,j,k$.

The generic fibre of~$R$ is formally smooth at~$\pp$ 
by~\cite[Theorem~3.3.8]{kisindefrings}, and so in particular there is a unique
minimal prime~$\PP$ of~$R[1/p]$ contained in the prime~$\pp$.
Suppose that~$\QQ$ is any minimal prime ideal of~$R[1/p]$
which is not contained in~$\pp$. Then~$\QQ$ contains an element~$P(X_{ijk}) \in R[1/p]$
which doesn't vanish at~$X_{ijk} = 0$ and hence has a non-zero constant term. After scaling
if necessary, we may assume that~$P \in R$. But now any
specialization of~$P$ with~$v_p(X_{ijk}) > v_p(P(0,0,\ldots,0))$ for every~$(i,j,k)$ will be non-zero,
and hence, if~$c > v_p(P(0,0,\ldots,0))$, then~$t$ cannot lie  on the irreducible component
corresponding to~$\QQ$.   Since~$R$ has only finitely many minimal prime ideals (it is Noetherian),  there exists a choice of~$c$ which guarantees that~$t$ lies on the component
corresponding to~$\PP$.
\end{proof}

\subsection{Lemmas on big image conditions}

In order to apply Theorem~\ref{thm:main_automorphy_lifting_theorem} to
a $p$-adic  representation of~$G_F$, one needs first to establish that the image of the residual representation
(and its restriction to~$G_{F(\zeta_p)}$) satisfies certain technical hypotheses,
in particular conditions~(\ref{part: ALT cond dgi}) and~(\ref{part:scalartoremark}). 
In this section, we prove some lemmas showing that a number of representations of a form
we shall encounter later have these properties. We first combine these conditions into the following
definition:

\begin{df} \label{tw} Say that a representation~$\sbar: G_F \rightarrow \GL_n(\Fbar_p)$
satisfies the Taylor--Wiles big image conditions if
the following hold:
\begin{enumerate}
\item  \label{generic} The representation~$\sbar$ is decomposed generic.
\item  \label{adequate}
The representation~$\sbar |_{G_{F(\zeta_p)}}$ has adequate image.
\item  \label{whatever}
There exists~$\sigma \in G_F  -  G_{F(\zeta_p)}$ such
that~$\sbar(\sigma)$ is  scalar.
\end{enumerate}
\end{df}

We have:

\begin{lemma} \label{restricttw}
Suppose that~$\sbar: G_F \rightarrow \GL_n(\Fbar_p)$ satisfies the Taylor--Wiles
big image conditions.  Suppose that~$F/\Q$ is Galois.
Let~$H/F$ be a finite extension whose Galois closure over~$\Q$  is linearly disjoint over~$F$
from the composite of~$F(\zeta_p)$ and the Galois closure over~$\Q$ of the fixed field of~$\ker(\sbar)$.
Then~$\sbar |_{G_H}$ satisfies the Taylor--Wiles
big image conditions.
\end{lemma}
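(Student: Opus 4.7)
Write $L$ for the fixed field of $\ker(\sbar)$ (so that $\sbar$ factors through $\Gal(L/F)$), and $\tilde L$ for its Galois closure over $\Q$. Set $M = F(\zeta_p)\tilde L$; the hypothesis says that the Galois closure $\tilde H$ of $H$ over $\Q$ is linearly disjoint from $M$ over $F$. The plan is to verify conditions~(\ref{generic}),~(\ref{adequate}) and~(\ref{whatever}) of Definition~\ref{tw} in turn, using this disjointness to transfer information from $F$ to $H$.

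I first address~(\ref{adequate}) and~(\ref{whatever}), which should follow quickly. The linear disjointness gives $\Gal(\tilde H M/F)\cong \Gal(\tilde H/F)\times \Gal(M/F)$. A short diagram chase then shows $\tilde H F(\zeta_p)\cap M=F(\zeta_p)$, and hence $L\cap H(\zeta_p)\subseteq L\cap \tilde H F(\zeta_p)=L\cap F(\zeta_p)$; the reverse inclusion is automatic, so $L\cap H(\zeta_p)=L\cap F(\zeta_p)$. Translating back, $\sbar(G_{H(\zeta_p)})=\sbar(G_{F(\zeta_p)})$, and adequate image is preserved, giving~(\ref{adequate}). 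For~(\ref{whatever}), the same disjointness shows that $G_{\tilde H}$, and hence $G_H$, surjects onto $\Gal(L(\zeta_p)/F)$. Lifting the image of the given element $\sigma\in G_F\smallsetminus G_{F(\zeta_p)}$ under this surjection produces $\sigma'\in G_H$ with $\sbar(\sigma')=\sbar(\sigma)$ scalar and with non-trivial action on $\zeta_p$; since $G_{H(\zeta_p)}\subseteq G_{F(\zeta_p)}$, this $\sigma'$ lies outside $G_{H(\zeta_p)}$, as required.

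The main work is condition~(\ref{generic}). By assumption there is a prime $\ell_0$ splitting completely in $F$ and a place $v_0\mid \ell_0$ such that the eigenvalues $\alpha_1,\dots,\alpha_n$ of $\sbar(\Frob_{v_0})$ are distinct with $\alpha_i/\alpha_j\notin\{1,\ell_0\bmod p\}$ for $i\ne j$. The plan is to apply Chebotarev to the Galois extension $\tilde H M/\Q$ to find a prime $\ell$ whose Frobenius class has prescribed restrictions to each of the three factors $\Gal(\tilde H/\Q)$, $\Gal(\tilde L/\Q)$ and $\Gal(F(\zeta_p)/\Q)$: namely, trivial on $\tilde H$ (so $\ell$ splits completely in $H$), equal to the conjugacy class of $\sigma_{v_0}$ on $\Gal(L/F)\subseteq\Gal(\tilde L/\Q)$ (so the eigenvalues of $\sbar(\Frob_w)$ at any $w\mid\ell$ in $H$ are again $\alpha_1,\dots,\alpha_n$), and equal to any chosen element of $(\Z/p)^\times$ on $\Gal(F(\zeta_p)/\Q)$ (which prescribes $\ell\bmod p$). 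The linear disjointness provides exactly the freedom to combine these three prescriptions independently, up to the compatibility over $\Gal(F/\Q)$, which is automatic since all three are chosen consistent with the trivial element of $\Gal(F/\Q)$.

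The one point that requires care, and is the only non-formal step, is ensuring that the resulting $\ell$ satisfies $\alpha_i/\alpha_j\ne\ell\bmod p$ for $i\ne j$; the original bound at $\ell_0$ is irrelevant at the new prime $\ell$. But the set of forbidden residues $\{\alpha_i/\alpha_j\bmod p:i\ne j\}\subset(\Z/p)^\times$ has size at most $n(n-1)<p$ (since $p>n$, and indeed $p$ is much larger in applications), so some allowed residue class exists and may be chosen for the component in $\Gal(F(\zeta_p)/F)\cong(\Z/p)^\times$. The Chebotarev density theorem then produces infinitely many $\ell$ with all required properties, verifying~(\ref{generic}) and completing the proof.
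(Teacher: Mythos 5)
Your verifications of conditions~(\ref{adequate}) (adequate image) and~(\ref{whatever}) (existence of $\sigma$ with $\sbar(\sigma)$ scalar) are correct and follow the same lines as the paper's proof, albeit the paper is briefer. The proof also proceeds by the same basic reduction (replacing $H$ by its Galois closure and exploiting the product decomposition $\Gal(H\compositum M(\zeta_p)/F)\simeq\Gal(M(\zeta_p)/F)\times\Gal(H/F)$, where $M$ is the Galois closure of the fixed field of $\ker\sbar$).

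The argument for condition~(\ref{generic}) (decomposed generic), however, contains a gap. You propose applying Chebotarev in $\tilde H M/\Q$ with three independently prescribed restrictions: trivial on $\tilde H$, the conjugacy class of $\Frob_{v_0}$ on $\tilde L$, and a freely chosen element of $(\Z/p)^\times$ on $F(\zeta_p)$, with the only compatibility being over $\Gal(F/\Q)$. But $\tilde L$ and $F(\zeta_p)$ need not be linearly disjoint over $F$: if $\tilde L\cap F(\zeta_p)\supsetneq F$, then fixing the Frobenius class in $\Gal(\tilde L/\Q)$ already pins down part of $\ell\bmod p$, and the ``freedom'' you invoke to avoid the $n(n-1)$ forbidden residue classes is constrained to a coset of $\Gal(F(\zeta_p)/\tilde L\cap F(\zeta_p))$, which could in principle be smaller than the forbidden set. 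There is no hypothesis ruling this out.

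The cleaner fix (which is what the paper does) is to note that your worry ``the original bound at $\ell_0$ is irrelevant at the new prime $\ell$'' is actually unfounded. The decomposed-generic condition is entirely a condition on the Frobenius conjugacy class in $\Gal(M(\zeta_p)/F)$ (equivalently your $\Gal(\tilde L(\zeta_p)/F)$): this single class determines both the eigenvalues of $\sbar(\Frob)$ and the residue $\ell\bmod p$ simultaneously. So rather than trying to decouple these two data, simply take the new Frobenius class in $\Gal(M(\zeta_p)/F)$ to equal that of $\Frob_{v_0}$; then the decomposed-generic property is inherited outright, with no need to avoid anything. Combined with triviality on the $\Gal(\tilde H/F)$ factor (which is where the linear disjointness hypothesis is used), Chebotarev gives the required prime. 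This sidesteps both the unnecessary residue-avoidance step and the independence issue.
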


\begin{proof} 
Let~$\wH$ be the Galois closure of~$H$ over~$\Q$. Since~$\sbar |_{G_{H}}$ satisfies
the Taylor--Wiles conditions if~$\sbar |_{G_{\wH}}$ does, we assume that~$H = \wH$ is Galois over~$\Q$.
The conditions ensure that the images of~$\sbar$ and~$\sbar |_{G_H}$
coincide, and also the images of~$\sbar |_{G_{F(\zeta_p)}}$ and~$\sbar |_{G_{H(\zeta_p)}}$
coincide. Thus condition~(\ref{adequate}) of Definition~\ref{tw} holds. 
Let~$M$ be the Galois closure of the fixed field of~$\ker(\sbar)$.
Then we have an isomorphism
$$\Gal(H \compositum M(\zeta_p)/F) \simeq \Gal(M(\zeta_p)/F) \times \Gal(H/F),$$
and so~$ \Gal(M(\zeta_p)/F) \simeq \Gal(H \compositum M(\zeta_p)/H)$ via the map~$\sigma \rightarrow (\sigma,1)$.
Moreover,
$$\Gal(H \compositum M(\zeta_p)/\Q) \subset \Gal(M(\zeta_p)/\Q) \times \Gal(H/\Q)$$
is the subgroup of elements whose projection to~$\Gal(F/\Q)\times\Gal(F/\Q)$ is the diagonal.
There exists a conjugacy class~$\langle \sigma \rangle \in \Gal(M(\zeta_p)/F)$
such that any rational prime unramified in~$H \compositum M(\zeta_p)$ whose Frobenius element corresponds
to~$\sigma$ is decomposed generic for~$\sbar$. Then~$(\sigma,1)$ will be decomposed
generic for~$\sbar |_{G_H}$. Similarly, if~$\sigma \in \Gal(M(\zeta_p)/F)  -  \Gal(M(\zeta_p)/F(\zeta_p))$
is an element such that~$\sbar(\sigma)$ is scalar, then the same is true of~$(\sigma,1) \in \Gal(H \compositum M(\zeta_p)/H)$.
\end{proof}

We shall also use the following group-theoretic fact.

\begin{lemma} \label{steinberg} Consider the collection of  groups~$G$ either of the form~$\PSL_n(\F_{p^k})$
or of the form~$\PSU_n(\F_{p^{2k}})$ for all primes~$p$ and integers~$k \ge 1$, $n \ge 2$.
Then~$G$ is simple unless~$(n,p) \in \{(2,2), (2,3), (3,2)\}$. 
  These groups are all pairwise mutually non-isomorphic as~$n$ and~$p$ both vary
except for the following isomorphisms:
 \begin{enumerate}
 \item $\PSL_2(\F_{p^k}) \simeq \PSU_2(\F_{p^{2k}})$, 
 \item $\PSL_2(\F_5) \simeq \PSL_2(\F_4)$,
 \item $\PSL_2(\F_7) \simeq \PSL_3(\F_2)$.
 \end{enumerate}
 If we restrict~$G$ to be of the form~$G = \PSL_2(\F_{p^k})$ or~$\PSU_n(\F_{p^{2k}})$,
and~$A \in G$ is the image of any matrix with eigenvalues in~$\F_p$,
then any automorphism of~$G$  preserves these eigenvalues up to scalar.
\end{lemma}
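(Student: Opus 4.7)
The plan is to prove Lemma~\ref{steinberg} by separating its three distinct assertions---simplicity, classification of isomorphisms, and preservation of eigenvalues---and invoking classical results for each; the statement is purely group-theoretic and entirely standard, so I would sketch the arguments rather than grind through case work.

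For the simplicity of $\PSL_n(\F_{p^k})$ and $\PSU_n(\F_{p^{2k}})$ outside the listed small cases, I would appeal to the classical theorems of Jordan and Dickson. The standard proof applies Iwasawa's criterion: one verifies that each group acts faithfully and primitively on a natural coset space (lines in $\F_{p^k}^n$ for $\PSL_n$, or isotropic lines for $\PSU_n$), that the point stabilizer has an abelian normal subgroup whose conjugates generate the whole group, and that the group is perfect. The excluded triples $(n,p)\in\{(2,2),(2,3),(3,2)\}$ correspond exactly to the genuine exceptions $\PSL_2(\F_2)\cong S_3$, $\PSL_2(\F_3)\cong A_4$, and the solvable group $\PSU_3(\F_4)$, where the argument genuinely fails.

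For the classification of isomorphisms, the general equivalence $\PSL_2(\F_{p^k})\cong\PSU_2(\F_{p^{2k}})$ would be obtained from an explicit change of basis: the standard Hermitian form on $\F_{p^{2k}}^2$ becomes equivalent to the split form after passing to the quadratic extension, and $\SU_2$ of the split form is naturally $\SL_2(\F_{p^k})$. The two sporadic coincidences $\PSL_2(\F_4)\cong\PSL_2(\F_5)\cong A_5$ and $\PSL_2(\F_7)\cong\PSL_3(\F_2)$ are classical and follow by exhibiting explicit permutation representations on small sets. To show that no further coincidences occur, I would compare orders using the standard formulas $|\PSL_n(\F_{p^k})|=\tfrac{1}{\gcd(n,p^k-1)}p^{kn(n-1)/2}\prod_{i=2}^n(p^{ki}-1)$ and $|\PSU_n(\F_{p^{2k}})|=\tfrac{1}{\gcd(n,p^k+1)}p^{kn(n-1)/2}\prod_{i=2}^n(p^{ki}-(-1)^i)$; the few remaining order coincidences can then be ruled out via invariants such as the exponent, the structure of a Sylow $2$-subgroup, or by appeal to Artin's theorem.

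The eigenvalue assertion will follow from the structure of $\Aut(G)$. For $\PSL_2(\F_{p^k})$ one has $\Aut(\PSL_2(\F_{p^k}))=\PGL_2(\F_{p^k})\rtimes\Gal(\F_{p^k}/\F_p)$, while for $\PSU_n(\F_{p^{2k}})$ with $n\ge 2$ one has $\Aut(\PSU_n(\F_{p^{2k}}))=\PGU_n(\F_{p^{2k}})\rtimes\Gal(\F_{p^{2k}}/\F_p)$; crucially, there is no graph automorphism contribution ($\PSL_2$ has none, and for $\PSU_n$ the graph automorphism is already absorbed into the twisting defining the group). Inner and diagonal automorphisms act by conjugation in the ambient general linear group, and hence preserve the eigenvalues of any lift exactly; a field automorphism $\sigma$ acts entrywise on matrices and sends an eigenvalue $\lambda\in\F_p$ to $\sigma(\lambda)=\lambda$. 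The scalar ambiguity inherent in choosing a lift from $\PSL$ or $\PSU$ back to $\SL$ or $\SU$ accounts for the ``up to scalar'' qualifier. The main (quite mild) obstacle will be verifying the description of $\Aut(\PSU_n)$ in low-rank or small-characteristic cases, which is standard and can be cross-checked either via the general theory of automorphisms of finite simple groups of Lie type or by a direct generator-relation computation.
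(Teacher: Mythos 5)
Your proposal is correct and for the simplicity statement and the eigenvalue-preservation statement it runs along essentially the same lines as the paper, which simply cites Steinberg's Lectures on Chevalley groups for both: for automorphisms you both appeal to the generation of $\operatorname{Out}(G)$ by diagonal, field and graph automorphisms, and you both observe that for the unitary groups the graph automorphism collapses into a field automorphism (the paper makes this explicit with the identity $M\mapsto (M^t)^{-1}=\sigma(M^\dagger)^{-1}=\sigma M$ inside $\GU_n$). The genuine divergence is in the classification of accidental isomorphisms. The paper again quotes Steinberg's theorem, which gives the complete list of coincidences among finite (twisted) Chevalley groups directly, whereas you propose a hands-on route: prove $\PSL_2(\F_{p^k})\cong\PSU_2(\F_{p^{2k}})$ via a change of Hermitian form, exhibit the two sporadic coincidences explicitly, and then compare orders and supplementary invariants. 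That route is workable but strictly more delicate than it looks, because the order does not determine the group within this family: the pair $\PSL_3(\F_4)$ and $\PSL_4(\F_2)$ (both of order $20160$, the latter also isomorphic to $A_8$) are non-isomorphic simple groups of equal order, so one genuinely must invoke Artin's theorem on order coincidences or compute an invariant such as the exponent of a Sylow $2$-subgroup. The paper's citation sidesteps that bookkeeping entirely, so while your decomposition is sound, the paper's proof is the shorter path to the same facts.
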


\begin{proof}  
If~$n=2$, then there is an isomorphism~$\PSU_2(\F_{p^{2k}}) \simeq \PSL_2(\F_{p^k})$. Otherwise,
$\PSL_n(\F_{p^k}) \simeq A_{n-1}(p^k)$ and~$\PSU_{n}(\F_{p^{2k}}) \simeq {}^2 A_{n-1}(p^{2k})$ is the Steinberg group. These groups are (twisted  in the second case) Chevalley groups. The simplicity statement
follows from~~\cite[Thm~37(b)]{MR0466335} (see also~\cite{MR0407163}).
The list of exceptional isomorphisms between
(possibly twisted) simple Chevelley groups was determined
in~\cite[Thm~37(a)]{MR0466335}.

By a theorem of Steinberg~\cite{MR121427} (and~\cite[Thm~12.5.1]{MR0407163}),
the outer automorphism group of either~$\PSL_n(\F_{p^k})$ or~$\PSU_{n}(\F_{p^{2k}})$ is generated by diagonal automorphisms (conjugation by diagonal elements),
by field automorphisms (acting on~$\F_{p^k}$ or~$\F_{p^{2k}}$ respectively), and the graph automorphism coming from the automorphism
of the  Dynkin diagram~$A_{n-1}$ if~$n > 2$ (associated to the inverse transpose map).
Certainly diagonal automorphisms preserve eigenvalues and field automorphisms act on the eigenvalues
and so preserve eigenvalues in~$\F_p$. There are no graph automorphisms for~$n=2$.
For~$n > 2$,
the graph automorphism
$M \mapsto (M^t)^{-1} = \sigma (M^{\dagger})^{-1} = \sigma M$ in the unitary group coincides with the field automorphism,
and so also preserves rational eigenvalues. 
\end{proof}

\begin{lemma} \label{yeomanslemmanew}  Let~$F/\Q$ be Galois.
Consider representations:
$$\begin{aligned}
\rbar_A:  G_F &  \rightarrow \GL_2(\F_p), \\
\rbar_B:  G_F  & \rightarrow \GU_m(\Fps) \rightarrow \GL_{m}(\Fps), \end{aligned}
$$
such that the images~$\rbar_A(G_{F(\zeta_p)})$ and~$\rbar_B(G_{F(\zeta_p)})$
equal~$\SL_2(\F_p)$ and~$\SU_m(\Fps)$ respectively.
Consider the representation:
$$\sbar =  \Sym^{n-1} \rbar_A \otimes  \rbar_B: G_F \rightarrow \GL_{mn}(\Fps)$$
Assume that~$p>2mn+1$, and if~$m=2$ assume that the fixed fields
of the kernels of the projective representations associated to~$\rbar_A$ and~$\rbar_B$
are linearly disjoint over~$F(\zeta_p)$.
\begin{enumerate}
\item The representation~$\sbar$ satisfies conditions~(\ref{generic}) and~(\ref{adequate}) of
Definition~\ref{tw}.
\item \label{isselfdual} If~$\det(\rbar_A) = \varepsilonbar^{-m}$ and~$\rbar_B$ has multiplier character~$\varepsilonbar^{1 - m}$,
then~$\sbar$ has image in~$\GU_{mn}(\Fps)$ with multiplier character~$\varepsilonbar^{1 - mn}$.
\item If in addition to the assumptions in~(\ref{isselfdual}), one additionally assumes that~$\varepsilonbar(G_F)=\F^{\times}_p$, 
then~$\sbar$ also satisfies  condition~(\ref{whatever}) of Definition~\ref{tw} and thus satisfies the Taylor--Wiles big image conditions.
\end{enumerate}
\end{lemma}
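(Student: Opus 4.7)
The plan is to handle the three parts in turn, each reducing to a combination of formal manipulation and a group-theoretic input.

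Part (2) is purely formal. Since $\rbar_A$ is valued in $\GL_2(\F_p)$, we have $\rbar_A^c = \rbar_A$, and the formula $\rbar_A^\vee \cong \rbar_A \otimes (\det \rbar_A)^{-1}$ for $2$-dimensional representations gives $(\Sym^{n-1}\rbar_A)^\vee \cong \Sym^{n-1}\rbar_A \otimes \varepsilonbar^{m(n-1)}$. The hypothesis on the multiplier of $\rbar_B$ translates to $\rbar_B^\vee \cong \rbar_B^c \otimes \varepsilonbar^{m-1}$. Tensoring these yields $\sbar^\vee \cong \sbar^c \otimes \varepsilonbar^{mn-1}$. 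Combined with absolute irreducibility of $\sbar$ (which follows from part (1) below), Lemma~\ref{selfdual} then places $\sbar$ inside $\GU_{mn}(\Fps)$ with multiplier character $\varepsilonbar^{1-mn}$.

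For part (1), the first task is to identify the image of $\sbar|_{G_{F(\zeta_p)}}$. By assumption, $\rbar_A(G_{F(\zeta_p)})=\SL_2(\F_p)$ and $\rbar_B(G_{F(\zeta_p)})=\SU_m(\Fps)$. When $m\ge 3$, Lemma~\ref{steinberg} shows $\PSL_2(\F_p)$ and $\PSU_m(\Fps)$ are simple and non-isomorphic (using $p>2mn+1>3$), so Goursat's lemma forces the joint image in the product to be all of $\SL_2(\F_p)\times \SU_m(\Fps)$; when $m=2$ the linear disjointness hypothesis achieves the same. Hence $\sbar(G_{F(\zeta_p)})$ contains the image of $\SL_2(\F_p)\times \SU_m(\Fps)$ under $\Sym^{n-1}(\mathrm{std})\boxtimes\mathrm{std}$, an absolutely irreducible representation of dimension $mn$. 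Decomposed genericity then follows by a Chebotarev argument: we pick regular semisimple $A\in \SL_2(\F_p)$ with eigenvalues $\alpha,\alpha^{-1}$ of large order and $B\in \SU_m(\Fps)$ with distinct eigenvalues $\beta_1,\dots,\beta_m$, so that the eigenvalues $\alpha^{n-1-2i}\beta_j$ of $\Sym^{n-1}(A)\otimes B$ are pairwise distinct with pairwise ratios not equal to $1$, which are precisely the eigenvalues of $\sbar(\mathrm{Frob}_v)$ at a prime $v$ which splits completely in $F(\zeta_p)$. Adequacy, the main technical step, follows from known adequacy results for tensor products of irreducible representations of quasi-simple finite groups of Lie type in the defining characteristic (e.g.\ the results of Guralnick–Herzig–Taylor–Thorne), applicable in the regime $p>2mn+1$ where the cohomology vanishing and semisimple spanning properties both hold.

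For part (3), we aim to produce $\sigma\in G_F\setminus G_{F(\zeta_p)}$ with $\rbar_A(\sigma)=\alpha I$ and $\rbar_B(\sigma)=\beta I$ scalar, so that $\sbar(\sigma)=\alpha^{n-1}\beta\cdot I$. Under $\varepsilonbar(G_F)=\F_p^\times$, fix any $\zeta\in\F_p^\times$ which is a nontrivial square, and choose $\sigma_0\in G_F$ with $\varepsilonbar(\sigma_0)=\zeta$. The determinant constraint $\alpha^2=\zeta^{-m}$ on any scalar lift $\alpha I\in\GL_2(\F_p)$ of $\rbar_A(\sigma_0)$ is solvable since $\zeta^{-m}$ is a square, and the constraints $\beta^{p+1}=\zeta^{1-m}$ and $\beta^m=\det\rbar_B(\sigma_0)$ on a scalar $\beta I\in\GU_m(\Fps)$ with the prescribed multiplier are consistent (compatibility is forced by the identity $(\det \rbar_B)^{p+1}=\varepsilonbar^{m(1-m)}$ derived from the pairing $\rbar_B^\vee\cong \rbar_B^c\otimes\varepsilonbar^{m-1}$) and solvable in $\F_{p^2}^\times$. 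Having chosen target scalars $(\alpha I,\beta I)$, we modify $\sigma_0$ by an element $\tau\in G_{F(\zeta_p)}$ with $(\rbar_A(\tau),\rbar_B(\tau))=(\rbar_A(\sigma_0)^{-1}\alpha I,\rbar_B(\sigma_0)^{-1}\beta I)$, which lies in $\SL_2(\F_p)\times \SU_m(\Fps)$ by construction; the existence of such $\tau$ uses the surjectivity of $(\rbar_A,\rbar_B)|_{G_{F(\zeta_p)}}$ onto this product established in part (1). The element $\sigma=\sigma_0\tau$ then has the required properties. The main obstacle is the adequacy verification in part (1), which relies on an external input on representations of groups of Lie type; the remainder is straightforward bookkeeping.
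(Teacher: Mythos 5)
Your outline follows the same overall route as the paper: for (2), translate the determinant and multiplier hypotheses into a conjugate-self-duality of $\sbar$ and invoke Lemma~\ref{selfdual}; for (1), identify the image of $\sbar|_{G_{F(\zeta_p)}}$ via Goursat/linear disjointness, reduce adequacy to absolute irreducibility plus the GHTT criterion (this is exactly \cite[Thm.~A.9]{jack}, which is what the paper cites), and verify decomposed genericity by a Chebotarev argument; for (3), produce a scalar lying over a nontrivial element of $\Gal(F(\zeta_p)/F)$. Parts (2) and the adequacy portion of (1) are essentially correct and match the paper's argument, though you should state the reduction to absolute irreducibility explicitly rather than invoking ``known adequacy results'' vaguely.

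There are, however, two genuine gaps. The first is in the decomposed-genericity argument. The definition requires choosing a \emph{rational} prime $q$ and verifying the eigenvalue genericity for \emph{every} place $v \mid q$ of $F$, not just one. Your argument produces one favourable place $v$; but the Frobenius elements at the other places above $q$ are only conjugate in $\Gal(\wH/\Q)$, not in $\Gal(\wH/F(\zeta_p))$, so their images under $\sbar$ are a priori related to yours only by an automorphism of $\PSL_2(\F_p)^r \times \PSU_m(\F_{p^2})^s$. Here the paper crucially invokes Lemma~\ref{steinberg}, which classifies these automorphisms (diagonal, field, graph) and shows they preserve eigenvalues up to scalar, and hence preserve the genericity condition. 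Without this input the argument does not go through, and this is the point where the Galois hypothesis on $F/\Q$ and the paper's careful passage to the Galois closure $\wH$ over $\Q$ are really doing work. Related to this, saying ``pick $B\in\SU_m(\Fps)$ with distinct eigenvalues $\beta_1,\dots,\beta_m$'' is not quite legitimate: the eigenvalue multiset of an element of $\SU_m(\Fps)$ is constrained (closed under $\lambda\mapsto\lambda^{-p}$), so an arbitrary distinct set need not be realized. The paper constructs $B$ explicitly as the image of $\Sym^{m-1}$ applied to an element of $\SL_2(\F_p)$, landing in $\Sp_m(\F_p)$ or $\SO_m(\F_p)$ and then conjugating into $\SU_m(\Fps)$ via Lemma~\ref{selfdual}; this guarantees both membership and the desired eigenvalue pattern $1,\alpha^n,\dots,\alpha^{n(m-1)}$, which in turn makes the product eigenvalues exactly $1,\alpha,\dots,\alpha^{mn-1}$.

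The second gap is in (3). You reduce to solving $\beta^{p+1}=\zeta^{1-m}$ and $\beta^m=\det\rbar_B(\sigma_0)$ simultaneously in $\F_{p^2}^\times$, and assert that the evident necessary condition ($(\det\rbar_B)^{p+1}=\varepsilonbar^{m(1-m)}$) forces consistency. But that relation only guarantees compatibility of the two equations after raising to the $m$th (resp.\ $(p+1)$st) power; whether the intersection of the two solution cosets in $\F_{p^2}^\times$ is nonempty requires a further argument and is not automatic for arbitrary $\gcd(m,p+1)$. The paper sidesteps this entirely by writing down a concrete target: with $\beta\in\F_p^\times$ a primitive root, the pair $(\beta^m I_2,\,\beta^{m-1}I_m)$ visibly lies in $\Gal(M/F)$ (with $\eta=\beta^{-2}$), maps to a scalar under $\sbar$, and has $\varepsilonbar^{-m}=\beta^{2m}\ne 1$ since $p>2m+1$. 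You should replace the solvability argument by this explicit choice, or else actually carry out the diophantine solvability check.
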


\begin{proof} 
Let~$H_A$ and~$H_B$ denote the extensions of~$F(\zeta_p)$ corresponding to the fixed fields
of the kernels of the projective representations associated to~$\rbar_A$ and~$\rbar_B$.
Our assumption on the images of~$\rbar_A$ and~$\rbar_B$ imply that
$$\Gal(H_A/F(\zeta_p)) \simeq \PSL_2(\F_p), \quad \Gal(H_B/F(\zeta_p))
\simeq \PSU_m(\F_{p^2}).$$
Let~$\wH_A$ and~$\wH_B$ denote the Galois closures of~$H_A$ and~$H_B$ over~$\Q$,
and let~$\wH$ denote the compositum of~$\wH_A$ and~$\wH_B$.
Since~$F/\Q$ is Galois and~$\PSL_2(\F_p)$ and~$\PSU_n(\Fps)$ are simple (as~$p \ge 5$),
we have isomorphisms~$\Gal(\wH_A/F(\zeta_p)) \sim \PSL_2(\F_p)^r$  and~$\Gal(\wH_B/F(\zeta_p)) \simeq \PSU_m(\Fps)^s$ respectively for some positive integers~$r$ and~$s$.
Thus
$$\Gal(\wH/F(\zeta_p)) \simeq \Gal(\wH_A/F(\zeta_p)) \times \Gal(\wH_B/F(\zeta_p)),$$
since either~$m > 2$ and the groups have no common quotients, or~$m=2$
and the fields are linearly disjoint by assumption.

If~$G = \PSL_2(\F_p)$ or~$G = \PSU_m(\Fps)$, then~$G$ is simple by Lemma~\ref{steinberg}.
Moreover, by the same lemma,  for any equivalence class of matrices~$A$
with eigenvalues in~$\F_p$,
any automorphism of~$G$
preserves the (unordered set of) eigenvalues of any member of $A$ up to scalars.
We also have~$\Aut(G^m) \simeq \Aut(G) \rtimes S_m$.

 Let~$\alpha  = \beta^2 \in \F^{\times 2}_p$ be an element such that~$1,\alpha,\ldots,\alpha^{mn-1}$ are all distinct; such an~$\alpha$ exists because~$p - 1 \ge 2mn$.
 Let~$A$  be a matrix in~$\PSL_2(\F_p)$ with eigenvalues (up to
scalars) $1$ and~$\alpha$, and let~$B \in \PSU_m(\Fps)$ have
eigenvalues~$1,\alpha^n,\ldots,\alpha^{n(m-1)}$. 
Explicitly, let~$A = \diag(\beta,\beta^{-1}) \in \SL_2(\F_p)$, and then construct~$B$
as follows. Certainly~$A^n \in \SL_2(\F_p)$. 
The image of~$A^n$ under the~$m-1$th symmetric power
map lands in~$\Sp_{m}(\F_p)$ or~$\SO_m(\F_p)$ depending on the parity of~$m$ (here we mean the symplectic or orthogonal groups defined using the bilinear form induced by the standard symplectic form on $\F_p^2$). These groups are conjugate to subgroups of~$\SU_m(\Fps)$ by Lemma~\ref{selfdual}. 
By Chebotarev, we find a prime~$q$ unramified in~$\wH$ and such that
for a fixed choice of $\mathfrak{q}|q$ in $\wH$,~$\Frob_{\q}\in \Gal(\wH/F(\zeta_p)) \subset 
\Gal(\wH/\Q)$ 
has the form
$$(A,A,\ldots,A) \times (B,B,\ldots,B) \in \PSL_2(\F_p)^r \times \PSU_m(\Fps)^s.$$
The eigenvalues (up to scalar) of~$A$ and~$B$ are preserved by the action of~$\Gal(F/\Q)$; this
follows from our description
of the automorphism group  of each factor.

The images of these elements in~$\Gal(F(\zeta_p)/\Q)$ are trivial, so 
 such a prime $q$ will split completely in~$F(\zeta_p)$ and so satisfy~$q \equiv 1 \bmod p$.
Moreover, the Frobenius elements at all other primes above~$q$ will be conjugate inside~$\Gal(\wH/\Q)$.
Hence the image of (any conjugate of) $\Frob_{\q}$ under~$\sbar$ has
eigenvalues (up to scalar) given by~$1, \alpha, \ldots, \alpha^{mn-1}$. In particular, they are all distinct.
Since~$q \equiv 1 \bmod p$, this implies that~$\sbar$ is decomposed
generic, which is property~(\ref{generic}) of Definition~\ref{tw}.

To see that~$\sbar |_{G_{F(\zeta_p)}}$ has adequate image, it suffices to show that the image 
is absolutely irreducible and thus is also adequate by~\cite[Theorem~A.9]{jack} (using
the assumption~$p > 2mn+1$). The irreducibility follows from the fact that the $\SL_2(\Fp)$-representation $\Sym^{n-1}\Fpbar^2$ and the standard representation of $\SU_m(\Fps)$ are both irreducible as long as~$p > n$, since the image of $\sbar |_{G_{F(\zeta_p)}}$ is $\SL_2(\Fp)\times\SU_m(\Fps)$.  This proves property~(\ref{adequate})of Definition~\ref{tw}.

Assume that~$\det(\rbar_A) = \varepsilonbar^{-m}$ and that~$\rbar_B$ has multiplier character~$\varepsilonbar^{1-m}$.
Then~$\rbar_A \otimes \rbar_B$ is absolutely irreducible and self-dual (i.e.~there is an isomorphism of the form \eqref{twistselfdual}) with multiplier character
$$\varepsilonbar^{-m(n-1)} \cdot \varepsilonbar^{1-m} = \varepsilonbar^{1-mn},$$
and so the image lies in~$\GU_{mn}(\Fps)$ with this multiplier character by Lemma~\ref{selfdual}.
This establishes condition~(\ref{isselfdual}).

Assume that~$\varepsilonbar(G_F)=\F^{\times}_p$.
Let~$M_A$ and~$M_B$ denote the fixed fields of the kernels of
$\rbar_A$ and $\rbar_B$, and let~$M$ be the compositum of~$M_A$ and~$M_B$. By our assumption on linear disjointness of
$H_A$ and $H_B$, $\Gal(M/F(\zeta_p))$ is the direct
product~$\SL_2(\F_p) \times \SU_m(\Fps)$, 
and~$\Gal(M/F)$ is the subgroup of matrices~$(A,B)$ of~$\GL_2(\F_p) \times \GU_m(\Fps)$ 
with~$\det(A) = \eta^{-m}$ and~$\nu(B) = \eta^{1-m}$ %
   for some~$\eta \in \F^{\times}_p$.
Hence the image certainly contains~$(\beta^{m} I_2,\beta^{m-1} I_m)$, where~$I_n$ denotes the trivial
matrix in~$\SL_n(\F_p)$ and~$\beta \in \F^{\times}_p$ is a primitive root.
Then, by Chebotarev, there exists~$\sigma \in G_F$
whose image in~$\Gal(M/F)$ is this element. Since~$p > 2mn+1 \ge 2m+1$, we have~$\beta^{2m} \ne 1$.
Since~$\varepsilonbar^{-m}(\sigma) = \beta^{2m}$,   the element~$\sigma$ is not contained
in~$G_{F(\zeta_p)}$. On the other hand,  we see that~$\sbar(\sigma)$ is also
scalar,  and we are done.
\end{proof}

We shall need  the following well-known property of  induced representations
(specialized to the context in which we shall apply it in the proof of the following lemma).
\begin{lemma} \label{useme} Let~$E/\Q$ be a cyclic
Galois extension of degree~$m$ linearly disjoint from~$F$, and let~$L = E \compositum F$.
Let~$\psibar: G_L \rightarrow \F^{\times}_p$ be a character
and let~$\rbar_B = \Ind^{G_F}_{G_L} \psibar: G_F \rightarrow \GL_m(\Fbar_p)$.
Let~$q$ be a prime of~$\Q$  such that~$\rbar_B$ is unramified at all~$v|q$, $q$
splits completely in~$F$, and~$\Frob_q$ generates~$\Gal(E/\Q)$.
Then, for~$v|q$ in~$F$, the eigenvalues of~$\rbar_B(\Frob_v)$ are of the form~$\lambda, \zeta \lambda, \ldots, \zeta^{m-1} \lambda$ 
for some~$\lambda$ where~$\zeta$
is a primitive~$m$th root of unity.
\end{lemma}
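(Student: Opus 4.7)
The plan is to exploit the fact that induction from an index~$m$ normal subgroup by a character produces a companion-matrix-like action whenever the outer element generates the quotient.

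First I would unpack what the hypotheses say about the decomposition of~$q$ in $L$. Since~$E/\Q$ is linearly disjoint from~$F$, restriction gives an isomorphism $\Gal(L/F) \simeq \Gal(E/\Q)$, and this restriction identifies a Frobenius element $\sigma := \Frob_v \in G_F$ (well-defined up to inertia, which acts trivially since~$\rbar_B$ is unramified at~$v$) with the Frobenius of $q$ in $\Gal(E/\Q)$. By hypothesis this latter element generates $\Gal(E/\Q)$, so the image of~$\sigma$ generates $\Gal(L/F)$. Consequently~$v$ is inert in $L/F$, the orbit of the trivial coset under~$\langle\sigma\rangle$ acting on $G_F/G_L$ has size~$m$, and $\sigma^m \in G_L$.

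Next I would compute $\rbar_B(\sigma)$ directly using the standard model of the induced representation. Take the basis $e, \sigma e, \sigma^2 e,\ldots, \sigma^{m-1} e$ of $\Ind^{G_F}_{G_L}\psibar$, where~$e$ is the vector attached to the trivial coset. Because $\sigma^m \in G_L$ acts on~$e$ by~$\psibar(\sigma^m)$, one sees immediately that $\rbar_B(\sigma)$ is the companion matrix of the polynomial $X^m - \psibar(\sigma^m)$:
\[
\rbar_B(\sigma) = \begin{pmatrix} 0 & 0 & \cdots & 0 & \psibar(\sigma^m) \\ 1 & 0 & \cdots & 0 & 0 \\ 0 & 1 & \cdots & 0 & 0 \\ \vdots & & \ddots & & \vdots \\ 0 & 0 & \cdots & 1 & 0 \end{pmatrix}.
\]
Setting $\lambda \in \Fpbar^\times$ to be any $m$-th root of $\psibar(\sigma^m)$ and $\zeta \in \Fpbar^\times$ a primitive $m$-th root of unity (which exists in $\Fpbar$; the only potential obstruction is $p \mid m$, but that case is excluded in every application in the paper since $m$ will be coprime to~$p$), the eigenvalues of this matrix are exactly $\lambda,\zeta\lambda,\dots,\zeta^{m-1}\lambda$, as required.

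There is no substantive obstacle here---the argument is a direct calculation once the coset structure is identified. The only mildly delicate point is checking that the Frobenius is well-defined up to the ambiguities that do not affect the characteristic polynomial (inertia acting trivially, and conjugation by elements of $G_F$ permuting the basis cyclically), both of which are standard.
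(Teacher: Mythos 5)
Your proof is correct, but the route is different from the paper's. Where you explicitly compute $\rbar_B(\sigma)$ in the coset-representative basis $e,\sigma e,\ldots,\sigma^{m-1}e$ and identify it as the companion matrix of $X^m-\psibar(\sigma^m)$, the paper argues more abstractly: since $G_L\lhd G_F$ has cyclic quotient of order~$m$, there is an isomorphism $\rbar_B\cong\rbar_B\otimes\chi$ for any character~$\chi$ of $\Gal(L/F)$ of order~$m$, and then $\rbar_B(\Frob_v)$ is conjugate to $\chi(\Frob_v)\,\rbar_B(\Frob_v)=\zeta\,\rbar_B(\Frob_v)$ (using that $\Frob_v$ maps to a generator of $\Gal(L/F)$, so $\zeta=\chi(\Frob_v)$ is a primitive $m$th root of unity). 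Invariance of the eigenvalue multiset under multiplication by $\zeta$ then forces it to be a single $\zeta$-orbit. Your version is more concrete and also gives the characteristic polynomial $X^m-\psibar(\sigma^m)$ explicitly; the paper's is shorter and avoids any basis bookkeeping. Both implicitly require $p\nmid m$ (to have a primitive $m$th root of unity in $\Fpbar^\times$), which holds in every application since $p>2nm+1$; you rightly flag this, while the paper leaves it tacit. One small imprecision: when you address the ambiguity in the choice of Frobenius, the relevant point is simply that conjugacy preserves the characteristic polynomial, not that a conjugating element ``permutes the basis cyclically'' (a general conjugator need not respect your chosen basis at all) — but this does not affect the argument.
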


\begin{proof} The assumption that~$E$ is linearly disjoint from~$F$ ensures
that~$\Gal(L/F) \simeq \Gal(E/\Q)$ is cyclic of order~$m$.
There is an isomorphism~$\rbar_B \simeq \rbar_B \otimes \chi$
where~$\chi$ is a character (factoring through~$\Gal(L/F)$) of order~$m$.

Thus~$\rbar_{B}(\Frob_v)$ is conjugate to $\chi(\Frob_v) \rbar_{B}(\Frob_v) = \zeta  \cdot \rbar_{B}(\Frob_v)$,
where~$\zeta$ is a primitive~$m$th root of unity and the result follows.
\end{proof}

We  shall also need the following variant of Lemma~\ref{yeomanslemmanew}:

\begin{lemma} \label{yeomanslemmanewvariant}  Let~$F/\Q$ be Galois.
Consider representations:
$$\begin{aligned}
\rbar_A:  G_F &  \rightarrow \GL_2(\F_p), \\
\rbar_B:  G_F  & \rightarrow \GL_{m}(\F_p). \end{aligned}
$$
Assume that:
\begin{enumerate}
\item The image of~$\rbar_A(G_{F(\zeta_p)})$ 
equals~$\SL_2(\F_p)$.
\item There is a cyclic Galois
extension $E / \Q$ of degree~$m$ and linearly disjoint from~$F$, such that, setting $L = E \compositum F$, there is a character $\overline{\psi} : G_L \to \F_p^\times$ with $\rbar_B \cong \Ind_{G_L}^{G_F} \overline{\psi}$ and $\rbar_B|_{G_{F(\zeta_p)}}$ irreducible.
\end{enumerate}
Consider the representation:
$$\sbar =  \Sym^{n-1} \rbar_A \otimes  \rbar_B: G_F \rightarrow \GL_{mn}(\F_p).$$
Assume that~$p>2mn+1$. 
Then:
\begin{enumerate}
\item The representation~$\sbar$ satisfies conditions~(\ref{generic}) and~(\ref{adequate}) of
Definition~\ref{tw}.
\item If~$\det(\rbar_A) = \varepsilonbar^{-m}$ and~$\det(\rbar_B) = \varepsilonbar^{-m(m-1)/2}$
and the image of~$\varepsilonbar(G_L)=\F^{\times}_p$, 
then~$\sbar$ also satisfies  condition~(\ref{whatever}) of
Definition~\ref{tw} and thus satisfies the Taylor--Wiles big image conditions.
\end{enumerate}
\end{lemma}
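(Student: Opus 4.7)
The plan is to mirror the proof of Lemma~\ref{yeomanslemmanew}, replacing the quasi-simple image of $\rbar_B$ there (which was $\SU_m(\Fps)$) by the solvable image we have here coming from an induced representation.

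For the adequate image condition, since $\rbar_B \cong \Ind_{G_L}^{G_F}\psibar$, the image $\rbar_B(G_{F(\zeta_p)})$ sits in the normalizer of a split maximal torus of $\GL_m(\F_p)$ and is therefore solvable. Because $\SL_2(\F_p) = \rbar_A(G_{F(\zeta_p)})$ has no nontrivial solvable quotients for $p\ge 5$, Goursat's lemma will force the joint image $(\rbar_A, \rbar_B)(G_{F(\zeta_p)})$ to equal the full direct product $\SL_2(\F_p) \times \rbar_B(G_{F(\zeta_p)})$. The representation $\Sym^{n-1}\rbar_A|_{G_{F(\zeta_p)}}\otimes\rbar_B|_{G_{F(\zeta_p)}}$ is then a tensor product of irreducible representations of the two factor groups, so absolutely irreducible, and hence $\sbar|_{G_{F(\zeta_p)}}$ is absolutely irreducible. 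Adequacy will then follow from \cite[Theorem~A.9]{jack} since $p>2mn+1$.

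For the decomposed generic condition, a Mackey decomposition will show that irreducibility of $\rbar_B|_{G_{F(\zeta_p)}}$ forces $L\cap F(\zeta_p)=F$, and hence $E\cap F(\zeta_p)=\Q$ since $E\cap F=\Q$. Chebotarev will then produce a rational prime $q$ splitting completely in $F(\zeta_p)$ with $\Frob_q$ generating $\Gal(E/\Q)$, and with $\rbar_A(\Frob_v)$ conjugate to $\diag(\alpha,\alpha^{-1})$ for some place $v\mid q$ of $F$ and some primitive root $\alpha\in\F_p^\times$. Lemma~\ref{useme} will then give the eigenvalues of $\rbar_B(\Frob_v)$ as $\lambda, \zeta\lambda, \ldots, \zeta^{m-1}\lambda$ for a primitive $m$-th root of unity $\zeta\in\overline{\F}_p$, so the $mn$ eigenvalues of $\sbar(\Frob_v)$ will be the products $\alpha^{n-1-2i}\zeta^j\lambda$ for $0\le i\le n-1$ and $0\le j\le m-1$. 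Since $\alpha^{2k}$ has order exceeding $m$ in $\overline{\F}_p^\times$ for $0<|k|\le n-1$ (using $p-1>2mn$) while $\zeta^l$ has order dividing $m$, these eigenvalues will be pairwise distinct, which together with $q\equiv 1\pmod p$ yields the decomposed generic condition.

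For the scalar-element condition under the extra hypotheses, observe first that $\sbar(\sigma)$ scalar forces both $\rbar_A(\sigma)$ and $\rbar_B(\sigma)$ scalar (using $p>n$ for the symmetric power), and scalarness of $\rbar_B(\sigma)=\Ind\psibar(\sigma)$ in turn requires $\sigma\in G_L$ together with $\psibar^\gamma(\sigma)=\psibar(\sigma)$ for all $\gamma\in\Gal(L/F)$, cutting out a finite extension $L''$ of $L$. The assumption $\varepsilonbar(G_L)=\F_p^\times$ combined with $\det\rbar_A=\varepsilonbar^{-m}$ yields $\rbar_A(G_L)=\{A\in\GL_2(\F_p):\det A\in(\F_p^\times)^m\}$, which contains a scalar matrix $cI_2$ for every $c\in\F_p^\times$ with $c^2\in(\F_p^\times)^m$. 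Choosing such a $c\ne\pm 1$ (which exists since $p-1>2mn$), any $\sigma$ with $\rbar_A(\sigma)=cI_2$ will automatically satisfy $\varepsilonbar(\sigma)^m=c^{-2}\ne 1$, forcing $\varepsilonbar(\sigma)\ne 1$. A Chebotarev argument on the compositum of $L''$ with the fixed field of $\rbar_A$ will then produce $\sigma\in G_{L''}$ with $\rbar_A(\sigma)=cI_2$, giving $\sbar(\sigma)=c^{n-1}\psibar(\sigma)I_{nm}$, the required scalar. The main obstacle in this last step will be verifying the requisite linear disjointness to ensure that the desired Frobenius conjugacy class is nonempty, which amounts to showing that the characters $\psibar^\gamma/\psibar$ cutting out $L''/L$ are suitably independent of $\det\rbar_A|_{G_L}$.
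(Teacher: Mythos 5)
Your treatment of the adequacy condition and (modulo the usual Galois-closure bookkeeping that the paper carries out to control \emph{all} places $v\mid q$, not just one) the decomposed-generic condition tracks the paper's own argument: both use solvability of $\rbar_B(G_{F(\zeta_p)})$ plus perfectness of $\SL_2(\F_p)$ to split the joint image, then irreducibility of the tensor product plus~\cite[Thm.\ A.9]{jack} for adequacy, and Lemma~\ref{useme} plus a Chebotarev choice of $q$ for genericity.

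For the scalar-element condition, however, you yourself flag the crux --- showing that the fibre $G_{L''}$ over ``$\rbar_B$ scalar'' actually meets the fibre ``$\rbar_A$ a non-$\pm$ scalar'' --- and you do not close it. Your observation that $\Gal(L''/L)$ is abelian only gives $\rbar_A(G_{L''})\supseteq[\rbar_A(G_L),\rbar_A(G_L)]=\SL_2(\F_p)$, whose scalars are $\pm I_2$; it does not by itself produce a scalar $cI_2$ with $c^2\ne 1$ inside $\rbar_A(G_{L''})$, and the ``suitable independence'' you would need between the characters $\psibar^\gamma/\psibar$ and $\det\rbar_A|_{G_L}$ is exactly what remains to be proved. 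The paper sidesteps this abstract nonemptiness question with a direct construction: fix a primitive root $\beta$, pick $g\in G_L$ with $\varepsilonbar(g)=\beta^2$, and form the ``norm'' $h=\prod_{\tau\in\Gal(L/F)}{}^{\tau}g$. Since $\psibar$ is abelian, $\rbar_B(h)=\bigl(\prod_\tau{}^\tau\psibar\bigr)(g)\cdot I_m=\det\rbar_B|_{G_L}(g)\cdot I_m$ is automatically scalar, so $h$ is automatically in your $G_{L''}$; and $\det\rbar_A(h)=\varepsilonbar(h)^{-m}=\beta^{-2m^2}$ is controlled by the determinant hypotheses. Then, using that $G_{M_B}\to\Gal(M_A/F(\zeta_p))\cong\SL_2(\F_p)$ is surjective (again because $\SL_2(\F_p)$ has no nontrivial solvable quotients), one can pick $\gamma\in G_{M_B}$ with $\rbar_A(\gamma)=\beta^{-m^2}\rbar_A(h)^{-1}\in\SL_2(\F_p)$; note that $\gamma\in G_{M_B}\subseteq G_{L''}$ since $\rbar_B(\gamma)=I_m$. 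Setting $\sigma=h\gamma$ then gives $\rbar_A(\sigma)=\beta^{-m^2}I_2$, $\rbar_B(\sigma)$ scalar, and $\varepsilonbar(\sigma)=\beta^{2m}\ne 1$. If you want to salvage your Chebotarev framing rather than adopt the paper's constructive one, this is essentially the content you would need to supply: an explicit element of $G_{L''}$ whose image under $\det\rbar_A$ is a prescribed nonsquare power of $\beta$, which the norm trick $h=\prod_\tau{}^\tau g$ provides for free.
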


\begin{proof}
The representation $\rbar_B$ has solvable image.  The assumption that~$\rbar_B|_{G_{F(\zeta_p)}}$ is irreducible implies that~$F(\zeta_p)$ and~$E$ are linearly disjoint.
As in the proof of Lemma~\ref{yeomanslemmanew},
let~$H_A$ and~$H_B$ denote the extensions of~$F(\zeta_p)$ corresponding to the fixed fields
of the kernels of the projective representations associated to~$\rbar_A$ and~$\rbar_B$ and $\wH_A$, $\wH_B$ their Galois closures over $\Q$. Let~$\wH$ be the compositum of~$\wH_A$ and~$\wH_B$.
We deduce  once more that
$$\Gal(\wH_A/F(\zeta_p)) \simeq \PSL_2(\F_p)^r$$
and, since~$\PSL_2(\F_p)$ has no solvable quotients, 
$$\Gal(\wH/F(\zeta_p)) \simeq \Gal(\wH_A/F(\zeta_p)) \times \Gal(\wH_B/F(\zeta_p)).$$
We have $E \subset \wH_B$ and, since $\rbar|_{G_{F(\zeta_p)}}$ is irreducible, $\Gal(\wH_B/F(\zeta_p)) \to \Gal(E / \Q)$ is surjective.

 Let~$\alpha \in \F^{\times}_p$ be an element such that~$1,\alpha,\ldots,\alpha^{mn-1}$ are all distinct; such an~$\alpha$ exists because~$p - 1 \ge mn$.
By Chebotarev, we find a prime~$q$ unramified in~$\wH$, split in $F(\zeta_p)$, and such that for a fixed choice of $\mathfrak{q}|q$ in $\wH$,~$\Frob_{\mathfrak{q}}\in \Gal(\wH/F(\zeta_p))  \subset \Gal(\wH/\Q)$ 
has the form
$$(A,A,\ldots,A) \times \sigma \in \PSL_2(\F_p)^r \times  \Gal(\wH_B/F(\zeta_p)),$$
where $A$ has eigenvalues with ratio~$\alpha$ and $\sigma$ projects to a generator of $\Gal(E / \Q)$.
Hence, by Lemma~\ref{useme}, the image of (any conjugate of) $\Frob_{\q}$ under~$\sbar$ has
eigenvalues (up to scalar) given by:
$$\alpha^i \zeta^j, i = 0,\ldots,n-1, j = 0,\ldots,m-1,$$
and in particular all eigenvalues are distinct
since otherwise~$\alpha^{km} = 1$ for some~$k < n$.
Since~$q \equiv 1 \bmod p$, this  implies that~$\sbar$ is decomposed
generic, which is property~(\ref{generic}).

Property~(\ref{adequate}) of Definition~\ref{tw} follows exactly as in the proof
of Lemma~\ref{yeomanslemmanew}.

Now suppose that~$\det(\rbar_A) = \varepsilonbar^{-m}$ and~$\det(\rbar_B) = \varepsilonbar^{-m(m-1)/2}$ and that~$\varepsilonbar(G_L)=\F^{\times}_p$. We
now show that property~(\ref{whatever}) of Definition~\ref{tw} holds.
Let~$M_A$ and~$M_B$ denote the fixed fields of the kernels of
$\rbar_A|_{G_{F(\zeta_p)}}$ and $\rbar_B|_{G_{F(\zeta_p)}}$ respectively. Since $\SL_2(\F_p)$ has no solvable quotients, the map $G_{M_B} \to \Gal(M_A / F(\zeta_p))$ is surjective.
Let $\beta \in \F_p^\times$ be a primitive root. We claim that we can
find $\sigma \in G_F$ such that $\rbar_A(\sigma) = \beta^{-m^2} \cdot
I_2$ and $\rbar_B(\sigma) = \beta^{m(1-m)} \cdot I_m$. To see this,
first choose $g \in G_L$ such that $\varepsilonbar(g) = \beta^2$, and
let $h = \prod_{\tau \in \Gal(L / F)} {}^\tau g $. Then $\rbar_B(h) =
\beta^{m(1-m)} I_m$, as $\prod_{\tau \in \Gal(L / F)} {}^\tau
\overline{\psi} = \det \rbar_B|_{G_L} =
\varepsilonbar^{-m(m-1)/2}$. %
  We now choose~$\sigma$ of the form~$h \gamma$ where~$\gamma \in G_{M_B}$;
  since~$\rbar_B(\gamma)$ is trivial, this means that~$\rbar_{B}(\sigma) = \rbar_{B}(h)$
  is of the correct form.
On the other hand, we have $\det \rbar_A(h) = \varepsilonbar^{-m}(g)^{m} = \beta^{-2m^2}$. Since $G_{M_B} \to \Gal(M_A / F(\zeta_p)) \simeq \SL_2(\F_p)$ is surjective, we choose~$\gamma \in G_{M_B}$
so that~$\rbar_A(\gamma) = \beta^{-m^2} \cdot \rbar_A(h)^{-1} $, and
then~$\rbar_A(\sigma) =  \beta^{-m^2} \cdot I_2$.

By construction, $\sbar(\sigma)$ is scalar. On the other hand, $\varepsilonbar(\sigma) = \beta^{2m} \neq 1$, as $p-1 > 2m$ because $p > 2nm + 1$, so $\sigma \in G_F - G_{F(\zeta_p)}$, as required. 
\end{proof}

\subsection{Character building lemmas}

In this section, we construct some induced extensions with certain desirable local properties.
We begin with the following  well-known lemma:

\begin{lemma}[Globalizing local characters] \label{realize} Let~$F$ be a number field, and let~$S$ be a finite set of places of~$F$. 
Let~$\psi_v: G_{F_v} \rightarrow \Z/n\Z$ be a collection of characters
for all~$v \in S$. Assume that 
$S$ does not contain any places~$v|2$.
Then
there exists a global character~$\chi: G_F \rightarrow \Z/n\Z$ such that
$\chi|_{G_{F_v}} = \psi_v$  for all $v \in S$.
 \end{lemma}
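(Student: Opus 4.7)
The plan is to invoke the Grunwald--Wang theorem. By the Chinese Remainder Theorem $\Z/n\Z \cong \prod_{\ell} \Z/\ell^{a_\ell}\Z$ the problem decomposes according to the primary decomposition of the target, and so it suffices to treat the case $n = \ell^k$ a prime power.

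For $\ell$ odd, Grunwald--Wang asserts without any exception that the restriction map
\[
H^1(G_F, \Z/\ell^k\Z) \to \prod_{v \in S} H^1(G_{F_v}, \Z/\ell^k\Z)
\]
is surjective, immediately producing a global character $\chi$ whose local restrictions are the prescribed $\psi_v$. For $\ell=2$, Grunwald--Wang admits a well-known ``special case'' in which this map can fail to be surjective; however, the obstruction is concentrated at the places above~$2$, in the sense that the image of $\prod_{v \in S} H^1(G_{F_v},\Z/2^k\Z)$ in the Poitou--Tate dual $H^1(G_F,\mu_{2^k})^{\vee}$ (computed with suitable support conditions) is automatically trivial whenever $S$ contains no $v \mid 2$. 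Equivalently, one is never in the special case of Grunwald--Wang precisely because the hypothesis excludes the only places at which an obstruction to lifting can arise.

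The only substantive point is the appeal to Grunwald--Wang (see e.g.\ Neukirch--Schmidt--Wingberg); the hypothesis on $S$ is designed precisely to sidestep the $2$-primary exceptional case, so there is no real obstacle to overcome beyond citing the theorem.
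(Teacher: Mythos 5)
Your proof is correct and takes essentially the same route as the paper's: both reduce the problem to the Grunwald--Wang theorem and observe that the exceptional (``special'') case can only arise when $S$ contains $2$-adic places, which the hypothesis excludes. The paper cites Artin--Tate, Ch.~X, Thm.~5 directly for general $n$, without your preliminary CRT reduction to prime-power order; it also makes the obstruction concrete, exhibiting the element $a_{F,n}\in(F^\times)^{n/2}$ which fails to be a local $n$th power only at a finite (possibly empty) set $S_{F,n}$ of places above $2$, and noting that the single relation $\prod_{v\in S_{F,n}}\psi_v(a_{F,n})=1$ is automatically satisfied (or vacuous) when $S\cap S_{F,n}=\emptyset$. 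Your Poitou--Tate phrasing of the $2$-primary obstruction is directionally correct but somewhat imprecise as stated---what actually controls the failure of surjectivity is the finite group $\Sha^1_S(\mu_{2^k})$ and its pairing with the local classes, and one must argue that this pairing vanishes on local data supported away from $2$---so the explicit description via $a_{F,n}$ in Artin--Tate is the cleaner way to close the argument. Either formulation is fine; the substance is identical.
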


\begin{proof} This is a consequence of~\cite[\S X Thm.\ 5]{ArtinTate} (see also~\cite[Appendix~A]{conradlifting}).
More precisely, the claim holds
 (without the hypothesis on~$S$) if~$n$ is odd. If~$n$ is even,
there exists an explicitly defined element
$$a_{F,n} \in (F^{\times})^{n/2}$$
which is a perfect~$n$th power for all but a finite set
(possibly empty) places~$S_{F,n}$ of primes~$v|2$. %
Then  the~$\psi_v$ come from a global character~$\chi$ of order~$n$
if and only  if either ~$S_F \not\subset S$, or   ~$S_F \subset S$ and 
$$\prod_{v \in S_F} \psi_v(a_{F,n}) = 1.$$
Since we have assumed that~$S$ contains no places above~$2$,
either~$S_F$ is empty or~$S_F \not\subset S$, so the result follows.
\end{proof}

\begin{rem}
One cannot drop the hypothesis on~$S$ in general because of the Grunwald--Wang
phenomenon (e.g. $F = \Q$, $v = 2$, $n = 8$, and~$\psi_2$ unramified with order~$8$).
 If one considers general~$F$, one cannot even  globalize a
local character~$\psi_v$
up to a character~$\phi_v$ which is unramified at~$v$. Let~$F = \Q(\sqrt{-5})$,
and consider a character
$$\chi: F^{\times} \backslash \A^{\times}_F \rightarrow \Z/8\Z.$$
Since~$16 \in F^{\times}$ is a perfect~$8$th power for all~$v|F$ of odd residue characteristic
(this is true even for~$F=\Q$) and also in~$F^{\times}_{\infty} \simeq \C^{\times}$, it follows that the restriction:
$$\chi_2: F^{\times}_2 \rightarrow \Z/8\Z$$
satisfies~$\chi_2(16) = 1$. Since~$F_2/\Q_2$ is ramified of degree~$2$,
we find that~$16$ has valuation~$8$ and hence~$\phi_2(16) = 1$ for any unramified character ~$F^{\times}_2\rightarrow \Z/8\Z$.
Since neither~$2$ nor~$-1$ is a square in~$F_2 \simeq \Q_2(\sqrt{-5})$, it follows that~$16$ is not
a perfect~$8$th power in~$F^{\times}_2$, and thus there  exists a local character~$\psi_2: F^{\times}_2 \rightarrow \Z/8\Z$
such that~$\psi_2(16) = -1$. But from the above, we see that there is no global character~$\chi$ such that~$\chi_2 = 
\psi_2 \phi_2$ for an unramified character~$\phi_2$.
\end{rem}

We now show that any character over a CM field can be written as an~$m$th power of another
character over some finite CM extension satisfying certain properties.
The argument is essentially the same as in the proof
        of~\cite[Theorem~7.1.11]{10author}.

 \begin{lemma} \label{fixdeterminant} Let~$\eta$ be a finite order character of $G_F$ for a CM field~$F$, let~$m$ be an integer,
 and let~$F^{\avoid}/F$ be a finite extension.
 Then there exists a totally real Galois extension~$M/\Q$ linearly
 disjoint from~$F^{\avoid}$ and a character~$\psi$ of~$G_{M \compositum F}$ such that
 ~$\eta|_{G_{MF}}=\psi^m$.
 Moreover, if~$\eta$ is unramified at all~$v$ dividing some finite set of primes~$T$ of~$\Q$
 not including~$2$, then we may take~$M$ to be totally split at all primes dividing those in~$T$, and~$\psi$ to be unramified at primes dividing those in~$T$.
 \end{lemma}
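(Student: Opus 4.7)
The proof will follow the strategy of \cite[Theorem 7.1.11]{10author}: construct $\psi$ by specifying its local components as $m$-th roots of $\eta|_{G_{MF}}$ at a finite set of places, then globalize using the Grunwald--Wang type Lemma \ref{realize}, with $M/\Q$ chosen large enough that all local obstructions to extracting $m$-th roots vanish.

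Let $S_0$ denote the union of $T$ with the (finite) set of rational primes below places of $F$ at which $\eta$ ramifies; by hypothesis $S_0$ contains no prime above $2$. First I would construct a totally real Galois extension $M/\Q$, linearly disjoint from $F^{\avoid}$, with the following local properties: every prime in $T$ splits completely in $M$, and for every $\ell \in S_0 \setminus T$ and every place $v$ of $F$ above $\ell$, the completion $(MF)_w$ at any place $w \mid v$ has residue extension over $F_v$ large enough that the restriction $\eta|_{G_{(MF)_w}}$, viewed via local class field theory as a character of $(MF)_w^\times$, admits an $m$-th root of finite order (concretely, it suffices that the residue degree grow so that $(q_v^f-1)/(q_v-1)$ is divisible by $m$). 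Such an $M$ can be obtained as a totally real subfield of a suitably chosen cyclotomic extension $\Q(\zeta_N)^+$: pick $N$ coprime to the discriminant of $F^{\avoid}/\Q$ so that the Frobenius of each $\ell \in T$ acts trivially on $M$ (forcing splitting), while the Frobenius of $\ell \in S_0 \setminus T$ has sufficiently large order in the relevant quotient of $(\Z/N\Z)^\times$ (forcing residue-field growth).

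Second, for each place $w$ of $MF$ above $S_0$ I would specify a local character $\psi_w: G_{(MF)_w}^{\mathrm{ab}} \to \mu_\infty$ of finite order satisfying $\psi_w^m = \eta|_{G_{(MF)_w}}$, chosen unramified when $w$ lies above $T$ (which is possible because $\mu_\infty$ is divisible and $\eta$ is unramified at $T$). Third, I would apply Lemma \ref{realize} (applied to characters valued in $\mu_N$ for $N$ divisible by the orders of all $\psi_w$) to globalize these local specifications to a finite-order character $\psi: G_{MF} \to \mu_\infty$; the hypothesis that $T$ contains no prime above $2$ is essential here to avoid a Grunwald--Wang obstruction. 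A Chebotarev density argument, applied to the ratio $\eta|_{G_{MF}} \cdot \psi^{-m}$ (which is unramified outside $S_0$ and has prescribed trivial restriction there), together with the freedom to impose additional unramified Frobenius conditions in the globalization step, then yields $\psi^m = \eta|_{G_{MF}}$ globally.

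The main obstacle is the construction of $M$ in the first step: one must simultaneously arrange total realness, Galoisness over $\Q$, linear disjointness from $F^{\avoid}$, complete splitting at $T$, and sufficiently large residue-field extensions at the primes below ramified places of $\eta$. Balancing these constraints is the technical heart of the argument, and is handled via a careful choice of cyclotomic conductor combined with a Chebotarev-style linear-disjointness argument to avoid $F^{\avoid}$.
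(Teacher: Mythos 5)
Your proposal has a genuine gap in its final step, and it also misses two structural features of the paper's argument.

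The paper's proof proceeds by obstruction theory: after reducing to $m$ prime (by induction), it views $\eta$ as a class in $H^1(F,\Z/n\Z)$ (with $n$ the order of $\eta$), and observes that the obstruction to lifting along $\Z/mn\Z \to \Z/n\Z$ lives in $H^2(F,\Z/m\Z)$, which (because $m$ is prime) injects into $\Br(F(\zeta_m))[m]$, which by Albert--Brauer--Hasse--Noether injects into the direct sum of local Brauer groups. One then chooses $M$ so that after base change the local obstruction classes all die (using that $\Br(K)[m]$ dies in any degree-$m$ extension $L/K$), and hence the global obstruction dies, so a lift $\psi$ with $\psi^m = \eta|_{G_{MF}}$ exists on the nose. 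The relevant existence statement for $M$ is \cite[Lemma 4.1.2]{cht}, not an ad hoc cyclotomic construction.

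Your proposal instead globalizes local $m$-th roots $\psi_w$ at $S_0$ via Lemma~\ref{realize}, obtaining a $\psi$ for which $\chi := \eta|_{G_{MF}} \psi^{-m}$ is trivial locally at $S_0$, and then claims to eliminate $\chi$ by ``Chebotarev \ldots together with the freedom to impose additional unramified Frobenius conditions.'' This does not work as stated. The character $\chi$ can be a nontrivial everywhere-unramified character (e.g.\ a character of the class group of $MF$) which is invisible to any finite set of local conditions specified in advance; and you cannot know which auxiliary Frobenius conditions to impose because $\chi$ depends on the globalization you have not yet performed. The circularity is not easily broken, and the correct way to certify that the residual error is trivial is precisely the Hasse-principle argument that the paper invokes: once the local obstructions vanish everywhere (automatic at unramified places, by construction at $S_0$), the injectivity of $H^2(F,\Z/m\Z)\hookrightarrow\bigoplus_v H^2(F_v,\Z/m\Z)$ forces the global obstruction to vanish. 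That injectivity is itself a genuine hypothesis: you omit the reduction to $m$ prime, without which the global-to-local map on $H^2$ can fail to be injective (this is the Grunwald--Wang phenomenon, acknowledged elsewhere in the paper). Finally, your condition on $M$ at places in $S_0 \setminus T$ --- growing the residue degree so that $(q_v^f-1)/(q_v-1)$ is divisible by $m$ --- controls only the tame part of $\eta$; the paper's condition that $[M_v:\Q_v]$ be divisible by $m[F(\zeta_m):\Q]$ kills the full local Brauer obstruction, which is what is actually needed.
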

 
 \begin{proof} By induction, it suffices to consider the case when~$m$ is prime. Assume that~$\eta$ has order~$n$. There is an exact sequence:
  $$0 \rightarrow \Z/m \Z \rightarrow \Z/mn\Z \rightarrow \Z/n\Z \rightarrow 0.$$
 The character~$\eta$ gives a class in~$H^1(F,\Z/n\Z)$ which we want to write as an~$m$th power,
which amounts to lifting this class to~$H^1(F,\Z/mn\Z)$.
The obstruction to this is an element $\partial\eta$ lying in
 $$H^2(F,\Z/m\Z) \hookrightarrow H^2(F(\zeta_m),\mu_m)) \simeq \Br(F(\zeta_m))[m],$$
 where the injectivity of the first map follows from Hochschild--Serre and the fact that~$[F(\zeta_m):F]$ is prime to~$m$
 (since~$m$ is prime). From the  Albert--Brauer--Hasse--Noether theorem, there is an injection
 $$ \Br(F(\zeta_m))[m] \hookrightarrow \bigoplus_{v} \Br(F(\zeta_m)_v)[m].$$
 The image of the class $\partial\eta$ is zero for all~$v$ not dividing a finite set of places~$S$ of~$\Q$ (the places where $\eta$ is ramified), and is zero for $v$ dividing places in $T$ (since $\eta$ is unramified there and there is no obstruction to lifting an unramified local character). Since~$F$ is totally imaginary and~$\Br(\C) = 0$, we may assume~$S$ consists only of finite primes and we may also assume that~$\infty \in T$.
 If~$K$ is a local field and~$L/K$ has degree~$m$ then the map~$\Br(K)[m] \rightarrow \Br(L)$
 is trivial~\cite[\S VI, Thm.~3]{MR911121}. Hence any class in~$\Br(F(\zeta_m))[m]$
 is trivial in~$F(\zeta_m) \compositum M$ whenever~$[M_v:\Q_v]$ is divisible
by~$m [F(\zeta_m):\Q]$ for any prime~$v$ in~$S$.
Hence it suffices to find such a Galois extension~$M/\Q$ disjoint from~$F^{\avoid}$, in which the places in $T$ are totally split (since $\infty\in T$ this implies that $M$ is totally real). This is essentially done in \cite[\S X Thm.~6]{ArtinTate} and we can appeal to \cite[Lemma 4.1.2]{cht} for the precise statement we need.
Since~$\eta$ is unramified at primes in~$T$, for each~$v|T$ the image of~$\psi|_{I_{(M \compositum F)_v}}$ has order dividing~$m$.
Thus by Lemma~\ref{realize} we may twist~$\psi$ by another character of order~$m$
(which doesn't change~$\psi^m$) so that it is unramified at~$v|T$.  \end{proof}

\section{Automorphy of compatible systems}\label{sec_automorphy_of_compatible_systems}

\subsection{Compatible systems and purity}

We recall the following definition from \cite[\S7]{10author}.
\begin{defn}
	Let $F$ be a number field. A very weakly compatible system $\cR$ (of rank $n$ representations of $G_F$, with coefficients in $M$) is by definition a tuple
	\[ (M, S, \{ Q_v(X) \}, \{ r_\lambda \}, \{H_\tau\}),\]
	where:
	\begin{enumerate}
	\item $M$ is a number field;
	\item $S$ is a finite set of finite places of $F$;
	\item for each finite place $v \not\in S$ of $F$, $Q_v(X) \in M[X]$ is a monic degree $n$ polynomial; 
	\item for each $\tau : F \hookrightarrow \overline{M}$, $H_\tau$ is a multiset of integers;
	\item for each finite place $\lambda$ of $M$ (say of residue characteristic $l$),
	\[ r_\lambda : G_F \to \GL_n(\overline{M}_\lambda) \]
	is a continuous, semi-simple representation satisfying the following conditions:
	\begin{enumerate}
	\item If $v \not \in S$ and $v \nmid l$, then $r_\lambda|_{G_{F_v}}$ is unramified and the characteristic polynomial of $\Frob_v$ equals $Q_v(X)$.
	\item For $l$ outside a set of primes of Dirichlet density 0, $r_\lambda$ is crystalline and $\mathrm{HT}_\tau(r_\lambda) = H_\tau$.
	\item For every $l$, we have $\mathrm{HT}_\tau(\det r_\lambda) = \sum_{h \in H_\tau} h$.
	\end{enumerate}
	\end{enumerate}
\end{defn}
If $F' / F$ is a finite extension then we may define the restricted very weakly compatible system
\[ \cR|_{G_{F'}} = (M, S_{F'}, \{ Q_w(X) \}, \{ r_\lambda|_{G_{F'}} \}, \{H_\tau' \}), \]
where $S_{F'}$ is the set of places of $F'$ lying above $S$, $Q_w(X) = \det r_\lambda(X - \Frob_w)$ (thus independent of $\lambda$), and $H'_{\tau} = H_{\tau|_F}$. If
\[ \cR_1 = (M, S_1, \{ Q_{1, v}(X) \}, \{ r_{1, \lambda} \}, \{H_{1, \tau}\}), \, \cR_2 = (M, S_2, \{ Q_{2, v}(X) \}, \{ r_{2, \lambda} \}, \{H_{2, \tau} \}) \]
are very weakly compatible systems with a common coefficient field $M$, then we can define the tensor product
\[ \cR_1 \otimes \cR_2 = (M, S_1 \cup S_2, \{ Q_{ v}(X) \}, \{ r_{1, \lambda} \otimes r_{2, \lambda} \}, \{H_{ \tau}\}), \]
where we take $Q_v(X) = \det (r_{1, \lambda} \otimes r_{2, \lambda})(X - \Frob_v)$ (thus independent of $\lambda$) and $H_\tau = \{ k + l \, | \, k \in H_{1 ,\tau}, l \in H_{2, \tau} \}$ (sums taken with multiplicity).

The following definition summarizes some possible properties of very weakly compatible systems. These were all defined in \cite{10author}, with the exception of (\ref{def_weak_aut}) (`weakly automorphic'). This condition arises for us because we consider tensor products of compatible systems, one of which has poorly controlled ramification. Lemma \ref{lem_weakly_automorphic_and_pure} gives conditions under which `weakly automorphic' can be upgraded to `automorphic'. 
\begin{defn}\label{def:vwcs props}
Let 
\[ \cR = (M, S, \{ Q_v(X) \}, \{ r_\lambda \}, \{H_\tau\}) \]
be a very weakly compatible system. We say that $\cR$ is:
\begin{enumerate} \item pure of weight $m \in \Z$, if it satisfies the following conditions:
	\begin{enumerate}
		\item for each $v \not\in S$, each root $\alpha$ of $Q_v(X)$ in $\overline{M}$, and each $\iota:\overline{M} \into \C$
		we have 
		\[ | \iota \alpha |^2 = q_v^m; \]
		\item for each $\tau:F \into \overline{M}$ and each complex conjugation $c$ in $\Gal(\overline{M}/\Q)$ we have
		\[ H_{c \tau} = \{ m-h: \,\,\, h \in H_\tau\}. \]
	\end{enumerate}
	\item automorphic, if there is a regular algebraic, cuspidal
          automorphic representation $\pi$ of $\GL_n(\A_F)$ and an
          embedding $\iota : M \hookrightarrow \C$ such that for every
          finite place $v \not\in S$ of $F$, $\pi_v$ is unramified and
          $\operatorname{rec}^T_{F_v}(\pi_v)(\Frob_v)$ has
          characteristic polynomial $\iota(Q_v(X))$. 
	\item\label{def_weak_aut} weakly automorphic of level prime to $T$, if $T$ is a
          finite set of finite places of~ $F$, disjoint from $S$, and
          there is a regular algebraic, cuspidal automorphic
          representation $\pi$ of $\GL_n(\A_F)$ and an embedding
          $\iota : M \hookrightarrow \C$ such that for all but
          finitely many finite places $v \not\in S$ of $F$, and for every
          $v \in T$, $\pi_v$ is unramified and
          $\operatorname{rec}^T_{F_v}(\pi_v)(\Frob_v)$ has
          characteristic polynomial $\iota(Q_v(X))$. We will say that $\RR$ is simply `weakly automorphic' if it is weakly automorphic of level prime to the empty set.
		\item irreducible, if for $l$ outside a set of primes of Dirichlet density 0, and for all $\lambda | l$ of $M$, $r_\lambda$ is irreducible.
	\item strongly irreducible, if for every finite extension $F' / F$, the compatible system $\cR|_{G_{F'}}$ is irreducible. 
	\end{enumerate}
\end{defn}

For a CM number field $F$, and a regular algebraic weight $\lambda$, cuspidal automorphic representation $\pi$ of $\GL_2(\A_F)$, there is an associated automorphic very weakly compatible system
\[ \RR=(M,S,\{ Q_v(X) \},
r_{\pi, \lambda}  ,H_{\tau} ),\]
where $H_\tau = \{ \lambda_{\tau, 1} + 1, \lambda_{\tau, 2} \}$ (see
\cite[Lemma 7.1.10]{10author}).

We now recall that the potential automorphy of symmetric 
powers of~$\cR$ is enough to imply purity.
\begin{lemma}\label{lem:pot aut implies purity}
Let~$\cR = (M, S, \{ Q_v(X) \}, \{ r_\lambda \}, \{H_\tau\})$ be a very weakly compatible system
of rank 2 representations of $G_F$ such that~$H_{\tau} = \{0,m\}$ for a fixed~$m \in \N$ for all~$\tau$.
	Fix a finite place $v_0$ of $F$ which is not in $S$, and let~$X_0 = \{v_0\}$.
	Suppose that for infinitely many~$n\ge 1$, we can find 
	a finite Galois extension  $F_n / F$ such that the very weakly compatible system $\Sym^{n-1} \RR |_{F_n}$ 
	 is weakly automorphic of level prime to $X_{0, F_n} = \{ v | v_0 \}$. 	 Then the roots $\alpha_1, \alpha_2$ of $Q_{v_0}(X)$ in $\overline{M}$ satisfy	\[ | \iota \alpha |^2 = q_{v_0}^m\] for each $\iota:\overline{M} \into \C$. 
\end{lemma}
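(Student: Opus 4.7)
My plan is to reduce the statement to an asymptotic application of the Jacquet--Shalika unitarity bound on the symmetric powers. I would fix an embedding $\iota: \overline{M} \hookrightarrow \C$ and set $t_i := \log_{q_{v_0}} |\iota \alpha_i|$ for $i=1,2$, so that the goal becomes $t_1 = t_2 = m/2$.

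For each $n$ in the infinite set provided by the hypothesis, let $\pi^{(n)}$ be the regular algebraic cuspidal automorphic representation of $\GL_n(\A_{F_n})$ witnessing the weak automorphy of $\Sym^{n-1}\cR|_{F_n}$ at level prime to $X_{0,F_n}$. By this level condition, for each place $w$ of $F_n$ lying above $v_0$ the local component $\pi^{(n)}_w$ is unramified and its Satake parameters (identified via $\operatorname{rec}^T_{F_{n,w}}$ with Frobenius eigenvalues) coincide with the eigenvalues of $\mathrm{Frob}_w$ on $\Sym^{n-1} r_\lambda|_{G_{F_{n,w}}}$. Setting $f_w := [k(w):k(v_0)]$, these are the $n$ numbers $\iota\alpha_1^{(n-1-j) f_w} \iota\alpha_2^{j f_w}$ for $j = 0,\dots,n-1$.

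The main input will be the Jacquet--Shalika bound. Since $\Sym^{n-1}\cR|_{F_n}$ has Hodge--Tate weights $\{0, m, 2m, \dots, (n-1)m\}$ at every embedding, $\pi^{(n)} \otimes |\det|^{-m(n-1)/2}$ is essentially unitary, and Jacquet--Shalika applied to this unitary twist yields, for every $j$,
\[
q_w^{(m(n-1)-1)/2} < \left| \iota\alpha_1^{(n-1-j) f_w} \iota\alpha_2^{j f_w} \right| < q_w^{(m(n-1)+1)/2}.
\]
Specialising to $j = 0$, taking $\log_{q_{v_0}}$, and using $\log_{q_{v_0}} q_w = f_w$ yields
\[
\frac{m}{2} - \frac{1}{2(n-1)} < t_1 < \frac{m}{2} + \frac{1}{2(n-1)}.
\]
Sending $n \to \infty$ through the infinite set of good indices then pins $t_1 = m/2$, and the same argument at $j = n-1$ gives $t_2 = m/2$, which is the desired purity.

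No step here should be difficult: once the Jacquet--Shalika bound is in hand, the rest is routine bookkeeping with the matching of Satake parameters and Frobenius eigenvalues. The only delicate point will be getting the normalization between $\operatorname{rec}^T$ and the unitary Langlands correspondence right, so that the central value $m(n-1)/2$ is indeed the midpoint of the displayed bound; this is where the hypothesis on the Hodge--Tate weights enters crucially.
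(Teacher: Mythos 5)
Your proposal follows essentially the same route as the paper: apply the Jacquet--Shalika bound to a unitary twist of the cuspidal automorphic representation $\pi^{(n)}$ witnessing weak automorphy of $\Sym^{n-1}\cR|_{F_n}$, deduce $|\iota\alpha_i| \le q_{v_0}^{m/2 + 1/(2(n-1))}$, and let $n\to\infty$; the only cosmetic difference is that you extract both the upper and lower bounds directly from Jacquet--Shalika, whereas the paper shortens the argument by observing $|\iota(\alpha_1\alpha_2)| = q_{v_0}^m$ from the determinant and therefore only needs the upper bound. One point worth fixing, which you flagged yourself: the claim that $\pi^{(n)}\otimes|\det|^{-m(n-1)/2}$ is essentially unitary is not correct as stated. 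Computing the central character as in the paper (it is $|\cdot|^{n(m-1)(1-n)/2}$ up to finite order, since $\det\Sym^{n-1}r_\lambda$ is $\varepsilon^{-mn(n-1)/2}$ up to finite order), the correct unitary twist is $\pi^{(n)}|\cdot|^{(m-1)(n-1)/2}$. The exponent $m(n-1)/2$ that you wrote is the \emph{total} shift from the $\operatorname{rec}^T$-normalized Frobenius eigenvalues $\beta_j$ down to the unitary-normalized Satake parameters of the unitary twist, obtained by compounding the unitarizing twist $(m-1)(n-1)/2$ with the $\operatorname{rec}^T$-to-$\operatorname{rec}$ shift $(n-1)/2$. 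Your displayed inequality $q_w^{(m(n-1)-1)/2} < |\beta_j| < q_w^{(m(n-1)+1)/2}$ is nonetheless correct, but the sentence preceding it conflates these two shifts; the cleanest fix is to present the unitarizing twist separately from the normalization shift, as the paper does.
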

\begin{proof}

Choose a place $v_n|v_0$ in $F_n$ and fix $\iota:\overline{M}\hookrightarrow\C$. We are assuming that $\Sym^{n-1} \RR |_{F_n}$ is associated to a cuspidal automorphic representation $\Pi$ of $\GL_{n}(\A_{F_n})$ and $\Pi_{v_n}$ is unramified. Up to a finite order character, the determinant of our rank $n$ automorphic compatible system is given by the~$-mn(n-1)/2$th power
 of the cyclotomic character, so the central character of $\Pi$ is (again, up to a finite order Hecke character)~$| \cdot |^{n(m-1)(1-n)/2}$,
 and in particular $\Pi | \cdot |^{(m-1)(n-1)/2}$ is unitary. 
Since we know that $|\iota(\alpha_1\alpha_2)| = q_{v_0}^m$, it suffices to prove that $|\iota\alpha_i|\le q_{v_0}^{m/2}$ for $i = 1, 2$. Let $q_{v_n} = q_{v_0}^f$.
	As in the proof of \cite[Cor.\ 7.1.13]{10author}, we can apply the Jacquet--Shalika bound \cite[Cor.\ 2.5]{jsajm103}
	to deduce that 
	that $|\iota\left(
	\alpha_i^{f(n-1)}\right)|\le q_{v_n}^{((m-1)(n-1) + n)/2}$, so $|\iota\alpha_i|\le
	q_{v_0}^{m/2+1/2(n-1)}$. Letting $n$ tend to $\infty$ gives the desired bound on $|\iota\alpha_i|$.
\end{proof}

\begin{lemma}\label{lem_weakly_automorphic_and_pure}
Let $F$ be a CM number field, and let 
\[ \cR = (M, S, \{ Q_v(X) \}, \{ r_\lambda \}, \{H_\tau\}) \]
 be a very weakly compatible system of rank $n$ representations of $G_F$ which is weakly automorphic, corresponding to a regular algebraic, cuspidal automorphic representation $\pi$ of $\GL_n(\A_F)$, and pure of weight $m \in \Z$. Then $\cR$ is automorphic.
\end{lemma}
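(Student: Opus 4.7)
The plan is to verify, for each finite place $v_0 \notin S$, that $\pi_{v_0}$ is unramified and that the characteristic polynomial of $\operatorname{rec}^T_{F_{v_0}}(\pi_{v_0})(\Frob_{v_0})$ equals $\iota(Q_{v_0}(X))$. By the weak automorphy hypothesis this already holds at all but finitely many such $v_0$, so I would fix one of the exceptional places and verify the matching there.

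First, choose a place $\lambda$ of $M$ of residue characteristic $\ell$ coprime to that of $v_0$, and write $r_{\pi, \lambda}$ for the $\lambda$-adic Galois representation associated to $\pi$ (identified via $\iota$ with a representation valued in $\GL_n(\overline{M}_\lambda)$). Unramified local--global compatibility for $\pi$ combined with the weak automorphy hypothesis implies that the semisimple representations $r_\lambda$ and $r_{\pi, \lambda}^{\mathrm{ss}}$ have matching characteristic polynomials of Frobenius on a set of places of density one, so Chebotarev density together with Brauer--Nesbitt gives $r_\lambda \cong r_{\pi, \lambda}^{\mathrm{ss}}$. Writing $\rho := r_{\pi, \lambda}|_{G_{F_{v_0}}}$, we conclude that $\rho^{\mathrm{ss}}$ is unramified and that the Frobenius eigenvalues of $\rho$ are the roots of $Q_{v_0}(X)$, which by purity of $\cR$ all have absolute value $q_{v_0}^{m/2}$ under every complex embedding of $\overline{M}$.

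The key step is to upgrade this to show $\rho$ itself is unramified. By Grothendieck's monodromy theorem, the Weil--Deligne representation attached to $\rho$ has a nilpotent monodromy operator $N$ satisfying $\rho(\phi) N \rho(\phi)^{-1} = q_{v_0} N$ for a Frobenius lift $\phi$; the uniform absolute value of the Frobenius eigenvalues of $\rho$ then forces $N = 0$, since otherwise $N$ would produce a (generalized) eigenvector of eigenvalue $q_{v_0}^{-1} \alpha$ from one of eigenvalue $\alpha$. Since $\rho^{\mathrm{ss}}$ is trivial on $I_{F_{v_0}}$, the image of inertia in $\rho$ is a finite subgroup of unipotent matrices in $\GL_n(\overline{M}_\lambda)$, hence trivial. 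Thus $\rho$ is itself unramified.

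Finally, I would invoke local--global compatibility for $\pi$ at $v_0 \nmid \ell$ in Frobenius-semisimple Weil--Deligne form (available for regular algebraic cuspidal automorphic representations of $\GL_n(\A_F)$ over a CM field, as used in \cite{caraiani-newton}): this identifies the Weil--Deligne representation of $\rho$ under local Langlands with $\operatorname{rec}^T_{F_{v_0}}(\pi_{v_0})$. Since $\rho$ is unramified, so is this Weil--Deligne representation, forcing $\pi_{v_0}$ to be an unramified principal series whose Satake parameters have characteristic polynomial $\iota(Q_{v_0}(X))$. The main obstacle is citing local--global compatibility in sufficiently strong (Weil--Deligne, not merely Frobenius-trace-level) form; the substantive step of the argument itself is the purity--monodromy analysis that forces $N = 0$.
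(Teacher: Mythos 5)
Your argument is correct in spirit and identifies the right mechanism---purity forces the monodromy operator to vanish---but it applies purity on the Galois side rather than the automorphic side, which creates a dependence on a stronger local--global compatibility input than the paper actually uses. The paper's proof cites Varma's result, which gives only the matching $r_\iota(\pi)|_{W_{F_v}}^{\sss} \cong \iota^{-1} \operatorname{rec}^T_{F_v}(\pi_v)^{\sss}$ at the level of (semi-simplified) Weil-group representations, saying nothing about the monodromy operator $N$. Starting from this, the paper deduces merely that $\pi_v$ is a subquotient of the unramified principal series determined by $Q_v(X)$, and then invokes purity of $r_\lambda|_{G_{F_v}}$ to conclude that this principal series is irreducible (no Satake parameter ratio equals $q_v$), so $\pi_v$ must be the whole thing and is unramified. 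Your version instead shows that $\rho = r_{\pi,\lambda}|_{G_{F_{v_0}}}$ is itself unramified (kill $N$ by purity, then a finite group of unipotents is trivial), which is a correct argument, but then to transfer this to the automorphic side you need local--global compatibility for the full Frobenius-semi-simple Weil--Deligne representation including $N$. As you acknowledge at the end, it is not clear that this stronger input is available in the needed generality (arbitrary regular algebraic cuspidal $\pi$ over a CM field, all $v \nmid \ell$); indeed the matching of monodromy operators is usually deduced \emph{from} purity, not assumed before it. Note also that under the paper's conventions $r_\iota(\pi)$ is semi-simple, so $\rho = r_\lambda|_{G_{F_{v_0}}}$ is unramified by definition of the compatible system and your monodromy-killing step is unnecessary; the real content has to lie on the automorphic side, which is exactly where the paper puts it. If you replace your final step by the paper's irreducibility-of-principal-series argument, your purity--monodromy computation becomes a redundant detour and the proof collapses to the paper's.
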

\begin{proof}
Choose some embedding of $M$ in $\C$. By assumption, there is a finite
set $S'\supseteq S$ of finite places of $F$ such that for each $v \not
\in S'$, $\pi_v$ is unramified and
$\operatorname{rec}^T_{F_v}(\pi_v)(\Frob_v)$ has characteristic
polynomial $Q_v(X)$. We must show that this holds for all $v \not\in
S$. Choose $v \in S' - S$, a rational prime $p$ not lying under~$ v$, and an isomorphism $\iota : \overline{\Q}_p \to \C$. Let $\lambda$ denote the place of $M$ induced by $\iota^{-1}$. Then the Chebotarev density theorem implies that there is an isomorphism $r_\lambda \cong r_\iota(\pi)$. By assumption, $r_\lambda|_{G_{F_v}}$ is unramified and pure of weight $m$. By \cite[Theorem 1]{ilavarma}, there is an isomorphism $r_\iota(\pi)|_{W_{F_v}}^{\sss} \cong \iota^{-1} \operatorname{rec}^T_{F_v}(\pi_v)^{\sss}$. We deduce that $\pi_v$ is a subquotient of an unramified principal series, namely the one with Satake parameter determined by $Q_v(X)$. Since $r_\lambda|_{G_{F_v}}$ is pure, this principal series representation is irreducible and $\pi_v$ is unramified, as desired. 
\end{proof}
\begin{lemma}\label{lem_properties_of_strongly_irreducible_very_weakly_compatible_systems}
Let $F$ be a number field and let 
\[ \cR = (M, S, \{ Q_v(X) \}, \{ r_\lambda \}, \{H_\tau\}) \]
be a very weakly compatible system of rank $2$ representations of $G_F$ which is strongly irreducible. Let $\cL(\cR)$ denote the set of primes $l$ satisfying the following conditions:
\begin{enumerate}
\item $l \not\in S$ and for each place $\lambda | l$ of $M$, $r_\lambda$ is crystalline of Hodge--Tate weights $H_\tau$.
\item For each place $\lambda | l$ of $M$, $\overline{r}_\lambda(G_{\widetilde{F}})$ contains a conjugate of $\SL_2(\F_l)$, where $\widetilde{F}$ is the Galois closure of $F / \Q$.
\end{enumerate} 
Then $\cL(\cR)$ has Dirichlet density 1.
\end{lemma}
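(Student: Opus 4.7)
The plan is to dispose of conditions (1) and (2) separately. Condition (1) is automatic on a Dirichlet density~$1$ set of primes by the definition of a very weakly compatible system. For condition (2), since strong irreducibility is preserved under restriction to any open finite index subgroup of $G_F$, I may replace $F$ by $\widetilde F$ throughout and show that $\overline r_\lambda(G_{\widetilde F})$ contains a conjugate of $\SL_2(\F_l)$ for a density~$1$ set of primes~$l$.

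The main input is Dickson's classification: for $l\ge 5$, a subgroup $H\subseteq\GL_2(\overline\F_l)$ whose projective image does not contain a conjugate of $\PSL_2(\F_l)$ falls into one of three cases: $H$ is reducible, $H$ is dihedral (induced from an index~$2$ subgroup), or the projective image of $H$ is isomorphic to $A_4$, $S_4$, or $A_5$. It suffices to show that the set of primes~$l$ for which some $\lambda\mid l$ puts $\overline r_\lambda|_{G_{\widetilde F}}$ into one of these classes has Dirichlet density~$0$.

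I would treat the dihedral case first, as it is the cleanest. An induction $\overline r_\lambda|_{G_{\widetilde F}}\cong\Ind_{G_{L_\lambda}}^{G_{\widetilde F}}\overline\psi_\lambda$ produces a quadratic character $\chi_{L_\lambda/\widetilde F}:G_{\widetilde F}\to\{\pm 1\}$ unramified outside $S\cup\{v\mid l\}$. Since $\{\pm 1\}$ has order prime to~$l$, this character is tame at places above~$l$ and its inertia type is constrained by the fixed Hodge--Tate data, so after excluding a density zero set of primes one may assume $L_\lambda$ is unramified outside~$S$. As there are only finitely many such quadratic extensions~$L$ of $\widetilde F$, pigeonhole yields a fixed $L$ with $\overline r_\lambda\cong\overline r_\lambda\otimes\chi_{L/\widetilde F}$ for infinitely many~$\lambda$. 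Comparing traces at unramified Frobenii, the quantity $(1-\chi_L(\Frob_v))\cdot a_v\in M$ is divisible by infinitely many distinct primes of $M$ and so vanishes for every $v\notin S$; hence $r_\lambda\cong r_\lambda\otimes\chi_L$, so $r_\lambda|_{G_L}$ is reducible, contradicting strong irreducibility.

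The reducible and exceptional cases are handled along the same lines. In the reducible case, a residual semisimplification $\overline r_\lambda^{\sss}\cong\overline\chi_{1,\lambda}\oplus\overline\chi_{2,\lambda}$ produces characters of $G_{\widetilde F}$ whose ramification outside~$l$ is bounded by~$S$ and whose tame inertia at $v\mid l$ is cut out by the fixed Hodge--Tate cocharacter; Serre's finiteness theorem for algebraic Hecke characters of bounded infinity type and conductor then lets one, after pigeonhole, write $\overline\chi_{i,\lambda}$ as the reduction of a fixed pair of algebraic Hecke characters for a positive density set of~$\lambda$, and matching traces in~$M$ forces $r_\lambda$ to be reducible, contradicting strong irreducibility. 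In the exceptional case, the projective image has order at most~$60$, producing a Galois extension $K_\lambda/\widetilde F$ of degree at most~$60$ on which $\overline r_\lambda$ becomes scalar; after removing a density zero set of~$l$ one may take $K_\lambda$ unramified outside~$S$, pigeonhole onto a fixed~$K$, and then $\tr(\ad^0 r_\lambda(\sigma))\equiv 3\pmod\lambda$ for all $\sigma\in G_K$ and all $\lambda$ in an infinite subset; since these traces lie in a fixed number field they equal~$3$ identically, forcing $r_\lambda|_{G_K}$ to be scalar and so reducible, again contradicting strong irreducibility. The main technical obstacle is the uniform control of ramification at $v\mid l$ of the residual characters arising in the reducible case, which is exactly where the hypothesis of crystalline lifts with fixed Hodge--Tate weights enters.
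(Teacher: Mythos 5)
The paper offers no proof of this lemma; it simply cites \cite[Lemma 7.1.3]{10author}, so your sketch is a reconstruction rather than an alternative to the paper's argument. The overall shape is correct: condition~(1) holds on a density-one set by definition, one passes to $\widetilde F$, and Dickson's classification reduces the problem to showing that the reducible, dihedral, and exceptional projective images each occur only on a density-zero set of primes. Your dihedral and reducible arguments are sound in outline, and you rightly locate the technical burden in the control of ramification at $v\mid l$: once $l$ is large relative to the Hodge--Tate data and the ramification of $\widetilde F$, Fontaine--Laffaille theory determines $\overline r_\lambda|_{I_v}$ and forces the quadratic self-twist character (dihedral case), or the residual characters after twisting by a suitable power of the mod-$l$ cyclotomic character (reducible case), to be unramified above~$l$, so that the pigeonhole runs over a fixed finite set.

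The exceptional case, however, contains a genuine gap. You claim that after removing a density-zero set of $l$ the trivializing extension $K_\lambda/\widetilde F$ may be taken unramified outside~$S$. This is not justified: the projective image of $I_v$ for $v\mid l$ can be a nontrivial tame cyclic subgroup of $A_4$, $S_4$, or $A_5$, so $K_\lambda$ may be ramified at $v\mid l$, and as $l$ varies these $l$-ramified bounded-degree extensions form an infinite family, defeating the pigeonhole. The correct treatment of this case is in fact simpler and needs no pigeonhole at all: for $l$ unramified in $\widetilde F$ and large relative to the Hodge--Tate weights, the image of tame inertia at $v\mid l$ in $\PGL_2(\overline{\F}_l)$ is cyclic of order at least $(l-1)/C$ with $C$ depending only on the Hodge--Tate data, which exceeds~$5$ once $l$ is large, whereas $A_4$, $S_4$, $A_5$ contain no cyclic subgroup of order greater than~$5$; so the exceptional case simply does not occur for $l$ outside a density-zero set. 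Note that this (like your argument) tacitly assumes some $H_\tau$ has distinct entries; the degenerate case where every $H_\tau$ is a repeated integer is not addressed by your sketch and would need a separate argument against strong irreducibility.
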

\begin{proof}
The set of primes $l$ having property (1) has Dirichlet density 1, by definition of a very weakly compatible system. The lemma therefore follows from \cite[Lemma 7.1.3]{10author}.
\end{proof}

\subsection{Potential automorphy theorems}
\label{subsec_proof_of_pot_aut}

Our goal in this section is to prove Theorem~\ref{thm_pot_aut_of_powers}. The proof will
occupy the whole section, but to keep the presentation organized and
somewhat motivated, we deduce it from
Theorem~\ref{thm_pot_aut_of_powers_moderately_many_assumptions} below, which
we will in turn deduce from Proposition~\ref{thm_pot_aut_of_powers_all_the_assumptions}.
\begin{thm}\label{thm_pot_aut_of_powers}
Let $F$ be an imaginary CM number field, and let 
\[ \cR = (M, S, \{ Q_v(X) \}, \{ r_\lambda \}, \{H_\tau\}) \]
be a very weakly compatible system of rank $2$ representations of
$G_F$. Let $m \geq 1$ be an integer, and suppose that the following conditions are satisfied:
\begin{enumerate} 
\item For each $\tau$, $H_\tau = \{ 0, m \}$.
\item $\det r_\lambda = \varepsilon^{-m}$.
\item\label{last_assumption} $\cR$ is strongly irreducible. 
\end{enumerate}
Then $\cR$ is pure of weight $m$, and for each $n \geq 1$, there exists a finite CM extension $F_n / F$ such that $F_n / \Q$ is Galois and $\Sym^{n-1} \cR|_{G_{F_n}}$ is automorphic. 
\end{thm}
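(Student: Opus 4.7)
The plan is to deduce both assertions simultaneously: once we establish potential automorphy of all symmetric powers $\mathrm{Sym}^{n-1}\cR$ over (possibly varying) CM extensions, purity of $\cR$ itself follows immediately from Lemma~\ref{lem:pot aut implies purity} by letting $n \to \infty$ and using the Jacquet--Shalika bound. So the substantive content is the potential automorphy, which I would prove by a ``$p$-$q$-$r$'' switch combining the Harris tensor product trick with a careful use of the Dwork family constructed in Section~\ref{ssec:dwork} and the new automorphy lifting theorem (Theorem~\ref{thm:main_automorphy_lifting_theorem}).

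Fix $n\ge 2$. I would first choose an integer $N$ controlling the Dwork family, together with a carefully chosen prime $p$; since $\cR$ is strongly irreducible, Lemma~\ref{lem_properties_of_strongly_irreducible_very_weakly_compatible_systems} provides a density-one set of primes $\lambda$ at which $\overline{\rho}_\lambda(G_{\widetilde F})$ contains $\SL_2(\F_l)$, which we need for the Taylor--Wiles big image hypotheses of Lemmas~\ref{yeomanslemmanew} and~\ref{yeomanslemmanewvariant}. Because $\Sym^{n-1}\cR$ has Hodge--Tate weights $\{0,m,\dots,(n-1)m\}$ (which cannot be realized motivically when $m\ge 2$), I would apply the Harris tensor product trick to choose a cyclic CM extension $L/F$ and a compatible system $\cX$ of algebraic Hecke characters so that the tensor product $(\Sym^{n-1}\cR)\otimes\mathrm{Ind}^{G_F}_{G_L}\cX$ has consecutive Hodge--Tate weights at every embedding. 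The auxiliary compatible system $\cR_{\aux}$ will be extracted from the Dwork family attached to $N$, using Proposition~\ref{ordinarypoints} to produce specializations that are crystalline ordinary at $p$-adic places where $\cR$ is ordinary, and Lemma~\ref{specializations} to produce specializations whose restriction to inertia matches $\rho_{n,m,0}|_{I_{K_0}}$ at places where $\cR$ is non-ordinary (here we use $p\equiv -1\bmod N$ and Lemmas~\ref{prep}, \ref{prep2}).

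The main obstacle, and what requires the new ideas of the paper, is arranging that the tensor product $\cS_{\aux}=(\Sym^{n-1}\cR)\otimes\cR_{\aux}$ has matching local conditions at every $p$-adic place of~$F$: either ordinary or connected to $\rho_{n,m,0}$ on the appropriate crystalline weight-$0$ deformation ring. The difficulty is that a single $\rho_\lambda$ may be ordinary at one place $v\mid p$ and non-ordinary at the conjugate $v^c$, and the Dwork-family construction requires a ramified base change at $v$ that would break unramifiedness at $v^c$. The resolution is to apply Theorem~\ref{thm:main_automorphy_lifting_theorem}, which allows arbitrary ramification at $p$-adic places at the cost of restricting to weight~$0$ crystalline representations (which is why we take the weight-$0$ twist $\Sym^{n-1}\cR\otimes\mathrm{Ind}\cX$ in the first place). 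Then via the Moret-Bailly argument (Proposition~\ref{prop_variation_of_MB}) applied to the geometrically irreducible moduli schemes $T(\overline{r}_{\lambda_1},\overline{r}_{\lambda_2})$ of Definition~\ref{pqmoduli} and Proposition~\ref{monodromy}, I would find a finite CM extension $F_n/F$, Galois over $\Q$, and a Dwork specialization $t\in T_0(F_n)$ producing a compatible system that residually matches $\Sym^{n-1}\cR$ at one prime (the ``$p$''-prime) and is residually induced from a character (hence automorphic by Hecke theory) at a second prime (the ``$q$''-prime). Applying Theorem~\ref{thm:main_automorphy_lifting_theorem} twice then establishes automorphy of $\cS_{\aux}|_{G_{F_n}}$.

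To descend from automorphy of $\cS_{\aux}$ to automorphy of $\Sym^{n-1}\cR$ itself, I would perform the $r$-switch: find a third compatible system $\cR_{\CM}$, induced from an algebraic Hecke character of a quadratic CM extension (built using Lemma~\ref{fixdeterminant} and Lemma~\ref{realize}) and chosen so that $\cS_{\aux}$ and $\cS_{\CM}=(\Sym^{n-1}\cR)\otimes\cR_{\CM}$ are residually isomorphic at a third prime $r$ and lie on the same irreducible component of the local $r$-adic deformation ring at every place of~$F_n$. Applying Theorem~\ref{thm:main_automorphy_lifting_theorem} once more transfers automorphy from $\cS_{\aux}$ to $\cS_{\CM}$; since $\cR_{\CM}$ is induced, cyclic base change \cite{MR1007299} combined with the tensor product decomposition extracts automorphy of $\Sym^{n-1}\cR$ (possibly after enlarging $F_n$ further). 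Purity of $\cR$ is then a consequence via Lemma~\ref{lem:pot aut implies purity}.
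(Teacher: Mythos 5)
Your proposal follows essentially the same approach as the paper: deduce purity of $\cR$ from weak automorphy of the symmetric powers for infinitely many $n$ via the Jacquet--Shalika bound (Lemma~\ref{lem:pot aut implies purity}), and prove the potential automorphy by a ``$p$-$q$-$r$'' switch that combines the Dwork-family compatible systems, the weight-$0$ crystalline automorphy lifting theorem (Theorem~\ref{thm:main_automorphy_lifting_theorem}), the auxiliary tensoring system $\cR_{\aux}$ (needed precisely because a single induction $\Ind^{G_F}_{G_L}\cX$ cannot simultaneously match the ordinary/non-ordinary behaviour of $\cR$ at conjugate $p$-adic places), and a final descent through $\cR_{\CM}$ and cyclic base change. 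A few details of your outline differ from the paper's execution and should be tightened if you write it out: $\cR_{\CM}$ is induced from a \emph{cyclic} CM extension of degree $m$ over $F$, not a quadratic one; the rank-$m$ system $\cR_{\aux}$ should be connected locally to $\rho_{m,1,0}$ (not to $\rho_{n,m,0}$, which is the target for $\Sym^{n-1}r_\p$, while $\cS_{\UA}$ is matched to $\rho_{nm,1,0}$); and the paper does \emph{not} arrange for $\cS_{\UA}$ to be residually induced at the $q$-prime---it instead arranges $\overline{s}_{\UA,\q}\cong\Sym^{nm-1}\overline{\rho}_{A,q}$ for a well-chosen semistable elliptic curve $A/\Q$, and quotes the known automorphy of $\Sym^{nm-1}\rho_{A,q}$ (modularity plus symmetric power functoriality over totally real fields, then solvable base change) together with the Fontaine--Laffaille ALT of Miagkov--Thorne, which avoids having to verify adequacy for a dihedral residual image. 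Finally, note that the output of all these lifting arguments is \emph{weak} automorphy away from the auxiliary ramification; the upgrade to honest automorphy is via Lemma~\ref{lem_weakly_automorphic_and_pure}, which uses the purity you have already extracted.
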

\begin{proof}
  We may assume  that~$m \ge 2$, since otherwise
  the result follows from~\cite[Cor~7.1.12]{10author}. Let $v_0
  \not\in S$ be a place of $F$.   Theorem~\ref{thm_pot_aut_of_powers_moderately_many_assumptions} states that we can find, for each $n \geq 1$, a CM extension $F_n / F$, Galois over $\Q$, such that $\Sym^{n-1}\cR|_{G_{F_n}}$ is weakly automorphic of level prime to $\{ v | v_0 \}$. By Lemma~\ref{lem:pot aut implies purity}, the roots of $Q_{v_0}(X)$ are $q_{v_0}$-Weil numbers of weight
  $m$. Since $v_0 \not\in S$ is arbitrary, this shows that the compatible system $\cR$
  is pure of weight $m$. We can then apply Lemma
  \ref{lem_weakly_automorphic_and_pure} to conclude that $\Sym^{n-1} \cR|_{G_{F_n}}$ is automorphic, as required.
  \end{proof}

\begin{rem}\label{rem: why m bigger than 1}
We assume in our arguments below that~$m>1$. Our argument certainly applies
in principle to the case~$m=1$, but certain statements we make through the proof assume that~$m \ge 2$,
and so this assumption avoids having to make  the necessary extra remarks to cover the case~$m=1$.
Moreover, our argument in the case~$m=1$ would involve tensoring~$\cR$ with auxiliary~$1$-dimensional
representations, and not so surprisingly can be simplified to the point where 
it becomes very similar to the proof of~\cite[Cor~7.1.12]{10author}.
\end{rem}

Before giving our first technical result towards the proof of Theorem~\ref{thm_pot_aut_of_powers_moderately_many_assumptions} (and hence Theorem \ref{thm_pot_aut_of_powers} above), we sketch the idea of the proof. We begin with the strongly irreducible, very weakly compatible system $\cR$ of rank 2 and parallel Hodge--Tate weights $\{0, m \}$, and wish to show that $\Sym^{n-1} \cR$ is potentially (weakly) automorphic. This presents difficulties since the compatible system $\Sym^{n-1} \cR$ has parallel Hodge--Tate weights $\{ 0, m, 2m, \dots, (n-1) m\}$, while the auxiliary motives that we can construct to show potential automorphy have consecutive (and parallel) Hodge--Tate weights (and moreover, our automorphy lifting theorem Theorem \ref{thm:main_automorphy_lifting_theorem} applies only to Galois representations with consecutive Hodge--Tate weights). To get around this, we construct auxiliary compatible systems as follows:
\begin{itemize}
\item An auxiliary compatible system $\cR_{\aux}$ of rank $m$ and with consecutive (and parallel) Hodge--Tate weights $\{ 0, 1, \dots, m-1 \}$. Then $(\Sym^{n-1} \cR) \otimes \cR_{\aux}$ has rank $nm$ and consecutive (and parallel) Hodge--Tate weights $\{ 0, 1, \dots, nm-1 \}$. 
\item A second auxiliary compatible system $\cR_{\CM}$ of rank $m$ and with consecutive (and parallel) Hodge--Tate weights $\{ 0, 1, \dots, m-1 \}$ which is moreover induced from a character. 
\item A third auxiliary compatible system $\cS_{\UA}$ of rank $nm$ with consecutive (and parallel) Hodge--Tate weights $\{ 0, 1, \dots, mn-1 \}$, and which is moreover automorphic. We will construct $\cS_{\UA}$ 
(and $\cR_{\aux}$) as a member of the families of motives considered in \S \ref{ssec:dwork}. (The subscript `UA' stands for `universally automorphic'.)
\end{itemize}
These are  chosen to behave well with respect to distinct primes $p, r$ as follows:
\begin{itemize}
\item There is a congruence modulo $p$ linking $\cS_{\aux} := ( \Sym^{n-1} \cR ) \otimes \cR_{\aux}$ and $\cS_{\UA}$. We will apply Theorem \ref{thm:main_automorphy_lifting_theorem} to conclude that $\cS_{\aux}$  is automorphic.
\item There is a congruence modulo $r$ linking~$\cR_{\aux}$ and~$\cR_{\CM}$, and therefore also linking ~$\cS_{\aux}$ and~$\cS_{\CM} := ( \Sym^{n-1} \cR ) \otimes \cR_{\CM}$. We will apply Theorem \ref{thm:main_automorphy_lifting_theorem} a second time to conclude that $\cS_{\CM}$ is automorphic.
\item Since $\cR_{\CM}$ is induced from a Hecke character, $\cS_{\CM}$ is also induced (from an $n$-dimensional compatible system).
We will then be able to apply the description of the image of automorphic induction given in \cite{MR1007299} to conclude that $\Sym^{n-1} \cR$ is itself automorphic.
\end{itemize}
The most significant conditions that must be satisfied to apply
Theorem \ref{thm:main_automorphy_lifting_theorem} in each case are the
non-degeneracy of the residual images and the `connects' relation
locally at the $p$-adic (resp.\ $r$-adic) places of $F$. The non-degeneracy of the residual images will be easy to arrange by careful choice of data. It is the `connects' relation that is more serious and imposes the circuitous route followed here to prove the theorem. 

The statement of Proposition
\ref{thm_pot_aut_of_powers_all_the_assumptions} below is long but is
merely a precise formulation of the properties required of the various
auxiliary compatible systems needed to carry out the above sketch. The
main point in the proof of
Theorem~\ref{thm_pot_aut_of_powers_moderately_many_assumptions} will
be to show how to construct auxiliary compatible systems with these properties.

\begin{prop}\label{thm_pot_aut_of_powers_all_the_assumptions}
Let $F$ be an imaginary CM number field, let $m \ge 2$ and~$n\ge 1$ be integers, and let $X_0$ be a finite set of finite places of $F$. Let 
\[ \cR = (M, S, \{ Q_v(X) \}, \{ r_\lambda \}, \{H_\tau\}) \]
be a very weakly compatible system of rank $2$ representations of
$G_F$ satisfying the following conditions:
\begin{enumerate} 
\item\label{ass_HT} For each $\tau$, $H_\tau = \{ 0, m \}$.
\item $\det r_\lambda = \varepsilon^{-m}$.
\item\label{last_assumption_second} $\cR$ is strongly irreducible. 
\item $X_0 \cap S = \emptyset$.
\item\label{ass_galois} $F / \Q$ is Galois and contains an imaginary quadratic field $F_0$.
\end{enumerate}
We fix an embedding $M \hookrightarrow \C$, and regard $M$ as a subfield of $\C$. Suppose we can find the following additional data:
\begin{enumerate}
\setcounter{enumi}{5}
\item A cyclic totally real extension $E / \Q$ of degree $m$, linearly disjoint from $F$, and a character  $\Psi : \A_L^\times \to M^\times$, where $L = E \compositum F$, satisfying the following conditions:
\begin{enumerate} \item There is an embedding $\tau_0 : F_0 \to \C$, and a labelling $\tau_1, \dots,
  \tau_m : E \compositum F_0 \to \C$ of the embeddings $E
  \compositum F_0 \to \C$ 
   which extend $\tau_0$ such that for each
  $\alpha \in L^\times$, we have
  \[ \Psi(\alpha) = \prod_{i=1}^m \tau_i(\mathbf{N}_{L / E \compositum F_0}(\alpha))^{m-i}  c \tau_i(\mathbf{N}_{L / E \compositum F_0}(\alpha))^{i-1}. \]
  \end{enumerate}
We let $\{ \Psi_\lambda \}$ denote the weakly compatible system associated to $\Psi$, and let 
\[ \cR_{\CM} = \{ \Ind_{G_L}^{G_F} \Psi_\lambda \} = (M, S_{\CM}, \{ Q_{\CM, v }(X) \}, \{ r_{\CM, \lambda} \} , \{ H_{\CM, \tau} \}) \]
denote the induced weakly compatible system. \emph{(}Then $H_{\CM, \tau} = \{ 0, 1, \dots, m-1 \}$ for all $\tau$, and we take $S_{\CM}$ to be the set of places of $F$ ramified in $L$ or above which $\Psi$ is ramified.\emph{)}
\begin{enumerate}
\item[(b)] For all $\lambda$, $\det r_{\CM, \lambda} = \varepsilon^{-m(m-1)/2}$.
\end{enumerate}
\item Distinct primes $p, r$, not dividing any place of $S$, and places $\p, \r$ of $M$ lying above them. 
\item A weakly compatible system of rank $m$ representations of $G_F$
  \[ \cR_{\aux} = (M, S_{\aux}, \{ Q_{\aux, v}(X) \}, \{ r_{\aux, \lambda} \}, \{ H_{\aux, \tau} \}), \]
  satisfying the following conditions:
  \begin{enumerate}
  \item $\cR_{\aux}$ is pure of weight $m-1$ and $S_{\aux}$ does not intersect $X_0 \cup \{ v | pr \}$.
  \item For all $\lambda$, $\det r_{\aux, \lambda} = \varepsilon^{-m(m-1)/2}$.  For all $\tau$, $H_{\aux, \tau} = \{ 0, 1, \dots, m-1 \}$.
    \end{enumerate}
    \item A weakly compatible system %
\[ \cS_{\UA} = ( M, S_{\UA}, \{ Q_{\UA, v}(X) \}, \{ s_{\UA, \lambda} \}, \{ H_{\UA, \tau} \}) \]
of rank $nm$ representations of $G_F$, satisfying the following conditions:
\begin{enumerate} 
\item $\cS_{\UA}$ is pure of weight $nm-1$ and $S_{\UA}$ does not intersect $X_0 \cup  \{ v | pr \}$.
\item For all $\lambda$, $\det s_{\UA, \lambda} = \varepsilon^{-nm(nm-1)/2}$.  For all $\tau$, $H_{\UA, \tau} = \{ 0, 1, \dots, n m-1 \}$.

\item $\cS_{\UA}$ is weakly automorphic of level prime to $X_0 \cup \{ v | pr \}$.
\end{enumerate} 
\end{enumerate} 
Let~$\cS_{\aux} = \{ s_{\aux, \lambda} \} = (\Sym^{n-1} \cR) \otimes \cR_{\aux}$ and~$\cS_{\CM}
= \{ s_{\CM, \lambda} \} = (\Sym^{n-1} \cR) \otimes \cR_{\CM}$. These compatible systems of rank $nm$ have coefficients in the number field $M$.
Suppose that these data satisfy the following additional conditions:
\begin{enumerate}
\setcounter{enumi}{9}
\item $L/F$ is unramified at $X_0 \cup \{ v | pr \}$, and $\Psi$ is unramified at the places of $L$ lying above $X_0\cup \{ v | pr \}$. \emph{(}Then $S_\CM \cap (X_0 \cup \{ v | p r \}) = \emptyset.$\emph{)}
\item $p > 2 n m + 1$, and $[F(\zeta_p) : F] = p-1$.
\item\label{623cond:r} $r  > 2 n m + 1$, $r$ splits completely in $E \compositum F_0$, and $[L(\zeta_r) : L] = r-1$.
\item\label{623cond:sandwiches} Up to conjugation, there are sandwiches
 \[ \SL_2(\F_p) \leq \overline{r}_\p(G_F) \leq \GL_2(\F_p) \]
 and
 \[ \SL_2(\F_r) \leq \overline{r}_\r(G_F) \leq \GL_2(\F_r). \]
 If $m > 2$ then the image $\overline{r}_{\aux, \p}(G_F)$ is a conjugate of $\operatorname{GU}_m(\F_{p^2})$ and $\overline{r}_{\aux, \p}$ has multiplier character
    $\overline{\varepsilon}^{1-m}$. If $m = 2$ then the image $\overline{r}_{\aux, \p}(G_F)$ is a conjugate of    $\GL_2(\F_p)$. The representation $\overline{r}_{\CM, \r}|_{G_{F(\zeta_r)}}$ is irreducible. If $m = 2$, then the extensions of $F(\zeta_p)$ cut out by the projective representations associated to $\overline{r}_\p|_{G_{F(\zeta_p)}}$ and $\overline{r}_{\aux, \p}|_{G_{F(\zeta_p)}}$ are linearly disjoint. 
    \item There are isomorphisms $\overline{s}_{\UA, \p} \cong \overline{s}_{\aux, \p}$ and $\overline{r}_{\aux, \r} \cong \overline{r}_{\CM, \r}$.
    \item There is a decomposition $S_p = \Sigma^{\textrm{ord}} \sqcup \Sigma^{\textrm{ss}}$ of the set $S_p$ of $p$-adic places of $F$ such that for each place $v | p$ of $F$, $F_v$ contains $\Q_{p^2}$, $\overline{r}_\p|_{G_{F_v}}$ and $\rhobar_{2, m, 0}|_{G_{F_v}}$ (cf.~Definition \ref{df: defn of rho0}) are trivial, and: \begin{enumerate}
\item if $v \in \Sigma^{\textrm{ord}}$, then $r_{\p}|_{G_{F_v}}$ is crystalline ordinary;
\item if $v \in \Sigma^{\textrm{ss}}$, then $r_{\p}|_{G_{F_v}} \sim \rho_{2, m, 0}|_{G_{F_v}}$. 
 \end{enumerate} 
  \item If $v \in \Sigma^{\textrm{ord}}$, then $\overline{r}_{\aux, \p}|_{G_{F_v}}$ is trivial and $r_{\aux, \p}|_{G_{F_v}}$ and $s_{\UA, \p}|_{G_{F_v}}$ are both crystalline ordinary. If $v \in \Sigma^{\textrm{ss}}$, then $\overline{r}_{\aux, \p}|_{G_{F_v}}$ is trivial, $r_{\aux, \p}|_{G_{F_v}}$ and $s_{\UA, \p}|_{G_{F_v}}$ are both crystalline, and $r_{\aux, \p}|_{G_{F_v}} \sim \rho_{m, 1, 0}|_{G_{F_v}}$ and $s_{\UA, \p}|_{G_{F_v}} \sim  \rho_{nm, 1, 0}|_{G_{F_v}}$.
 
 \item\label{ass_CM_triv}  For each place $v | r$ of $F$, $\overline{r}_{\aux,\r}|_{G_{F_v}} \cong \overline{r}_{\CM, \r}|_{G_{F_v}}$ is trivial and  $r_{\aux,\r}|_{G_{F_v}}$ is crystalline ordinary. 
\end{enumerate} 
Then $\Sym^{n-1} \cR$ is weakly automorphic of level prime to~$X_0$. 
\end{prop}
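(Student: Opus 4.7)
The plan is to apply Theorem~\ref{thm:main_automorphy_lifting_theorem} twice (first to $s_{\aux,\p}$ using the congruence to $s_{\UA,\p}$, then to $s_{\CM,\r}$ using the congruence to $s_{\aux,\r}$) and finally descend from the weak automorphy of $\cS_{\CM}$ to that of $\Sym^{n-1}\cR$ via automorphic induction and cyclic base change from~\cite{MR1007299}. Every hypothesis of the proposition has been engineered to make this strategy work with only local bookkeeping: conditions on $\cR_{\aux}$, $\cR_{\CM}$, and $\cS_{\UA}$ supply the necessary global congruences and local Hodge--Tate data; the image constraint (\ref{623cond:sandwiches}) supplies the Taylor--Wiles residual image; and conditions (10)--(\ref{ass_CM_triv}) supply the correct local behavior at the $p$- and $r$-adic places.

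For the first application of Theorem~\ref{thm:main_automorphy_lifting_theorem}, applied to $s_{\aux,\p}$ as a lift of $\overline{s}_{\aux,\p}\cong\overline{s}_{\UA,\p}$, the Hodge--Tate weights are $\{0,1,\dots,nm-1\}$ at every embedding (pairwise sums of the weights of the two tensor factors), which is weight~$0$ in the sense of the lifting theorem. The Taylor--Wiles big image hypotheses follow from Lemma~\ref{yeomanslemmanew}, applied with $\overline{r}_A=\overline{r}_\p$ and $\overline{r}_B=\overline{r}_{\aux,\p}$, whose hypotheses are guaranteed by~(\ref{623cond:sandwiches}). The local connectedness at each $v\mid p$ is checked case by case: for $v\in\Sigma^{\mathrm{ord}}$, both tensor factors are crystalline ordinary with trivial residual, so $s_{\aux,\p}|_{G_{F_v}}$ and $s_{\UA,\p}|_{G_{F_v}}$ are crystalline ordinary with trivial residual and connect by Lemma~\ref{prep2}; for $v\in\Sigma^{\mathrm{ss}}$, a direct computation of inertial characters gives
\[ \Sym^{n-1}\rho_{2,m,0}\otimes\rho_{m,1,0}=\rho_{nm,1,0}, \]
so both $s_{\aux,\p}|_{G_{F_v}}$ and $s_{\UA,\p}|_{G_{F_v}}$ connect to $\rho_{nm,1,0}|_{G_{F_v}}$. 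This shows $\cS_{\aux}$ is weakly automorphic of level prime to $X_0$ (the $X_0$-unramifiedness is preserved since $\cR$ and $\cR_{\aux}$ are both unramified there).

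For the second application, to $s_{\CM,\r}$ as a lift of $\overline{s}_{\CM,\r}\cong\overline{s}_{\aux,\r}$, the residual big image hypotheses are supplied by Lemma~\ref{yeomanslemmanewvariant} with $\overline{r}_A=\overline{r}_\r$ and $\overline{r}_B=\overline{r}_{\CM,\r}=\Ind_{G_L}^{G_F}\overline{\Psi}_\r$, using the irreducibility of $\overline{r}_{\CM,\r}|_{G_{F(\zeta_r)}}$ from~(\ref{623cond:sandwiches}). The main subtlety is the local connectedness at $v\mid r$. Condition~(\ref{ass_CM_triv}) gives that $\overline{r}_{\aux,\r}|_{G_{F_v}}\cong\overline{r}_{\CM,\r}|_{G_{F_v}}$ is trivial and that $r_{\aux,\r}|_{G_{F_v}}$ is crystalline ordinary. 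Since~(\ref{623cond:r}) forces $r$ to split completely in $E\compositum F_0$ and $L/F$ is unramified at $r$, every $v\mid r$ of $F$ splits completely in $L$, so $r_{\CM,\r}|_{G_{F_v}}=\bigoplus_{w\mid v}\Psi_{\r,w}$ is a direct sum of crystalline characters whose Hodge--Tate weights exhaust $\{0,1,\dots,m-1\}$; such a representation is automatically crystalline ordinary of weight~$0$. Lemma~\ref{prep2} then gives $r_{\aux,\r}|_{G_{F_v}}\sim r_{\CM,\r}|_{G_{F_v}}$, and since tensoring with the common factor $\Sym^{n-1}r_\r|_{G_{F_v}}$ preserves components of the crystalline deformation ring, this yields $s_{\aux,\r}|_{G_{F_v}}\sim s_{\CM,\r}|_{G_{F_v}}$. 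Thus $\cS_{\CM}$ is weakly automorphic of level prime to $X_0$.

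Finally, the projection formula gives the compatible system identity $\cS_{\CM}=\Ind_{G_L}^{G_F}\!\bigl((\Sym^{n-1}\cR)|_L\otimes\{\Psi_\lambda\}\bigr)$. By the theory of automorphic induction and cyclic base change of~\cite{MR1007299}, the weak automorphy of $\cS_{\CM}$ over $F$ implies the weak automorphy of $(\Sym^{n-1}\cR)|_L\otimes\{\Psi_\lambda\}$ over $L$; twisting by the Hecke character attached to $\Psi^{-1}$ (unramified at all places above $X_0$) gives the weak automorphy of $(\Sym^{n-1}\cR)|_L$. The corresponding automorphic representation of $\GL_n(\A_L)$ is $\Gal(L/F)$-invariant (its Satake parameters at almost all places coincide with those of a compatible system defined over $F$), so cyclic base change produces a descent to an automorphic representation of $\GL_n(\A_F)$ that matches $\Sym^{n-1}\cR$ at almost all places and is unramified at every $v\in X_0$. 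This completes the proof; the principal obstacle overcome here is the verification of local connectedness after tensoring with the common factor $\Sym^{n-1}\cR$, which is made possible by the careful control of the shapes of $r_{\aux,\p}$, $r_{\aux,\r}$, and $r_{\CM,\r}$ built into the hypotheses.
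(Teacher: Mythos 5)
Your proposal is correct and follows the same three-step strategy as the paper's proof: (i) apply Theorem~\ref{thm:main_automorphy_lifting_theorem} to $s_{\aux,\p}$ with $s_{\UA,\p}$ as the known-automorphic lift, (ii) apply it again to $s_{\CM,\r}$ with $s_{\aux,\r}$ as the known-automorphic lift, and (iii) descend via the projection formula, automorphic induction, and cyclic base change. The local connectedness checks at $\Sigma^{\mathrm{ord}}$ (Lemma~\ref{prep2}), at $\Sigma^{\mathrm{ss}}$ (the explicit identity $\rho_{n,m,0}\otimes\rho_{m,1,0}\cong\rho_{nm,1,0}$), and at $v\mid r$ (Lemma~\ref{prep2} plus preservation of $\sim$ under tensoring with a common factor, cf.\ \cite[p.\ 530, (5)]{BLGGT}), and the big-image checks via Lemmas~\ref{yeomanslemmanew} and~\ref{yeomanslemmanewvariant}, are all as in the paper.

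Two points of bookkeeping are stated slightly loosely. First, the output of the first application should be that $\cS_{\aux}$ is weakly automorphic of level prime to $X_0\cup\{v\mid r\}$, not just $X_0$; the $r$-unramifiedness is what allows the automorphic representation attached to $\cS_{\aux}$ to serve as the `$\pi$' in the second invocation of the lifting theorem (where $p$ is replaced by $r$, and hypothesis (5c) requires $\pi_v$ unramified for $v\mid r$). Your reasoning already delivers this, since $\cR_{\aux}$ and $\cS_{\UA}$ are unramified at $\{v\mid r\}$, but it is worth saying explicitly. Second, the assertion that a direct sum of crystalline characters whose Hodge--Tate weights exhaust $\{0,\dots,m-1\}$ is `automatically crystalline ordinary of weight $0$' needs the additional fact — which does hold here because of the prescribed infinity type of $\Psi$ in hypothesis (6a) and because $r$ splits completely in $E\compositum F_0$ — that each summand has a single HT weight, constant across all embeddings $F_v\hookrightarrow\overline{M}_\r$; the paper makes the corresponding computation of the characters $\alpha_i$ explicit. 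Neither point is a genuine gap, only an omission of detail.
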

\begin{proof}
We first show that $\cS_{\aux}$ is weakly automorphic of level prime to $X_0 \cup \{ v | r \}$ by applying
Theorem \ref{thm:main_automorphy_lifting_theorem} to $s_{\aux,
  \p}$. To justify this, we need to check that $\overline{s}_{\aux,
  \p} \cong \overline{s}_{\UA, \p}$ satisfies the Taylor--Wiles
conditions (as formulated in Definition \ref{tw}) and that for each
place $v | p$ of $F$, we have $s_{\aux, \p}|_{G_{F_v}} \sim s_{\UA,
  \p}|_{G_{F_v}}$.  The Taylor--Wiles conditions hold by assumption (\ref{623cond:sandwiches}) and Lemma
\ref{yeomanslemmanew}. %
If $v \in \Sigma^{\mathrm{ord}}$, then $\overline{s}_{\aux, \p}|_{G_{F_v}}$ is trivial, and both $s_{\aux, \p} \cong (\Sym^{n-1} r_\p) \otimes r_{\aux, \p}|_{G_{F_v}}$ and $s_{\UA, \p}|_{G_{F_v}}$ are crystalline ordinary, so Lemma \ref{prep2} implies that $s_{\aux, \p}|_{G_{F_v}} \sim s_{\UA, \p}|_{G_{F_v}}$. If $v \in \Sigma^{{\mathrm{ss}}}$, then $\overline{s}_{\aux, \p}|_{G_{F_v}}$ is trivial and our assumptions imply that $\Sym^{n-1} r_\p|_{G_{F_v}} \sim \rho_{n,m,0}|_{G_{F_v}}$, $r_{\aux, \p}|_{G_{F_v}} \sim \rho_{m, 1, 0}|_{G_{F_v}}$ and $s_{\UA, \p}|_{G_{F_v}} \sim \rho_{nm, 1, 0}|_{G_{F_v}}$, hence
\[ s_{\aux, \p}|_{G_{F_v}} \sim \rho_{n,m,0}|_{G_{F_v}} \otimes \rho_{m, 1, 0}|_{G_{F_v}} \cong \rho_{nm, 1, 0}|_{G_{F_v}} \sim s_{\UA, \p}|_{G_{F_v}}. \]
Therefore $\cS_{\aux}$ is weakly automorphic of level prime to $X_0 \cup \{ v | r \}$.

We next show that $\cS_{\CM}$ is weakly automorphic of level prime to~$X_0$ by applying Theorem \ref{thm:main_automorphy_lifting_theorem} to $s_{\CM, \r}$. The Taylor--Wiles conditions for $\overline{s}_{\CM, \r} \cong \overline{s}_{\aux, \r}$ hold by assumption (\ref{623cond:sandwiches}) and Lemma \ref{yeomanslemmanewvariant}. To check the connectedness conditions, let $v | r$ be a place of $F$. Then $\overline{r}_{\CM, \r}|_{G_{F_v}} \cong \overline{r}_{\aux, \r}|_{G_{F_v}}$ is trivial and $r_{\aux, \r}|_{G_{F_v}}$ is crystalline ordinary, by assumption (\ref{ass_CM_triv}). Since $r$ splits completely in $E$ by assumption (\ref{623cond:r}), $v $ splits completely  in $L$, and we can label the places $w_i | v$ so that $w_i|_E$ is the place induced by the embedding $\tau_i$. There is an isomorphism
\[ r_{\CM, \r}|_{G_{F_v}} \cong \oplus_{w | v} \alpha_{i}, \]
where for each $i = 1, \dots, m$, $\alpha_{i} : G_{F_{v}} \to \overline{M}_\r^\times$ is a continuous character with the property that for any $u \in \cO_{F_v}^\times$, we have
\[ \alpha_{ i}( \Art_{F_v}(u) ) = \prod_{\substack{ \tau \in \Hom(L_{w_i}, \overline{M}_\r) \\ \tau|_{F_0} = \tau_0}} \tau(u)^{-(m-i)}  \]
if $v$ lies above the place of $F_0$ induced by $\tau_0$, and
\[ \alpha_{w, i}( \Art_{F_v}(u) ) = \prod_{\substack{ \tau \in \Hom(L_{w_i}, \overline{M}_\r) \\ \tau|_{F_0} = c \tau_0}} \tau(u)^{-(i-1)} \]
otherwise. It follows that $r_{\CM, \r}|_{G_{F_v}}$ is also
crystalline ordinary, with Hodge--Tate weights $\{0,\ldots,m-1\}$ matching those of $r_{\aux, \r}|_{G_{F_v}}$. By Lemma \ref{prep2}, we have $r_{\CM,
  \r}|_{G_{F_v}} \sim r_{\aux, \r}|_{G_{F_v}}$, and using \cite[p. 530,
(5)]{BLGGT} it follows that
\[ s_{\CM, \r}|_{G_{F_v}} = \Sym^{n-1} r_{\r}|_{G_{F_v}} \otimes r_{\CM, \r}|_{G_{F_v}}  \sim  \Sym^{n-1} r_{\r}|_{G_{F_v}} \otimes  r_{\aux, \r}|_{G_{F_v}} = s_{\aux, \r}|_{G_{F_v}}. \]

We can now show that $\Sym^{n-1} \cR$ is weakly automorphic of level prime to $X_0$.  Let $\pi$ be the regular algebraic, cuspidal automorphic representation of $\GL_{nm}(\A_F)$ which is associated to the compatible system $\cS_{\CM}$. By construction, $\pi$ is unramified at~$X_0$. Let $\eta : F^\times \backslash \A_F^\times \to \C^\times$ be the character of order $m$ associated to the inducing field $L / F$ of $\cR_{\CM}$. Then $\pi \cong \pi \otimes (\eta \circ \det)$, so by cyclic base change~\cite[Ch.\ 3, Thm~4.2]{MR1007299}, we deduce that~$\pi$ is the induction
of a cuspidal automorphic representation~$\Pi$ for~$\GL_{n}(\A_{L})$, which by consideration of the infinity type of~$\pi$ must also be regular algebraic. More precisely, for any place $w$ of $L$ lying above a place $v$ of $F$, we have
\[ \operatorname{rec}_{F_v}(\pi)|_{W_{L_w}} = \oplus_{i=0}^{m-1} \operatorname{rec}_{L_w}(\Pi^{\sigma^i}), \]
where $\sigma$ is a generator for $\Gal(L / F)$. Since $L$ is CM and $\Pi$ is regular algebraic, $\Pi$ has an associated compatible system of $l$-adic Galois representations. If $l$ is a prime and $\iota : \overline{\Q}_l \to \C$ is an isomorphism, with $\iota^{-1}$ inducing the place $\lambda$ of $M$, then we find
\[ r_\iota(\pi)|_{G_L} = \oplus_{i=0}^{m-1} \Sym^{n-1} r_\lambda|_{G_L} \otimes \Psi_\lambda^{\sigma^{i}} \cong \oplus_{i=0}^{m-1} r_\iota(\Pi^{\sigma^i}). \]
Choosing $\lambda$ so that $\Sym^{n-1} r_\lambda$ is irreducible
(e.g. $\lambda = \p$), we find that $\Sym^{n-1} r_\lambda|_{G_L}$ is a
character twist of $r_\iota(\Pi)$. Undoing the twist and making cyclic
descent (using the irreducibility of $\Sym^{n-1} r_\lambda|_{G_L}$, as
in \cite[Proposition 6.5.13]{10author}) shows that $\Sym^{n-1} \cR$ is
weakly automorphic over $F$ of level prime to $X_0$, as desired. 
\end{proof}
The next theorem is proved by constructing the data required by Proposition \ref{thm_pot_aut_of_powers_all_the_assumptions} (after possibly extending the base field $F$).
\begin{thm}\label{thm_pot_aut_of_powers_moderately_many_assumptions}
Let $F$ be an imaginary CM number field, and let 
\[ \cR = (M, S, \{ Q_v(X) \}, \{ r_\lambda \}, \{H_\tau\}) \]
be a very weakly compatible system of rank $2$ representations of
$G_F$. Let $m \ge 2$ be an integer, and suppose that the following conditions are satisfied:
\begin{enumerate} 
\item For each $\tau$, $H_\tau = \{ 0, m \}$.
\item $\det r_\lambda = \varepsilon^{-m}$.
\item\label{last_assumption_third} $\cR$ is strongly irreducible. 
\end{enumerate}
Let $v_0 \not\in S$ be a place of $F$. Then
for each $n \geq 1$, there is a CM extension $F_n/ F$, Galois over $\Q$, such that $\Sym^{n-1} \cR|_{G_{F_n}}$ is weakly automorphic of level prime to $\{v|v_0\}$. 
\end{thm}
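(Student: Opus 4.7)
The plan is to deduce the theorem from Proposition~\ref{thm_pot_aut_of_powers_all_the_assumptions} by constructing, after a suitable CM base change $F_n/F$ that is Galois over $\Q$, all the auxiliary data required there: the primes $p,r$, the cyclic totally real extension $E/\Q$ of degree $m$, the Hecke character $\Psi$ on $L=EF_n$, and the compatible systems $\cR_{\aux}$ and $\cS_{\UA}$, together with the decomposition $S_p=\Sigma^{\mathrm{ord}}\sqcup\Sigma^{\mathrm{ss}}$ of the $p$-adic places of $F_n$. Throughout, condition~(\ref{last_assumption_third}) and Lemma~\ref{lem_properties_of_strongly_irreducible_very_weakly_compatible_systems} ensure that at a density-one set of primes $\ell$ the residual representation $\overline{r}_\lambda$ has the large image $\SL_2(\F_\ell)\subseteq\overline{r}_\lambda(G_{\widetilde F})$, which we exploit to pick $p,r$.

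First I would choose the primes. Fix a large odd integer $N>100nm+100$ coprime to~$nm$. Using Lemma~\ref{lem_properties_of_strongly_irreducible_very_weakly_compatible_systems} and the Chebotarev density theorem, I select a prime $p>2nm+1$ with $p\equiv -1\pmod N$ and $\overline{r}_\p(G_{\widetilde F})\supseteq\SL_2(\F_p)$, so that one can realize $\rho_{n,m,0}$ and $\rho_{nm,1,0}$ inside a suitably twisted Dwork motive (cf.\ Lemma~\ref{specializations} and the use of $\GU_{nm}(\F_{p^2})$ in Proposition~\ref{monodromy}); and a prime $r>2nm+1$ with $r\equiv 1\pmod N$ and $\overline{r}_\r(G_{\widetilde F})\supseteq\SL_2(\F_r)$, so that Dwork motives at $r$ are $\GL$-valued. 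I then base change to a CM Galois extension $F_n/F$ containing an imaginary quadratic subfield, such that $F_n$ contains $\Q_{p^2}$ at every $p$-adic place, $\overline{r}_\p|_{G_{F_{n,v}}}$ and $\rho_{2,m,0}|_{G_{F_{n,v}}}$ are trivial for every $v\mid p$, and $\overline{r}_\r|_{G_{F_{n,v}}}$ is trivial for every $v\mid r$. At this stage I also fix the partition $S_p=\Sigma^{\mathrm{ord}}\sqcup\Sigma^{\mathrm{ss}}$ freely; by Lemma~\ref{restricttw} the Taylor--Wiles big-image conditions are preserved under such base change.

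Next I build $\cR_{\aux}$ and $\cS_{\UA}$ from the Dwork family of Section~\ref{ssec:dwork} (with the integer $n$ there taken to be $m$ and $nm$ respectively). For each, the strategy is to apply Moret-Bailly (Proposition~\ref{prop_variation_of_MB}) to the moduli space $T(\{\overline U_{\lambda_i}\})$ of Definition~\ref{pqmoduli}, which is geometrically irreducible by Proposition~\ref{monodromy}. The prescribed residual data at $\p$ and $\r$ is what forces $\overline{s}_{\UA,\p}\cong\overline{s}_{\aux,\p}$ and $\overline{r}_{\aux,\r}\cong\overline{r}_{\CM,\r}$. The local conditions imposed are: at places in $X_0\cup\{v_0\}$, an unramified good-reduction condition; at $v\in\Sigma^{\mathrm{ord}}$, ordinary crystalline specializations (available by Proposition~\ref{ordinarypoints}); at $v\in\Sigma^{\mathrm{ss}}$, a specialization close to $t=0$ whose residual inertia realizes $\rho_{nm,1,0}|_{I_{F_{n,v}}}$ respectively $\rho_{m,1,0}|_{I_{F_{n,v}}}$ (Lemma~\ref{specializations} supplies such a specialization because $p\equiv -1\pmod N$); at $v\mid r$, ordinary crystalline specializations. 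Lemma~\ref{prep} combined with further unramified base change inside $F_n$ upgrades ``equal on inertia'' to ``connects'' in the sense of \cite[\S1.4]{BLGGT}, as required by Proposition~\ref{thm_pot_aut_of_powers_all_the_assumptions}. The character $\Psi$ (and hence $\cR_{\CM}$) is then constructed as an algebraic Hecke character on $L=EF_n$ of the prescribed infinity type, unramified at places above $X_0\cup\{v|pr\}$, with determinant matched via Lemma~\ref{fixdeterminant} (applied to the ratio of determinants of $\overline{r}_{\aux,\r}$ and $\mathrm{Ind}\,\overline{\psi}$, after possibly a further CM base change absorbed into $F_n$). Finally Lemmas~\ref{yeomanslemmanew} and~\ref{yeomanslemmanewvariant} verify the big-image hypotheses (\ref{623cond:sandwiches}).

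The main obstacle is the simultaneous coordination of all these local-global choices: the residual representations $\overline{s}_{\UA,\p}$ and $\overline{s}_{\aux,\p}$ must agree as $\GU_{nm}(\F_{p^2})$-valued representations, while $\overline{r}_{\aux,\r}$ and $\overline{r}_{\CM,\r}$ must agree as $\GL_m(\F_r)$-valued representations, and at each $p$-adic (resp.\ $r$-adic) place the relevant characteristic-zero representations must lie on the same component of the appropriate weight-$0$ crystalline lifting ring. The Taylor--Wiles-style application of Theorem~\ref{thm:main_automorphy_lifting_theorem} inside Proposition~\ref{thm_pot_aut_of_powers_all_the_assumptions} then gives the automorphy of $\cS_{\aux}$ (via $\cS_{\UA}$) and of $\cS_{\CM}$ (via $\cS_{\aux}$); automorphic induction from $L$ down to $F_n$, applied to $\cS_{\CM}=(\Sym^{n-1}\cR)\otimes\Ind^{G_{F_n}}_{G_L}\Psi$, yields weak automorphy of $\Sym^{n-1}\cR|_{G_{F_n}}$ of level prime to $\{v|v_0\}$, which is the desired conclusion.
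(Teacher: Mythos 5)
There are two genuine gaps in your construction, both at points where the argument is not a matter of routine bookkeeping.

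First, you write that after base changing you ``fix the partition $S_p=\Sigma^{\mathrm{ord}}\sqcup\Sigma^{\mathrm{ss}}$ freely.'' This cannot be a free choice: assumption (15) of Proposition~\ref{thm_pot_aut_of_powers_all_the_assumptions} requires $r_\p|_{G_{F_v}}$ to be crystalline ordinary for $v\in\Sigma^{\mathrm{ord}}$ and to connect to $\rho_{2,m,0}|_{G_{F_v}}$ for $v\in\Sigma^{\mathrm{ss}}$, so the partition is forced by the actual local behaviour of $r_\p$. The nontrivial point, which is one of the main difficulties of the whole paper, is to show that this dichotomy is exhaustive when $m\geq 2$: one needs to know that whenever $r_\p|_{G_{F_v}}$ is crystalline non-ordinary of weights $\{0,m\}$, its residual restriction to $G_{\Q_{p^2}}$ agrees with $\rhobar_{2,m,0}$ (so that after further base change it connects to $\rho_{2,m,0}$ by Lemma~\ref{prep}). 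This is exactly where the paper invokes Berger's theorem~\cite{ber05} on reductions of 2-dimensional crystalline representations of $G_{\Q_p}$, after arranging that $p$ splits completely in the relevant coefficient and base fields so that $F_v = M_\p = \Q_p$. Without this step, nothing guarantees that a non-ordinary $v$ can be fed into the connectedness hypothesis of the automorphy lifting theorem.

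Second, you never explain how $\cS_{\UA}$ is shown to be weakly automorphic, which is hypothesis (9c) of Proposition~\ref{thm_pot_aut_of_powers_all_the_assumptions} and is not a consequence of the Moret-Bailly construction alone. Moret-Bailly produces a Dwork motive with the prescribed residual data at $\p$ and $\r$, but automorphy has to be imported from somewhere else. The paper does this by introducing a third residual congruence, at an auxiliary prime $q$, between $\overline{s}_{\UA,\q}$ and $\Sym^{nm-1}\overline{\rho}_{A,q}$ for a semistable elliptic curve $A/\Q$; the automorphy of $\Sym^{nm-1}\rho_{A,q}|_{G_{F_6}}$ is known by symmetric power functoriality for modular and Hilbert modular forms, and an application of the automorphy lifting theorem of Miagkov--Thorne then transfers this to $\cS_{\UA}$ (this requires building the prime $q$, the curve $A$, and the $q$-adic local constraints into the Moret-Bailly moduli space $T(\sbar_{\aux,\p},\Sym^{nm-1}\rhobar_{A,q}|_{G_{F_5}})$ and into the fields $F_4,\dots,F_6$ from the start). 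Your proposal contains no analogue of this step, so the chain of implications from $\cS_{\UA}$ through $\cS_{\aux}$ and $\cS_{\CM}$ to $\Sym^{n-1}\cR$ never gets off the ground.
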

\begin{proof}
We can fix $n \geq 1$. Let $p_0$ denote the residue characteristic of $v_0$, let $F_0$ be an imaginary quadratic field, and let $F_1$ denote the Galois closure of $F \compositum F_0$ over $\Q$. Embed $M$ in $\C$ arbitrarily, and let $X_1$ denote the set of places of $F_1$ lying above $v_0$. It suffices to prove the following statement: 
\begin{itemize}
\item There exists a CM extension $F' / F_1$, Galois over $\Q$, such that (after possibly enlarging $M$) $\cR|_{G_{F'}}$ satisfies the hypotheses of   Proposition~\ref{thm_pot_aut_of_powers_all_the_assumptions} with $X_0$ taken to be the set of places of $F'$ lying above $X_1$. 
\end{itemize}
Indeed, Proposition~\ref{thm_pot_aut_of_powers_all_the_assumptions} will then imply that $\Sym^{n-1} \cR|_{G_{F'}}$ is weakly automorphic of level prime to $X_0 = \{ v | v_0 \}$, which is what we need to prove. To prove this statement, we will consider a series of CM extensions $F_{j+1} / F_j$ ($j = 1, 2, \dots$), each Galois over $\Q$. For any such extension $F_j / F_1$, $\cR|_{G_{F_j}}$ satisfies Assumptions (\ref{ass_HT})--(\ref{ass_galois}) of Proposition \ref{thm_pot_aut_of_powers_all_the_assumptions} with respect to $X_j$, the set of places of $F_j$ lying above $v_0$. The extensions $F_{j+1} / F_j$ will be chosen in order to satisfy the remaining assumptions. 

Let~$E/\Q$ be any totally real cyclic extension of degree~$m$ linearly disjoint from~$F_1$, in which $p_0$ is unramified.
(We can find such~$E$ by taking the degree~$m$ subfield of~$\Q(\zeta_{p'})$, where~$p'$ is any 
sufficiently large prime~$\equiv 1 \bmod 2m$.) 
Let~$L_1 = E \compositum F_1$. For any extension $F_j / F_1$, we will set $L_j = E \compositum F_j$. Choose an odd prime $q_1 \nmid X_0$ which splits
completely in $L_1$ %
and a
place $v_1 | q_1$ of $F_1$ which splits completely
 as $v_1
 = w_1 \dots w_m$ in $L_1$. Fix an embedding $\tau_0 : F_0 \to \C$, and a labelling $\tau_1, \dots,
  \tau_m : E \compositum F_0 \to \C$ of the embeddings $E
  \compositum F_0 \to \C$ 
   which extend $\tau_0$. After enlarging $M$, using \cite[Lemma 2.2]{HSBT}, we can find a character $\Psi_0 : \A_{L_1}^\times \to M^\times$, unramified at the places above $X_1$, such that for each
  $\alpha \in L_1^\times$, we have
  \[ \Psi_0(\alpha) = \prod_{i=1}^m \tau_i(\mathbf{N}_{L_1 / E \compositum F_0}(\alpha))^{m-i}  c \tau_i(\mathbf{N}_{L_1 / E \compositum F_0}(\alpha))^{i-1}, \]
and moreover such that the characters
$\Psi_0|_{{\cO^\times_{L_{w_i}}}}$ ($i = 1, \dots, m$) are wildly
ramified, pairwise distinct, and satisfy $\prod_{i=1}^m
\Psi_0|_{\cO^\times_{L_{w_i}}} = 1$ (where we identify $F_{v_1} =
L_{w_i}$ for each $i$). If $\lambda$ is a place of $M$, then $\varepsilon^{m(m-1)/2} \det
\Ind_{G_{L_1}}^{G_{F_1}} \Psi_{0, \lambda}$ is a character of finite order
which is unramified at $v_0$ and $v_1$. Using Lemma
\ref{fixdeterminant}, and possibly enlarging $M$ further, we can find a CM extension $F_2 / F_1$, linearly disjoint from $E/\Q$ and Galois over $\Q$, and a twist $\Psi_2 : \A_{L_2}^\times \to M^\times$ of $\Psi_0 \circ \mathbf{N}_{L_2 / L_1}$ by a character of $L_2^\times \backslash \A_{L_2}^\times$ of finite order, unramified above $v_0$ and $v_1$, such that for any place $\lambda$ of $M$, $\det \Ind_{G_{L_2}}^{G_{F_2}} \Psi_{2, \lambda} = \varepsilon^{-m(m-1)/2}$. (If~$v_0 | 2$, then the twist whose existence
is guaranteed from Lemma~\ref{fixdeterminant} may be ramified above $X_2$; if so,  it is certainly
ramified of finite order, and we enlarge $F_2$ further
so that~$\Ind_{G_{L_2}}^{G_{F_2}} \Psi_{2, \lambda}$ is unramified at all places above~$X_2$
and~$L_2/F_2$ is unramified at all places above~$X_2$.) If $F_j / F_2$ is a finite extension, then we set $\Psi_j = \Psi_2 \circ \mathbf{N}_{F_j / F_2}$. Let $\cR_{\CM} = \{ \Ind_{G_{L_2}}^{G_{F_2}} \Psi_{2, \lambda} \} = \{ r_{\CM, \lambda} \}$. 

 We now choose any primes $N$, $p \in \cL(\cR|_{G_{F_2}})$,  $r \in \cL(\cR|_{G_{F_2}})$ (cf.~Lemma \ref{lem_properties_of_strongly_irreducible_very_weakly_compatible_systems}) not
 dividing $v_0 v_1$  and satisfying the following conditions:
 \begin{itemize}
 \item $N > 100nm + 100$ and $N$ is unramified in $L_2$ and $M$. 
 \item $p \equiv -1 \text{ mod }N$ and $p > 2nm+1$.
\item $r \equiv 1 \text{ mod }N$ and $r > 2nm+1$.
 \item $p$ splits completely in $L_2$ and $M$ and $r$ splits completely in $L_2(\zeta_p)$ and $M$. 
 \item The character $\Psi_2$ is unramified at the places of $L$ above $p$ and $r$.
 \end{itemize} 
Choosing $\p |p$ and $\r | r$ arbitrarily, there will be sandwiches up to conjugation
 \[ \SL_2(\F_p) \leq \overline{r}_\p(G_{F_2}) \leq \GL_2(\F_p) \]
 and
 \[ \SL_2(\F_r) \leq \overline{r}_\r(G_{F_2}) \leq \GL_2(\F_r), \]
 and for each $p$-adic (resp. $r$-adic) place $v$ of $F$, $r_\p|_{G_{F_v}}$ (resp. $r_\r|_{G_{F_v}}$) is crystalline. (Here we are using the definition of $\cL(\cR|_{G_{F_2}})$ and the fact that $p$, $r$ split in $M$.) The representation $\overline{r}_{\CM, \r}$ can be chosen to take values in $\GL_m(\F_r)$. Since the prime $N$ is unramified in $L_2(\zeta_r)$, $E / \Q$ is linearly disjoint from $F_2(\zeta_N, \zeta_r) / \Q$.  The different inertial behaviour of $\Psi_0$ at places dividing $v_1$ implies that
 $\overline{r}_{\CM, \r}|_{G_{F_2(\zeta_N, \zeta_r)}}$ is absolutely irreducible. 

Let $v$ be a $p$-adic place of $F$. Then $F_{2, v} = M_\p = \Q_p$. By \cite[Th\'eor\`eme 3.2.1]{ber05},
either $r_\p|_{G_{F_{2, v}}}$ is (crystalline) ordinary, or there is an
isomorphism $\overline{r}_\p|_{G_{\Q_{p^2}}} \cong \rhobar_{2, m, 0}$
(notation as in Definition \ref{df: defn of rho0}). In the latter
case, Lemma \ref{prep} shows that for any finite extension $K /
\Q_{p^2}$, we have $r_\p|_{G_{K}} \sim \rho_{2, m, 0}|_{G_K}$. We write $\Sigma_2^{\text{ord}}$ (resp. $\Sigma_2^{\text{ss}}$) for the set of $p$-adic places of $F_2$ such that $r_\p|_{G_{F_{2, v}}}$ is (resp. is not) ordinary. If $F_j / F_2$ is a finite extension, then we write $\Sigma_j^{\text{ord}}$ for the set of places of $F_j$ lying above a place of $\Sigma_2^{\text{ord}}$ (and define $\Sigma_j^{\text{ss}}$ similarly). 

Let $B / F_2(\zeta_N, \zeta_p, \zeta_r)$ be the extension cut out by $\overline{r}_\p \times \overline{r}_\r \times \overline{r}_{\CM,
  \r}$. 
We now choose a solvable CM extension $F_3 / F_2(\zeta_N)$, Galois over
$\Q$ and linearly disjoint from $B \compositum F_2 / F_2(\zeta_N)$. Since $p \equiv -1 \text{ mod }N$, for each place $v | p$ of $F_3$, $F_v$ contains $\Q_{p^2}$. We moreover
adjoin $e^{2 \pi i / N}$ to $M$ and extend $\p$, $\r$ arbitrarily to places of this enlarged $M$. 

At this point we choose (for later use) a semistable elliptic curve  $A / \Q$ with good reduction at $p$, $r$, and $p_0$. We choose a prime $q$ with the following properties:
\begin{itemize}
\item $q > 2 n m + 1$ and $q$ splits in $M$. In particular, $q \equiv 1 \text{ mod }N$. We choose a place $\q | q$ of $M$.
\item $\rhobar_{A, q}(G_{F_3}) = \GL_2(\F_q)$ and $A$ has good ordinary reduction at $q$.
\end{itemize} 
Let $B'$ denote the composite of $B$ with the extension of $F_3$ cut out by $\rhobar_{A, q}$.

Having chosen an integer $N$ and extension $F_3 / \Q(\zeta_N)$, we have
  access to the families of motives over $T_0 = \mathbf{P}^1_{F_3} - \{
  \mu_N, \infty \}$ constructed in \S \ref{ssec:dwork}. We will use
  the families of motives both of rank $m$ and of rank $nm$. We write
  ${}_m W_{t, \lambda}$, ${}_{nm} W_{t, \lambda}$ for the
  $\cO_{M_\lambda}[G_K]$-modules of ranks $m$, $nm$ constructed in \S
  \ref{ssec:dwork} associated to an extension $K / F_3$ and point $t \in T_0(K)$. We claim that we can find a CM extension $F_4 / F_3$, Galois over $\Q$ and linearly disjoint from $B' \compositum F_3 / F_3$  such that for any place $v | p r p_0 q$ of $F_4$, the representations $\overline{r}_\p|_{G_{F_{4, v}}}$, $\overline{r}_\r|_{G_{F_{4, v}}}$, $\overline{r}_{\CM, \r}|_{G_{F_{4, v}}}$ and $\overline{\rho}_{A, q}|_{G_{F_{4,  v}}}$ are all trivial, and the following additional
data exists 
for $k \in
  \{ m, nm \}$:
  \begin{enumerate}[label=(\roman*)]
\item \label{item:MB ord} If $v \in \Sigma_4^{\textrm{ord}}$, then there is a non-empty open subset ${}_k \Omega_v \subset T_0(\cO_{F_{4,v}})$ such that if $t \in {}_k \Omega_v$, then ${}_k \overline{W}_{t, \p}$ is trivial and ${}_k W_{t, \p}$ is crystalline ordinary. Moreover, ${}_k \overline{W}_{t, \r}$ and ${}_k \overline{W}_{t, \q}$ are both trivial.
\item \label{item:MB ss} If $v \in \Sigma_4^{\textrm{ss}}$, then there is a non-empty open subset ${}_k \Omega_v \subset T_0(\cO_{F_{4, v}})$ such that if $t \in {}_k \Omega_v$, then ${}_k \overline{W}_{t, \p}$ and $\overline{\rho}_{k, 1, 0}|_{G_{F_{4, v}}}$ are trivial, ${}_k W_{t, \p}$ is crystalline, and ${}_k W_{t, \p} \sim \rho_{k, 1, 0}|_{G_{F_{4, v}}}$. Moreover, ${}_k \overline{W}_{t, \r}$ and ${}_k \overline{W}_{t, \q}$ are both trivial.
\item \label{item:MB r} If $v  | r$ is a place of $F_4$, then there is a non-empty open subset ${}_k  \Omega_v \subset T_0(\cO_{F_{4, v}})$ such that if $t \in \Omega_v$, then ${}_k \overline{W}_{t, \r}$ is trivial and $W_{t, \r}$ is crystalline ordinary. Moreover, ${}_k \overline{W}_{t, \q}$ and ${}_k \overline{W}_{t, \p}$ are both trivial.  
\item  \label{item:MB p_0} If $v  | p_0$ is a place of $F_4$, then there is a non-empty open subset ${}_k  \Omega_v \subset T_0(\cO_{F_{4, v}})$ such that if $t \in {}_k \Omega_v$, then  ${}_k \overline{W}_{t, \r}$, ${}_k \overline{W}_{t, \q}$ and ${}_k \overline{W}_{t, \p}$ are all trivial.
\item  \label{item:MB q} If $v  | q$ is a place of $F_4$, then there is a non-empty open subset $ {}_k \Omega_v \subset T_0(\cO_{F_{4, v}})$ such that if $t \in {}_k \Omega_v$, then ${}_k \overline{W}_{t, \q}$ is trivial and ${}_k W_{t, \q}$ is crystalline ordinary. Moreover,  ${}_k \overline{W}_{t, \p}$ and ${}_k \overline{W}_{t, \r}$ are both trivial. 
\end{enumerate}
Indeed, we can take $F_4 = K^+ \compositum F_3$, where $K^+ / \Q$ is a Galois, totally real extension with $K^+_v$ large enough for each place $v | prp_0 q$, as we now explain, dropping the subscript $k$ which is fixed for the next two paragraphs. For \ref{item:MB ord},
  we claim that it is enough to show that once $F_{4, v}$ is large enough, we
  can find a single point of $t \in T_0(F_{4, v})$ such that
  $\overline{W}_{t, \p}$, $\overline{W}_{t, \r}$, and $\overline{W}_{t, \q}$ are all trivial
  and $W_{t, \p}$ is crystalline ordinary. Indeed, by a version of
  Krasner's Lemma due to Kisin \cite[Theorem 5.1]{kisin-krasner}, for
  any~$c > 0$ there exists an open ball $U_t$ around~$t$ in
  $T_0(\cO_{F_{4, v}})$, such that for any $t' \in U_t$, the pairs of
  representations $W_{\p, t} / (p^c)$, $W_{\p, t'} / (p^c)$ and
  $W_{\r, t} / (r^c)$, $W_{\r, t'} / (r^c)$ and $W_{\q, t} / (q^c)$, $W_{\q, t'} / (q^c)$ are isomorphic. By Lemma
  \ref{smooth}, we can choose $c > 1$ so that this forces $W_{\p, t}
  \sim W_{\p, t'}$, hence (by Lemma \ref{prep2}) that $W_{\p, t'}$ is
  crystalline ordinary. The existence of a crystalline ordinary point
  $t$ follows from Proposition \ref{ordinarypoints} and Proposition
  \ref{prop_independence_of_l}(2), after which we enlarge $F_{4, v}$ further if
  necessary to force the residual representations  to be trivial. Then we take $\Omega_v = U_t$.

For \ref{item:MB r} and \ref{item:MB q}, the argument is essentially the same as case
\ref{item:MB ord}, while for   \ref{item:MB p_0}, it is even simpler. For \ref{item:MB ss}, we enlarge $F_{4, v}$ so
that $\overline{\rho}_{k, 1, 0}|_{G_{F_{4, v}}}$ and $\overline{W}_{\p, 0}|_{G_{F_{4, v}}}$
are trivial. By Lemma \ref{prep} and Lemma \ref{specializations}, we
have $W_{\p, 0}|_{G_{F_{4, v}}} \sim \rho_{k, 1, 0}|_{G_{F_{4, v}}}$. Employing the same
argument as in the previous paragraph, using \cite[Theorem 5.1]{kisin-krasner} and Lemma
\ref{smooth}, we can find a non-empty open neighbourhood $\Omega_v
\subset T_0(\cO_{F_{4, v}})$ of $0 \in T_0(\cO_{F_{4, v}})$ such that if $t \in
\Omega_v$, then $W_{\p, t}$ is crystalline and $W_{\p, t} \sim W_{\p,
  0}|_{G_{F_{4, v}}}$. Since $\sim$ is a transitive relation, this leads to a choice
of $\Omega_v$ with the desired property.

To construct the compatible system $\cR_{\aux}$, we will apply
Proposition \ref{prop_variation_of_MB}. If~$m=2$ we can  use a
modular curve with level $r$-structure, and since the argument in
this case is a straightforward (and considerably simpler) variant on
the argument that we use if~$m>2$, we leave this case to the
reader. In the case $m > 2$ we use the moduli space $T = T(
\overline{r}_{\CM, \r}|_{G_{F_4}}  )$ defined in Remark \ref{pqmoduliremark}, which is defined since $r \equiv 1 \text{
  mod }N$ and $\overline{r}_{\CM, \r}$ takes values in $\GL_m(\F_r)$,
with determinant $\overline{\varepsilon}^{-m(m-1)/2}$. We take
$F^{\avoid} = B' \compositum F_4$. We take the homomorphism
$\pi_1^{\text{\'et}}(T_{F_4}) \to \operatorname{GU}_m(\F_{p^2})$ to be the
one associated to the local system $\overline{\cW}_\p$. We take $S_0 =
\{ p, r, p_0, q \}$. If $v$ is a place lying above a prime in $S_0$, we take
$L_v = F_{4, v}$ and $\Omega_v$ to be the pre-image in $T(F_{4, v})$ of the set ${}_m \Omega_v$. Note that $\Omega_v$ is certainly open, and it is non-empty because we have arranged that for each place $v | S_0$ of $F_4$, and for each $t \in {}_m \Omega_v$, $\overline{r}_{\CM, \r}|_{G_{F_{4, v}}}$ and $\overline{W}_{t, \r}$ are both trivial (hence isomorphic!).
  
  Proposition \ref{prop_variation_of_MB} now yields an imaginary CM extension $F_5 /
  F_4$, Galois over $\Q$ and in which the places above $S_0$ all split completely, and a weakly compatible system $\{ W_{t, \lambda} \}$ of
  representations of $G_{F_5}$ with coefficients in $\Q(e^{2 \pi i /
    N}) \subset M$. We take $\cR_{\aux} = \{ r_{\aux, \lambda} \} = \{ W_{t, \lambda} \}$ and note that the statement of  Proposition \ref{prop_variation_of_MB} and the definition of the sets $\Omega_v$ imply that $\cR_{\aux}$ has the following properties:
 \begin{itemize} \item $\overline{r}_{\aux, \p}(G_{F_5}) = \operatorname{GU}_m(\F_{p^2})$ (note we are assuming that $m >2$).
         \item If $v \in \Sigma_5^{\mathrm{ord}}$, then $\overline{r}_{\aux, \p}|_{G_{F_{5, v}}}$ is trivial and $r_{\aux, \p}|_{G_{F_{5, v}}}$ is crystalline ordinary.
         \item If $v \in \Sigma_5^{\mathrm{ss}}$, then $\overline{r}_{\aux, \p}|_{G_{F_{5, v}}}$ is trivial and $r_{\aux, \p}|_{G_{F_{5, v}}} \sim \rho_{m, 1, 0}|_{G_{F_{5, v}}}$.
         \item  $S_{\aux}$ is disjoint from $X_5 \cup \{ v | p r \}$. (Use Proposition \ref{prop_independence_of_l}.)
         \item There is an isomorphism $\overline{r}_{\aux, \r} \cong \overline{r}_{\CM, \r}|_{G_{F_5}}$. For each place $v | r$ of $F_5$, $\overline{r}_{\aux, \r}|_{G_{F_{5, v}}}$ is trivial and $r_{\aux, \r}|_{G_{F_{5, v}}}$ is crystalline ordinary.
         \end{itemize} 
 We set $\cS_{\aux} = (\Sym^{n-1} \cR|_{G_{F_5}}) \otimes \cR_{\aux}$, and now construct $\cS_{\UA}$. The places $v | prp_0q$ split in $F_5 / F_4$, so if $v$ is a place of $F_5$ dividing $prp_0q$ we may define ${}_k \Omega_v = {}_k \Omega_{v|_{F_4}}$ to keep in hand the data   \ref{item:MB ord}--\ref{item:MB q} defined above.  We will apply Proposition \ref{prop_variation_of_MB} to the moduli space 
\[ T = T(\sbar_{\aux,\p}, \Sym^{nm-1} \rhobar_{A,q} |_{G_{F_5}}). \]
 We take $F^{\avoid} = B' \compositum F_5$. We do not specify a homomorphism $f$. We take $S_0 = \{ p, r, p_0, q  \}$. If $v$ is a place lying above a prime in $S_0$, we take
$L_v = F_{5, v}$ and $\Omega_v$ to be the pre-image in $T(F_{5, v})$ of the set ${}_{nm} \Omega_v$. Once again, this pre-image is non-empty because we have trivialized all of the relevant local residual representations. (Since $p \equiv -1 \text{ mod }N$, the definition of $T$ involves a choice of Hermitian structure. We are therefore invoking the fact here that over a finite field, any two Hermitian spaces of the same dimension are isomorphic.) Proposition \ref{prop_variation_of_MB} then yields a CM extension $F_6 / F_5$, Galois over $\Q$, and a point $t \in T(F_6)$ corresponding to a weakly compatible system $\cS_{\UA} = \{ s_{\UA, \lambda} \} = \{ W_{t, \lambda} \}$ of rank $nm$ representations of $G_{F_6}$ with the following properties:
\begin{itemize}
\item There are isomorphisms $\overline{s}_{\UA, \p} \cong \overline{s}_{\aux, \p}|_{G_{F_6}}$ and $\overline{s}_{\UA, \q} \cong \Sym^{nm-1} \overline{\rho}_{A, q}|_{G_{F_6}}$.
\item If $v \in \Sigma_6^{\mathrm{ord}}$, then $\overline{s}_{\UA, \p}|_{G_{F_{6, v}}}$ is trivial and $s_{\UA, \p}|_{G_{F_{6, v}}}$ is crystalline ordinary. 
\item If $v \in \Sigma_6^{\mathrm{ss}}$, then $\overline{s}_{\UA, \p}|_{G_{F_{6, v}}}$ is trivial and $s_{\UA, \p}|_{G_{F_{6, v}}} \sim \rho_{nm, 1, 0}|_{G_{F_{6, v}}}$. 
\item For each place $v | q$ of $F_6$, $\overline{s}_{\UA, \p}|_{G_{F_{6, v}}}$ is trivial and $s_{\UA, q}|_{G_{F_{6, v}}}$ is crystalline ordinary. 
\item $S_{\UA}$ is disjoint from $X_6 \cup \{ v | p r \}$.
\end{itemize} 
 We now claim that Assumptions (\ref{ass_HT})--(\ref{ass_CM_triv}) of Proposition \ref{thm_pot_aut_of_powers_all_the_assumptions} are satisfied for the compatible system $\cR|_{G_{F_6}}$, set $X_0 = X_6$ of places of $F_6$, and auxiliary compatible systems $\cR_{\CM}|_{G_{F_6}}$, $\cR_{\aux}|_{G_{F_6}}$, and $\cS_{\UA}$ (defined over $F_6$ by construction). Let us verify these assumptions in turn.
\begin{itemize}
\item As already observed,  (\ref{ass_HT})--(\ref{ass_galois}) are automatically satisfied.
\item We take $\Psi = \Psi_6$. The extension $E / \Q$ is linearly disjoint from $F_6$ because $E \leq B$, while $\Psi$ has the given infinity type, so (6) is satisfied.
\item The primes $p, r$ are prime to $S$ by construction, so (7) is satisfied. 
\item $\cR_{\aux}|_{G_{F_6}}$ has the claimed properties by construction, so (8) is satisfied. The same is true for $\cS_{\UA}$, except we need to justify the fact that $\cS_{\UA}$ is weakly automorphic of level prime to $X_6 \cup \{ v | p r \}$. Note that the $q$-adic representation
$\Sym^{nm - 1} \rho_{A, q}|_{G_{F_6}}$ is automorphic by the
combination of the main results of \cite{MR1839918, MR3394612,
  newton2022symmetric, MR1007299} (or alternately by \cite{Clo23}),
associated to a regular algebraic, cuspidal automorphic representation of $\GL_{nm}(\A_{F_6})$ which is $\iota$-ordinary with respect to any isomorphism $\iota : \overline{\Q}_q \to \C$. (We could also verify the automorphy, at the cost of further extending the field $F_6$, by a further application of Proposition \ref{prop_variation_of_MB}  as is done in  \cite{10author}.) We would now like to apply \cite[Theorem 1.3]{miagkov-thorne} to conclude that $\cS_{\UA}$ is weakly automorphic of level prime to $X_6 \cup \{ v | p r \}$ (noting that the cited result includes the conclusion that the automorphic representation witnessing the weak automorphy of $\cS_{\UA}$ is unramified at any place where both $\rho_{A, q}$ and $s_{\UA, \q}$ are unramified). We must verify that $\Sym^{nm - 1} \rhobar_{A, q}|_{G_{F_6}}$ satisfies the Taylor--Wiles conditions (as formulated in Definition \ref{tw}). By Lemma \ref{restricttw}, it suffices to check that  $\Sym^{nm - 1} \rhobar_{A, q}$ satisfies these conditions (as a representation of $G_\Q$), and this follows  from the definitions, together with an application of  \cite[Theorem A.9]{jackapp} (using our assumption $q > 2nm + 1$).

\item $L / F$ and $\Psi$ are unramified above $X_0 \cup \{ v | p r \}$ by construction, so (10) is satisfied. 
\item We have chosen the primes $p, r$ so that $p > 2nm +1$ and $r > 2nm + 1$. At each step the extension $F_{j+1} / F_{j}$ has been chosen linearly disjoint from $L_j(\zeta_p, \zeta_r)$, so (11) and (12) are satisfied.
\item The images $\overline{r}_\p(G_{F_2})$, $\overline{r}_\r(G_{F_2})$ and $\overline{r}_{\aux, \p}(G_{F_5})$ are large by construction, and at each step the extension $F_{j+1} / F_{j}$ has been chosen so that the image does not change on restriction to the smaller Galois group. Moreover, $\overline{r}_{\CM, \r}|_{G_{F_2(\zeta_r)}}$ is irreducible, and again the analogous property holds over $F_6$ by construction. Therefore (13) is satisfied. 
\item Assumptions (14)--(17) hold by construction.
\end{itemize} 
This completes the proof. 
\end{proof}

\section{Applications}

\subsection{The Ramanujan Conjecture}
We are now in a position to prove the (more general verisons of the) main theorems of the introduction
as a consequence of Theorem~\ref{thm_pot_aut_of_powers}.
Let~$F$ be an imaginary CM field, 
and let~$\pi$ be a regular algebraic cuspidal automorphic
representation of~$\GL_2(\A_F)$. We write $(a_\tau \ge b_\tau)_{\tau: F\hookrightarrow \C}$ for the weight of $\pi$. 
Recall that we say that~$\pi$ is \emph{of parallel weight} if $a_{\tau}-b_{\tau}$ is independent of $\tau$. 
\begin{theorem}\label{thm: Ramanujan thm main paper}
Let~$F$ be an imaginary CM field, and let~$\pi$ be a regular algebraic cuspidal automorphic representation
of~$\GL_2(\A_F)$ of parallel weight. Then, for all  primes $v$ of $F$, the representation $\pi_v$ is \emph{(}essentially\emph{)} tempered.
\end{theorem}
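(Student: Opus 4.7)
The plan is to deduce Theorem~\ref{thm: Ramanujan thm main paper} from Theorem~\ref{thm_pot_aut_of_powers} applied to the compatible system of Galois representations attached to $\pi$, following the template sketched in the introduction and carried out in detail in~\cite[\S7]{10author} in the parallel weight $2$ case. First I would attach to $\pi$ its very weakly compatible system $\RR=\{r_\lambda\}$ of $2$-dimensional $\lambda$-adic representations of $G_F$ (by work of Harris--Lan--Taylor--Thorne, Scholze, and Caraiani--Newton for the $p$-adic local properties). After twisting $\pi$ by a suitable algebraic Hecke character, which does not affect (essential) temperedness, I may assume $\pi$ has weight $(m-1,0)_\tau$ for some $m\ge 1$, so that each $r_\lambda$ has parallel Hodge--Tate weights $\{0,m\}$ and $\det r_\lambda = \varepsilon^{-m}\chi$ for some finite order character $\chi$. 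A solvable CM base change (followed by cyclic descent at the end) allows me further to trivialize $\chi$, placing us squarely in the setup of Theorem~\ref{thm_pot_aut_of_powers}.

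Next I would split into the CM and non-CM cases. If $\pi$ is the automorphic induction of an algebraic Hecke character from a quadratic CM extension, purity of Hecke characters (classical) gives temperedness at all finite places. Otherwise I claim $\RR$ is strongly irreducible: if some $r_\lambda|_{G_{F'}}$ were reducible for a finite extension $F'/F$, a standard argument shows $\pi$ would be dihedral, hence induced from a Hecke character of a quadratic CM extension, contradicting the non-CM hypothesis. With strong irreducibility, parallel Hodge--Tate weights $\{0,m\}$, and $\det r_\lambda=\varepsilon^{-m}$ in hand, Theorem~\ref{thm_pot_aut_of_powers} applies, yielding both purity of $\RR$ of weight $m$ and, for every $n\ge 1$, a CM extension $F_n/F$ over which $\Sym^{n-1}\RR|_{G_{F_n}}$ is automorphic, corresponding to a cuspidal regular algebraic $\Pi_n$ of $\GL_n(\A_{F_n})$.

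Purity of $\RR$ at a finite place $v\notin S$ says exactly that $|\alpha_v|=|\beta_v|=q_v^{m/2}$, which is temperedness of $\pi_v$ at unramified $v$. For a ramified finite place $v$, I would argue as follows. Choose $\lambda$ of residue characteristic different from that of $v$, let $w\mid v$ be a place of $F_n$ with residue degree $f$, and let $\alpha_v,\beta_v$ denote the generalized Frobenius eigenvalues on the Frobenius-semisimplification of $\operatorname{WD}(r_\lambda|_{G_{F_v}})$. Normalizing $\Pi_n$ by $|\det|^{-(n-1)m/2}$ to obtain a unitary cuspidal representation, the Jacquet--Shalika bound gives $|\mu|\le q_w^{1/2}$ for any eigenvalue $\mu$ appearing in the Langlands parameter of the normalized $\Pi_{n,w}$. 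Local--global compatibility (from Caraiani--Newton, extending earlier work of Varma and others) identifies this parameter with $\Sym^{n-1}$ of the parameter of $\pi_v|_{W_{F_{n,w}}}$, producing the inequality $|\alpha_v|^{f(n-1)}\le q_v^{f/2+f(n-1)m/2}$. Letting $n\to\infty$ yields $|\alpha_v|\le q_v^{m/2}$ and symmetrically $|\beta_v|\le q_v^{m/2}$; combined with $|\alpha_v\beta_v|=q_v^m$ from the determinant, we conclude $|\alpha_v|=|\beta_v|=q_v^{m/2}$, i.e.\ $\pi_v$ is tempered.

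The main obstacle is of course Theorem~\ref{thm_pot_aut_of_powers} itself, which is the substantive new potential automorphy input and was proved in the previous section; given it, the remaining work is a largely formal compilation of known ingredients. The only subtlety is the passage from purity at unramified places to temperedness at ramified places, which is handled by the $n\to\infty$ Jacquet--Shalika argument above and requires local--global compatibility for the automorphic representations $\Pi_n$ on CM fields; this is available thanks to the results of Caraiani--Newton. No new ideas beyond those already developed in the paper are needed.
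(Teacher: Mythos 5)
Your proposal is correct and follows the same overall strategy as the paper --- reduce to potential automorphy of symmetric powers (Theorem~\ref{thm_pot_aut_of_powers}) --- but you take a genuinely different route at ramified places. The paper, after trivializing the central character, makes a further solvable base change (``exactly as in the proof of~\cite[Cor.~7.1.15]{10author}'') so that only unramified places need to be checked, and then temperedness at those places is precisely the purity statement of Theorem~\ref{thm_pot_aut_of_powers}. You instead keep the ramified places and handle them directly by running the Jacquet--Shalika argument on $\Pi_n$ (the automorphic realization of $\Sym^{n-1}\cR|_{G_{F_n}}$) at places $w\mid v$ of $F_n$, invoking local--global compatibility at $\ell\neq p$ places \`a la Varma, and letting $n\to\infty$. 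This is exactly the mechanism of Lemma~\ref{lem:pot aut implies purity}, extended to ramified places; it is valid, but it is more work than the base change route, which sidesteps ramified places entirely (noting that after suitable solvable base change the only surviving ramified local types for $\GL_2$ are Steinberg twists, which are already tempered). Your version does have the modest advantage of not needing to cite the reduction step from~\cite{10author}.

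Two small imprecisions worth flagging. First, the phrase ``a solvable CM base change (followed by cyclic descent at the end)'' to trivialize the determinant character is muddled: temperedness is insensitive to base change, so no descent is needed --- the paper simply picks a quadratic CM extension $F'/F$ making $\psi\circ\mathbf{N}_{F'/F}$ a square, twists, and checks temperedness over $F'$. Second, your claim that ``if some $r_\lambda|_{G_{F'}}$ were reducible then $\pi$ would be dihedral'' is not quite right as stated: the trichotomy of \cite[Lemma~7.1.2]{10author} allows, a priori, a third possibility that $\cR$ is Artin up to twist (e.g.\ projective image $A_4$, $S_4$, $A_5$), and such representations are not dihedral. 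This case must be excluded by a separate observation --- the paper does so by noting that Artin-up-to-twist is incompatible with the distinct Hodge--Tate weights $\{0,m\}$ with $m\geq 1$. You are implicitly using this, but it should be made explicit. Neither issue affects the correctness of the argument.
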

\begin{proof}
Since~$\pi$ is assumed to have parallel weight, there is an integer $m \geq 1$ such that~$a_{\tau}-b_{\tau} = m-1$ for all~$\tau:F\hookrightarrow\C$. 
By Clozel's purity lemma \cite[Lemma 4.9]{MR1044819}, there is an integer $w$
with $a_{\tau} + b_{c\tau} = w$ for all $\tau$. It follows that $b_{\tau}+b_{c\tau}=w-m+1$ is independent of~$\tau$. In particular, there exists an algebraic Hecke character of $\A_F^\times$ with weight $(b_{\tau})_{\tau:F\hookrightarrow \C}$, so after twisting we may
        assume that $(a_\tau,b_\tau) = (m-1,0)$ for all ~$\tau$. The
        central character of $\pi$ is then of the form
        $\psi|\cdot|^{1-m}$ for a finite order Hecke character
        $\psi$.

        Exactly as in the proof
        of~\cite[Theorem~7.1.1]{10author}, we can find a quadratic CM
        extension $F'/F$ for which the character~$\psi\circ N_{F'/F}$
        is a square. We can check temperedness after base change to
        $F'$. Twisting by a finite order Hecke character, we may then
        assume that~$\pi$ has central character $|\cdot|^{1-m}$. 
                  Exactly as in the proof of \cite[Cor.~7.1.15]{10author}, we
        can make a further solvable base change to reduce to checking
        temperedness of the unramified $\pi_v$.
        Hence it suffices to show that the associated very weakly compatible system~$\cR$ (cf.~\cite[Lemma 7.1.10]{10author}) is pure.
         By \cite[Lemma~7.1.2]{10author},
        either~$\cR$ is strongly irreducible, Artin up to twist, or induced
        from a quadratic extension.
        If~$\cR$ is induced, then purity follows from the purity of rank one (very weakly) compatible systems. The compatible family~$\cR$ cannot be Artin up to twist because
        that is incompatible with having distinct Hodge--Tate weights.
        Thus~$\cR$ is strongly irreducible,
        and the result follows from Theorem~\ref{thm_pot_aut_of_powers}.
        \end{proof}

\subsection{The potential automorphy of compatible systems and the Sato--Tate conjecture}\label{subsec:satotate}
\begin{thm} \label{PO}
Let~$F$ be a CM field, %
and let~$\RR=(M,S,\{ Q_v(X) \},
r_\lambda ,H_{\tau} )$ be a  very weakly compatible system of rank 2 representations of $G_F$ that is strongly
 irreducible. Suppose there exists an integer $m \geq 1$ such that $H_\tau = \{ 0, m \}$ for each embedding $\tau : F \to \overline{M}$. %
 Then $\cR$ is pure of weight $m$, and for each $n \geq 1$, there exists a finite CM extension $F' / F$, Galois over $\Q$, such 
 that~$\Sym^{n-1} \cR|_{G_{F'}}$ is automorphic. 

 If one alternatively assumes that~$\RR$ is irreducible but not strongly irreducible,
 then~$\cR$ is pure of weight~$m$, and for each $n \geq 1$,
 $\Sym^{n-1} \cR$ decomposes as a direct sum of compatible systems
 of dimension at most~$2$ which are automorphic.
 \end{thm}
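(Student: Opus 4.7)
For the strongly irreducible case, my plan is to reduce to Theorem~\ref{thm_pot_aut_of_powers}. If $F$ is totally real, first base change to $F\cdot K$ for an imaginary quadratic field $K$: this preserves strong irreducibility and all Hodge--Tate weights, and both purity of $\cR$ and the existence of a CM extension $F'$ Galois over $\Q$ with $\mathrm{Sym}^{n-1}\cR|_{G_{F'}}$ automorphic descend from $F\cdot K$ back to $F$. So we may assume $F$ is imaginary CM. To match the hypothesis $\det r_\lambda = \varepsilon^{-m}$ of Theorem~\ref{thm_pot_aut_of_powers}, let $\eta_\lambda := (\det r_\lambda)\cdot\varepsilon^m$: for $\lambda$ in a Dirichlet density-one set, $\eta_\lambda$ is crystalline with all Hodge--Tate weights zero at each embedding, hence unramified at every place above the residue characteristic of $\lambda$; since $\cR$ has only finitely many bad places, $\eta = \{\eta_\lambda\}$ is the compatible system of a Hecke character with trivial infinity type, i.e., of a finite-order character of $G_F$. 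By Lemma~\ref{fixdeterminant}, after a finite CM base change $F_0/F$ Galois over $\Q$ there is a compatible system $\chi$ of algebraic Hecke characters of $F_0$ with $\chi^2 = \eta|_{G_{F_0}}$. Then $\cR' := \cR|_{G_{F_0}}\otimes\chi^{-1}$ is strongly irreducible of rank two, has Hodge--Tate weights $\{0,m\}$, and satisfies $\det r'_\lambda = \varepsilon^{-m}$. Theorem~\ref{thm_pot_aut_of_powers} then yields purity of $\cR'$ (whence of $\cR$, since $\chi$ is pure) and, for each $n\ge 1$, a CM extension $F_n/F_0$ Galois over $\Q$ (hence CM and Galois over $\Q$ as an extension of $F$) with $\mathrm{Sym}^{n-1}\cR'|_{G_{F_n}}$ automorphic; twisting by the algebraic Hecke character $\chi^{n-1}|_{G_{F_n}}$ preserves automorphy and delivers the conclusion for $\mathrm{Sym}^{n-1}\cR|_{G_{F_n}}$.

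For the irreducible but not strongly irreducible case, I would invoke the trichotomy \cite[Lem.~7.1.2]{10author}: $\cR$ is strongly irreducible, Artin-up-to-twist, or induced from a character of a quadratic extension. The Artin-up-to-twist case is excluded by $m\ge 1$ (which gives unequal Hodge--Tate weights), so $\cR = \mathrm{Ind}_{G_{F'}}^{G_F}\psi$ for a quadratic CM extension $F'/F$ and a rank-one compatible system $\psi = \{\psi_\lambda\}$. Since each $\psi_\lambda$ has integer Hodge--Tate weights, $\psi$ is the compatible system associated to an algebraic Hecke character of $F'$. Let $\sigma$ generate $\mathrm{Gal}(F'/F)$. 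Then $\cR|_{G_{F'}}\cong\psi\oplus\psi^\sigma$, so
\[
\mathrm{Sym}^{n-1}\cR\big|_{G_{F'}} \;\cong\; \bigoplus_{i=0}^{n-1}\psi^i(\psi^\sigma)^{n-1-i}.
\]
Pairing $\sigma$-conjugate summands yields a $G_F$-decomposition of $\mathrm{Sym}^{n-1}\cR$ into $\lfloor n/2\rfloor$ two-dimensional pieces $\mathrm{Ind}_{G_{F'}}^{G_F}\bigl(\psi^i(\psi^\sigma)^{n-1-i}\bigr)$ for $0\le i<(n-1)/2$, together with, when $n$ is odd, one one-dimensional piece extending the $\sigma$-invariant character $(\psi\psi^\sigma)^{(n-1)/2}$ to $G_F$. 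Each summand is the compatible system of an algebraic Hecke character of $F'$ or $F$, hence pure and automorphic as a character of $\GL_1$; the two-dimensional inductions are automorphic by cyclic automorphic induction \cite{MR1007299}, and induction preserves purity of compatible systems.

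The substantive difficulty is entirely contained in Theorem~\ref{thm_pot_aut_of_powers}; the arguments above amount to a determinant-twist reduction onto the hypotheses of that theorem, together with elementary representation-theoretic manipulations in the induced case.
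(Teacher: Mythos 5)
Your proposal is correct and follows essentially the same route as the paper: reduce to imaginary CM $F$ with $\det r_\lambda = \varepsilon^{-m}$ and invoke Theorem~\ref{thm_pot_aut_of_powers} in the strongly irreducible case, and in the induced case decompose $\Sym^{n-1}\cR$ into inductions of Hecke characters (plus a one-dimensional piece for $n$ odd) and appeal to automorphic induction. You fill in some steps the paper leaves implicit (the base change to imaginary CM, the observation that $(\det r_\lambda)\varepsilon^m$ is a finite-order character, and the direct use of Lemma~\ref{fixdeterminant} rather than citing the automorphic-side argument in Theorem~\ref{thm: Ramanujan thm main paper}), but the substance matches.
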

\begin{proof} Assume that~$\cR$ is strongly irreducible.
As in the proof of Theorem~\ref{thm: Ramanujan thm main paper}, we can reduce to the case where~$\cR$
has determinant~$\varepsilon^{-m}$. But now Theorem~\ref{PO} follows directly from Theorem~\ref{thm_pot_aut_of_powers}.

If~$\cR$ is not strongly irreducible, then from~\cite[Lemma~7.1.2]{10author} it
follows that~$\cR$ is induced from a compatible system of algebraic
Hecke characters for some quadratic extension~$F'/F$ (the condition on the Hodge--Tate
weights ensures that~$\cR$ is not Artin up to twist).
Then the symmetric powers~$\Sym^{n-1} \cR$
decompose as a sum of  two-dimensional induced compatible systems and
(when~$n$ is odd) a one-dimensional compatible system. %
In particular for any~$n$, $\Sym^{n-1} \cR$ decomposes as a direct sum of %
automorphic compatible systems, %
and the purity statement follows from the purity
of (the Galois representations associated to) algebraic Hecke characters.
\end{proof}
We next give a statement of the Sato--Tate conjecture, including Theorem~\ref{SatoTate} as a special case, before giving the proof when $\pi$ has parallel weight. Let $F$ be an imaginary CM field, and let $\pi$ be a cuspidal automorphic representation of $\GL_2(\A_F)$ which is regular algebraic of weight $\lambda$ and not CM (i.e.\ not automorphically induced). Thus there is an integer $w$ such that $\lambda_{\tau, 1} + \lambda_{\tau c, 2} = w$  for all $\tau \in \Hom(F, \C)$. The central character of $\pi$ has the form $\omega_\pi = | \cdot|^{-w} \psi$, where $\psi : F^\times \backslash \A_F^\times \to \C^\times$ is a unitary Hecke character of type $A_0$. We define the Sato--Tate group of $\pi$, $\mathrm{ST}(\pi)$, as follows:
\begin{itemize}
\item If $\psi$ has finite order $a \geq 1$, then $\mathrm{ST}(\pi) = \mathrm{U}_2(\R)_a := \{ g \in \mathrm{U}_2(\R) \mid \det(g)^a = 1 \}$.
\item If $\psi$ has infinite order, then $\mathrm{ST}(\pi) = \mathrm{U}_2(\R)$.
\end{itemize}
\begin{lemma}
$\mathrm{ST}(\pi)$ is a compact subgroup of $\GL_2(\C)$. If $v$ is a finite place of $F$ such that $\pi_v$ is unramified and essentially tempered, then the $\GL_2(\C)$-conjugacy class of $q_v^{-w/2} \operatorname{rec}_{F_v}(\pi_v)(\Frob_v)$ intersects $\mathrm{ST}(\pi)$ in a unique conjugacy class of $\mathrm{ST}(\pi)$.
\end{lemma}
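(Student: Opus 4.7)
The plan is to break the statement into its two parts and handle each with fairly standard techniques; no genuinely hard step is needed, but care with normalizations is required. For the compactness claim, I would first observe that $\mathrm{U}_2(\R)$ is visibly compact as a closed and bounded subset of $M_2(\C)$, and that $\mathrm{U}_2(\R)_a$ is cut out of $\mathrm{U}_2(\R)$ by the closed condition $\det(g)^a = 1$, so is itself compact. This handles the first sentence of the lemma.

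For the second sentence, the main input is the relation between $\operatorname{rec}_{F_v}(\pi_v)$ and the local data of $\pi_v$. Write $\alpha_v,\beta_v$ for the eigenvalues of $A_v := \operatorname{rec}_{F_v}(\pi_v)(\Frob_v)$. I would first use that $\pi_v$ is essentially tempered and unramified to deduce $|\alpha_v| = |\beta_v| = q_v^{w/2}$, so that the eigenvalues of $q_v^{-w/2} A_v$ lie on the unit circle. Next, using the compatibility of local Langlands with the central character, together with $\omega_\pi = |\cdot|^{-w}\psi$, I would compute that $\det(q_v^{-w/2} A_v) = \psi_v(\varpi_v)$; when $\psi$ has order $a$ this is an $a$th root of unity, so any conjugate of $q_v^{-w/2} A_v$ with unitary image lies in $\mathrm{ST}(\pi)$, while when $\psi$ has infinite order the weaker bound $|\det| = 1$ is exactly what $\mathrm{ST}(\pi) = \mathrm{U}_2(\R)$ requires. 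Since $A_v$ is semisimple with eigenvalues on the unit circle once rescaled, the spectral theorem gives $g \in \GL_2(\C)$ with $g(q_v^{-w/2}A_v)g^{-1}$ diagonal unitary, establishing that the $\GL_2(\C)$-conjugacy class meets $\mathrm{ST}(\pi)$.

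For uniqueness, I would argue that any two elements of $\mathrm{ST}(\pi)$ which are $\GL_2(\C)$-conjugate have the same eigenvalue multiset; the spectral theorem lets one diagonalize both simultaneously inside $\mathrm{U}_2(\R)$, and the eigenvalues can be permuted by the matrix $\left(\begin{smallmatrix} 0 & -1 \\ 1 & 0 \end{smallmatrix}\right) \in \mathrm{SU}_2(\R) \subset \mathrm{U}_2(\R)_a$, which lies in $\mathrm{ST}(\pi)$ in either case. Thus the two elements are in fact $\mathrm{ST}(\pi)$-conjugate. The only step that might require a little vigilance is the determinant computation, since the statement uses $\operatorname{rec}_{F_v}$ (the geometric normalization of \cite{ht}) while the identification of Satake parameters with Hecke eigenvalues is often written using $\operatorname{rec}_{F_v}^T$; keeping track of the half-integer power of $q_v$ separating the two normalizations, and verifying it cancels correctly against the $q_v^{-w/2}$ rescaling, is the only subtle part of the argument.
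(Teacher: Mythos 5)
Your proposal follows essentially the same route as the paper's proof: compactness of $\mathrm{ST}(\pi)$ from compactness of $\mathrm{U}_2(\R)$; essential temperedness to get $|\alpha_v|=|\beta_v|$; the central character computation $\alpha_v\beta_v = q_v^w\psi_v(\varpi_v)$ (hence $|\alpha_v\beta_v|=q_v^w$ since $\psi$ is unitary, and the $a$-th root of unity condition when $\psi$ has finite order); and uniqueness from the standard fact that $\GL_2(\C)$-conjugate elements of $\mathrm{U}_2(\R)$ are already $\mathrm{SU}_2(\R)$-conjugate. One small remark on presentation: you state $|\alpha_v|=|\beta_v|=q_v^{w/2}$ as following directly from essential temperedness, but temperedness alone only gives $|\alpha_v|=|\beta_v|$; the specific value $q_v^{w/2}$ requires the determinant computation you carry out in the next sentence, so the logical dependence should be flagged. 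Your concern about the $\operatorname{rec}$ versus $\operatorname{rec}^T$ normalization is sensible but in the end innocuous here: the paper's $\operatorname{rec}_{F_v}$ is the Harris--Taylor normalization, under which $\det\operatorname{rec}_{F_v}(\pi_v)=\operatorname{rec}_{F_v}(\omega_{\pi_v})$ directly, so the determinant comes out as stated with no extra half-integer twist.
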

\begin{proof}
The group $\mathrm{ST}(\pi)$ is compact since $\mathrm{U}_2(\R)$ is. It is well-known that two elements of $\mathrm{U}_2(\R)$ which become conjugate in $\GL_2(\C)$ are conjugate by an element of $\mathrm{SU}_2(\R)$. All we need to show then is that if $v$ is a finite place of $F$ such that $\pi_v$ is unramified, and $ \operatorname{rec}_{F_v}(\pi_v)(\Frob_v) = \diag(\alpha_v, \beta_v)$, then $\alpha_v, \beta_v$ are complex numbers of absolute value $q_v^{w/2}$, and further if $\psi$ has finite order $a$ then $(q_v^{-w} \alpha_v \beta_v)^a = 1$. 

Since $\pi_v$ is essentially tempered, we have $| \alpha_v | = | \beta_v |$. On the other hand, we have $\alpha_v \beta_v = \psi(\varpi_v) q_v^w$, hence $| \alpha_v \beta_v | = q_v^w$ (as $\psi$ is unitary), and if $\psi$ has finite order $a$ then $(q_v^{-w} \alpha_v \beta_v)^a = 1$. 
\end{proof}
If $v$ is a place such that $\pi_v$ is unramified and essentially tempered, then we write $[\pi_v] \in \mathrm{ST}(\pi)$ for a representative of the conjugacy class of $q_v^{-w/2} \operatorname{rec}_{F_v}(\pi_v)(\Frob_v) \in \GL_2(\C)$.
\begin{thm}\label{thm_Sato_Tate_general_case}
Suppose that $\pi$ has parallel weight. Let $S_\pi$ denote the set of finite places of $F$ at which $\pi$ is unramified. With notation as above, the classes of elements $[\pi_v] \in \mathrm{ST}(\pi)$ \textup{(}$v \not\in S_\pi$\textup{)} are equidistributed with respect to the Haar probability measure $\mu_{\mathrm{ST}}$ of $\mathrm{ST}(\pi)$. More precisely, for any continuous, conjugation-invariant function $f : \mathrm{ST}(\pi) \to \C$, we have
\[ \lim_{X \to \infty} \frac{ \sum_{v \not\in S_\pi, q_v < X} f([\pi_v])}{ \#\{v \not\in S_\pi, q_v  < X\}} = \int_{g \in \mathrm{ST}(\pi)} f(g) \, d \mu_{\mathrm{ST}(\pi)}. \]
\end{thm}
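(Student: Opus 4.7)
The strategy is the standard one from the Sato--Tate literature (\cite{HSBT,blght,10author}), reducing the statement to the potential automorphy of symmetric powers of $\cR$ (Theorem~\ref{PO}) together with the temperedness of $\pi$ (Theorem~\ref{thm: Ramanujan thm main paper}). By Weyl's equidistribution criterion combined with the Wiener--Ikehara Tauberian theorem, as packaged in~\cite[\S8]{blght}, it suffices to show that for every non-trivial continuous irreducible representation $\rho$ of $\mathrm{ST}(\pi)$ the partial $L$-function
\[ L^{S_\pi}(s, \{[\pi_v]\}_v, \rho) \;:=\; \prod_{v \notin S_\pi} \det\!\bigl(1 - \rho([\pi_v])\,q_v^{-s}\bigr)^{-1} \]
is holomorphic and non-zero on the closed half-plane $\{\mathrm{Re}(s) \ge 1\}$. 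The irreducible representations of $\mathrm{ST}(\pi) \le \mathrm{U}_2(\R)$ arise by restriction from $\Sym^{n-1}(\mathrm{std})\otimes\det^k$ with $(n,k) \in \Z_{\ge 1}\times\Z$, and non-triviality amounts to $n \ge 2$ or $k \neq 0$. Using Theorem~\ref{thm: Ramanujan thm main paper}, $[\pi_v]$ is well-defined for $v \notin S_\pi$, and up to a shift in $s$ and finitely many Euler factors we identify $L^{S_\pi}(s, \{[\pi_v]\}_v, \Sym^{n-1}\otimes\det^k)$ with the partial $L$-function $L^{S_\pi}(s, \Sym^{n-1}\cR \otimes \cX)$, where $\cR$ is the rank-two compatible system attached to $\pi$ and $\cX$ is a rank-one compatible system built from the $k$-th power of the unitarized central character.

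Since $\pi$ is non-CM, \cite[Lem.~7.1.2]{10author} together with the Hodge--Tate weight condition $H_\tau = \{0, m\}$ (with $m \ge 1$) forces $\cR$ to be strongly irreducible, so Theorem~\ref{PO} applies: for each $n \ge 2$ there is a CM extension $F'/F$, Galois over $\Q$, such that $\Sym^{n-1}\cR|_{G_{F'}}$ is automorphic, attached to a cuspidal $\Pi'$ on $\GL_n(\A_{F'})$. Since $F'/F$ need not be solvable, I would apply Brauer induction to the trivial character of $G := \Gal(F'/F)$, writing $\mathbf{1}_G = \sum_i n_i \,\mathrm{Ind}_{H_i}^G \chi_i$ with each $H_i$ solvable and each $\chi_i$ a one-dimensional character of $H_i$; let $F'_i := (F')^{H_i}$ and let $\widetilde{\chi}_i$ denote the finite-order Hecke character of $F'_i$ corresponding to $\chi_i$ by class field theory. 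Because $F'/F'_i$ is solvable, iterated cyclic base change~\cite{MR1007299} descends $\Pi'$ to an (isobaric) automorphic representation $\Pi_i$ of $\GL_n(\A_{F'_i})$, and Brauer then yields the factorization
\[ L^{S_\pi}(s, \Sym^{n-1}\cR \otimes \cX) \;=\; \prod_i L^{S_\pi}\!\bigl(s, \Pi_i \otimes \widetilde{\chi}_i \otimes \cX|_{G_{F'_i}}\bigr)^{n_i} \]
as an identity of Dirichlet series (away from a finite set of primes).

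Each factor on the right is a $\GL_n$ automorphic Rankin--Selberg $L$-function over the number field $F'_i$: it extends meromorphically to $\C$, is holomorphic on $\{\mathrm{Re}(s) \ge 1\}$ away from a possible simple pole at $s=1$, and is non-vanishing on $\mathrm{Re}(s) = 1$ by Jacquet--Shalika~\cite{jsajm103}. The genuine point to verify --- and the main technical obstacle after Theorem~\ref{PO} --- is that the right-hand side has no pole at $s=1$. A pole could appear only if some $\Pi_i \otimes \widetilde{\chi}_i \otimes \cX|_{G_{F'_i}}$ contained the trivial representation of $\GL_1(\A_{F'_i})$ as an isobaric summand. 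But this would force the virtual Galois representation $\Sym^{n-1}\cR\otimes\cX$ to contain the trivial constituent after restriction to some intermediate field, which is incompatible with the strong irreducibility of $\cR$, the non-CM hypothesis on $\pi$, and the fact that $(n,k) \ne (1,0)$ ensures that $\Sym^{n-1}\cR\otimes\cX$ has non-zero Hodge--Tate weights at every embedding. This bookkeeping is a direct adaptation of~\cite[\S7.1]{10author} (see also~\cite[\S4]{blght}) and requires no new input in our setting. Once this non-vanishing is established, the Wiener--Ikehara Tauberian theorem furnishes the claimed equidistribution; the genuine difficulty of proving potential automorphy of symmetric powers for parallel weight compatible systems over imaginary CM fields has already been overcome in Theorem~\ref{PO}.
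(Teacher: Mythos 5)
Your proof takes essentially the same route as the paper: both reduce to showing holomorphy and non-vanishing of the partial $L$-functions $L^{S_\pi}(\pi,\rho,s)$ on $\operatorname{Re}(s)=1$ for non-trivial irreducible $\rho$ via Serre's equidistribution criterion, and both deduce this from the potential automorphy of the symmetric powers supplied by Theorem~\ref{PO} together with the Brauer-induction/Rankin--Selberg argument from \cite[\S 8]{blght} and \cite[\S 7]{GeeSTwt3}. The paper simply cites those references for the analytic reduction, whereas you have unpacked the Brauer factorization and the check for poles at $s=1$ explicitly. One small imprecision: your final justification that there is no pole at $s=1$ appeals to ``non-zero Hodge--Tate weights at every embedding'' for $(n,k)\ne(1,0)$, but when $n=1$ and $\psi$ has finite order $a$, the system $\cX$ corresponding to $\psi^k$ has all Hodge--Tate weights zero; non-triviality of $\det^k$ on $\mathrm{ST}(\pi)=\mathrm{U}_2(\R)_a$ then means $a\nmid k$, so $\psi^k$ is a non-trivial finite-order Hecke character and the non-vanishing of $L(s,\psi^k)$ on $\operatorname{Re}(s)=1$ is Hecke's theorem rather than a Hodge--Tate weight argument. (This is the point behind the paper's closing remark that the list of non-trivial one-dimensional representations of $\mathrm{ST}(\pi)$ depends on the order of $\psi$.) This does not affect the correctness of the overall strategy.
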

\begin{proof}
If $\rho$ is a finite-dimensional irreducible representation of $\mathrm{ST}(\pi)$, let us define
\[ L^{S_\pi}(\pi, \rho, s) = \prod_{v \not\in S_\pi} \det(1 - q_v^{-s} \rho([\pi_v]))^{-1}, \]
an Euler product which converges absolutely in the right half-plane $\operatorname{Re}(s) > 1$. According to the criterion of Serre \cite[Ch. I, Appendix]{serreabladic}, the theorem will be proved if we can show that for each non-trivial such $\rho$, $L^{S_\pi}(\pi, \rho, s)$ admits a meromorphic continuation to $\C$ which is holomorphic and non-vanishing on the line $\operatorname{Re}(s) = 1$. This may be deduced from the potential automorphy of the symmetric powers $\Sym^{n-1} \cR$ of the compatible system associated to $\pi$, exactly as in e.g.\ \cite[\S 7]{GeeSTwt3} and \cite[\S 8]{blght}, after noting that $\cR$ is strongly irreducible (again invoking \cite[Lemma~7.1.2]{10author} and the assumption that $\pi$ is not CM). Note also that the list of non-trivial one-dimensional representations of $\mathrm{ST}(\pi)$ depends on the order of the character $\psi$.
\end{proof}

\emergencystretch=3em

\newcommand{\etalchar}[1]{$^{#1}$}
\renewcommand{\MR}[1]{}
\providecommand{\bysame}{\leavevmode\hbox to3em{\hrulefill}\thinspace}
\providecommand{\MR}{\relax\ifhmode\unskip\space\fi MR }
\providecommand{\MRhref}[2]{%
  \href{http://www.ams.org/mathscinet-getitem?mr=#1}{#2}
}
\providecommand{\href}[2]{#2}

\end{document}